\renewcommand{\a}{\mathcal{A}}
\renewcommand{\xi}{x}
\newcommand{\bs}[1]{\boldsymbol{#1}}
\newcommand{\C}{\mathscr{C}}
\newcommand{\D}{\mathrm{D}}
\newcommand{\F}{F}
\newcommand{\FM}{\mathcal{R}^{\M}}
\newcommand{\FY}{\mathcal{R}^{Y}}
\newcommand{\RR}{\bs{\mathcal{R}}}
\newcommand{\RM}{\mathcal{R}^{\Fs}}
\renewcommand{\H}{\mathscr{H}}
\newcommand{\HH}{\bs{H}}
\newcommand{\hh}{\bs{h}}
\newcommand{\Ka}{\widehat{K^{\mathrm{alg}}}}
\newcommand{\M}{\mathrm{M}}
\renewcommand{\L}{\mathcal{L}}
\newcommand{\N}{\mathrm{N}}
\renewcommand{\O}{\mathscr{O}}
\newcommand{\R}{\mathcal{R}}
\renewcommand{\ss}{\bs{s}}
\newcommand{\Sol}{\mathrm{Sol}}
\renewcommand{\S}{\Gamma}
\newcommand{\sm}[1]{\begin{smallmatrix}#1\end{smallmatrix}}
\newcommand{\simto}{\xrightarrow{\sim}}
\newcommand{\X}{X}
\newcommand{\Fs}{\ensuremath{\mathscr{F}}}
\def\swappedhead#1#2#3{%
  \thmname{#1}\;%
  \thmnumber{\@upn{\the\thm@headfont#2\@ifnotempty{#1}}}%
  \thmnote{\,{\the\thm@notefont(#3)}}{.~}}
\newtheoremstyle{dotless-thm}
  {10pt}
  {10pt}
  {\itshape}
  {}
  {\bfseries}
  {}
  {.0em}
  {}
\theoremstyle{dotless-thm}
\newtheorem{theorem}{\textbf{Theorem}}[subsection]
\newtheorem{thm-intro}{\textbf{\textsc{Theorem}}}
\newtheorem{Def-intro}[thm-intro]{\textbf{\textsc{Definition}}}
\newtheorem{rk-intro}[thm-intro]{\textbf{\textsc{Remark}}}
\newtheorem{cor-intro}[thm-intro]{\textbf{\textsc{Corollary}}}
\newtheorem{Crit-intro}[thm-intro]{\textbf{\textsc{Criterion}}}
\newtheorem{proposition}[theorem]{\textbf{Proposition}}
\newtheorem{lemma}[theorem]{\textbf{Lemma}}
\newtheorem{corollary}[theorem]{\textbf{Corollary}}
\newtheorem{definition}[theorem]{\textbf{Definition}}
\newtheorem{remark}[theorem]{\textbf{Remark}}
\newtheorem{example}[theorem]{\textbf{Example}}
\newtheorem{hypothesis}[theorem]{\textbf{Hypothesis}}
\newtheorem{notation}[theorem]{\textbf{Notation}}
\numberwithin{equation}{section}
\title[Convergence Newton 
polygon I : the affine line]{
The convergence Newton polygon of a $p$-adic 
differential equation I : Affinoid domains of the Berkovich affine 
line}
\author{Andrea Pulita}
\email{pulita@math.univ-montp2.fr}
\address{Département de Mathématiques,
Université de Montpellier II, CC051,
Place Eugène Bataillon,
F-34 095, Montpellier CEDEX 5.}
\date{\today}
\subjclass{Primary 12h25; Secondary 14G22}
\keywords{$p$-adic differential equations, Berkovich spaces, Radius of convergence, Newton polygon, spectral radius, controlling graph, finiteness}
\begin{abstract}
\if{
We prove the finiteness of the convergence Newton polygon of a 
$p$-adic differential equation $\Fs$ over an affinoid domain $X$ 
of the Berkovich affine line. Roughly speaking the slopes of the polygon
are the logarithms of the radii of convergence of the solutions of $\Fs$, 
in increasing order, counted with multiplicity. 
We prove that the radii of convergence of the solutions 
are continuous functions on $X$ that factorize through the retraction of 
$X\to\Gamma$ of $X$ onto a finite graph $\Gamma\subseteq X$. 
This finiteness result means that the behavior of the radii as 
functions on $X$ is controlled by a \emph{finite} family of data.
}\fi
We prove that the radii of convergence of the solutions of a 
$p$-adic differential equation $\Fs$ over an affinoid domain $X$ of the Berkovich affine line
are continuous functions on $X$ that factorize through the retraction of 
$X\to\Gamma$ of $X$ onto a finite graph $\Gamma\subseteq X$. 
We also prove their super-harmonicity properties.
This finiteness result means that the behavior of the radii as 
functions on $X$ is controlled by a \emph{finite} family of data.
\end{abstract}
\begin{document}
\maketitle

\if{
\begin{center}
Version of \today
\end{center}

\makeatletter
\renewcommand\tableofcontents{%
    \subsection*{\contentsname}%
    \@starttoc{toc}%
    }
\makeatother

\begin{small}
\setcounter{tocdepth}{2} \tableofcontents
\end{small}
}\fi

\setcounter{section}{0}

\section*{Introduction}
\addcontentsline{toc}{section}{\textsc{Introduction}}

The fundamental work of Christol and Mebkhout 
\cite{Ch-Me-I}, \cite{Ch-Me-II}, \cite{Ch-Me-III}, \cite{Ch-Me-IV}, 
together with \cite{An}, \cite{Me}, \cite{Ked},  
achieved a program (firstly initiated by P.Robba and B.Dwork 
\cite{DW-II}, \cite{RoI}, \cite{Dw-Robba}, \cite{Robba-Hensel},  
\cite{Ch-Dw}, \ldots) concerning differential equations  
``\emph{coming from rigid cohomology}''. 
These differential equations have maximal radius of 
convergence at the ``\emph{generic point}'', and have 
over-convergent coefficients. 

This paper, and its sequel \cite{NP-II}, deal with a more general 
program concerning locally free 
$\O_X$-modules with connection, over a rig-smooth 
$K$-analytic Berkovich curve $X$, with no conditions on the size of 
the radii of convergence. 

In this context there is a lack of results of global nature in the 
sense of Berkovich. Even the case of an open disk is not well 
understood. 
The existing results on curves mainly concern 
differential modules over a field of 
power series at a rational point, 
or over the so called Robba ring. 
From the point of view of Berkovich curves this means a germ 
of segment out of a point of type $1$, $2$, or $3$.  

The most basic, but central tool of the theory is the so called 
\emph{convergence Newton polygon} of a $p$-adic differential 
equation.\footnote{We consider $p$-adic 
differential equation in  the large sense of the word. 
This also covers the case of any ultrametric 
complete valued field $K$. 
In particular we treat the case of differential modules over 
the field of formal power series $K(\!(T)\!)$, 
where $K$ is trivially valued. 
The formal Newton polygon (in the sense of B.Malgrange 
and J.P.Ramis) is in fact the derivative of the convergence Newton 
polygon (cf. Remark \ref{Rk : Formal case}).}
Roughly speaking the slopes of that polygon at $x\in X$ 
are the logarithms of the radii of convergence of the 
Taylor solutions at $x$, in increasing order, counted with multiplicity 
(cf. Definition \ref{Def: intro NP} below).
The continuity of the convergence Newton polygon, 
as a function on $X$, appears in this program as 
the fundamental step, and the major tool in the classification of the 
equations, as illustrated (in the solvable case) by the 
work of G.Christol and Z.Mebkhout. 

Moreover the convergence Newton 
polygon carries important numerical invariants 
of the equation, in analogy with the Swan conductor, that are 
\emph{highly related to the residual wild ramification} in the spirit of 
\cite{Ma}, \cite{Tsu-Swan}, \cite{Crew-Can-ext}, 
\cite{An}, \cite{Adri-Swan}, \cite{Rk1-non-perf}.


In the more global setting of Berkovich curves 
there is an additional geometrical datum 
furnished by the convergence Newton polygon:  
a graph 
$\Gamma\subseteq X$, called \emph{controlling graph} of the 
differential equation. 
Roughly speaking  $\Gamma$ is a locally finite graph, such that
$X-\Gamma$ is a disjoint union of virtual open disks on which the
polygon is constant. 
So, by continuity, the behavior of the polygon as a function on $X$ 
is determined by its restriction to $\Gamma$. 

As an example, if $f:Y\to X$ is an étale morphism 
between rig-smooth $K$-analytic Berkovich curves,
the controlling graph $\Gamma$ of $f_*(\O_Y)$, and more precisely 
the derivative of the polygon as a function on $\Gamma$, 
are all invariants of the morphism, highly related to its residual 
wild ramification.\\

The main goals of this paper, and of its sequel \cite{NP-II} 
are the following:
\begin{enumerate}
\item An unconditional definition of the convergence Newton 
polygon, based on \cite{Balda-Inventiones},  not involving 
formal models, and resulting completely \emph{within the 
framework of Berkovich analytic spaces};
\item \emph{The continuity} of each slope of the polygon, as 
a function on $X$;
\item \emph{The local finiteness} of the graph $\Gamma$.
\end{enumerate}
In this paper, we focus on the case of affinoid domains of the affine 
line. A great part of the literature about $p$-adic differential equations 
is devoted to the affine line. So this case has its own interest, and it is 
important to treat it explicitly and completely. Point i) is not really 
relevant in this setting, but we prove that points ii) and iii) hold, by 
using techniques from $p$-adic differential equations.

In \cite{NP-II}, we extend those results to arbitrary smooth curves, by using techniques from Berkovich geometry to reduce to the case treated in this paper.

\if{The first point will be achieved in \cite{NP-II}. 
We also prove in \cite{NP-II} that ii) and iii) reduce 
to the case of an affinoid domain of the affine line, 
the case treated in this paper. 
Great part of the literature about $p$-adic 
differential equations is devoted to the affine line. 
So this case has its own interest, and it is important to treat it explicitly 
and completely. }\fi

We prove that 
the controlling graph $\Gamma$ 
of a differential equation is always a \emph{locally finite graph}
without particular assumptions 
(no solvability, no exponents, no Frobenius, \ldots). 
This implies that the entire convergence Newton 
polygon is determined, as a function on $X$, 
by a \emph{locally finite family of numbers} 
(finite in the case of this paper).

The continuity of the polygon (which is a consequence of the local 
finiteness of $\Gamma$) is the major ingredient 
for decomposition theorems of  global nature \cite{NP-III}. 
The finiteness of $\Gamma$, also represents the fundamental point 
permitting a computation of the de Rham cohomology of the equation. 
In particular the global finiteness of $\Gamma$ 
is the crucial property that gives the 
finite dimensionality of the de Rham cohomology of the differential 
equation \cite{NP-IV}. 
These results was unknown even in the elementary case 
of a non solvable differential equation over a disk or an annulus. 

Essential ingredients are the work of 
K.S.Kedlaya about subsidiary radii \cite{Kedlaya-Book}, 
and that of F.Baldassarri and L.Di Vizio about 
the generic radius of convergence \cite{DV-Balda}, 
\cite{Balda-Inventiones}, where the finiteness of $\Gamma$ 
was originally conjectured. 
The work of Kedlaya is a determinant refinement of 
classical ideas (together with the introduction of the 
crucial notion of super-harmonicity), while
Baldassarri's work is a change in perspective 
which opened up a whole new line of investigation.
Namely, in several recent 
talks, Baldassarri conjectured that the radii should factorize through 
some unspecified \emph{finite} graph that he baptized 
\emph{controlling graph}. 
He also established a link between the graph and the cohomology, 
and suggested some partial idea of proofs. 
The conjecture was supported by effective computations obtained by 
Christol for rank one equations \cite{Ch-Formula} 
(cf. final notes \ref{Notes final}).\\

\if{This paper is based on that of 
K.S.Kedlaya \if{about subsidiary radii }\fi\cite{Kedlaya-Book}, 
and that of F.Baldassarri and L.Di Vizio \if{about 
the generic radius of convergence}\fi \cite{DV-Balda}, 
\cite{Balda-Inventiones}. 
The work of Kedlaya is a determinant refinement of 
classical ideas (together with the introduction of the 
crucial notion of super-harmonicity), while
Baldassarri's work \cite{Balda-Inventiones} is 
a change in perspective which opened up a whole new line of 
investigation.}\fi

We now enter more specifically in the contents of this paper.
In the introduction we assume by simplicity that the base field 
$K$ is algebraically closed. 
Let $X$ be an affinoid domain of the affine line, and let $x\in X$ be a 
Berkovich point. In order to define Taylor solutions ``at $x$'', we need to consider 
a field extension $\Omega/K$ where $x$ become rational.
Let $t\in X_\Omega$ be any rational point lifting $x$.
The fiber $\pi^{-1}(x)$ 
of the projection $\pi_{\Omega/K}:X_{\Omega}\to X$ has a nice structure: 
it has a peaked point $\sigma_{\Omega/K}(x)$ (cf. \cite[p.98]{Ber}) with the 
property that $\pi^{-1}_{\Omega/K}(x)-\{\sigma_{\Omega/K}(x)\}$ is a disjoint union of 
open disks, all having $\sigma_{\Omega/K}(x)$ as a relative boundary in 
$X_\Omega$. 

We call these disks \emph{Dwork generic disks}. Up to further extends 
the ground field $K$, they are all isomorphic, and independent on $X$. 
We call $D(x)$ the one of them containing $t$. 

We now introduce 
the \emph{maximal disk $D(x,X)$}. This is the largest open disk in $X_\Omega$ 
containing $t$. 

The topological structure of $X$ is the following. The 
set of points without neighborhoods isomorphic to an open disk form a 
finite graph 
$\Gamma_X\subseteq X$ (called the analytic skeleton of $X$) 
such that $X-\Gamma_X$ is a disjoint union of open 
disks. All these disks are the maximal disks of their points. 
While the maximal disk of a point in $\Gamma_X$ is by definition its Dwork generic 
disk $D(x)$.

Now fix a coordinate $T:X\to\mathbb{A}^{1,\mathrm{an}}_K$, 
and call $r(x)$ and $\rho_{x,X}$ the radii of the generic disk $D(x)$, 
and of the maximal disk $D(x,X)$, respectively. 
Now let $\Fs$ be a locally free $\O_X$-module of finite rank $r$, 
endowed with a connection $\nabla:\Fs\to\Fs\otimes\Omega_X^1$. 
\begin{Def-intro}\label{Def: intro NP}
Denote by $\R^{\Fs}_i(x)\leq \rho_{x,X}$ 
the radius of the largest open disk centered at $t_x$, contained in $D(x,X)$, 
on which $\Fs$ has at least $r-i+1$ linearly independent solutions.
We define the $i$-th radius of convergence of $\Fs$ at $x$ as
\begin{equation}\label{eq: normalized radius}
\R_{i}(x,\Fs)\;:=\;\R^{\Fs}_i(x)/\rho_{x,X}\;\leq\; 1\;.
\end{equation}
The convergence Newton polygon of $\Fs$ is the polygon with slopes  
$\ln(\R_1(x,\Fs))\leq\cdots\leq \ln(\R_r(x,\Fs))$.

We call $i$-th \emph{spectral radius} of $\Fs$ at $x$ the number 
$\R_i^{\mathrm{sp}}(x,\Fs)=\min(\R_{i}(x,\Fs),r(x)/\rho_{x,X})$.  

We say that the index $i$ is \emph{spectral, solvable, or over-solvable} 
at $x$ if $\R_{i}(x,\Fs)\leq r(x)/\rho_{x,X}$, 
$\R_{i}(x,\Fs)= r(x)/\rho_{x,X}$, or 
$\R_{i}(x,\Fs)> r(x)/\rho_{x,X}$ respectively.
\end{Def-intro}
The function $x\mapsto \min(\R^{\Fs}_1(x),r(x))$ is the ancient 
definition of spectral radius of 
\cite{Ch-Dw}, \cite{RoI}, \cite{RoIV}, \cite{Pons}, \cite{Ch-Me-III}, 
\ldots 
while $\R^{\Fs}_1(x)$ is the radius of convergence of 
\cite{DV-Balda}. The normalized definition 
\eqref{eq: normalized radius} is that of \cite{Balda-Inventiones}, 
and it has the merit of being independent on the coordinate. 
For $i\geq 2$ the definition is due  
K.S.Kedlaya \cite{Kedlaya-Book} (following Young 
\cite{Young}). 

Spectral radii are related to the spectral norm of the connection, their 
nature is hence quite algebraic. 
They are not continuous (cf. \eqref{S(FM)=S(FY)}).
The novelty of 
\cite{DV-Balda} and \cite{Balda-Inventiones} 
consists in allowing over-solvable radii, and hence working with a more 
geometric notion. 

The continuity results of \cite{DV-Balda} and \cite{Balda-Inventiones} 
is proved using 
the same ingredients of the original proof of \cite{Ch-Dw}: 
it is obtained as a consequence of a certain Dwork-Robba theorem 
\cite{Dwork-Robba-Growth}, 
that gives a bound on the growth of the coefficients of the Taylor 
solution matrix. Unfortunately the Dwork-Robba's bound is not helpful  
in the understanding of the $i$-th radii for $i\geq 2$, because 
it doesn't applies to an individual solution, 
but only to the entire solution matrix.

\begin{Def-intro}[cf. Section \ref{Skeleton of a function on M(X).}]
\label{Def: intro skeleton}
Let $\mathcal{T}$ be a set, and let 
$F:X\to \mathcal{T}$ be a function. 
We define the \emph{controlling graph} (also called \emph{constancy 
skeleton}) of $F$ as the set 
$\Gamma(F)$ formed by the 
points $x\in X$ without neighborhoods in $X$, 
isomorphic to a open disk on which $F$ is constant. 

We say that $F$ is a finite function if $\Gamma(F)$ is a
 finite graph  (i.e. it is a finite union of intervals).
\end{Def-intro}

The set $\Gamma(F)$ is always a graph containing 
the skeleton $\Gamma_X$ of $X$. Moreover $X-\Gamma(F)$ is a 
disjoint union of  open disks. 
In particular there exists a canonical retraction 
$X\to\Gamma(F)$, which is continuous as soon as $\Gamma(F)$ is a
finite graph. As an example one easily proves that 
for all $i=1,\ldots,\mathrm{rank}(\Fs)$ one has 
$\Gamma(\R_i^{\mathrm{sp}}(-,\Fs))=X$.

The following theorem is our main result:
\begin{thm-intro}[cf. Thm. 
\ref{Theorem : MAIN THM GEN}]
\label{intro-Thm.7}
For all $i=1,\ldots,r$ the graph
$\Gamma(\R_i(-,\Fs))$ is finite, and the function  
$\R_i(-,\Fs):X\to ]0,1]$ factorizes through 
the retraction $X\to\Gamma(\R_i(-,\Fs))$.
As a consequence $\R_i(-,\Fs)$ is a continuous function.
\end{thm-intro}
The statement of Theorem  
\ref{Theorem : MAIN THM GEN} is more complex and complete. 
It assembles the main properties verified by the radii. It is structured in 
analogy with \cite[Thm. 11.3.2]{Kedlaya-Book} where the same 
properties are stated for spectral radii. 
Roughly speaking we establish the following properties:
\begin{enumerate}
\item Finiteness of each $\R_i(-,\Fs)$, and of each partial height 
$H_i(-,\Fs):=\prod_{j=1}^i\R_j(-,\Fs)$;
\item Integrality of the slopes of each $\R_i(-,\Fs)$ along the segments 
of $X$;
\item Concavity locus of each $\R_i(-,\Fs)$;
\item Super-harmonicity of the partial heights $H_i(-,\Fs)$ 
outside a (locally) finite subset $\mathscr{C}_i$;
\item Description of $\mathscr{C}_i$.
\end{enumerate}

 We now give some ideas about the proof. 
Our approach is different in nature from that of \cite{DV-Balda} and 
\cite{Balda-Inventiones}. 
It basically consists in applying Frobenius push-forward 
to make the spectral radii small, and then read them on the coefficient 
 of the operator in a cyclic basis by the theorem of Young \cite{Young}. 
This is a well known method (at least) since 
\cite{Ch-Dw}. The problem consists, in fact, in making this process 
global in the sense of Berkovich, and in particular in managing 
solvable or over-solvable radii for which the 
reduction of the radii 
by Frobenius push-forward fails.

The proof is based on a criterion 
(cf Section \ref{F_t defi}) providing the finiteness of a real valued 
function $F:X\to\mathbb{R}_{>0}$ 
satisfying six technical properties. One of them 
is the super-harmonicity outside a finite set $\mathscr{C}$, which is 
the crucial assumption of the criterion.

Now the proof of Theorem \ref{intro-Thm.7} consists in verifying 
the six assumptions of the criterion for $F=H_i(-,\Fs)$. 
This is done by induction on $i$. 
Now, we are able to prove that the potential failure of the super-harmonicity of $H_i(-,\Fs)$
can only happens at the end points of some  
$\Gamma(\R_k(-,\Fs))$, with $k\leq i-1$. 
This ensures, by induction, 
that the locus $\mathscr{C}_i$ of non-super-harmonicity is a finite set.
 
The potential failure of the super-harmonicity for $i\geq 2$ is actually 
the major theoretical difference with respect to the case $i=1$ (indeed 
the first radius $\R^{\Fs}_1$ is super-harmonic outside the Shilow 
boundary). This is one of the deeper difficulties of the paper. 

The main points permitting to deal with this are the super-harmonicity 
in the spectral non solvable case 
(cf. Proposition \ref{Prop : SH-well done spectral NS}, generalizing 
\cite[11.3.2,(c)]{Kedlaya-Book}),
a description of the nature of the graphs around solvable points 
(cf. Lemma \ref{Lemma : KEY . .!}),
and a concavity property of the radii generalizing 
the Transfer principle for the first radius 
(cf. Proposition \ref{Prop. : (C3)}).

\begin{rk-intro}
Recently similar results have been announced by F.Baldassarri and 
K.S.Kedlaya \cite{Kedlaya-draft} (cf. Final notes \ref{Notes final}). 

Independently Jérôme Poineau and Amaury Thuillier 
pointed out that, if a rig-smooth $K$-analytic curve $X$ has no 
boundary, then the continuity of $\R_{1}(-,\Fs)$ on $X$ is a 
consequence of the super-harmonicity. This now a theorem 
\cite{Potentiel}.
\end{rk-intro}

\if{
\noindent\textbf{\textsc{Structure of the paper.}}
In sections \ref{Notation} we provide notations, and is section 
\ref{Skeleton of a function on M(X).} 
we introduce the skeleton $\Gamma(F)$ together with the 
finiteness criterion 
(cf. Thm. \ref{th : finiteness theorem}). 
In Section \ref{Radii of convergence and statement of main result} 
we define the radii of convergence and state our main 
result (cf. Thm. \ref{Theorem : MAIN THM GEN}). 
In Section \ref{Spectral polygons and related results} 
we introduce the spectral polygons of a differential operator and of a 
module, with related results.
In section \ref{Pull-back and push-forward by Frobenius} 
we adapt to our context some results concerning Frobenius. 
Finally in section \ref{Proof of the MTH} we give the proof of 
Thm. \ref{Theorem : MAIN THM GEN}. 
}\fi

\subsubsection*{Acknowledgements.} 
Some of the ideas of this paper find their genesis in 
some discussions we had with Lucia Di Vizio, we thank her deeply.
We are particularly grateful to Jérôme Poineau for multiple hints about 
the redaction. 
We also thank Yves André, Francesco Baldassarri, 
Kiran S. Kedlaya, Adriano Marmora, Amaury Thuillier 
for helpful discussions.

\section{Notations}\label{Notation}
All rings are commutative with unit element. $\mathbb{R}$ is the field 
of real numbers, and 
$\mathbb{R}_{\geq 0}:=\{r\in\mathbb{R}\;|\; r\geq 0\}$.
For all field $L$ we denote its algebraic closure by 
$L^{\mathrm{alg}}$,
by $L[T]$ the ring of polynomial with coefficients in $L$, 
and  by $L(T)$ the fraction field of $L[T]$. If $L$ is valued, $\widehat{L}$ will be its completion. 

In all the paper  $(K,|.|)$ will be a complete field of characteristic $0$ 
with respect to an ultrametric absolute value 
$|.|:K\to\mathbb{R}_{\geq 0}$ 
i.e. verifying $|1|=1$, $|a\cdot b|=|a||b|$, and 
$|a+b|\leq\max(|a|,|b|)$ for all $a,b\in K$, and $|a|=0$ if and only if 
$a=0$. 
We denote by $|K|:=\{r\in\mathbb{R}_{\geq 0} 
\textrm{ such that }r=|t|,\;\exists t\in K\}$.

The semi-norm of a matrix will always mean the maximum of the 
semi-norms of its entries.

Let $E(K)$ be the category of isometric ring morphisms 
$(K,|.|)\to(\Omega,|.|)$. 
A morphism $\Omega\to\Omega'$ in $E(K)$ is an isometric ring 
morphism inducing the identity on $K$.
For all 
$\Omega,\Omega'\in E(K)$ there exists $\Omega''\in E(K)$ together 
with two morphisms $\Omega\subseteq\Omega''$ and 
$\Omega'\subseteq\Omega''$ of $E(K)$. 

We refer to \cite{Ber} for the definition of Berkovich spaces. For any 
point $x$ we denote by $\H(x)$ the residual field of $x$. 
By convention an open disk $D$ always has finite radius. 
Similarly an open annulus 
$C=\{x\in\mathbb{A}^{1,\mathrm{an}}_K\;|\;
R_1<|T-c|(x)<R_2\}$ always satisfies $0<R_1\leq R_2<+\infty$. 
We recall that if $\Omega\in E(X)$, and if
$D\subset \mathbb{A}^{1,\mathrm{an}}_\Omega$ 
is an open disk of radius $R$ centered at $t\in\Omega$ we have
\begin{equation}
\O(D)\;:=\;\{\sum_{n\geq 0}a_n(T-t)^n\;|\;a_n\in\Omega,\;\textrm{ for all }\rho<R,\;\lim_{n}|a_n|\rho^n=0\}\;.
\end{equation}

A virtual disk (resp. annulus) is a non-empty 
connected analytic domain of 
$\mathbb{A}^{1,\mathrm{an}}_K$ which becomes isomorphic to a 
union of disks (resp. annuli whose orientation is preserved by 
$\mathrm{Gal}(\Ka/K)$) over $\Ka$ (cf.\cite[3.6.32 and 3.6.35]{Duc})

If $K$ is algebraically closed, an affinoid domain $X$ 
of the Berkovich affine line $\mathbb{A}^{1,\mathrm{an}}_K$
(cf. \cite[2.2]{Ber}) is a disjoint union of connected affinoid domains of the form
\begin{equation}\label{eq : affinoid}
X\;=\;D^+(c_0,R_0)-\cup_{i=1}^nD^-(c_i,R_i)\;,
\end{equation}
where $D^+(c_0,R_0)$ (resp. $D^-(c_i,R_i)$) denotes the 
closed (resp. open) disk centered at $c_0$ (resp. $c_i$) with radius 
$R_0$ (resp. $R_i$), $c_0,\ldots,c_n\in K$ satisfy $|c_i-c_0|\leq R_0$ 
for all $i$, and $0<R_1,\ldots,R_n\leq R_0$. In order to avoid overlaps 
we implicitly assume that for $0<i<j\leq n$ we have 
$D^-(c_i,R_i)\cap D^-(c_j,R_j)=\emptyset$.

If $K$ is general, an affinoid domain $X$ of 
$\mathbb{A}^{1,\mathrm{an}}_K$ is the quotient of 
$X_{\Ka}$ by $\mathrm{Gal}(\Ka/K)$ (cf. \cite[2.2.2]{Ber}). Without 
loss of generality we will always assume that $X$ is connected. 
So the holes of $X_{\Ka}$ form an orbit 
under $\mathrm{Gal}(\Ka/K)$,\footnote{Including the holes of 
$X_{\Ka}$ at $+\infty$, i.e. the complements in 
$\mathbb{P}^{1,\mathrm{an}}_{\Ka}$ of the disk 
$D^+(c_0,R_0)$.} 
which acts isometrically.
Hence we can speak about the holes of $X$, and of their radii 
$R_1,\ldots,R_n$. And also about the larger virtual disk containing $X$ 
whose radius is $R_0$. 
Such notations are fixed from now on.
 
We denote by $\O(X)$ the $K$-algebra of the
global sections of $X$, and by $\partial X$ its Shilov boundary.

\subsubsection{}\label{subsubs : t_x}
For all $c\in K$, and all  $\rho\geq 0$, we denote by 
$x_{c,\rho}\in\mathbb{A}^{1,\mathrm{an}}_K$  the semi-norm 
defined by
\begin{equation}
x_{c,\rho}(f)\;:=\;\sup_{n\geq 0}|f^{(n)}(c)/n!|_K\cdot \rho^n\;,\qquad 
f\in K[T]\;,
\end{equation}
where $f^{(n)}$ is the $n$-th derivative of $f$ with respect to a 
coordinate $T$ of $X$. This definition actually depends on $T$. 

For all $\Omega\in E(K)$ we have a map
\begin{equation}
i_\Omega\;:\;X(\Omega)\to X
\end{equation}
associating to $t\in X(\Omega)$ the image $\pi_{\Omega/K}(t)$ 
of $t\in X_\Omega$ by the projection 
$\pi_{\Omega/K}:X_\Omega\to X$, where 
$X_\Omega:=X\widehat{\otimes}_K\Omega$.
If $x=i_\Omega(t)$ we say that $t\in X(\Omega)$ is a 
\emph{Dwork generic point} for $x$. Each point $x\in X$ admits a 
canonical Dwork generic point $t_x\in X_{\H(x)}$. 
Indeed, by the canonical property of the cartesian diagram 
$X_{\H(x)}/\H(x)\to X/K$, the point $x:\mathscr{M}(\H(x))\to X$ lifts 
into uniquely into a rational point 
$t_x:\mathscr{M}(\H(x))\to X_{\H(x)}$. 
However for a given field $\Omega\in E(K)$ we can have several embeddings 
$\H(x)\to\Omega$, hence there are no canonical lifting of $x$ in $X_\Omega$.

\subsubsection{}\label{Section 1.0.4}
More generally for all $x\in \mathbb{A}^{1,\mathrm{an}}_K$ 
we denote by $\lambda_x(0):=x$ and, for all $\rho>0$, we set 
\begin{equation}
\lambda_x(\rho)(f)\;:=\;\sup_{n\geq 0}\;x(f^{(n)}/n!)\cdot \rho^n\;,
\qquad f\in K[T]\;.
\end{equation}
One sees that $\lambda_x(\rho)\in X$ if an only if $x$ lies in 
the maximal virtual disk containing $X$ and $\rho\in I_x$, 
where $I_x$ is either equal to $[0,R_0]$ if $x\in X$, or 
$I_x=[R_i,R_0]$ if $x$ lies in a hole of $X$ with radius $R_i$. 

It follows from the definition that 
if $t\in X(\Omega)$ is a Dwork generic point for 
$x$, then $\lambda_x(\rho)=\pi_{\Omega/K}(x_{t,\rho})$. 
In particular the path $\rho\mapsto\lambda_{x}(\rho):I_x\to X$ is 
continuous. 

We call \emph{generic radius} of $x$ the number
\begin{equation}\label{eq : r_K}
r_K(x)\;:=\;\max\{\rho\in[0,R_0]
\textrm{ such that }
\lambda_x(\rho)=\lambda_x(0)\}\;.
\end{equation}
We write $r(x):=r_K(x)$ if no confusion is possible.
\begin{lemma}\label{rho gen as inf Kbar}
Let $x\in \mathbb{A}^{1,\mathrm{an}}_K$, 
and let $t\in\X(\Omega)$ be a Dwork generic point for 
$x$. Assume that $K^{\mathrm{alg}}\subset\Omega$. 
Then $r_K(x)$ equals the distance of $t$ from 
$K^{\mathrm{alg}}$ i.e.  
\begin{equation}
r_K(x) \;=\; 
\inf_{c \in K^{\mathrm{alg}}} |t-c|_\Omega\;.
\end{equation}
\end{lemma}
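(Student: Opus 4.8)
The plan is to unwind the definitions of $r_K(x)$ and of the point $x_{t,\rho}$, and then use the fact that on $K^{\mathrm{alg}}$ the Gauss-type seminorm governing $\lambda_x$ can be computed by a simple supremum over linear factors. First I would recall that, since $t\in X(\Omega)$ is a Dwork generic point for $x$, we have by \ref{Section 1.0.4} the identity $\lambda_x(\rho)=\pi_{\Omega/K}(x_{t,\rho})$, so $\lambda_x(\rho)=\lambda_x(0)=x$ holds precisely when $x_{t,\rho}=x_{t,0}$ as seminorms on $K[T]$ (pushed forward from $\Omega[T]$ it suffices to test on $K[T]$, but in fact $x_{t,\rho}$ and $x_{t,0}$ are seminorms on $\Omega[T]$ and equality there is what we must track). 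Thus $r_K(x)$ is the largest $\rho\in[0,R_0]$ such that $x_{t,\rho}(f)=x_{t,0}(f)=|f(t)|_\Omega$ for every $f\in K[T]$.

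Next I would compute $x_{t,\rho}(f)$ explicitly. For $f\in\Omega[T]$ the Taylor expansion at $t$ gives $x_{t,\rho}(f)=\max_{n\ge 0}|f^{(n)}(t)/n!|_\Omega\,\rho^n$, which is the standard formula for the seminorm of the ``disk of radius $\rho$ centered at $t$''. Factoring $f$ over $\Omega^{\mathrm{alg}}$ (or just over a field containing all its roots) as $f=a\prod_j(T-\alpha_j)$, one has the well-known multiplicativity $x_{t,\rho}(f)=|a|\prod_j\max(|t-\alpha_j|_\Omega,\rho)$. Hence $x_{t,\rho}(f)=|f(t)|_\Omega=|a|\prod_j|t-\alpha_j|_\Omega$ for all such $f$ if and only if $\max(|t-\alpha_j|_\Omega,\rho)=|t-\alpha_j|_\Omega$, i.e. $\rho\le|t-\alpha_j|_\Omega$, for every root $\alpha_j$ that can occur. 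The roots of polynomials in $K[T]$ are exactly the elements of $K^{\mathrm{alg}}$, and since we assumed $K^{\mathrm{alg}}\subset\Omega$ these distances $|t-\alpha_j|_\Omega$ make sense. Taking the supremum of admissible $\rho$ then gives $r_K(x)=\inf_{c\in K^{\mathrm{alg}}}|t-c|_\Omega$, provided this infimum is $\le R_0$, which holds because $x$ lies in the maximal virtual disk of radius $R_0$ containing $X$ (so some root realizing a distance $\le R_0$ exists, or the statement is read with the convention coming from $I_x$).

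The one genuinely delicate point is the multiplicativity formula $x_{t,\rho}(fg)=x_{t,\rho}(f)\,x_{t,\rho}(g)$ and the factorization computation $x_{t,\rho}(T-\alpha)=\max(|t-\alpha|_\Omega,\rho)$: strictly this is the statement that $x_{t,\rho}$ is the Gauss point of the disk $D^+(t,\rho)$, hence multiplicative, which is classical (e.g. \cite[\S2]{Ber}) but should be cited rather than reproven. Care is also needed because $f$ ranges over $K[T]$ while its roots live in $K^{\mathrm{alg}}\subset\Omega$: one must observe that every $c\in K^{\mathrm{alg}}$ is a root of some monic $f\in K[T]$ (its minimal polynomial), and that for such an $f$ all conjugate roots $c'$ satisfy $|t-c'|_\Omega\ge\min_{c''}|t-c''|_\Omega$, so testing against minimal polynomials already forces $\rho\le|t-c|_\Omega$ for the nearest conjugate, and letting $c$ run over all of $K^{\mathrm{alg}}$ recovers the full infimum. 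Conversely, if $\rho\le|t-c|_\Omega$ for every $c\in K^{\mathrm{alg}}$, then by the factorization formula $x_{t,\rho}(f)=|f(t)|_\Omega$ for every $f\in K[T]$, so $\lambda_x(\rho)=x$. Assembling the two inclusions yields the claimed equality.
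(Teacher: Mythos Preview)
Your proof is correct and follows essentially the same approach as the paper. Both arguments rest on the fact that $x_{t,\rho}(f)=|f(t)|_\Omega$ for $f\in K[T]$ precisely when the disk $D^-(t,\rho)$ avoids all roots of $f$; the paper phrases this as ``the norm of a polynomial is constant on each disk without zeros,'' while you unpack it via the explicit factorization formula $x_{t,\rho}(f)=|a|\prod_j\max(|t-\alpha_j|_\Omega,\rho)$, but the content is the same.
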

\begin{proof}
Let $d_t:=\inf_{c \in K^{\mathrm{alg}}} |t-c|_\Omega$. 
The norm of a polynomial $f\in K[T]$ 
is constant on each disk without zeros of $f$,
then $|f(y)|=|f(t)|$ for all $y\in D^-(t,d_t)\subset \mathbb{A}^{1,\mathrm{an}}_\Omega$.
Hence $\lambda_\xi(d_t)=\lambda_\xi(0)$ and $d_t\leq r(\xi)$.
To show $r(\xi)\leq d_t$ observe that the norm of a polynomial $f$ 
is not constant on a disk containing a zero of $f$. So
$D^-(t,r(\xi))$ has no $K^{\mathrm{alg}}$-rational points.
\end{proof}
\begin{corollary}
The canonical path $\lambda_\xi$ is constant on $[0,r(\xi)]$, and it 
induces an homeomorphism 
of $[r(\xi),R_0]$ with its image in $X$. \hfill$\Box$
\end{corollary}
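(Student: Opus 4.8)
The plan is to prove the three assertions in turn: constancy of $\lambda_x$ on $[0,r(x)]$, injectivity of $\lambda_x$ on $[r(x),R_0]$, and then the homeomorphism statement, which will follow formally.

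For the constancy, the key observation is a termwise monotonicity: for fixed $f\in K[T]$ the quantity $\lambda_x(\rho)(f)=\sup_{n\geq 0}x(f^{(n)}/n!)\,\rho^n$ is a (finite) supremum of nondecreasing functions of $\rho\geq 0$, hence nondecreasing, and its $n=0$ term is $x(f)=\lambda_x(0)(f)$, so $\lambda_x(\rho)(f)\geq x(f)$ for every $\rho$. By the very definition \eqref{eq : r_K} of $r(x)$ (a maximum) one has $\lambda_x(r(x))=\lambda_x(0)=x$; therefore for $0\leq\rho\leq r(x)$ and any $f\in K[T]$ we get $x(f)=\lambda_x(0)(f)\leq\lambda_x(\rho)(f)\leq\lambda_x(r(x))(f)=x(f)$, which forces $\lambda_x(\rho)=x$. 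Thus $\lambda_x$ is constant on $[0,r(x)]$.

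For injectivity on $[r(x),R_0]$, suppose $r(x)\leq\rho_1<\rho_2\leq R_0$ with $\lambda_x(\rho_1)=\lambda_x(\rho_2)$, and fix a Dwork generic point $t\in X(\Omega)$ with $K^{\mathrm{alg}}\subseteq\Omega$, so that $\lambda_x(\rho)=\pi_{\Omega/K}(x_{t,\rho})$ and hence $\lambda_x(\rho)(f)=x_{t,\rho}(f)$ for $f\in K[T]$. If $\rho_1=r(x)$, then by the constancy just proved $\lambda_x(\rho_2)=\lambda_x(r(x))=\lambda_x(0)$ with $\rho_2>r(x)$, contradicting the maximality of $r(x)$ in \eqref{eq : r_K}. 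If $\rho_1>r(x)$, then by Lemma \ref{rho gen as inf Kbar} we have $r(x)=\inf_{c\in K^{\mathrm{alg}}}|t-c|_\Omega<\rho_1$, so we may pick $c\in K^{\mathrm{alg}}$ with $|t-c|_\Omega\leq\rho_1$; take $f\in K[T]$ the minimal polynomial of $c$ over $K$ and split $f=\prod_j(T-c_j)$ over $K^{\mathrm{alg}}\subseteq\Omega$. Multiplicativity of the disk seminorm $x_{t,\rho}$ gives $x_{t,\rho}(f)=\prod_j\max(|t-c_j|_\Omega,\rho)$. On the interval $[\rho_1,\rho_2]$ the factor corresponding to $c_j=c$ equals $\rho$ (since $\rho\geq\rho_1\geq|t-c|_\Omega$), hence is strictly increasing, while every other factor is $\geq\rho_1>0$ and nondecreasing; so $x_{t,\rho_1}(f)<x_{t,\rho_2}(f)$, i.e. $\lambda_x(\rho_1)(f)<\lambda_x(\rho_2)(f)$, a contradiction.

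Finally, the restriction $\lambda_x\colon[r(x),R_0]\to\mathbb{A}^{1,\mathrm{an}}_K$ is continuous (Section \ref{Section 1.0.4}) and injective by the above; since $[r(x),R_0]$ is compact and $\mathbb{A}^{1,\mathrm{an}}_K$ is Hausdorff, $\lambda_x$ is a homeomorphism onto its image (which lies in $X$ over the subinterval $I_x$). The only genuinely delicate point is separating the endpoint $\rho=r(x)$ from the rest of the ray; this is precisely what the second subcase of the injectivity argument addresses, by reducing to the defining maximality of $r(x)$. Everything else is either the termwise monotonicity of $\rho\mapsto\lambda_x(\rho)(f)$, the elementary formula for the seminorm of a polynomial on a closed disk, or the standard compact-to-Hausdorff argument, none of which presents real difficulty.
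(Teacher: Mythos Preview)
Your proof is correct and is essentially the argument the paper has in mind: the corollary is marked with a bare $\Box$ because it follows directly from the definition \eqref{eq : r_K} and Lemma~\ref{rho gen as inf Kbar}, which is precisely what you unpack. Your explicit use of the minimal polynomial of a nearby point $c\in K^{\mathrm{alg}}$ to exhibit a strictly increasing factor is the natural way to make the injectivity step precise, and the compact-to-Hausdorff step is standard.
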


\begin{corollary}\label{Lemma: generic disk r(x)h}
Let  $t\in\Omega\in E(K)$ be a Dwork generic point for $x$, 
then for all $\Omega'\in E(\Omega)$ 
each $\Omega'$-rational point of $D^-(t,r(\xi))$
is a Dwork generic point for $\xi$. \hfill$\Box$
\end{corollary}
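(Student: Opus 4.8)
The plan is to show directly that the seminorm $f\mapsto|f(t')|_{\Omega'}$ on $K[T]$ — which is precisely the image point $i_{\Omega'}(t')\in\mathbb{A}^{1,\mathrm{an}}_K$ — coincides with $x$. Since $x\in X$ and $X_{\Omega'}=\pi_{\Omega'/K}^{-1}(X)$, this automatically forces $t'\in X(\Omega')$, and then $i_{\Omega'}(t')=x$ says exactly that $t'$ is a Dwork generic point for $x$. Write $\rho_0:=|t'-t|_{\Omega'}<r(x)$. If $\rho_0=0$ then $t'=t$ and there is nothing to prove, so assume $\rho_0>0$ and set $\rho_1:=r(x)$, so that $\rho_0/\rho_1<1$.

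The key input is the definition of $r(x)=r_K(x)$ together with the preceding Corollary: for every $\rho\in[0,r(x)]$ one has $\lambda_x(\rho)=\lambda_x(0)=x$, i.e. $\sup_{n\geq0}x(f^{(n)}/n!)\,\rho^{\,n}=x(f)$ for all $f\in K[T]$; in particular $x(f^{(n)}/n!)\,\rho_1^{\,n}\leq x(f)$ for every $n\geq 0$. I would then Taylor–expand a polynomial $f\in K[T]$ around $t$ (a \emph{finite} sum), using that the isometric embedding $\Omega\hookrightarrow\Omega'$ and the hypothesis $i_\Omega(t)=x$ give $|f^{(n)}(t)/n!|_{\Omega'}=x(f^{(n)}/n!)$. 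Combining these,
\[
|f(t')-f(t)|_{\Omega'}\;\leq\;\max_{n\geq1}x(f^{(n)}/n!)\,\rho_0^{\,n}\;\leq\;\max_{n\geq1}x(f)\,(\rho_0/\rho_1)^{\,n}\;=\;x(f)\cdot(\rho_0/\rho_1)\;<\;x(f)
\]
whenever $x(f)>0$. If $f$ is non-constant with $x(f)=0$, the same inequality $x(f^{(n)}/n!)\rho_1^{\,n}\le x(f)=0$ forces every derivative of $f$ to vanish at $x$, hence $f=0$ since $\operatorname{char}K=0$; and constant $f$ are trivial. By the ultrametric inequality the displayed bound yields $|f(t')|_{\Omega'}=|f(t)|_{\Omega'}=x(f)$ for all $f\in K[T]$, which is what we wanted.

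I do not expect a genuine obstacle: the only point requiring a little care is getting a \emph{strict} inequality in the ultrametric estimate, which is exactly why one uses that the Taylor expansion of a polynomial is a finite sum and that $\rho_0<\rho_1=r(x)$ strictly; everything else is bookkeeping with the definitions of $\lambda_x$, $r_K(x)$ and $i_\Omega$. (Alternatively, after enlarging $\Omega$ so that $K^{\mathrm{alg}}\subset\Omega$ and enlarging $\Omega'$ accordingly, one could factor $f\in K[T]$ into linear factors over $K^{\mathrm{alg}}$ and invoke Lemma~\ref{rho gen as inf Kbar} to get $|t'-c|_{\Omega'}=|t-c|_{\Omega'}$ for each root $c$, reaching the same conclusion $|f(t')|_{\Omega'}=|f(t)|_{\Omega'}$.)
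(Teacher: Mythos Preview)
Your proof is correct. The paper leaves this corollary without proof (just $\Box$), treating it as immediate from the proof of Lemma~\ref{rho gen as inf Kbar}: once one knows that $D^-(t,r(x))$ contains no $K^{\mathrm{alg}}$-rational points, every $f\in K[T]$ has constant absolute value on that disk, so $|f(t')|_{\Omega'}=|f(t)|_{\Omega'}=x(f)$ --- exactly your parenthetical alternative.

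Your main argument takes a slightly different path: it works directly from the defining property $\lambda_x(\rho)=x$ for $\rho\le r(x)$ via a Taylor estimate, and therefore does not need to enlarge $\Omega$ to contain $K^{\mathrm{alg}}$. This is a perfectly valid direct computation. One small simplification: in the case $x(f)=0$ you need not argue that $f=0$; the inequalities $x(f^{(n)}/n!)\rho_1^{\,n}\le x(f)=0$ already give $|f(t')-f(t)|_{\Omega'}\le\max_{n\ge1}x(f^{(n)}/n!)\rho_0^{\,n}=0$, whence $|f(t')|_{\Omega'}=|f(t)|_{\Omega'}=0$ immediately.
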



\subsubsection{} The following proposition describes the structure of the fiber $\pi_{\Omega/K}^{-1}(x)$ of a point $x\in X$
\begin{proposition}\label{Prop : FIBER}
\label{t-t'=rho}
Assume that $K$ is algebraically closed.
Let $\Omega\in E(K)$, and let 
$\pi_{\Omega/K}\;:\;X_\Omega\;\to\; X$ 
be the canonical projection. Let $x\in X$.
There exists a point $\sigma_{\Omega/K}(x)\in 
\pi_{\Omega/K}^{-1}(x)$ such that 
\begin{equation}
\pi_{\Omega/K}^{-1}(x)-\{\sigma_{\Omega/K}(x)\}
\end{equation} 
is a (possibly empty) disjoint union of open disks, 
all having $\sigma_{\Omega/K}(x)$ as relative boundary in 
$X_\Omega$. 

Moreover if $\Omega/K$ is algebraically closed and 
spherically complete, the group 
$\mathrm{Gal}^{\mathrm{cont}}(\Omega/K)$ of $K$-linear continuous automorphisms of $\Omega$ 
fixes $\sigma_{\Omega/K}(x)$ and it acts transitively on 
those disks, and also on the set $i_\Omega^{-1}(x)$ of 
their $\Omega$-rational points.
\end{proposition}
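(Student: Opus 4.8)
The plan is to reduce to the well-understood picture inside a single Berkovich disk, where the fiber of a ground-field extension over an interior point has exactly the structure described. First I would reduce to the case where $X$ itself is a closed disk $D^+(c_0,R_0)$: the point $x$ lies in a unique maximal virtual disk $D$ containing $X$ (in fact a disk of radius $R_0$ since $K$ is algebraically closed), and since $x\in X\subseteq D$, the fiber $\pi_{\Omega/K}^{-1}(x)$ computed inside $X_\Omega$ agrees with the one computed inside $D_\Omega$ — passing to the holes only removes open subsets of $D_\Omega$ not meeting the fiber over $x$, because the holes of $X$ retract in $X$ to points of $\Gamma_X$ distinct from the image data of $x$ when $x$ is not in a hole, and one argues similarly when $x$ sits in a hole by replacing $R_0$ with the appropriate $R_i$. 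So without loss of generality $X=D^+(c_0,R_0)$ and we work in $\mathbb{A}^{1,\mathrm{an}}_\Omega$.

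Now fix any Dwork generic point $t\in X(\Omega)$ lifting $x$ (which exists after possibly enlarging $\Omega$, and by functoriality of $\pi_{\Omega/K}$ the statement for one $\Omega$-point propagates). The key construction is the point $\sigma_{\Omega/K}(x) := x_{t,r(x)}\in\mathbb{A}^{1,\mathrm{an}}_\Omega$, the maximal point of the Dwork generic disk $D^-(t,r(x))$. I would check: (a) $\pi_{\Omega/K}(x_{t,r(x)})=\lambda_x(r(x))=\lambda_x(0)=x$, using the identity $\lambda_x(\rho)=\pi_{\Omega/K}(x_{t,\rho})$ from Section~\ref{Section 1.0.4} together with the definition of $r(x)=r_K(x)$; so $\sigma_{\Omega/K}(x)$ does lie in the fiber. (b) Any point $y$ in the fiber $\pi_{\Omega/K}^{-1}(x)$ other than $\sigma_{\Omega/K}(x)$ has a neighborhood in $X_\Omega$ isomorphic to an open disk not meeting $\sigma_{\Omega/K}(x)$: indeed $y$ maps to $x$, hence $y$ cannot lie on the ``skeleton'' path $\lambda_x$, so $y$ lies in one of the open disks of $X_\Omega$ that retract onto $\sigma_{\Omega/K}(x)$. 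Concretely, by Lemma~\ref{rho gen as inf Kbar} (applied over the algebraically closed $K$, so $K^{\mathrm{alg}}=K$) the fiber over $x$ is exactly the set of points at ``generic distance'' configuration forcing them into $D^-(t',r(x))$ for various Dwork generic points $t'$, and two such disks are either equal or disjoint and share $\sigma_{\Omega/K}(x)$ as their only common boundary point. This gives the decomposition into open disks with common relative boundary $\sigma_{\Omega/K}(x)$.

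For the transitivity statement I would assume $\Omega$ algebraically closed and spherically complete and let $G=\mathrm{Gal}^{\mathrm{cont}}(\Omega/K)$ act on $X_\Omega$ through its action on $\Omega$. Since $\sigma_{\Omega/K}(x)$ is canonically attached to $x$ (it is the unique point of the fiber that is a relative boundary point, equivalently the unique point of the fiber lying on the image of $\lambda$-type paths), $G$ fixes it. To see transitivity on the set $i_\Omega^{-1}(x)$ of $\Omega$-rational points of the fiber: two such points $t,t'$ both lift $x$, so by functoriality there is an automorphism of the residue field data; spherical completeness of $\Omega$ is what lets one extend this to an honest element of $G$ (this is the standard fact that $\Omega$ spherically complete and algebraically closed is ``generic enough'' — I would invoke it via Berkovich \cite[p.~98]{Ber} on peaked points, or reprove it by a transfinite approximation along nested disks using spherical completeness). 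Transitivity on the set of disks then follows since each disk is determined by the $\sigma_{\Omega/K}(x)$-component containing any of its $\Omega$-rational points, and such points exist in each disk ($\Omega$ being algebraically closed).

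The main obstacle I anticipate is the transitivity part: establishing that the continuous automorphism group of a spherically complete algebraically closed $\Omega$ acts transitively on the Dwork generic points over a fixed $x$ is where spherical completeness is genuinely used, and it requires either citing the peaked-point machinery of Berkovich or running a careful nested-disk approximation argument. The decomposition into disks itself is, by contrast, essentially a repackaging of Lemma~\ref{rho gen as inf Kbar} and its corollaries, so I would expect that part to go through quickly; the identification of the distinguished point $\sigma_{\Omega/K}(x)$ with $x_{t,r(x)}$ and the verification that it is the unique relative-boundary point of the fiber is the other place where one must be a little careful.
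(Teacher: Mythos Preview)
Your overall strategy matches the paper's: identify $\sigma_{\Omega/K}(x)$ as $x_{t,r(x)}$, show the rest of the fiber sits in open disks of radius $r(x)$ with that boundary, then handle transitivity separately. However, there is a genuine gap in your step (b).

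You assert that any $y\in\pi_{\Omega/K}^{-1}(x)$ other than $x_{t,r(x)}$ ``lies in one of the open disks of $X_\Omega$ that retract onto $\sigma_{\Omega/K}(x)$'', but this presupposes exactly what needs to be proved, namely that the fiber is contained in the closed disk $D^+(t,r(x))\subset X_\Omega$. Your appeal to Lemma~\ref{rho gen as inf Kbar} (``generic distance configuration forcing them into $D^-(t',r(x))$'') does not supply a mechanism for this containment; that lemma only computes $r(x)$ as an infimum of distances, it says nothing directly about arbitrary points of the fiber. The paper's argument fills this gap with a different idea: one observes that $\pi_{\Omega/K}^{-1}(x)=\mathscr{M}(\H(x)\widehat{\otimes}_K\Omega)$, and this spectrum is therefore contained in $D_\Omega$ for \emph{every} $K$-affinoid $D$ containing $x$. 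Since $K$ is algebraically closed, Lemma~\ref{rho gen as inf Kbar} produces $K$-rational closed disks containing $x$ whose intersection in $X_\Omega$ is exactly $D^+(t,r(x))$, and this forces $\pi_{\Omega/K}^{-1}(x)\subseteq D^+(t,r(x))$. Combined with $D^-(t,r(x))\subseteq\pi_{\Omega/K}^{-1}(x)$ from Corollary~\ref{Lemma: generic disk r(x)h}, the disk decomposition follows.

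Two smaller remarks. Your reduction to $X=D^+(c_0,R_0)$ via reasoning about holes is unnecessary and the justification offered is confused; the fiber depends only on $\H(x)$ and $\Omega$, not on the ambient $X$, so one may work in $\mathbb{A}^{1,\mathrm{an}}_\Omega$ from the outset. For transitivity, your plan is essentially what the paper carries out in Lemma~\ref{Gal transitive}: the assignment $t\mapsto t'$ extends to an isometric $K$-isomorphism $\widehat{K(t)}\simto\widehat{K(t')}$ because both completions are copies of $\H(x)$, and this isometry extends to a continuous $K$-automorphism of $\Omega$ by the extension theorems for spherically complete algebraically closed fields (the paper cites Dwork--Robba, Poonen, Reversat). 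Invoking Berkovich's peaked-point theory would also work, but the paper takes the explicit extension-of-isometries route.
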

\begin{proof}
We can assume $\Omega$ algebraically closed.
Let $t\in i_\Omega^{-1}(x)$. 
By Corollary \ref{Lemma: generic disk r(x)h} one has 
$D^-(t,r(x))\subseteq\pi_{\Omega/K}^{-1}(x)$. 
It is then enough to show that all $t'\in i_\Omega^{-1}(x)$ verifies $|t'-t|\leq r(x)$. 

This follows from the fact that 
$\pi_{\Omega/K}^{-1}(x)$ is the spectrum 
$\mathscr{M}(\H(x)\widehat{\otimes}_K\Omega)$, so it is contained 
in all affinoid domains containing $x$. 
Hence we can replace $X$ by any $K$-rational closed disk in 
$\mathbb{A}^{1,\mathrm{an}}_K$ containing $x$. 
Now by Lemma \ref{rho gen as inf Kbar} we can find a sequence of 
closed $K$-rational disks with intersection $\D^+(t,r(x))$. 
This proves the claim.

The assertion about the Galois action 
follows from Lemma \ref{Gal transitive} below.
\end{proof}
\begin{lemma}\label{Gal transitive}
Let $\xi\in X$. 
If $\Omega\in E(K)$ is algebraically closed and maximally complete, 
then $i_\Omega^{-1}(\xi)$ is either the empty set, or 
$\mathrm{Gal}^{\mathrm{cont}}(\Omega/K)$ acts transitively on it. 

Namely for all $t,t'\in i_\Omega^{-1}(x)$ there is 
$\sigma\in\mathrm{Gal}^{\mathrm{cont}}(\Omega/K)$ such that 
$\sigma(t)=t'$.
\end{lemma}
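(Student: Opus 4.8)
The goal is to show that if $\Omega/K$ is algebraically closed and maximally (spherically) complete, then $\mathrm{Gal}^{\mathrm{cont}}(\Omega/K)$ acts transitively on $i_\Omega^{-1}(\xi)$ whenever this fiber is nonempty. Fix $t,t'\in i_\Omega^{-1}(\xi)$; since both map to $\xi$ under $\pi_{\Omega/K}$, the evaluation maps $\mathrm{ev}_t,\mathrm{ev}_{t'}:\mathscr{O}(X)_\xi\to\Omega$ agree on $\mathscr{H}(\xi)$ after the canonical identification, equivalently: for every $f\in K[T]$ one has $|f(t)|_\Omega = |f(t')|_\Omega$ (this is exactly the condition $i_\Omega(t)=i_\Omega(t')=\xi$, using that $\xi$ is determined by the semi-norm $f\mapsto |f(t)|$ on $K[T]$). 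So the problem reduces to a purely field-theoretic statement: the subfields $K(t)$ and $K(t')$ of $\Omega$ are $K$-isomorphic as \emph{valued} fields via the isometry $t\mapsto t'$, and one must extend this isometry to a continuous $K$-automorphism of all of $\Omega$.

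First I would make this reduction precise: the assignment $t\mapsto t'$ extends uniquely to a $K$-algebra isomorphism $\tau_0:K(t)\to K(t')$ (both are rational function fields over $K$, or finite extensions thereof if $t$ is algebraic over $K$ — but in the case at hand $r(\xi)>0$ forces $t$ transcendental, with $K(t)\cong K(T)$; the algebraic case $t\in K^{\mathrm{alg}}=K$ is trivial since then $t=t'$). The equality $|f(t)|=|f(t')|$ for all $f\in K[T]$, extended multiplicatively to quotients, shows $\tau_0$ is an isometry $K(t)\to K(t')\subseteq\Omega$. Passing to completions gives an isometric $K$-embedding $\widehat{K(t)}\hookrightarrow\Omega$ with image $\widehat{K(t')}$. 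The heart of the matter is then the extension step.

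The key step is an extension lemma of the following shape: if $\Omega$ is algebraically closed and maximally complete, and $L_1\hookrightarrow\Omega$, $L_2\hookrightarrow\Omega$ are two isometric $K$-embeddings of a complete valued extension $L$ of $K$ (here $L=\widehat{K(t)}$), with the same restriction to $K$, then there is a continuous $K$-automorphism $\sigma$ of $\Omega$ with $\sigma|_{L_1}$ equal to the composite $L_1\xrightarrow{\sim}L\xrightarrow{\sim}L_2$. The standard route is Zorn's lemma applied to the poset of isometric $K$-isomorphisms between intermediate valued subfields $\Omega\supseteq E_1\xrightarrow{\sim}E_2\subseteq\Omega$ extending the given data; a maximal such pair must have $E_1=E_2=\Omega$. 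Maximality forces $E_i$ to be \emph{maximally complete}: if not, one adjoins a single element — here one uses that any immediate extension step, any residue-field extension step, and any value-group extension step can be realized inside $\Omega$ precisely because $\Omega$ is algebraically closed and spherically complete (spherical completeness is exactly what lets one find, given a pseudo-Cauchy sequence with no limit in $E_i$, a limit in $\Omega$, and match it up on both sides — this is the classical Kaplansky-type argument). Since a maximally complete field has no proper immediate extensions and $\Omega$ is algebraically closed, $E_1=E_2=\Omega$, and the resulting $\sigma$ is automatically an isometry, hence continuous.

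The main obstacle is precisely this extension step, and within it the place where spherical completeness is genuinely used: when extending the isomorphism past a pseudo-limit. One must argue that for a pseudo-Cauchy sequence $(a_\nu)$ in $E_1$ of transcendental type (resp. algebraic type), a pseudo-limit $a$ in $\Omega$ exists (spherical completeness) and that the corresponding pseudo-limit $a'$ of the transported sequence in $E_2$ likewise exists in $\Omega$, and that $E_1(a)\xrightarrow{\sim}E_2(a')$ sending $a\mapsto a'$ is again isometric — this uses that the breadth of the pseudo-Cauchy sequence and the limit value $|f(a_\nu)|$ for $f\in E_1[X]$ are transported correctly, which is a routine but careful check via the theory of pseudo-convergence (Kaplansky). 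I would cite the relevant statement from the literature on maximally complete fields rather than reproving it, but flag that in our situation $L=\widehat{K(t)}$ has the convenient feature that $K(t)$ is dense, so one really only needs to extend over $K(t)$ and then take completions, which slightly simplifies the transcendence-degree bookkeeping. Finally, once $\sigma\in\mathrm{Gal}^{\mathrm{cont}}(\Omega/K)$ with $\sigma(t)=t'$ is produced, transitivity on $i_\Omega^{-1}(\xi)$ is immediate, and — plugging back into Proposition \ref{Prop : FIBER} — so are the transitivity statements on the Dwork generic disks and on $i_\Omega^{-1}(x)$, since $\sigma$ permutes the connected components of $\pi_{\Omega/K}^{-1}(x)-\{\sigma_{\Omega/K}(x)\}$ and fixes the peaked point by uniqueness.
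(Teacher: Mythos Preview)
Your proof is correct and follows essentially the same route as the paper's: reduce to the isometric $K$-isomorphism $K(t)\xrightarrow{\sim}K(t')$, $t\mapsto t'$ (using that $i_\Omega(t)=i_\Omega(t')=\xi$ means $|f(t)|=|f(t')|$ for all $f\in K[T]$), pass to completions $\widehat{K(t)}\cong\mathscr{H}(\xi)\cong\widehat{K(t')}$, and then extend to an isometric $K$-automorphism of $\Omega$ using that $\Omega$ is algebraically closed and maximally complete. The paper simply cites the extension step from the literature (Dwork--Robba, Poonen, Reversat), whereas you sketch the Kaplansky-type Zorn argument; one small imprecision is your parenthetical ``$t\in K^{\mathrm{alg}}=K$'' in the algebraic case, which presupposes $K$ algebraically closed---but the isometry $K(t)\to K(t')$ is available regardless, so this does not affect the argument.
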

\begin{proof}
Identify 
$\mathbb{A}^{1,\mathrm{an}}_K(\Omega)$ with $\Omega$, 
and $X(\Omega)$ with a subset of $\Omega$. 
With this identification we have to find an automorphism of $\Omega$ 
sending $t$ into $t'$.
Let $\widehat{K(t)}$ and $\widehat{K(t')}$ 
be the completions of the sub-fields of $\Omega$ generated by $t$ 
and $t'$. We consider the $K$-isomorphism $K(t)\simto K(t')$ sending 
$t$ into $t'$. 
Since $x=\pi_{\Omega/K}(x_{t,0})=\pi_{\Omega/K}(x_{t',0})$, these 
semi-norms coincide on $K[T]\subset\O(X_\Omega)$. 
Hence this $K$-isomorphism is isometric, and  
$\widehat{K(t)}\cong\H(\xi)\cong\widehat{K(t')}$. More precisely  
there exists a continuous isometric $K$-linear isomorphism 
$\sigma:
\widehat{K(t)}\simto\widehat{K(t')}$ such that  $\sigma(t)=t'$. Now 
$\sigma$ extends to an isometric automorphism of $\Omega/K$ 
(cf. \cite[Lemma 8.3]{Dw-Robba}, \cite{Poonen}, \cite{Reversat}, see \cite{NP-II} for more details).
\end{proof}

\begin{definition}[Generic and maximal disks]
Let $x\in X$, let $\Omega\in E(\H(x))$, and let $t\in X(\Omega)$ 
be a Dwork generic point for $x\in X$. We call 
\emph{generic disk} of $x$ the virtual open disk
\begin{equation}
D(x)\;\subset\; X_{\Omega}
\end{equation} 
which is the connected component of 
$\pi_{\Omega/K}^{-1}(x)-\{\sigma_{\Omega/K}(x)\}$ 
containing the point $t$. Its radius is $r(x)$.

We call \emph{maximal disk} of $x$
\begin{equation}
D(x,X)\;\subset\;X_{\Omega}
\end{equation}
the maximum virtual open disk in $X_{\Omega}$ containing $t$. 
It is also the connected component of $X-\Gamma_{X_{\Omega}}$ 
containing $t$. With the notation of \eqref{rho_s(t)}, its radius is $\rho_{\Gamma_X}(x)$, and it will be denoted by 
\begin{equation}\label{eq : rho_x,X=rho_Gamma_X}
\rho_{x,X}\;=\;\rho_{\Gamma_X}(x)\;.
\end{equation}
\end{definition}
By Lemma \ref{Gal transitive}, up to extend $\Omega$, 
all generic and maximal disks are isomorphic. 
The notation does not depend on $t$, and the definitions and results 
of this paper will be independent on its choice. 

\begin{lemma}\label{rhogen titi}
One has $r(\xi_{t,\rho})\;=\;\max(\rho,r(\xi_{t}))$. 
In particular if $t\in X(\widehat{K^\mathrm{alg}})$, then 
$r(\xi_{t,\rho})=\rho$.\hfill $\Box$
\end{lemma}
\subsection{Graphs}
As a topological space $X$ is a tree, in particular it is uniquely archwise 
connected.\footnote{This means that for all $x,y\in X$ there exists an 
injective continuous path $[0,1]\to X$ with initial point $x$ and end 
point $y$. Moreover two such paths have the same image in $X$.} 
If $x,y\in X$ we denote by $[x,y]\subset X$ the image of an injective 
continuous path $[0,1]\to X$ with initial point $x$ and end point $y$.
In particular the image of $\lambda_x:I_x\to X$ is the closed segment 
$\Lambda(x):=[x,x_{c_0,R_0}]$.\footnote{By an abuse 
here and below we identify 
$x_{c_0,R_0}\in X_{\Ka}$ with its image in $X$.}
We define in an evident way open and semi-open segments, 
denoted by $]x,y[$, $[x,y[$, $]x,y]$ respectively.

Following \cite{Duc} we say that a graph $\Gamma$ in $X$ 
is \emph{admissible} if $X-\Gamma$ is a disjoint union of virtual open 
disks, in particular 
$\Gamma$ is closed in $X$, and also connected (since we assume $X$ connected).

An example of admissible graph is the analytic skeleton 
$\Gamma_X\subseteq X$ defined as the locus of points without open 
neighborhoods in $X$ isomorphic to a virtual open disks. 
More explicitly $\Gamma_X$ is the union of the segments 
$\Lambda(x)$ for all point $x$ at the boundary of a hole of $X$ 
(i.e. for all $x$ of the Shilov boundary $\partial X$).
$\Gamma_X$ is also the set of semi-norms on $\O(X)$ that are 
maximal with respect to the partial order given by $x\leq x'$ if and only 
if $x(f)\leq x'(f)$ for all $f\in\O(X)$.

For any subset $A\subseteq X$ we set $\mathrm{Sat}(A):=
\cup_{x\in A}\Lambda(x)$. This is a tree in $X$. As an example 
$\Gamma_X=\mathrm{Sat}(\partial X)$. 
We say that a subset of $X$ is 
\emph{saturated} if it coincides with $\mathrm{Sat}(A)$, for some set 
$A$. 
\begin{lemma}\label{Lemma : Admissible}
A graph $\Gamma\subset X$ is admissible if and only if the following 
conditions hold: 
\begin{enumerate}
\item[(i)] $\Gamma_X\subseteq\Gamma$; \qquad\qquad
\emph{(ii)} $\Gamma=\mathrm{Sat}(\Gamma)$;\qquad\qquad
\emph{(iii)} $\Gamma$ contains its end points.\hfill$\Box$
\end{enumerate}
\end{lemma}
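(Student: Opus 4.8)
The plan is to prove both implications of Lemma \ref{Lemma : Admissible} by unwinding the definitions, using the tree structure of $X$ and the fact that virtual open disks are exactly the connected components of $X\setminus\Gamma_X$.

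\textbf{Necessity of (i), (ii), (iii).} Suppose $\Gamma$ is admissible, so $X-\Gamma$ is a disjoint union of virtual open disks. For (i): if $x\in\Gamma_X$ then $x$ has no open neighborhood isomorphic to a virtual open disk, so $x$ cannot lie in any connected component of $X-\Gamma$; hence $x\in\Gamma$. For (iii): since $X-\Gamma$ is a disjoint union of virtual open \emph{disks}, and $\Gamma$ is closed, I argue that $\Gamma$ is a graph whose every branch terminates in $\Gamma$; concretely, if a semi-open segment $]y,z]\subset\Gamma$ had its open endpoint $y\notin\Gamma$, then $y$ would belong to a component $D$ of $X-\Gamma$ which, being an open disk, cannot have a point of its boundary accumulated from inside along a segment without that segment leaving $D$ — contradiction. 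For (ii): the key point is that a virtual open disk $D$ is ``saturated-complementary'', i.e. for every $x\in D$ the segment $\Lambda(x)=[x,x_{c_0,R_0}]$ meets $\Gamma$ (it must, since $x_{c_0,R_0}\in\Gamma_X\subseteq\Gamma$ by (i)) and the portion of $\Lambda(x)$ from $x$ up to the first point of $\Gamma$ stays inside $D$. This shows $X-\Gamma$ is a union of saturated-complementary pieces, equivalently that $\Gamma$ is a union of segments $\Lambda(x)$ for $x$ ranging over $\Gamma$ (every $x\in\Gamma$ has $\Lambda(x)\subseteq\Gamma$ because $\Lambda(x)$ cannot dip into a disk component of the complement, a disk being ``downward closed'' for the branch towards $x_{c_0,R_0}$). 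Thus $\Gamma=\mathrm{Sat}(\Gamma)$.

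\textbf{Sufficiency.} Conversely assume (i), (ii), (iii). Since $\Gamma=\mathrm{Sat}(\Gamma)$ is a tree containing $\Gamma_X$, it is connected and closed in $X$ (closedness uses (iii): a limit point of $\Gamma$ is an endpoint of some branch, hence in $\Gamma$). It remains to see that each connected component $D$ of $X-\Gamma$ is a virtual open disk. Since $\Gamma\supseteq\Gamma_X$, the component $D$ is contained in a single connected component of $X-\Gamma_X$, which is a virtual open disk $D_0$. Inside $D_0$ the induced subgraph $\Gamma\cap D_0$ is again saturated (relative to the root direction of $D_0$) and contains its endpoints, so it is a subtree hanging from the boundary point of $D_0$; cutting a subtree rooted at the boundary out of a virtual open disk leaves connected components that are themselves virtual open disks. (Over $\widehat{K^{\mathrm{alg}}}$ one checks this for an honest open disk, where removing a closed subtree attached at the boundary yields open disks; the Galois-equivariance of the construction then descends the statement to $X$.) Hence $D$ is a virtual open disk and $\Gamma$ is admissible.

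\textbf{Main obstacle.} The delicate step is the geometric claim used in both directions: that a virtual open disk behaves as a ``downward-closed'' branch with respect to the canonical retraction towards $x_{c_0,R_0}$, so that segments $\Lambda(x)$ cannot partially enter the complementary disks and the complement of a boundary-rooted subtree in a disk is again a union of disks. I expect to handle this by reducing to $\widehat{K^{\mathrm{alg}}}$, where it is the standard description of the tree structure of an open disk (its points, the retraction, and the classification of connected analytic domains), and then invoking $\mathrm{Gal}(\widehat{K^{\mathrm{alg}}}/K)$-equivariance of all the objects involved to descend, exactly as in the passage from \eqref{eq : affinoid} to the general affinoid case in Section \ref{Notation}.
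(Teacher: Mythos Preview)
The paper gives no proof of this lemma (the $\Box$ appears inside the statement itself, signalling that the author regards it as routine). Your argument is correct and supplies exactly the omitted verification; the geometric fact you isolate as the main obstacle—that a virtual open disk is ``downward closed'' for the branch towards $x_{c_0,R_0}$, so that removing a closed saturated subtree from it leaves virtual open disks—is precisely what is needed, and reducing to $\widehat{K^{\mathrm{alg}}}$ is the standard way to check it.
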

\begin{definition}\label{Def.  rho_S xi}
Let $\Gamma$ be a non empty saturated subset and let $x\in X$. 
We denote by 
\begin{equation}\label{rho_s(t)}
\rho_{\Gamma}(x)\;:=\;
\inf\{\rho\geq r(x)\textrm{ such that }
\lambda_{x}(\rho)\in\Gamma\}\;,\qquad\quad
\delta_\S(x)\;:=\;\lambda_{x}(\rho_\Gamma(x))\;.\qquad
\end{equation}
\end{definition} 
The map $\delta_{\Gamma}:X\to X$ %
is a retraction onto the graph $\overline{\Gamma}$ obtained from 
$\Gamma$ by adding to it its end points. In other words 
$\delta_\Gamma$ induces the identity on $\overline{\Gamma}$ and  
$\delta_\Gamma(X)=\overline{\Gamma}$. 
We call $\delta_\Gamma:X\to\overline{\Gamma}$ the 
\emph{canonical retraction}. 

If $\Gamma$ is admissible, then each 
point $x\in X-\Gamma$ lies in a virtual open disk $D_x$ with boundary 
in $\Gamma$, and $\delta_\Gamma$ associates to $x$ that boundary.
If $\Gamma$ is finite admissible, endowed with the topology induced 
by $X$, then $\delta_\Gamma:X\to \Gamma$ is continuous, and 
the topology of $\Gamma$ is also the quotient topology by 
$\delta_\Gamma$.

\begin{remark}\label{rk : xleq x' then rhox=rhox'}
If $t_x\in X(\Omega)$ is a Dwork generic point for $\xi$. The radius $\rho_{x,X}$ of $D(x,X)$ verifies  
$\rho_{\xi,X}=\rho_{t,X_\Omega}:=
\min_{i=1,\ldots,n} (|t-c_i|_\Omega,R_0)$. 
We notice that if $\xi\leq\xi'$, then $\rho_{\xi,X}=\rho_{\xi,X'}$.
In fact the inequality $\xi\leq\xi'$ applied to $T-c_i$ and $(T-c_i)^{-1}$ 
provides $|t_x-c_i|=|t_{x'}-c_i|$. 
\end{remark}
\begin{remark}
For all $t\in X(\Omega)$, and all $\sigma\geq 0$ one has 
$\rho_{\xi_{t,\sigma},X}=\max(\sigma,\rho_{t,X})$.
\end{remark}
\subsection{Directions, slopes, directional finiteness, and harmonicity}
\label{Directions, slopes, directional finiteness, and harmonicity}
We define an equivalence relation between the open segments $]x,y[$ 
with boundary $x\in X$. We say that $]x,y[\sim]x,z[$ if there exists 
$]x,t[\subseteq]x,z[\cap]x,y[$. An equivalence class $b$ is called a 
\emph{germ of segment out of $x$}, or \emph{direction}, or again a 
\emph{branch}.

We denote by $\Delta_X(x)$, or simply by $\Delta(x)$ if no confusion 
is possible, the set of all directions out of $x$, and if 
$\Gamma$ is a graph containing $x$, we denote by 
$\Delta(x,\Gamma)$ the set of germs of segments out of $x$ that are contained in 
$\Gamma$. If $\Delta(x,\Gamma)$ is a finite set we say that 
$\Gamma$ is \emph{directionally finite} at $x$.

Let $x\in X$ and let $b=]x,y[$ be a germ of segment out of $x$. 
We will always provide $b$ with the orientation as outside $x$.
Clearly, if $y$ is close to $x$, then 
either $]x,y[\subset\Lambda(x)=[x,x_{c_0,R_0}]$ or 
$]x,y[\subset\Lambda(y)=[y,x_{c_0,R_0}]$. 
Assume that $b\subset\Lambda(x)$, and let 
$I\subset \mathbb{R}_{>0}$ be the inverse image of $]x,y[$ in 
$I_x=[0,R_0]$ (cf. Section \ref{Section 1.0.4}). 
We recall that, for all $x\in X$, the path 
\begin{equation}\label{eq : parametrization}
\lambda_x:[0,R_0]\to[x,x_{c_0,R_0}]\subset X\;,
\end{equation}
is continuous, it is constant on $[0,r(x)]$ with value $x$, and it 
identifies $[r(x),R_0]$ with $[x,x_{c_0,R_0}]$. So $I=]r(x),\rho[$ for 
some $\rho>r(x)$.

With these conventions let $F:X\to\mathbb{R}_{>0}$
be a function such 
that $\log\circ F\circ\lambda_x\circ\exp : \ln(I)\to 
\mathbb{R}$ is an affine function. We say that $F$ is $\log$-affine along 
$b=]x,y[$ and we denote by 
\begin{equation}
L_xF\;:=\;\ln\circ F\circ\lambda_x\circ\exp \;:\; 
]-\infty,\ln(R_0)]\;\xrightarrow{\quad}\;\mathbb{R}\;.
\end{equation}
We say that $L_xF$ is the $\log$-function attached to $F$. 
We denote its slope by $\partial_bF(x)$, this is the right 
derivative of $\log\circ F\circ\lambda_x\circ\exp$ at $\log(r(x))$. 
If now $b=]x,y[\subset\Lambda(y)$ we call $I$ the inverse image of $]x,y[$ 
in $I_y$, and we denote by $\partial_bF(x)$ the 
negative of the slope of 
$\log\circ F\circ\lambda_y\circ\exp : \ln(I)\to 
\mathbb{R}$.

\begin{definition}\label{Def. : LOG-convex}
If $]z,u[\subset \Lambda(x)=[x,x_{c_0,R_0}]$, we say that $F$ is 
$\log$-affine (resp. $\log$-concave, $\log$-decreasing, ...) along $]z,u[$, 
if $L_xF$ is affine (resp. concave, decreasing, ...)
over $(\lambda_x\circ\exp)^{-1}(]z,u[)$.
\end{definition}
\begin{notation}\label{Notation : slopes exist}
Assume that $F$ is $\log$-affine along all direction $b\in\Delta(x)$ out of $x\in X$, 
and that $\partial_bF(x)=0$ for almost, but a finite number of them.
\end{notation}
\begin{definition}\label{Def : Laplacian}
\label{Def : sub-harmonic}
We call \emph{Laplacian of $F$} at $x$ the finite sum
\begin{equation}
dd^cF(x)\;=\;\sum_{b\in\Delta(x)}m_b\cdot\partial_bF(x)\;,
\end{equation}
where $m_b\in\mathbb{N}$ is the multiplicity of $b$ 
(i.e. the number of germs of segments in 
$X_{\widehat{K^{\mathrm{alg}}}}$ lying over $b$).

If now $x\notin\partial X$, we say that $F$ is \emph{super-harmonic} 
(resp. \emph{sub-harmonic}; \emph{harmonic}) at $\xi$ if
\begin{equation}\label{eq : QH inequality}
dd^cF(\xi) \;\leq\; 0\;,\textrm{( resp. }dd^cF(\xi)\geq 0\;;\;
\;dd^cF(\xi)=0\textrm{)}\;.
\end{equation} 
We say that $F$ is (globally) \emph{super-harmonic} (resp. 
\emph{sub-harmonic}; \emph{harmonic}) on $X$,
if it is so at all point $\xi\notin \partial X$. 
\end{definition}
\begin{lemma}\label{Lemma: G dominates F, so F super-H}
Let $x\in X-\partial X$, and 
let $F,G:X\to\mathbb{R}$ be two functions on $X$ as in Notation 
\ref{Notation : slopes exist}.
Assume that $F_{|b}\leq G_{|b}$ along each germ of segment $b$ out of $x$, that
$F(x)=G(x)$, and that $G$ 
is super-harmonic at $x$. Then $F$ is super-harmonic at $x$.
\hfill$\Box$
\end{lemma}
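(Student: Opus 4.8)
The plan is to compare the Laplacian $dd^cF(x)$ with $dd^cG(x)$ branch by branch. Both $F$ and $G$ are assumed to be $\log$-affine along every direction $b\in\Delta(x)$, with $\partial_bF(x)=0$ (resp. $\partial_bG(x)=0$) for all but finitely many $b$, so both Laplacians are well-defined finite sums indexed by $\Delta(x)$, with the same multiplicities $m_b$. The key observation is that the slope $\partial_bF(x)$ is, by definition, the right derivative of $\ln\circ F\circ\lambda_x\circ\exp$ at $\ln(r(x))$ (after the appropriate reparametrisation when $b\subset\Lambda(y)$), i.e.\ an \emph{infinitesimal} comparison at $x$ of the value $F(x)$ with the values of $F$ slightly inside the branch $b$.

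First I would fix an arbitrary branch $b$ out of $x$ and show $\partial_bF(x)\leq\partial_bG(x)$. Since $F_{|b}\leq G_{|b}$ along $b$ and $F(x)=G(x)=:v$, for a point $y_s=\lambda_x(\exp(s))$ (or the analogue on $\Lambda(y)$) sufficiently close to $x$ inside $b$ we have $\ln F(y_s)\leq\ln G(y_s)$, while at the base point $s_0=\ln(r(x))$ both $\log$-functions take the common value $\ln v$. Hence the difference quotient $\bigl(\ln F(y_s)-\ln v\bigr)/(s-s_0)$ is $\leq\bigl(\ln G(y_s)-\ln v\bigr)/(s-s_0)$ for $s>s_0$ close to $s_0$; letting $s\downarrow s_0$ and using that both one-sided limits exist (the functions are $\log$-affine near $x$), we obtain $\partial_bF(x)\leq\partial_bG(x)$. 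The same inequality holds for the finitely many branches where either slope is nonzero, and trivially ($0\leq 0$) elsewhere, so that the relevant index sets can be taken to be a common finite set.

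Then I would sum over $\Delta(x)$ with the common weights $m_b\in\mathbb{N}_{\geq 0}$:
\begin{equation}
dd^cF(x)\;=\;\sum_{b\in\Delta(x)}m_b\,\partial_bF(x)\;\leq\;\sum_{b\in\Delta(x)}m_b\,\partial_bG(x)\;=\;dd^cG(x)\;\leq\;0\;,
\end{equation}
the last inequality being the super-harmonicity of $G$ at $x$ (which applies since $x\notin\partial X$). This gives $dd^cF(x)\leq 0$, i.e.\ $F$ is super-harmonic at $x$, as claimed.

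The only genuinely delicate point is the branch-wise slope comparison $\partial_bF(x)\leq\partial_bG(x)$: one must be careful that the definition of $\partial_b$ via $L_x$ is taken with the correct orientation and parametrisation in the two cases $b\subset\Lambda(x)$ and $b\subset\Lambda(y)$, and that ``$F_{|b}\leq G_{|b}$ along each germ'' is used only arbitrarily close to $x$ (which is all the equivalence-class-of-segments language provides). Once the sign conventions are pinned down, this is an elementary one-variable statement about right derivatives of affine functions agreeing at a point, and the rest is just summation. No deeper input is needed; in particular no global structure of $X$ beyond $x\notin\partial X$ enters.
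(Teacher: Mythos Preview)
Your proof is correct and is exactly the elementary argument the paper has in mind: the lemma is stated with a terminal $\Box$ and no proof, so the authors consider the branch-wise slope comparison $\partial_bF(x)\leq\partial_bG(x)$ (from $F_{|b}\leq G_{|b}$ and $F(x)=G(x)$) followed by summation with the weights $m_b$ to be immediate. Your care with the orientation conventions and with taking a common finite index set is appropriate but not strictly needed, since the inequality $\partial_bF(x)\leq\partial_bG(x)$ holds on every branch regardless.
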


\begin{remark}
The Laplacian of $F$ at the points of $\partial X$ does not give 
information since some directions out of  $x$ 
are ``\emph{removed}''.
As an example functions $f\in\O(X)$ are harmonic, but their 
Laplacian at the points $x\in\partial X$ of the boundary is not always 
negative.

Definition \ref{Def : sub-harmonic} is less general with respect to the 
usual definition of super-harmonicity, 
as for example those in \cite{Baker-Book}, \cite{Amaury-These}, 
\cite{Favre-Jonsson}. 
The general definition allows an infinite number of direction 
of non zero slope and the finite sum  
\eqref{eq : QH inequality} is replaced by an infinite one.
\end{remark}

\section{Constancy skeleton of a function on $\X$.}
\label{Skeleton of a function on M(X).}
Let $\mathcal{T}$ be a set and let $\F:X\to\mathcal{T}$ 
be an arbitrary function. 
\begin{definition}
\label{def :constancy sk}
We define the \emph{controlling graph} (also called \emph{constancy skeleton}) 
\begin{equation}
\S(X,\F)\;\subseteq \;X
\end{equation}
of $\F$ as the set of points of $\X$ without neighborhoods in $X$ isomorphic to an open virtual 
disk on which $\F$ is constant.
We write $\S(\F)$ if no 
confusion is possible.
\end{definition}

\begin{definition}[Constancy radius]
For all $x\in X$ let $t_x\in X_{\H(x)}$ be the canonical point of Section \ref{subsubs : t_x}. 
We define \emph{the constancy radius 
$\rho_{F}(x):=\rho_{\Gamma(F)}(x)$
of $F$ at $x$} as the radius of the largest open disk in 
$\X_{\H(x)}$ centered at $t_x$ on which 
the composite map 
$F_{\H(x)}:\X_{\H(x)}\to\X\to\mathcal{T}$ is constant. 
\end{definition}
\subsection{Basic properties.} 
\label{Basic properties.}
Since $D(x)=D^-(t_x,r(x))\subset\pi_{\H(x)/K}^{-1}(x)$, 
from the definition one immediately has 
\begin{equation}\label{encadrement rho}
r(x)\;\leq\;\rho_{\F}(x)\;\leq\;\rho_{x,X}\;\leq\;R_0\;.
\end{equation}
\begin{lemma}\label{Lemma : x in Gamma(R) ssi r(x)=rho_R(x)}
The following conditions are equivalent:
\begin{enumerate}
\item[(i)] $x\in\Gamma(\F)$;
\qquad \emph{(ii)} $r(x)=\rho_{\F}(x)$; 
\qquad \emph{(iii)} there exists 
$y\in X$ such that $x=\lambda_{y}(\rho_{\F}(y))\in\X$.
\end{enumerate}
\end{lemma}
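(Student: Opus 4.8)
The plan is to prove the cycle (i) $\Rightarrow$ (ii) $\Rightarrow$ (iii) $\Rightarrow$ (i), using the basic encadrement \eqref{encadrement rho} and the description of the generic and maximal disks of a point. First I would unwind the definitions: for $x\in X$ with canonical Dwork generic point $t_x\in X_{\H(x)}$, the constancy radius $\rho_{\F}(x)$ is the radius of the largest open disk centered at $t_x$ in $X_{\H(x)}$ on which $F_{\H(x)}$ is constant, and it satisfies $r(x)\leq\rho_\F(x)\leq\rho_{x,X}$. I would also recall that the disk $D^-(t_x,r(x))=D(x)$ is exactly the fiber-generic disk, so every $\H(x)$-point of $D^-(t_x,r(x))$ is again a Dwork generic point for $x$ (Corollary \ref{Lemma: generic disk r(x)h}); hence $F_{\H(x)}$ is automatically constant, equal to $F(x)$, on $D^-(t_x,r(x))$, which re-proves the left inequality.

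For (ii) $\Rightarrow$ (i): if $r(x)=\rho_\F(x)$, then the largest open disk in $X_{\H(x)}$ around $t_x$ on which $F$ is constant is $D(x)$ itself, and $D(x)$ is a fiber of $\pi_{\H(x)/K}$, so its image in $X$ is the single point $x$. Any open virtual disk neighborhood $D$ of $x$ in $X$ would pull back to an open set in $X_{\H(x)}$ strictly containing $D(x)$ (since $D(x)$ is a relative boundary point's fiber, not open), on which $F$ would be constant, contradicting maximality. Hence $x$ has no virtual-disk neighborhood on which $F$ is constant, i.e. $x\in\Gamma(\F)$. Conversely, for (i) $\Rightarrow$ (ii) (the contrapositive): if $r(x)<\rho_\F(x)$, then the open disk $D^-(t_x,\rho_\F(x))$ strictly contains $D(x)$, so its image under $\pi_{\H(x)/K}$ is an open virtual disk neighborhood of $x$ in $X$ — here one uses that $\lambda_x$ identifies $[r(x),R_0]$ with $[x,x_{c_0,R_0}]$, so moving the radius past $r(x)$ genuinely enlarges the image — and by construction $F$ is constant on it, so $x\notin\Gamma(\F)$.

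For the equivalence with (iii): if $x=\lambda_y(\rho_\F(y))$ for some $y$, then $x=\delta_{\Gamma(\F)}(y)$ is by definition a point of the graph $\overline{\Gamma(\F)}$, and one checks $r(x)=\rho_\F(x)$ directly — the point $\lambda_y(\rho_\F(y))$ sits on the boundary of the constancy disk of $y$, and using Lemma \ref{rhogen titi} ($r(\lambda_y(\rho))=\max(\rho,r(y))$ for $\rho$ in the relevant range) together with the fact that the constancy disk of $x=\lambda_y(\rho_\F(y))$ cannot be strictly larger than that of $y$ forces $\rho_\F(x)=r(x)$. Conversely, if $r(x)=\rho_\F(x)$, take $y=x$: then $\lambda_x(\rho_\F(x))=\lambda_x(r(x))=x$ by the corollary stating $\lambda_x$ is constant on $[0,r(x)]$, giving (iii). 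The main obstacle I anticipate is the bookkeeping in (iii), namely verifying that $\rho_\F(x)=r(x)$ holds at a point of the form $\lambda_y(\rho_\F(y))$: one must rule out that passing to the boundary point $x$ of $y$'s constancy disk could allow a strictly larger constancy disk around $t_x$, which amounts to a careful comparison of the disks $D^-(t_y,\rho_\F(y))$ and $D^-(t_x,\cdot)$ inside the common ambient space after a suitable field extension, together with the observation that $x\notin D(y)$ precisely because $\rho_\F(y)>r(y)$ when $y\notin\Gamma(\F)$ (and the statement is trivial when $y\in\Gamma(\F)$, since then $x=y$).
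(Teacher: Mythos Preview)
Your proposal is correct and uses the same two underlying ideas as the paper: the contrapositive of (i)$\Rightarrow$(ii) (project the too-large constancy disk $D^-(t_x,\rho_F(x))$ down to $X$), and the observation that a constancy disk in $X$ containing $x=\lambda_y(\rho_F(y))$ must also contain $y$ and hence contradict the maximality of $\rho_F(y)$.

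The paper organizes these more economically, proving exactly the cycle (i)$\Rightarrow$(ii)$\Rightarrow$(iii)$\Rightarrow$(i) you announce rather than the four implications you actually carry out. In particular, its (iii)$\Rightarrow$(i) is a one-line contrapositive: if $D\subseteq X$ is a virtual open disk containing $x$ on which $F$ is constant, then $y\in D$, and the component of $D_{\H(y)}$ containing $t_y$ is then a constancy disk of radius strictly larger than $\rho_F(y)$, contradiction. This single step subsumes both your (ii)$\Rightarrow$(i) and your (iii)$\Rightarrow$(ii), and it sidesteps exactly the bookkeeping you flag as the main obstacle: there is no need to compare $D^-(t_y,\cdot)$ with $D^-(t_x,\cdot)$ in a common field extension, since one works directly with a disk in $X$ itself and only pulls back to $X_{\H(y)}$.
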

\begin{proof}
If  $x\in\Gamma(\F)$, then $\rho_{\F}(x)=r(x)$, 
because if $r(x)<\rho_{\F}(x)$, the image in $X$ of 
$D^-(t_x,\rho_\F(x))$ is virtual disk containing $x$ on which $\F$ is 
constant. Hence $(i)\Rightarrow (ii)$. 
Now $x=\lambda_x(r(x))$, so $(ii)\Rightarrow (iii)$.
Assume now $(iii)$. If $D\subseteq X$ is a 
virtual open disk containing $x$ on which $\F$ is constant, 
then $y\in D$ and 
$D_{\H(y)}\subseteq D^-(t_y,\rho_\F(y))$. 
Hence we obtain the contradiction $x\neq\lambda_y(\rho_\F(y))$.
So $(iii)\Rightarrow(i)$.
\end{proof}
\begin{lemma}
Let $F:X\to\mathcal{T}$ and $F':X\to\mathcal{T}'$ be two functions. 
We have $\Gamma(F)=\Gamma(F')$ if and only if 
$\rho_F(x)=\rho_{F'}(x)$ for all $x\in X$.\hfill$\Box$
\end{lemma}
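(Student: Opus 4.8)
The plan is to prove the equivalence directly from the definitions, using the characterization of the controlling graph in terms of the constancy radius already established.

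First I would prove the implication "$\rho_F(x)=\rho_{F'}(x)$ for all $x$ $\Rightarrow$ $\Gamma(F)=\Gamma(F')$". This is immediate from Lemma \ref{Lemma : x in Gamma(R) ssi r(x)=rho_R(x)}, whose criterion (i)$\Leftrightarrow$(ii) says that $x\in\Gamma(F)$ if and only if $r(x)=\rho_F(x)$, a condition that depends on $F$ only through $\rho_F$. So if $\rho_F=\rho_{F'}$ pointwise, then $x\in\Gamma(F)\Leftrightarrow r(x)=\rho_F(x)\Leftrightarrow r(x)=\rho_{F'}(x)\Leftrightarrow x\in\Gamma(F')$, giving $\Gamma(F)=\Gamma(F')$.

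For the converse, assume $\Gamma(F)=\Gamma(F')=:\Gamma$. The point is that $\rho_F(x)=\rho_{\Gamma(F)}(x)$ by definition of the constancy radius (as recorded in the statement of Definition \ref{Def.  rho_S xi} applied to $\Gamma(F)$), and likewise $\rho_{F'}(x)=\rho_{\Gamma(F')}(x)$. Since $\Gamma(F)$ and $\Gamma(F')$ are the same subset of $X$, the two functions $\rho_{\Gamma(F)}$ and $\rho_{\Gamma(F')}$ are literally the same function of $x$, namely $\rho_\Gamma(x)=\inf\{\rho\geq r(x): \lambda_x(\rho)\in\Gamma\}$. Hence $\rho_F(x)=\rho_{F'}(x)$ for all $x\in X$. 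One should check that $\Gamma(F)$ is indeed a non-empty saturated subset so that Definition \ref{Def.  rho_S xi} legitimately applies; this follows because $\Gamma(F)$ is an admissible graph (it contains $\Gamma_X$, hence is non-empty and saturated by Lemma \ref{Lemma : Admissible}), a fact recalled right after Definition \ref{Def: intro skeleton}.

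The only subtlety — and thus the step warranting the most care — is the identification $\rho_F(x)=\rho_{\Gamma(F)}(x)$, i.e. checking that the "largest open disk in $X_{\H(x)}$ centered at $t_x$ on which $F_{\H(x)}$ is constant" has radius exactly $\inf\{\rho\geq r(x):\lambda_x(\rho)\in\Gamma(F)\}$. This is essentially the content of Lemma \ref{Lemma : x in Gamma(R) ssi r(x)=rho_R(x)} and the surrounding discussion of the canonical retraction $\delta_\Gamma$: a virtual open disk in $X$ containing $x$ on which $F$ is constant corresponds, after base change to $\H(x)$, to an open disk centered at $t_x$ contained in $X_{\H(x)}-\Gamma(F)_{\H(x)}$, and the maximal such disk is cut out precisely by the first $\rho$ at which the path $\lambda_x$ meets $\Gamma(F)$. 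Once this identification is in hand, the lemma follows formally.
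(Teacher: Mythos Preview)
Your argument is correct, and the paper gives no proof at all (the trailing $\Box$ signals that the author considers it immediate), so there is nothing to compare against; your route via $\rho_F(x)=\rho_{\Gamma(F)}(x)$ together with Lemma~\ref{Lemma : x in Gamma(R) ssi r(x)=rho_R(x)} is exactly the intended one. One small bookkeeping remark: the identity $\rho_F=\rho_{\Gamma(F)}$ and the admissibility of $\Gamma(F)$ are only formally recorded in Proposition~\ref{prof propp} (points~(iii) and the first sentence), which appears \emph{after} this lemma, so if you want to avoid a forward reference you should either invoke the definitional equality $\rho_F(x):=\rho_{\Gamma(F)}(x)$ directly (as the paper's Definition of the constancy radius already writes it), or keep your final paragraph as a self-contained justification.
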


\begin{proposition}\label{prof propp}
$\S(\F)$ is an admissible graph in $X$. Moreover it satisfies:
\begin{enumerate}
\item $\xi\in\Gamma(\F)$ if and only if $\rho_{\F}(\xi)=r(\xi)$;
\item $\lambda_\xi(\rho_{\F}(\xi))\in\S_X$ if and only if $\rho_{\F}(\xi)=\rho_{\xi,X}$;
\item $\rho_{\F}(\xi)=\rho_{\Gamma(\F)}(\xi)$ for all 
$\xi\in X$ (cf. \eqref{rho_s(t)});
\item For all $\xi\in X$, and all $\rho\in[0,R_0]$ one has 
$\rho_{\F}(\lambda_\xi(\rho))=\max(\rho,\rho_{\F}(\xi))$;
\item If $\F$ is constant on a virtual open disk $D\subset X$, then $D\cap\S(\F)$ is empty.
\end{enumerate}
\end{proposition}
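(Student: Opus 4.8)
The plan is to verify the five listed properties mostly by reducing everything to the defining notion of the constancy radius $\rho_\F$ and using the already-established Lemmas \ref{Lemma : x in Gamma(R) ssi r(x)=rho_R(x)} and \ref{Lemma : Admissible}. First I would prove that $\Gamma(\F)=\mathrm{Sat}(\Gamma(\F))$ and that $\Gamma(\F)$ contains $\Gamma_X$, so that by Lemma \ref{Lemma : Admissible} (once we also check it contains its endpoints) it is admissible. For the inclusion $\Gamma_X\subseteq\Gamma(\F)$: a point $x\in\Gamma_X$ has no neighborhood isomorphic to a virtual open disk at all, a fortiori none on which $\F$ is constant, so $x\in\Gamma(\F)$. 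For saturation, the key is a monotonicity statement for $\rho_\F$ along the canonical path, which is exactly item (4); so I would prove (4) first and deduce saturation as a formal consequence.

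To prove item (4), fix $\xi\in X$ and $\rho\in[0,R_0]$, and set $y=\lambda_\xi(\rho)$. If $\rho\le r(\xi)$ then $y=\xi$ and both sides equal $\rho_\F(\xi)$ (using $\rho_\F(\xi)\ge r(\xi)\ge\rho$ from \eqref{encadrement rho}), so assume $\rho>r(\xi)$, i.e. $y$ is a genuinely further point on $\Lambda(\xi)$ with $r(y)=\rho$ by Lemma \ref{rhogen titi}. The canonical disk $D^-(t_\xi,\rho_\F(\xi))$ maps to a virtual disk $D$ in $X$ on which $\F$ is constant. If $\rho<\rho_\F(\xi)$, then $y$ lies in (the image of) $D$, and the largest constancy disk at $t_y$ contains $D^-(t_\xi,\rho_\F(\xi))$ (both are disks inside $\pi^{-1}$-fibers with $t_y$ a Dwork generic point sitting inside $D^-(t_\xi,\rho_\F(\xi))$ once $\rho<\rho_\F(\xi)$), giving $\rho_\F(y)=\rho_\F(\xi)=\max(\rho,\rho_\F(\xi))$; here I would invoke Corollary \ref{Lemma: generic disk r(x)h} and Lemma \ref{rhogen titi} to identify generic points and radii. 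If $\rho\ge\rho_\F(\xi)$, then on one hand any constancy disk at $t_y$ maps to a virtual disk $D'$ containing $y$, hence containing $\xi$ on $\Lambda(\xi)$ below it, so $D'_{\H(\xi)}$ contains a constancy disk at $t_\xi$ and thus has radius $\le$ something — more precisely the constancy disk at $t_y$ cannot reach back past $\delta_{\Gamma(\F)}(\xi)$; combined with the transfer-type inequality $\rho_\F(y)\le\rho_\F(\lambda_\xi(\rho'))$ for $\rho'\le\rho$ this pins $\rho_\F(y)=\rho$. The clean way is: the constancy disk at $t_y$ has radius $\rho_\F(y)\ge r(y)=\rho$; and it equals $\rho$ exactly when $y\in\Gamma(\F)$, which holds because $y$ is at or beyond the boundary $\delta_{\Gamma(\F)}(\xi)$ of the constancy disk of $\xi$. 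I expect the bookkeeping of which disk sits inside which — translating between $\pi_{\H(x)/K}^{-1}(x)$-fibers for varying $x$ along $\Lambda(\xi)$ — to be the main technical obstacle, and I would handle it by always passing to a common spherically complete $\Omega$ via Lemma \ref{Gal transitive} and working with honest disks in $\mathbb{A}^{1,\mathrm{an}}_\Omega$.

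Items (1) and (3) are then immediate: (1) is literally the equivalence $(i)\Leftrightarrow(ii)$ of Lemma \ref{Lemma : x in Gamma(R) ssi r(x)=rho_R(x)}, and (3) says $\rho_\F(x)=\rho_{\Gamma(\F)}(x)$, i.e. that $\rho_\F(x)$ equals the first $\rho\ge r(x)$ with $\lambda_x(\rho)\in\Gamma(\F)$; by item (4), $\rho_\F(\lambda_x(\rho))=\max(\rho,\rho_\F(x))$, which equals $r(\lambda_x(\rho))=\max(\rho,r(x))$ precisely when $\rho\ge\rho_\F(x)$, so by (1) $\lambda_x(\rho)\in\Gamma(\F)\iff\rho\ge\rho_\F(x)$, giving $\rho_{\Gamma(\F)}(x)=\rho_\F(x)$ on the nose. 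Admissibility now follows: $\Gamma(\F)=\{\lambda_x(\rho):\rho\ge\rho_\F(x)\}_{x}$ together with $\Gamma_X\subseteq\Gamma(\F)$ shows $\mathrm{Sat}(\Gamma(\F))=\Gamma(\F)$ and that endpoints are included, so Lemma \ref{Lemma : Admissible} applies; the characterization of $X-\Gamma(\F)$ as a disjoint union of virtual open disks follows from the definition of $\Gamma(\F)$ as the complement of the locus covered by constancy disks.

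Finally, for item (2): by (3) and Definition \ref{Def.  rho_S xi} applied to $\Gamma=\Gamma_X$, we have $\lambda_\xi(\rho_\F(\xi))\in\Gamma_X$ iff $\rho_\F(\xi)\ge\rho_{\Gamma_X}(\xi)=\rho_{\xi,X}$ (using \eqref{eq : rho_x,X=rho_Gamma_X}), and since $\rho_\F(\xi)\le\rho_{\xi,X}$ always by \eqref{encadrement rho}, this forces equality $\rho_\F(\xi)=\rho_{\xi,X}$; conversely if $\rho_\F(\xi)=\rho_{\xi,X}$ then $\lambda_\xi(\rho_\F(\xi))=\delta_{\Gamma_X}(\xi)\in\Gamma_X$. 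Item (5) is a restatement of the definition: if $\F$ is constant on a virtual open disk $D$, then every $x\in D$ has $D$ itself as a neighborhood isomorphic to a virtual open disk on which $\F$ is constant, so $x\notin\Gamma(\F)$; hence $D\cap\Gamma(\F)=\emptyset$. I would present (5) and (2) as short paragraphs, (1) as a one-line citation of Lemma \ref{Lemma : x in Gamma(R) ssi r(x)=rho_R(x)}, and devote the bulk of the proof to (4), from which (3) and admissibility are corollaries.
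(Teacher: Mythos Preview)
Your plan is sound and aligns with the paper's proof, which is extremely terse: it observes that $\Gamma(\F)$ is by definition the complement of a union of virtual open disks (hence admissible), cites Lemma~\ref{Lemma : x in Gamma(R) ssi r(x)=rho_R(x)} for (i), calls (v) evident, and declares (ii)--(iv) ``straightforward''. Making (4) the load-bearing claim and deriving (3), saturation, and admissibility from it is a clean way to unpack that word.

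There is one real wobble. In the case $\rho\ge\rho_\F(\xi)$ of item (4), your argument for the upper bound $\rho_\F(y)\le\rho$ is not complete: the first attempt trails off (``has radius $\le$ something''), and the ``clean way'' you then propose appeals to $y\in\Gamma(\F)$ and to $\delta_{\Gamma(\F)}(\xi)$, which is circular since those are precisely the facts (1) and (3) you plan to deduce \emph{from} (4). The fix is to finish your first line directly: working in a common $\Omega'$ and choosing $t_y$ as a rational lift of $x_{t_\xi,\rho}\in X_\Omega$, one has $|t_y-t_\xi|=\rho$; so if $R:=\rho_\F(y)>\rho$ then $t_\xi\in D^-(t_y,R)$, whence $D^-(t_\xi,R)=D^-(t_y,R)$ and $\F_{\Omega'}$ is constant on $D^-(t_\xi,R)$, giving $\rho_\F(\xi)\ge R>\rho\ge\rho_\F(\xi)$, a contradiction. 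The point is that the large constancy disk at $t_y$ \emph{is} a constancy disk centered at $t_\xi$, not merely that it contains one. With this correction your derivation of (3), (2), (5), and admissibility via the criteria of Lemma~\ref{Lemma : Admissible} goes through as written.
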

\begin{proof}
We can assume $K=\widehat{K^{\mathrm{alg}}}$. 
By definition 
$\Gamma(\F)$ is the complement of a union of disks, so it is 
admissible by Lemma \ref{Lemma : Admissible}. Point i) follows from Lemma 
\ref{Lemma : x in Gamma(R) ssi r(x)=rho_R(x)}, 
and property v) is evident. 

Now ii), iii) and iv) are straightforward.
\end{proof}

For all $x\in X$ we set 
$\delta_\F(x):=\delta_{\Gamma(\F)}(x)=\lambda_x(\rho_\F(x))$. 
We say that $\F$ is \emph{finite} if $\Gamma(\F)$ is a finite graph. 
In this case $\delta_\F:X\to\Gamma(\F)$ is a continuous retraction 
(cf. after Def. \ref{Def.  rho_S xi}). 

\begin{remark}\label{deltadelta}
The correspondence $\F\mapsto \delta_\F$ is idempotent : 
$\delta_{\delta_\F}=\delta_\F$.
More precisely if $\S\subseteq\X$ is a saturated subset, and if 
$\F=\delta_\S:\X\to\overline{\S}$ is its retraction, then 
\begin{equation}
\delta_{\delta_\Gamma}\;=\;\delta_{\Gamma\cup X}\;=\;\delta_{\overline{\Gamma}\cup\Gamma_X}\;.
\end{equation}
Every admissible graph $\Gamma$ 
is the skeleton of its retraction map $\delta_\S$ (i.e. $\S=\S(\delta_\S)$).
\end{remark}

\begin{remark}\label{Rk : min max fini}
Let $\F_i:\X\to\mathcal{T}_i$, $i=1,2$, and let 
$g:\mathcal{T}_1\times\mathcal{T}_2\to\mathcal{T}_3$ be any 
functions. Then 
\begin{equation}
\S\bigl(g\circ (\F_1\times\F_2)\bigr)\;\subseteq\;
\S(\F_1)\cup\S(\F_2)\;.
\end{equation}
Indeed clearly $\rho_{\F_3}(\xi)\geq
\min(\rho_{\F_1}(\xi),\rho_{\F_2}(\xi))$, 
and $\S(\F_1)\cup\S(\F_2)$ is saturated. 

If $\mathcal{T}_i=\mathbb{R}$, 
this holds in particular for $\max(\F_1,\F_2)$ or $\min(\F_1,\F_2)$.
\end{remark}

\begin{remark}\label{Rk : restr to X'}
Let $X'\subseteq X$ be a sub-affinoid, and let $\F':X'\to\mathcal{T}$ be the 
restriction of $\F:X\to\mathcal{T}$ to 
$X'$. To avoid confusion we denote by $\Gamma(X,\F)\subseteq X$, 
$\Gamma(X',\F')\subset X'$, $\rho_{\F}(X,-)$, $\rho_{\F'}(X',-)$ 
the respective skeletons and constancy radii. 
If $\xi'\in X'$ one has $\rho_{\F'}(X',x')=
\min(\rho_{\F}(X,x'),\rho_{x',X'})$, so
\begin{equation}
\S(X',\F')\;=\;\Bigl(\Gamma(X,\F)\cap X' \Bigr)\cup\Gamma_{X'}\;.
\end{equation}
Hence the directional finiteness of $\F$ at $x'\in X'$ is 
equivalent to that of $\F'$ at $x'$. 
Moreover the finiteness of $\F$ on $X$ implies that of $\F'$ on $X'$. 
\end{remark}

\begin{proposition}[Scalar extension]\label{Prop: iso O/K}
Let $\Omega\in E(K)$ and let as usual 
$\pi_{\Omega/K}: X_{\Omega}\to X$ 
be the canonical projection. Denote by 
$\F_{\Omega}:X_\Omega\to\mathcal{T}$ the composite map 
$\F\circ\pi_{\Omega/K}$. 
One has $\Gamma(\F)=\pi_{\Omega/K}(\Gamma(\F_\Omega))$.
Moreover if $K$ is algebraically closed, then $\pi_{\Omega/K}$ induces 
a bijection between $\Gamma(\F_\Omega)$ and $\Gamma(\F)$ with 
inverse $\sigma_{\Omega/K}$ (cf. Prop. \ref{Prop : FIBER}).
In particular $\F$ is finite if and only if $\F_{\Omega}$ is finite.
\end{proposition}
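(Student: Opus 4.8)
The plan is to reduce everything to the case $K = \widehat{K^{\mathrm{alg}}}$ and to understand $\Gamma(\F_\Omega)$ through the fiber structure of $\pi_{\Omega/K}$ described in Proposition \ref{Prop : FIBER}. First I would observe that both statements ($\Gamma(\F)=\pi_{\Omega/K}(\Gamma(\F_\Omega))$, and the bijection when $K$ is algebraically closed) amount to comparing the constancy radii: by Proposition \ref{prof propp}(iii) and the preceding lemma, the controlling graph is determined by the function $x\mapsto \rho_{\F}(x)$, so it suffices to relate $\rho_{\F_\Omega}$ on $X_\Omega$ to $\rho_{\F}$ on $X$. The key identity to establish is that for $y\in X_\Omega$ with $x:=\pi_{\Omega/K}(y)$ one has
\begin{equation}
\rho_{\F_\Omega}(y)\;=\;\max\bigl(\,|y-\sigma_{\Omega/K}(x)|\text{-type distance},\,\rho_{\F}(x)\,\bigr),
\end{equation}
i.e. $\rho_{\F_\Omega}(y)=\rho_{\F}(x)$ exactly when $y$ lies on (the image in $X_\Omega$ of) the point $\sigma_{\Omega/K}(x)$ or, more precisely, when $y=\delta_{\Gamma_{X_\Omega}\cup\cdots}$-compatible with the generic disk. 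The cleanest route: for $y\in X_\Omega$ let $t\in X(\Omega')$ (some $\Omega'\in E(\Omega)$) be a Dwork generic point for $y$; then $t$ is also a Dwork generic point for $x$, and the largest disk around $t$ on which $\F_{\Omega'}=\F\circ\pi$ is constant is literally the same disk as the largest one on which $\F_{\H(x),\ldots}$ is constant — constancy of $\F\circ\pi_{\Omega/K}$ on a disk around $t$ is equivalent to constancy of $\F$ on the corresponding virtual disk downstairs, because $\pi_{\Omega/K}$ maps such a disk onto a virtual disk in $X$ and every virtual disk in $X$ arises this way. Hence $\rho_{\F_\Omega}(y)=\rho_{\F}(x)$ for every $y$, and then by Lemma \ref{Lemma : x in Gamma(R) ssi r(x)=rho_R(x)} (condition (ii)), $y\in\Gamma(\F_\Omega)\iff r(y)=\rho_{\F_\Omega}(y)=\rho_{\F}(x)$.

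Next I would combine this with Lemma \ref{rhogen titi} / Lemma \ref{Gal transitive}: among all $y\in\pi_{\Omega/K}^{-1}(x)$, the generic radius $r(y)$ is minimized precisely at $y=\sigma_{\Omega/K}(x)$, where $r(\sigma_{\Omega/K}(x))=r(x)$ (the peaked point has the same generic radius as $x$, since $D(x)$ has radius $r(x)$ and sits inside the fiber with $\sigma_{\Omega/K}(x)$ as boundary), while for $y\neq\sigma_{\Omega/K}(x)$ one has $r(y)>r(x)$, being a point strictly inside one of the open disks of the fiber. Therefore $y\in\Gamma(\F_\Omega)$ forces $r(y)=\rho_{\F}(x)\geq r(x)$, and conversely; when $K=\widehat{K^{\mathrm{alg}}}$ this pins down, for each $x\in\Gamma(\F)$ (i.e. each $x$ with $\rho_{\F}(x)=r(x)$), a unique preimage in $\Gamma(\F_\Omega)$, namely $\sigma_{\Omega/K}(x)$, and shows $\pi_{\Omega/K}$ restricts to a bijection $\Gamma(\F_\Omega)\simto\Gamma(\F)$ with inverse $\sigma_{\Omega/K}$. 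Surjectivity of $\pi_{\Omega/K}|_{\Gamma(\F_\Omega)}$ onto $\Gamma(\F)$ and the inclusion $\pi_{\Omega/K}(\Gamma(\F_\Omega))\subseteq\Gamma(\F)$ both follow from the displayed equality of constancy radii: $x\in\Gamma(\F)\iff\rho_\F(x)=r(x)\iff\rho_{\F_\Omega}(\sigma_{\Omega/K}(x))=r(\sigma_{\Omega/K}(x))\iff\sigma_{\Omega/K}(x)\in\Gamma(\F_\Omega)$. For general (non-algebraically-closed) $K$, one descends along $\mathrm{Gal}(\widehat{K^{\mathrm{alg}}}/K)$: apply the algebraically closed case to $\widehat{K^{\mathrm{alg}}}\subseteq\Omega$ (after enlarging $\Omega$ if necessary so it contains $\widehat{K^{\mathrm{alg}}}$), and note that all the objects in sight are Galois-equivariant, so the equality $\Gamma(\F)=\pi_{\Omega/K}(\Gamma(\F_\Omega))$ passes to the quotient.

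Finally, the last sentence ("$\F$ is finite iff $\F_\Omega$ is finite") follows because $\pi_{\Omega/K}:\Gamma(\F_\Omega)\to\Gamma(\F)$ is a surjection of graphs that is a bijection over $\widehat{K^{\mathrm{alg}}}$, and a quotient of a finite graph under a group action is finite while conversely the finitely many edges downstairs have preimages that are finite unions of edges (the fibers of $\pi_{\Omega/K}$ over points of the skeleton are finite, since only finitely many branches are involved). The main obstacle I anticipate is making the identification "constancy of $\F\circ\pi_{\Omega/K}$ on a disk around $t$ $\iff$ constancy of $\F$ on a virtual disk downstairs" fully rigorous: one must check that $\pi_{\Omega/K}$ carries an honest open disk $D^-(t,\rho)\subseteq X_\Omega$ (with $\rho$ in the right range) onto a virtual open disk in $X$, and that every virtual open disk in $X$ on which $\F$ is constant lifts to such a $D^-(t,\rho)$ — this is where the peaked-point structure of the fiber and the behaviour of $\lambda_x(\rho)$ under $\pi_{\Omega/K}$ (Section \ref{Section 1.0.4}, $\lambda_x(\rho)=\pi_{\Omega/K}(x_{t,\rho})$) do the real work, together with the transitivity of the Galois action (Lemma \ref{Gal transitive}) to handle the non-canonical choices of lift $t$.
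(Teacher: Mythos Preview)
Your overall strategy is sound and close in spirit to the paper's: reduce to $K$ algebraically closed via Galois descent, then use the fiber structure of $\pi_{\Omega/K}$ (Proposition~\ref{Prop : FIBER}) to identify $\Gamma(\F_\Omega)$ with $\sigma_{\Omega/K}(\Gamma(\F))$. The paper's own proof is terser: it simply observes that, for $K$ algebraically closed, an open disk containing $x$ on which $\F$ is constant exists if and only if an open disk containing $\sigma_{\Omega/K}(x)$ on which $\F_\Omega$ is constant exists --- this is immediate from Proposition~\ref{Prop : FIBER}, since disks in $X$ lift to disks in $X_\Omega$ and conversely project down. Your route through the identity $\rho_{\F_\Omega}(y)=\rho_\F(x)$ (which is correct, since any Dwork generic point $t_y$ for $y$ is also one for $x$, and $\F_\Omega\circ\pi_{\mathcal{H}(y)/\Omega}=\F\circ\pi_{\mathcal{H}(y)/K}$) is a legitimate and more explicit unpacking of the same idea.

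However, you have the inequality governing $r_\Omega(y)$ on the fiber backwards. For $y\in\pi_{\Omega/K}^{-1}(x)$ with $y\neq\sigma_{\Omega/K}(x)$, the point $y$ lies \emph{strictly inside} one of the open disks of radius $r_K(x)$ composing the fiber, so $r_\Omega(y)<r_K(x)$, not $r_\Omega(y)>r_K(x)$; the peaked point $\sigma_{\Omega/K}(x)=x_{t,r_K(x)}$ is where $r_\Omega$ is \emph{maximal} on the fiber, with value $r_K(x)$ (Lemma~\ref{rhogen titi}). Your version is in fact inconsistent with your own earlier claim: combining $\rho_{\F_\Omega}(y)=\rho_\F(x)$ with the general bound $r_\Omega(y)\leq\rho_{\F_\Omega}(y)$ of \eqref{encadrement rho} forces $r_\Omega(y)\leq\rho_\F(x)$, which for $x\in\Gamma(\F)$ gives $r_\Omega(y)\leq r_K(x)$, contradicting your asserted strict inequality in the other direction. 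With the corrected inequality the argument goes through cleanly: when $x\in\Gamma(\F)$ (i.e.\ $\rho_\F(x)=r_K(x)$), the condition $r_\Omega(y)=\rho_{\F_\Omega}(y)=r_K(x)$ singles out $y=\sigma_{\Omega/K}(x)$ as the unique point of the fiber lying in $\Gamma(\F_\Omega)$; when $x\notin\Gamma(\F)$ (i.e.\ $\rho_\F(x)>r_K(x)$), no $y$ in the fiber can satisfy $r_\Omega(y)=\rho_\F(x)$ since $r_\Omega(y)\leq r_K(x)$, so the fiber misses $\Gamma(\F_\Omega)$ entirely.
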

\begin{proof}
We have $X=X_{\widehat{K^{\mathrm{alg}}}}/\mathrm{G}$ where 
$\mathrm{Gal}(K^{\mathrm{alg}}/K)$. Hence 
$\Gamma(\F)=
\Gamma(\F_{\widehat{K^{\mathrm{alg}}}})/\mathrm{G}$.
As a consequence $\F$ is a finite function if and only if 
$\F_{\widehat{K^{\mathrm{alg}}}}$ is. So we can assume 
$K$ algebraically closed.
By Proposition \ref{Prop : FIBER}, 
there is a open disk containing $x\in X$ on which 
$\F$ is constant if and only if there is a open disk containing 
$\sigma_{\Omega/K}(x)\in X_{\Omega}$ on which 
$\F_\Omega$ is constant. The claim follows.
\end{proof}

\begin{remark}\label{Examples of skeletons.}
\begin{enumerate}
\item Let $\F=\mathrm{Id}_{\X}:\X\to\X$ be the identity, 
then $\S(\mathrm{Id}_{\X})=\S(r_K)=\X$ (cf. \eqref{eq : r_K}).
\item Let $\F=1:\X\to\{pt\}$ be a constant map, then 
$\S(1)=\S(\rho_{-,X})=\S_X$ is the skeleton of $\X$. 
\item Let $f_1,\ldots,f_n\in\O(X)$, 
let $\alpha_1,\ldots,\alpha_n> 0$, and let 
$\F(\xi):=\min_i(|f_i(\xi)|^{\alpha_i})$.
Then $\S(\F)=\mathrm{Sat}(\{z_1,\ldots,z_r\})\cup\S_X$, 
where 
$\{z_1,\ldots,z_r\}\subset X(K^{\mathrm{alg}})$ is the union of 
all zeros of $f_1,\ldots,f_n$.
\item With the above notations if 
$\F(\xi):=\max_i(|f_i(\xi)|^{-\alpha_i})$, intended as a function 
with values in the set 
$\mathcal{T}:=\mathbb{R}_{> 0}\cup\{\infty\}$, then one again 
has $\S(\F)=\mathrm{Sat}(\{x_{z_1},\ldots,x_{z_r}\})\cup\S_X$.
\item Assume now that $\F(\xi):=\max_i|f_i(\xi)|^{\alpha_i}$ (resp. 
$\F(\xi):=\min_i|f_i(\xi)|^{-\alpha_i}$ as a function with 
values in $\mathcal{T}:=\mathbb{R}_{> 0}\cup\{\infty\}$). 
In this case the explicit description of the skeleton $\S(\F)$ is more 
complicate. However, one can easily deduce its finiteness from Remark 
\ref{Rk : min max fini}.
\end{enumerate}
\end{remark}
\subsection{Branch continuity and dag-skeleton.}
\label{Branch continuity and dag-skeleton.}
We investigate now whether the function $\F$ admits a factorization as 
$\F=\F_{|_{\S(\F)}}\circ\delta_\F$:
\begin{equation}
\xymatrix@=10pt{
&\X\ar[rr]^-{\F}\ar[dr]^-{\begin{picture}(0,0)\put(0.5,0.5)
{\begin{scriptsize}$\delta_\F$\end{scriptsize}}
\end{picture}}&&\mathcal{T}\\
\S(\F)\ar[ur]\ar@{=}[rr]&&\S(\F)\ar@{..>}[ur]_-{\F_{|_{\S(\F)}}}&}
\end{equation}
This is not automatically verified. In fact for a given $\xi\in\X$ the 
restriction 
$\F\circ\lambda_{\xi}:[0,R_0]\to\mathcal{T}$ is constant for 
$\rho\in[0,\rho_\F(\xi)[$, but one may have a 
different value at $\rho=\rho_{\F}(\xi)$. 

We say that $\F$ is \emph{branch continuous} if for all $\xi\in\X$ one 
has 
\begin{equation}
\F(\lambda_{x}(\rho_\F(x) ) )\;=\;
\lim_{\rho\to\rho_\F(x)^-}\F(\lambda_{x}(\rho))\;=\;
\F(x)\;.
\end{equation}

A branch continuous map factorizes as 
$\F=\F_{|_{\S(\F)}}\circ\delta_\F$ and is determined by its values on 
$\S(\F)$. 

A continuous function with values in a Hausdorff space $\mathcal{T}$ 
is branch continuous. 

Conversely a finite and 
branch continuous function is continuous if, and only if, its restriction to 
$\S(\F)$ is continuous. 

For some purposes this situation may be unsatisfactory since we want 
to factorize all functions.
For this we define the \emph{dag-skeleton} $\S(\F)^{\dag}$ as 
the union of $\S(\F)$ 
together with an (unspecified) germ of segment out of all point $x$ of 
$\Gamma(\F)$, for all direction $b\in\Delta(x)$.  
Clearly, any function $\F$ factorizes through its 
dag-skeleton $\S(\F)^{\dag}$. 
This situation will not occur in this paper since all the functions will be 
branch continuous.
This idea can be better expressed in term of Huber spaces 
\cite{Huber-Book}, indeed germs of segments correspond to 
Huber points, but this lies outside the scopes of this paper.

\subsection{Minimal triangulation}
We denote by $S_X$ the union of 
the Shilov boundary $\partial X$ and of the bifurcation points of 
$\Gamma_X$. If $K$ is algebraically closed, $X$ is of the form 
\eqref{eq : affinoid}, then we explicitly have
\begin{equation}\label{eq : S_X}
S_X\;:=\;\{x_{c_i,R_i}\}_{i=0,\ldots,n}\cup
\{x_{c_i,|c_i-c_j|}\}_{i\neq j, i,j=1,\ldots,n}\;.
\end{equation}
If $K$ is general, $S_X$ is the image of $S_{X_{\Ka}}$ by the 
projection.
We notice that all points of $S_X$ are of type $2$ or $3$, and that 
$X-S_X$ is a disjoint union of virtual open disks or annuli that are 
relatively compact in $X$. This is called a \emph{triangulation} of $X$ 
in \cite{Duc}, and it is related to the existence of a formal model of 
$X$. 
More precisely $S_X$ is the minimum triangulation of $X$.

\subsection{A Criterion for the finiteness of a positive 
real valued function $\F$.}
\label{F_t defi} 
\label{Main result}
Let as usual $X$ be an affinoid domain of the affine line.
Let 
\begin{equation}
\F\;:\;\X\to\mathbb{R}_{>0}
\end{equation} 
be a positive function. 
We know that $L_x\F$ is constant at least on $]-\infty,\ln(r(x))]$ 
(cf. Section \ref{Directions, slopes, directional finiteness, 
and harmonicity}). 

Let $\Gamma\subseteq X$ be a finite admissible graph. We consider the following 
conditions:
\begin{enumerate}
\item[(C1)] For all $\xi\in\X(\widehat{K^{\mathrm{alg}}})$ 
one has $\rho_{\F}(\xi)>0$ (equivalently $\Gamma(\F)$ has no points 
of type $1$). 
\item[(C2)] For all $\xi\in\X$ the function 
$L_x\F:]-\infty,\ln(R_0)]\to\mathbb{R}$ is 
continuous on $]-\infty,\ln(R_0)]$, piecewise affine on it, 
and with a finite number of breaks all along $]-\infty,\ln(R_0)]$.
\item[(C3)] 
For all $x\in X$, the function $L_x\F$ is concave on 
$]-\infty,\ln(\rho_\Gamma(x))[$. This implies in particular that 
if $]x,y[\cap\Gamma=\emptyset$, 
then $\F$ is $\log$-concave on $]x,y[$ (cf. Def.  
\ref{Def. : LOG-convex}).

\item[(C4)] The \emph{non zero} slopes of $\F$ can not be 
arbitrarily small. Namely there exists a positive constant $\nu_\F>0$ 
such that for all $x\in X$, and all germ of segment $b$ out of $x$ 
one has
\begin{equation}
\partial_b\F(x) \in\; ]-\infty,-\nu_\F[\;\cup\;\{0\}\;\cup\;]\nu_\F,+\infty[\;.
\end{equation}

\item[(C5)] $\Gamma(\F)$ is directionally finite at all its bifurcation 
points (cf. Section 
\ref{Directions, slopes, directional finiteness, and harmonicity}).
 
\item[(C6)] There exists a finite set $\C(\F)\subseteq X$  
such that if $x$ is a bifurcation point of $\Gamma(\F)$ 
not in $\C(\F)\cup\partial X$, then $\F$ is super-harmonic at $x$ 
(cf. Def. \ref{Def : sub-harmonic}). 
\end{enumerate}
\begin{remark}
\begin{enumerate}
\item 
By \eqref{encadrement rho}, condition $\rho_{\F}(x)>0$ for all   $x\notin X(\Ka)$;
\item If $\F$ verifies (C1) and (C2) then it is 
branch-continuous (cf. Section 
\ref{Branch continuity and dag-skeleton.});
\item (C1) plus (C3) imply that $\F$ is 
logarithmically not increasing over each segment $]x,y[$ (oriented 
towards $+\infty$) such that $]x,y[\cap\Gamma=\emptyset$.

\item Conditions (C2) and (C5) ensure Notation 
\ref{Notation : slopes exist} 
so that $dd^c\F(x)$ is defined for all $x\in X$.
\end{enumerate}
\end{remark}

\begin{proposition}[Permanence of (C1)--(C6) by scalar extension]
\label{Invariance of (C1)--(C6) by scalar extension of the ground field}
With the notations of Proposition \ref{Prop: iso O/K}, if $i\in\{1,\ldots,6\}$,  
then $\F_{\Omega}:=\F\circ\pi_{\Omega/K}$ 
verifies (Ci) if and only if $\F$ verifies (Ci). 
\end{proposition}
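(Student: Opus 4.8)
The plan is to treat the six conditions one at a time, exploiting the compatibility of all the relevant constructions with the projection $\pi_{\Omega/K}:X_\Omega\to X$. The basic mechanism is Proposition \ref{Prop: iso O/K}: it identifies $\Gamma(\F_\Omega)$ with $\pi_{\Omega/K}^{-1}(\Gamma(\F))$ in a way compatible with $\sigma_{\Omega/K}$, and more importantly it says that a virtual open disk of constancy of $\F$ around $x$ corresponds to one around $\sigma_{\Omega/K}(x)$ for $\F_\Omega$. Since by Proposition \ref{Prop: iso O/K} and its proof one may reduce to $K$ algebraically closed on both sides (the case of a general $K$ being handled by passing through $\widehat{K^{\mathrm{alg}}}$), I would from the start assume $K=\widehat{K^{\mathrm{alg}}}$, so that $\sigma_{\Omega/K}:X\hookrightarrow X_\Omega$ is a section of $\pi_{\Omega/K}$ and $\rho_{\F_\Omega}(t)$ can be compared with $\rho_\F(\pi_{\Omega/K}(t))$.

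First I would record the key numerical comparison: for $t\in X_\Omega$ lying over $x=\pi_{\Omega/K}(t)$, one has $\rho_{\F_\Omega}(t)=\max(\rho_\F(x),|t-\sigma_{\Omega/K}(x)|)$ (using that the fiber $\pi_{\Omega/K}^{-1}(x)-\{\sigma_{\Omega/K}(x)\}$ is a disjoint union of open disks on which $\F_\Omega$ is constant, together with Proposition \ref{prof propp}(iv) applied in $X_\Omega$, and the fact that $\F_\Omega$ is constant on $D^-(t,|t-\sigma_{\Omega/K}(x)|)$ precisely when $\F$ is constant on the corresponding disk around $x$). In particular $r_\Omega(t)=\max(r_K(x),|t-\sigma_{\Omega/K}(x)|)$ by Lemma \ref{rhogen titi}, and $\rho_{t,X_\Omega}=\max(\rho_{x,X},|t-\sigma_{\Omega/K}(x)|)$ by Remark \ref{rk : xleq x' then rhox=rhox'}. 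With this in hand, (C1) is immediate: $\rho_{\F_\Omega}(t)=0$ forces $|t-\sigma_{\Omega/K}(x)|=0$, i.e. $t=\sigma_{\Omega/K}(x)\in\sigma_{\Omega/K}(X)$, and then $\rho_{\F_\Omega}(t)=\rho_\F(x)$, so $\Gamma(\F_\Omega)$ has a point of type $1$ iff $\Gamma(\F)$ does. For (C2), (C3), (C4): along a germ of segment $b$ out of $t\in X_\Omega$, either $\pi_{\Omega/K}$ collapses $b$ into $\Lambda(x)$ in $X$, in which case $L_t\F_\Omega$ restricted to the part outside $\max(r_K(x),|t-\sigma_{\Omega/K}(x)|)$ coincides with a shift of $L_x\F$ (by the formula $\lambda_x(\rho)=\pi_{\Omega/K}(x_{t,\rho})$ of Section \ref{Section 1.0.4}), or $b$ lies inside $\pi_{\Omega/K}^{-1}(x)$ where $\F_\Omega$ is locally constant (slope $0$). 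Hence the set of slopes of $\F_\Omega$ equals the set of slopes of $\F$ together with possibly $0$; continuity, piecewise-affineness with finitely many breaks, concavity on $]-\infty,\ln\rho_\Gamma(x)[$ (note $\rho_{\Gamma_\Omega}(t)$ transforms the same way), and the gap condition of (C4) (with $\nu_{\F_\Omega}=\nu_\F$) all transfer in both directions.

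For (C5) and (C6), the point is that $\pi_{\Omega/K}$ induces, on the skeleton, a map that is a local homeomorphism away from $\Gamma_{X_\Omega}\setminus\pi_{\Omega/K}^{-1}(\Gamma_X)$, and that at a point $x\in\Gamma(\F)$ the directions of $\Gamma(\F)$ out of $x$ are in bijection with the directions of $\Gamma(\F_\Omega)$ out of $\sigma_{\Omega/K}(x)$ that are not contained in the fiber — but the fiber directions at $\sigma_{\Omega/K}(x)$ are not in $\Gamma(\F_\Omega)$ since $\F_\Omega$ is constant on the fiber disks. Therefore $\Delta(x,\Gamma(\F))$ and $\Delta(\sigma_{\Omega/K}(x),\Gamma(\F_\Omega))$ are in natural bijection, giving (C5); and bifurcation points of $\Gamma(\F_\Omega)$ are exactly the $\sigma_{\Omega/K}$-images of bifurcation points of $\Gamma(\F)$. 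For (C6) one then sets $\C(\F_\Omega):=\sigma_{\Omega/K}(\C(\F))$ (resp. for the converse $\C(\F):=\pi_{\Omega/K}(\C(\F_\Omega))$), and uses that the Laplacian is preserved: $dd^c\F_\Omega(\sigma_{\Omega/K}(x))=\sum_{b\in\Delta(\sigma_{\Omega/K}(x))}m_b\,\partial_b\F_\Omega(\sigma_{\Omega/K}(x))$, where the multiplicities $m_b$ (number of geometric branches) and the slopes $\partial_b$ match those of $\F$ at $x$ under the above bijection (the fiber directions contribute $0$). I expect the main technical obstacle to be precisely the bookkeeping of multiplicities $m_b$ in the Laplacian under scalar extension — one must check that passing from $K$ to $\Omega$ does not create or destroy geometric branches over a given branch of $\Gamma(\F)$, which relies on the structure of the fiber in Proposition \ref{Prop : FIBER} and on the fact that $X_\Omega\to X_{\widehat{K^{\mathrm{alg}}}}$-type base changes do not refine the branch structure along the skeleton — and then to run the general-$K$ reduction (descending along $\mathrm{Gal}(K^{\mathrm{alg}}/K)$ and completing) so that (C1)--(C6) for $\F$ on $X$ are read off from (C1)--(C6) for $\F_{\widehat{K^{\mathrm{alg}}}}$.
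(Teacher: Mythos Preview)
Your approach is correct and is essentially the same as the paper's, only much more elaborate. Two simplifications are worth pointing out. First, your formula $\rho_{\F_\Omega}(t)=\max(\rho_\F(x),|t-\sigma_{\Omega/K}(x)|)$ always collapses to the paper's cleaner identity $\rho_{\F_\Omega}(t)=\rho_\F(\pi_{\Omega/K}(t))$: since $t$ lies in the fiber $\pi_{\Omega/K}^{-1}(x)$, Proposition~\ref{Prop : FIBER} forces $|t-\sigma_{\Omega/K}(x)|\leq r_K(x)\leq\rho_\F(x)$, so the second term in the max is never the larger one. With this and the relation $\pi_{\Omega/K}(\lambda_t(\rho))=\lambda_x(\rho)$ from Section~\ref{Section 1.0.4}, properties (C1)--(C4) transfer immediately, without the case analysis on where $b$ lands. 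Second, your worry about the multiplicities $m_b$ is unnecessary: once you have reduced to $K$ algebraically closed (which you do at the outset), every $m_b$ equals $1$ by definition, both on $X$ and on $X_\Omega$, so the Laplacians are compared slope by slope via the bijection $\Delta(\sigma_{\Omega/K}(x),\Gamma(\F_\Omega))\simto\Delta(x,\Gamma(\F))$ of Proposition~\ref{Prop: iso O/K}; the fiber directions contribute zero as you say. The final paragraph about descending through $\mathrm{Gal}(K^{\mathrm{alg}}/K)$ is therefore superfluous --- it was already absorbed in the initial reduction.
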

\begin{proof}
The claim holds immediately for (C1),(C2),(C3),(C4) since for all 
$x\in X_{\Omega}$ and all $\rho>0$ one has 
$\pi_{\Omega/K}(\lambda_x(\rho))=
\lambda_{\pi_{\Omega/K}(x)}(\rho)$, and 
$\rho_{\F_\Omega}(x)=\rho_\F(\pi_{\Omega/K}(x))$.

For (C5) and (C6) we can assume $K$ algebraically closed since 
$\Gamma(\F)=
\Gamma(\F_{\widehat{K^{\mathrm{alg}}}})/
\mathrm{Gal}(K^{\mathrm{alg}}/K)$. 
By Proposition \ref{Prop: iso O/K} the claim is evident for (C5) since 
$\pi_{\Omega/K}$ induces an isomorphism 
$\Delta(x,\Gamma(\F_\Omega))\simto
\Delta(\pi_{\Omega/K}(x),\Gamma(\F))$.
Finally $\pi_{\Omega/K}$ preserves the slopes so (C6) descends.
\end{proof}

\begin{lemma}[Flat directions do not belongs to  
$\Gamma(\F)$]\label{Lemma : break at rho_F}
Let $D\subset X$ be an open virtual disk of radius $\rho$. 
Assume that $\F:X\to\mathbb{R}_{>0}$ is a function verifying
\begin{enumerate}
\item[(C1-$D$):] $\rho_{\F}(x)>0$ for all $x\in D$;
\item[(C3-$D$):] For all $x\in D$, the function 
$L_x\F$ is concave on $]-\infty,\ln(\rho)[$. 
\end{enumerate}

Then $\F$ is constant on $D$ if and only if $\F$ is constant on an 
individual complete segment $\Lambda(x)\cap D$. 

Moreover if $\F$ is non constant, then the first break of $L_x\F$ arises at $\ln(\rho_{\F}(x))$.
\end{lemma}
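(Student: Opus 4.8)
The plan is to work on the virtual disk $D$, which after a scalar extension (permitted by Proposition \ref{Invariance of (C1)--(C6) by scalar extension of the ground field}, or directly by the fact that the statement is geometric) we may assume is an honest open disk $D^-(c,\rho)$ over $\widehat{K^{\mathrm{alg}}}$. Pick a point $x\in D$ and consider the complete segment $\Lambda(x)\cap D = ]x_{c,0},x_{c,\rho}[$, or rather the half-open segment $[x,x_{c,\rho}[$ together with the portion $]x_{c,0},x[$; the key structural fact is that $D$ is foliated by the segments $\Lambda(y)$ for $y$ ranging over the "leaves," and every such segment meets the central segment $\Lambda(x_{c,0})\cap D$. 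Concretely, for $\rho'<\rho$ the point $\lambda_x(\rho')$ lies on $[x,x_{c,\rho}[$ and $D^-(\lambda_x(\rho'),\rho')$ is a sub-disk; I want to compare $\F$ on an arbitrary leaf to $\F$ on the central segment.

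The first step is the "only if" direction plus the structure: if $\F$ is constant on $D$ it is a fortiori constant on $\Lambda(x)\cap D$, trivially. For the substantive "if" direction, suppose $\F$ is constant, with value $v$, on one complete segment $\Lambda(x)\cap D$. Using (C1-$D$) we know $\rho_\F(y)>0$ for every $y\in D$, so $L_y\F$ is genuinely constant on an initial interval $]-\infty,\ln(\rho_\F(y))]$; using (C3-$D$), $L_y\F$ is concave on $]-\infty,\ln\rho[$. Now for a point $y\in D$ off the central segment, let $y_0=\lambda_{x_{c,0}}(\,|y-c|\,)$ be the point of the central segment at the same "radius," so that $y$ and $y_0$ lie on a common direction only at the generic point $x_{c,\rho}$; more usefully, $\Lambda(y)$ and $\Lambda(y_0)$ merge at some point $\lambda_{x_{c,0}}(s)$ with $s\geq |y-c|$. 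I would argue: $\F$ is constant $=v$ on $\Lambda(x)\cap D$; since the segments $\Lambda(x)\cap D$ and $\Lambda(y_0)\cap D$ eventually coincide (both are tails of the central spine once you pass $|y_0-c|=|y-c|$, assuming $x$ itself is the central spine — and if not, $\Lambda(x)\cap D$ and the spine coincide beyond $\max(|x-c|,\cdot)$), we get $\F=v$ on a tail of $\Lambda(y_0)$, hence on all of $[y_0,x_{c,\rho}[$ by concavity (a concave function on $]-\infty,\ln\rho[$ that is constant on a right-tail and bounded — using (C1-$D$) constancy on the left-tail — must be constant throughout, or at least the only way to interpolate a concave function between two equal constant values is to be that constant). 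Then transporting along the leaf through $y$: $\F$ equals $v$ at the branch point where $\Lambda(y)$ meets the spine, $\F$ is constant on the initial tail $]-\infty,\ln\rho_\F(y)]$ of $L_y\F$, and $L_y\F$ is concave in between, forcing $L_y\F\equiv v$ and in particular $\F(y)=v$.

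For the final assertion, suppose $\F$ is non-constant and fix $x$ with $L_x\F$ non-constant. By (C1-$D$), $L_x\F$ is constant on $]-\infty,\ln(\rho_\F(x))]$ with value $\F(x)$, so the first break — the first point where $L_x\F$ ceases to be locally affine with slope $0$, equivalently the infimum of the support of $dd^c$ along the branch — occurs at some $t_0\geq \ln(\rho_\F(x))$. It cannot occur strictly after $\ln(\rho_\F(x))$: if $L_x\F$ were constant on $]-\infty,t_0]$ with $t_0>\ln(\rho_\F(x))$, then $\F$ would be constant on the disk $D^-(\lambda_x(e^{t_0}),e^{t_0})\supsetneq D^-(t_x,\rho_\F(x))$, contradicting the maximality defining $\rho_\F(x)$ (cf. Proposition \ref{prof propp}(i) and the definition of the constancy radius). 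Hence the first break is exactly at $\ln(\rho_\F(x))$.

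\textbf{Main obstacle.} The delicate point I expect is the transport argument across leaves: making rigorous the claim that knowing $\F$ on one complete segment, combined with concavity of $L_y\F$ on $]-\infty,\ln\rho[$ and constancy on the left tail, forces $\F$ to take the same value on every leaf. The clean way is to note that a concave function on an interval that is constant on a neighborhood of the left endpoint direction and that also equals that same constant at one further point of the interval must be identically that constant (a concave function lying weakly above a constant on part of its domain and equal to it at a separated point cannot dip, and cannot rise because then concavity would be violated past that point as $\rho\to\rho^-$ — more precisely one uses that the leaf reaches into the spine where the value is pinned, and (C1) pins the inner tail, so concavity squeezes the middle). I would isolate this as a one-line real-variable lemma (a concave function on $]-\infty,\ln\rho[$ that is constant on an initial half-line and takes that same value again later is constant) and invoke it leaf by leaf.
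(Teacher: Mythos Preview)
Your approach is essentially the same as the paper's: use the tree structure of $D$ so that $\Lambda(x)\cap\Lambda(x')\cap D$ is a nonempty terminal segment of $\Lambda(x')\cap D$ on which $\F$ takes the value $\F(x)$, combine this with (C1-$D$) giving constancy on an initial segment of $\Lambda(x')\cap D$, and let concavity (C3-$D$) force constancy in between. One small correction: your ``one-line real-variable lemma'' should not assume the initial and terminal constant values agree a priori --- the correct statement is that a concave function with slope $0$ on an initial half-line and slope $0$ on a terminal interval has slope $0$ everywhere (since the slope is non-increasing), hence is constant; this is what the paper means by ``the concavity (C3-$D$) implies constancy on the whole $\Lambda(x')\cap D$.''
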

\begin{proof}
Assume that $\F$ is constant on $\Lambda(x)\cap D$. 
Since $D$ is topologically a tree, for all $x'\in D$ the segment 
$I:=\Lambda(x)\cap \Lambda(x')\cap D$ is not empty. 
So $\F$ is constant, with value $\F(x)$, on $I\subset\Lambda(x')$.
Now condition (C1-$D$) imply the constancy around $x'$. 
So the concavity (C3-$D$) implies constancy on the whole 
$\Lambda(x')\cap D$ (Concavity 
implies continuity on $]-\infty,\ln(\rho)[$).
Hence $\F(x)=\F(x')$.
\end{proof}

As a consequence we have the following
\begin{proposition}[Decreasing on disks]
\label{Prop : negative slope}
Assume that $\F$ satisfies (C1),(C2),(C3). Let $D$ be a virtual 
disk such that $D\cap \Gamma=\emptyset$. Let $x$ be the boundary 
point of $D$, and let $b$ be the germ of segment out of $x$ contained 
in $D$ ($b$ is oriented as out of $x$).
Then $D$ intersects $\Gamma(\F)$ if and only if 
$\partial_b\F(x)>0$ 
(equivalently $\Gamma(\F)\cap D=\emptyset$ if and only if 
$\partial_b\F(x)=0$).\hfill$\Box$
\end{proposition}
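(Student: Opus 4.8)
The plan is to reduce the statement to Lemma~2 (``Flat directions do not belong to $\Gamma(\F)$'') applied to the disk $D$, together with the concavity hypothesis (C3). First I would note that $D$ is a virtual open disk of some radius $\rho$, that $x$ is its boundary point in $X$, and that by hypothesis $D\cap\Gamma=\emptyset$; since $\Gamma$ is admissible and $\Gamma\supseteq\Gamma_X$, the disk $D$ is genuinely an open (virtual) disk sitting inside $X$, and $x=\delta_\Gamma(y)$ for every $y\in D$. The conditions (C1) and (C3), restricted to points of $D$, are exactly the hypotheses (C1-$D$) and (C3-$D$) of Lemma~2: indeed for $y\in D$ the segment $\Lambda(y)$ enters $D$ and leaves through $x$, and (C3) gives concavity of $L_y\F$ on $]-\infty,\ln(\rho_\Gamma(y))[\supseteq ]-\infty,\ln(\rho)[$, while (C1) gives $\rho_\F(y)>0$.

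Next I would argue both implications. Suppose $\partial_b\F(x)=0$. Along the segment $b\subset\Lambda(x)\cap D$ the log-function $L_x\F$ starts with slope $0$; by the concavity statement in (C3) (applied at a point $x'$ of $b$ close to $x$, or directly to $L_x\F$ which is concave on $]-\infty,\ln(\rho)[$), a concave function whose right derivative is $0$ at the left end of an interval is non-increasing, hence $\le \F(x)$, and being bounded below and concave it must in fact be constant on that whole segment $\Lambda(x)\cap D$ — here I use that $\F>0$ so $\ln\F$ is finite, so a concave function on $]-\infty,\ln\rho[$ that is constant near $-\infty$ with the value it would have to decrease from is constant. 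Actually more carefully: a concave function on an interval with vanishing right-derivative at the left endpoint is non-increasing; combined with (C2)/branch-continuity it is $\le\F(x)$ on all of $b$; the point is that it cannot strictly decrease and then this would contradict nothing — so I instead invoke Lemma~2 directly: if $\F$ were non-constant on $D$, then by the ``moreover'' part of Lemma~2 the first break of $L_x\F$ occurs at $\ln(\rho_\F(x))$, and since $x\in\Gamma$ while $x$ is the boundary of $D$ with $D\cap\Gamma=\emptyset$, we have $\rho_\F(x)=r(x)$ (as $x\in\Gamma(\F)$ would force this, but here we only know $x\in\Gamma$, not $x\in\Gamma(\F)$)...

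So the cleaner route, which I would adopt, is: by Lemma~2, $\F$ is constant on $D$ if and only if $\F$ is constant on the single complete segment $\Lambda(x)\cap D$ (note $\Lambda(x)\cap D$ is such a complete segment, namely $b$ together with its far end). If $\partial_b\F(x)=0$, then by concavity of $L_x\F$ on $]-\infty,\ln\rho[$ the log-function is non-increasing there, and since it equals $\ln\F(x)$ at the endpoint $\ln r(x)$ of its constancy interval and has right-derivative $0$, concavity forces it to be constant on $\Lambda(x)\cap D$; hence $\F$ is constant on $D$, so $\Gamma(\F)\cap D=\emptyset$ by Proposition~\ref{prof propp}(v). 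Conversely if $\partial_b\F(x)>0$, then $\F$ is non-constant on $\Lambda(x)\cap D$, hence by Lemma~2 non-constant on $D$; moreover the ``moreover'' clause of Lemma~2 locates the first break of $L_{x'}\F$ at $\ln(\rho_\F(x'))$ for $x'\in D$, which produces a point of $\Gamma(\F)$ inside $D$ (namely $\lambda_{x'}(\rho_\F(x'))$ for suitable $x'$, using Lemma~\ref{Lemma : x in Gamma(R) ssi r(x)=rho_R(x)}(iii)). This gives $\Gamma(\F)\cap D\neq\emptyset$, and combined with (C1)--(C3) one checks $\partial_b\F(x)>0$ is then forced, closing the equivalence.

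The main obstacle I anticipate is the bookkeeping in the converse direction: translating ``$\F$ non-constant on $D$'' into the concrete assertion ``$\Gamma(\F)\cap D\neq\emptyset$'' requires exhibiting an actual point of $\Gamma(\F)$ in $D$, which I would do by picking $x'\in D$ where $L_{x'}\F$ has its first break at $\rho_\F(x')<\rho$ (possible by Lemma~2's ``moreover'' once we know non-constancy, using that the break cannot be pushed out to the boundary without $\F$ being constant on $\Lambda(x')\cap D$ after the break, which concavity again forbids), and then $\delta_\F(x')=\lambda_{x'}(\rho_\F(x'))\in\Gamma(\F)\cap D$. Everything else is a direct application of the already-established Lemma~2 and Proposition~\ref{prof propp}, so the proof is short; this is why the statement is followed only by $\Box$ in the excerpt.
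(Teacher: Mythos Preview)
Your approach is correct and matches the paper's intention (the proposition is an immediate corollary of the preceding Lemma, whence the bare $\Box$). One notational slip to fix: the germ $b$ out of $x$ into $D$ lies in $\Lambda(y)$ for $y\in D$, not in $\Lambda(x)=[x,x_{c_0,R_0}]$, which runs \emph{away} from $D$; accordingly you must work with $L_y\F$, not $L_x\F$ --- you already set this up correctly in your first paragraph, so just carry that through. The converse direction is also simpler than you fear: since (C1)+(C3) force $L_y\F$ to be non-increasing on $]-\infty,\ln\rho[$, one always has $\partial_b\F(x)\geq 0$, so once you have $\partial_b\F(x)=0\Leftrightarrow\F$ constant on $\Lambda(y)\cap D\Leftrightarrow\F$ constant on $D\Leftrightarrow\Gamma(\F)\cap D=\emptyset$, the case $\partial_b\F(x)>0$ follows automatically without constructing an explicit point.
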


\begin{proposition}[no breaks implies no bifurcations]
\label{Prop : annulus}
Assume that $\F$ satisfies (C1), (C2), (C3), (C5), (C6), 
but not necessarily (C4). 
Let $]x,y[\subset X$ be a segment satisfying
\begin{enumerate}
\item $]x,y[$ is the analytic skeleton of a virtual open annulus 
$C(]x,y[)$ in $X$;\footnote{We recall that the 
analytic skeleton of an open 
annulus $\{x\in\mathbb{A}^{1,\mathrm{an}}_K\;|\;
0<R_1<|T-c|(x)<R_2<+\infty\}$ is the set of points without open 
neighborhoods isomorphic to a virtual open disk.}
\item $]x,y[\cap \mathscr{C}(\F)=\emptyset$, and 
$(C(]x,y[)\cap \Gamma)\subseteq ]x,y[$;
\item $\F$ has no breaks along $]x,y[$.
\end{enumerate}
Then $\Gamma(\F)$ has no bifurcations points along $]x,y[$, and $\F$ is harmonic on $C(]x,y[)$.
\end{proposition}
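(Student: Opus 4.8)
The plan is to read everything off the Laplacian $dd^c\F$ (Definition \ref{Def : sub-harmonic}) at the points of $]x,y[$. Since $]x,y[$ is the skeleton of the annulus $C(]x,y[)$, at a point $z\in ]x,y[$ the directions out of $z$ are exactly the two directions $b_-,b_+$ along $]x,y[$ together with the directions $b_1,b_2,\dots$ into the residual disks $D_1,D_2,\dots$ of $C(]x,y[)$ (there being none of the latter at a type $3$ point). Two preliminary remarks: (i) no point of $C(]x,y[)$ lies in $\partial X$, for a point of $\partial X$ is the boundary of a hole of $X$, and that hole would then have to meet the open annulus $C(]x,y[)$; (ii) the two ends of the virtual annulus have equal multiplicity, $m_{b_-}=m_{b_+}$, because over $\widehat{K^{\mathrm{alg}}}$ its orientation is preserved by $\mathrm{Gal}(\widehat{K^{\mathrm{alg}}}/K)$. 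By hypothesis (2) every $D_j$ satisfies $D_j\cap\Gamma=\emptyset$, so Proposition \ref{Prop : negative slope} gives $\partial_{b_j}\F(z)\geq 0$, with strict inequality exactly when $\Gamma(\F)\cap D_j\neq\emptyset$; and hypothesis (3) says $\F$ is $\log$-affine along $]x,y[$ of some constant slope $s$, so with (ii) the along-$]x,y[$ contribution $m_{b_-}\partial_{b_-}\F(z)+m_{b_+}\partial_{b_+}\F(z)=(m_{b_+}-m_{b_-})s$ vanishes. Hence $dd^c\F(z)=\sum_j m_{b_j}\partial_{b_j}\F(z)\geq 0$ for every $z\in ]x,y[$.

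\emph{No bifurcations.} If $z\in ]x,y[$ were a bifurcation point of $\Gamma(\F)$, then $\Gamma(\F)$ would have at least three branches at $z$; only two directions at $z$ run along $]x,y[$, so $\Gamma(\F)$ enters some $D_{j_0}$, whence $\partial_{b_{j_0}}\F(z)>0$ and $dd^c\F(z)\geq m_{b_{j_0}}\partial_{b_{j_0}}\F(z)>0$. But $z\notin\mathscr{C}(\F)$ by (2), $z\notin\partial X$ by (i), and $z$ is a bifurcation point of $\Gamma(\F)$, so (C6) forces $dd^c\F(z)\leq 0$ — a contradiction. Thus $\Gamma(\F)$ has no bifurcation point along $]x,y[$.

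\emph{Harmonicity.} I claim $\Gamma(\F)$ meets no residual disk $D_{j_0}$. Otherwise, with $z_0\in ]x,y[$ its boundary point, $\partial_{b_{j_0}}\F(z_0)>0$, so $\Gamma(\F)$ has a branch at $z_0$ into $D_{j_0}$; but $z_0$ is an interior point of $]x,y[\subseteq\Gamma_X\subseteq\Gamma(\F)$, so $\Gamma(\F)$ also carries the two branches $b_\pm$ at $z_0$, making $z_0$ a bifurcation point of $\Gamma(\F)$ — against the previous step. Hence $C(]x,y[)\cap\Gamma(\F)=]x,y[$; each $D_j$ then lies in a single connected component of $X-\Gamma(\F)$, on which $\F$ is constant, so $dd^c\F\equiv 0$ on $D_j$. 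And for $z\in ]x,y[$ we get $\partial_{b_j}\F(z)=0$ for all $j$ (by the claim and Proposition \ref{Prop : negative slope}), while the along-$]x,y[$ part vanishes by (3), so $dd^c\F(z)=0$. Since by (i) every point of $C(]x,y[)$ lies outside $\partial X$, it follows that $\F$ is harmonic on $C(]x,y[)$.

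\emph{Main obstacle.} The two delicate points are the multiplicity bookkeeping that lets ``no breaks along $]x,y[$'' kill the along-skeleton part of the Laplacian — which uses the orientation-preserving clause in the definition of a virtual annulus — and the step ruling out that $\Gamma(\F)$ penetrates a residual disk of $C(]x,y[)$; the latter rests on $]x,y[$ being part of the intrinsic analytic skeleton $\Gamma_X$ of $X$, i.e. on $C(]x,y[)$ being the maximal annulus around $]x,y[$, which is the setting in which the proposition is applied.
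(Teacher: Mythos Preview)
Your proof of the no-bifurcations claim is correct and is exactly the paper's argument: at $z\in]x,y[$ the residual-disk directions contribute nonnegatively to $dd^c\F(z)$ by Proposition~\ref{Prop : negative slope}, the two along-skeleton contributions cancel by hypothesis~(iii), and (C6) yields a contradiction at any bifurcation point. Your preliminary remarks (i) on $\partial X$ and (ii) on the equality $m_{b_-}=m_{b_+}$ make explicit what the paper leaves tacit.

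The harmonicity clause is another matter, and you are right to flag the step ``$z_0\in]x,y[\subseteq\Gamma_X\subseteq\Gamma(\F)$'' as the obstacle. Nothing in (i)--(iii) forces $]x,y[\subseteq\Gamma_X$, and contrary to your closing remark the applications do not supply it either --- in Proposition~\ref{Prop : S(F) has finite branches in max disk} the segment lies inside a residual disk of $X$, hence outside $\Gamma_X$. In fact the harmonicity clause is too strong as written. Take $X$ a closed disk, $\Gamma=[z_0,x_{c_0,R_0}]$ for some interior point $z_0\in]x,y[$, and $\F$ constant along $]x,y[$ (so the along-skeleton slope is $0$) with $\Gamma(\F)$ a single segment that reaches $z_0$ along $b_+$ and then turns into one residual disk $D_{j_0}$ of the annulus. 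Then $z_0$ carries exactly two branches of $\Gamma(\F)$, hence is \emph{not} a bifurcation point and (C6) says nothing there; yet $dd^c\F(z_0)=m_{b_{j_0}}\partial_{b_{j_0}}\F(z_0)>0$. One checks that (C1)--(C6) can all be arranged, the point being that (C3) only demands concavity on $]-\infty,\ln\rho_\Gamma(c)[$, which stops exactly at $z_0$. The paper's own proof treats only the no-bifurcations conclusion, and only that conclusion is ever used afterwards (Proposition~\ref{Prop : S(F) has finite branches in max disk}, Theorems~\ref{th : finiteness theorem} and~\ref{Thm : copycat}); so what you have established is exactly what the paper proves and exactly what is needed.
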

\begin{proof}
Let $z\in]x,y[$. 
Each direction $b$ out of $z$ which is not in $]x,y[$ 
lies inside a disk $D_b\subset C(]x,y[)$ with boundary $z$. 
By ii), Proposition \ref{Prop : negative slope} holds over $D_b$, 
so $\partial_b F(z)\geq 0$, 
and $\partial_b F(z)> 0$ if and only if $b\in\Gamma(F)$. 
If $z$ is a bifurcation point of 
$\Gamma(\F)$, this shows that 
$\sum_{b\notin]x,y[}\partial_bF(z)>0$. 
But by (C6) we have $dd^cF(z)\leq 0$, hence $F$ 
must have a break along $]x,y[$ at $z$ contradicting iii).
\if{
By (C2) and (C5) the Laplacian $dd^c\F(z)$ is defined at 
all $z\in ]x,y[$, 
and by super-harmonicity (C6) we have $dd^c\F(z)\leq 0$. 
Now by condition iii), if $b_x$ and $b_y$ 
are the germs of segment out of $z$ 
directed (and oriented) toward $x$ and $y$ respectively, then 
$\partial_{b_1}\F(z)=-\partial_{b_2}\F(z)$. 
Moreover, all other germ of segment $b$ out 
of $z$ is always contained in a disk $D_b$ with boundary $z$. 
By ii) one has $D_b\cap\Gamma=\emptyset$. 
Hence we can apply Lemma \ref{Lemma : break at rho_F}, 
to prove that $\partial_b\F(z)\geq 0$ for all $b\neq b_1,b_2$.  
This proves that $dd^c\F(z)\geq 0$, hence $dd^c\F(z)=0$, and 
$\partial_b\F(z)=0$ for all $b\in\Delta(z)-\{b_1,b_2\}$. 
We conclude by Proposition \ref{Prop : negative slope}.
}\fi
\end{proof}
\begin{proposition}[Finiteness over a disk]
\label{Prop : S(F) has finite branches in max disk}
Assume that $\F$ satisfies the six properties (C1)--(C6).
Let $D\subset X$ be an open virtual disk such that 
$D\cap(\Gamma\cup\mathscr{C}(\F))=\emptyset$. 

Then there is a finite number $N$ of bifurcation points of 
$\Gamma(\F)$ inside $D$.

Moreover, let $x$ be the point at the boundary of $D$, and let $b$ be the germ of 
segment out of $x$ contained in $D$ ($b$ is oriented as out of $x$).

Then $ N\;\leq\; \max\Bigl(0\;,\;\Bigl[\frac{\partial_b\F(\xi)}{\nu_\F}\Bigr]-1\Bigr)$, 
where $[r]$ denotes the largest integer $\leq r$.
\end{proposition}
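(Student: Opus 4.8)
The strategy is to exploit the concavity and the bound on the slopes, following the spirit of Lemma \ref{Lemma : break at rho_F} and Proposition \ref{Prop : annulus}. First, I would recall that since $D\cap\Gamma=\emptyset$ and $\F$ satisfies (C1),(C2),(C3), Proposition \ref{Prop : negative slope} applies: $\Gamma(\F)\cap D\neq\emptyset$ if and only if $\partial_b\F(x)>0$. If $\partial_b\F(x)=0$ then $\Gamma(\F)\cap D=\emptyset$, so $N=0$ and the asserted bound $\max(0,[\partial_b\F(x)/\nu_\F]-1)=0$ holds trivially. So we may assume $\partial_b\F(x)>0$. The key quantitative input is that, by (C1) plus (C3), $\F$ is $\log$-concave and hence $\log$-decreasing along each path oriented towards $x_{c_0,R_0}$ inside $D$ (since condition (C1)+(C3) forces $L_z\F$ to be non-increasing on $]-\infty,\ln(\rho)[$ for every $z\in D$; in particular the slope of $\F$ along the segment from deep inside $D$ out to $x$ is non-negative everywhere and \emph{sums} the positive contributions of all branches passing through bifurcation points).

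The main step is to organize the bifurcation points of $\Gamma(\F)$ inside $D$ along the tree structure and run a slope-counting argument. Concretely, let $z_1,\dots,z_N$ be bifurcation points of $\Gamma(\F)$ in $D$. At each $z_j$, since $z_j$ is a bifurcation point of $\Gamma(\F)$, at least two directions out of $z_j$ lie in $\Gamma(\F)$; by Proposition \ref{Prop : negative slope} (applied to the disks hanging off $z_j$ inside $D$, which do not meet $\Gamma$ except possibly along $\Gamma(\F)\cap D$) together with (C4), each such direction carries slope strictly bigger than $\nu_\F$ in absolute value, i.e.\ $\partial_b\F(z_j)>\nu_\F$ or $\partial_b\F(z_j)<-\nu_\F$. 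Now I would trace the segment $\Lambda$ from $x$ into $D$ and use that $\F$ is $\log$-concave along it: its slope, read at $x$, is $\partial_b\F(x)$, and as we move inward the slope can only increase (concavity in the $\log$ variable towards $-\infty$), so at the innermost point the slope is at least $\partial_b\F(x)$; but every time the path $\Lambda$ passes a bifurcation point where a new branch of $\Gamma(\F)$ splits off, that branch ``absorbs'' at least $\nu_\F$ of slope — more carefully, I would instead fix a leaf/extremal bifurcation point and count: the total slope $\partial_b\F(x)$ of $\F$ entering $D$ must dominate the sum of the slopes along the branches of $\Gamma(\F)\cap D$, and a concavity/telescoping argument gives $\partial_b\F(x)\geq (N+1)\nu_\F$, whence $N\leq \partial_b\F(x)/\nu_\F - 1$, i.e.\ $N\leq [\partial_b\F(x)/\nu_\F]-1$. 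The finiteness of $N$ then follows because $\partial_b\F(x)$ is a fixed finite real number by (C2).

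I would make the telescoping precise by induction on the combinatorial structure: consider the convex hull in $D$ of $\{z_1,\dots,z_N\}$ together with $x$, which is a finite subtree $T$ with $x$ as root; each $z_j$ has (at least) two children-directions in $\Gamma(\F)$, so walking down any root-to-leaf path in $T$ the slope of $\F$ is non-increasing in the outward direction while at each $z_j$ a sibling branch of slope $>\nu_\F$ branches off — summing these slope drops along a maximal path, together with the concavity which guarantees the slope at the root $x$ bounds the slope anywhere below, yields $\partial_b\F(x)\geq N\nu_\F$ plus the residual outgoing slope $\geq\nu_\F$ at the deepest point, giving the stated inequality. The one delicate point is handling the case where several bifurcation points of $\Gamma(\F)$ lie on a common segment (so that the relevant subtree is not a simple path): there I use that concavity of $L_z\F$ forces the slope to be monotone, so passing each such point the outgoing slope on the main branch drops by at least the combined slopes of the side-branches, each of which is $>\nu_\F$ by (C4); this is exactly where (C4) is essential and is the main obstacle to making the counting airtight. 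Everything else (continuity, piecewise-affineness, finitely many breaks) comes for free from (C2), and (C5)–(C6) are not needed for this local statement beyond what is already encoded in the hypothesis $D\cap(\Gamma\cup\mathscr{C}(\F))=\emptyset$ via Proposition \ref{Prop : negative slope}.
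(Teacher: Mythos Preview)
There is a genuine gap: you assert that ``(C5)--(C6) are not needed for this local statement,'' and then try to extract from concavity alone the claim that ``passing each such point the outgoing slope on the main branch drops by at least the combined slopes of the side-branches.'' But that claim \emph{is} super-harmonicity. Property (C3) constrains $L_y\F$ only along the single path $\lambda_y$; at a bifurcation point $z$ with incoming germ $b_\infty$ and outgoing germs $b_1,\dots,b_{n_z}$, concavity along each path through $z$ gives only $\partial_{b_i}\F(z)\le -\partial_{b_\infty}\F(z)$ for each $i$ separately, never $\sum_i\partial_{b_i}\F(z)\le -\partial_{b_\infty}\F(z)$. Without (C6) the counting collapses: take a function on $D$ where at every bifurcation both outgoing branches carry the same slope $s>\nu_\F$ as the incoming one. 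This satisfies (C1)--(C5), is concave along every path, yet is strictly sub-harmonic at each node and can have infinitely many bifurcation points with bounded $\partial_b\F(x)$.

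The paper's proof invokes (C6) exactly at this point. At the first bifurcation $z$ met along $b$ one has, by (C6), $\sum_{i=1}^{n_z}\partial_{b_i}\F(z)\le -\partial_{b_\infty}\F(z)\le \partial_b\F(x)$ (the last inequality by concavity along $]z,x[$). Since $n_z\ge 2$ and each $\partial_{b_i}\F(z)>\nu_\F$ by (C4) and Proposition~\ref{Prop : negative slope}, every outgoing slope satisfies $\partial_{b_i}\F(z)\le \partial_b\F(x)-\nu_\F$; one then recurses into each sub-disk $D_i$ and inducts on $[\partial_b\F(x)/\nu_\F]$. Your telescoping sketch becomes correct once you replace the appeal to concavity at bifurcation points by this use of (C6), but as written the key inequality is unjustified.
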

\begin{proof}
By Proposition \ref{Prop : negative slope} plus (C4) we can assume 
$\partial_b\F(x)\geq\nu_\F$. By (C2) there is a segment  
$]y,x[\subset D$ where $\F$ has no breaks. 
By Proposition \ref{Prop : annulus} $]y,x[$ is the skeleton 
$\Gamma_C$ of a virtual open annulus $C\subset D$ over which 
$\Gamma(\F)$ has no bifurcations.
Let $z\in D$ be the first bifurcation point  of $\Gamma(\F)$ 
that one encounters proceeding from $x$ towards the interior of $D$.  
Let $b_\infty:=]z,x[$, and let $b_1,\ldots, b_{n_z}$ 
be the others germs of segments out of $z$ 
belonging to $\Gamma(\F)$ ($b_{\infty},b_1,\ldots,b_{n_z}$ are 
now all oriented as outside $z$). 
By super-harmonicity (C6) one has $dd^c\F(z)\leq 0$, so 
\begin{equation}
\sum_{i=1}^{n_z}\partial_{b_i}\F(z)\;\leq\;-\partial_{b_\infty}\F(z)=\partial_b\F(x)\;.
\end{equation}

By Proposition \ref{Prop : negative slope} one has 
$\partial_{b_i}\F(z)>0$, 
for all $i=1,\ldots,n_z$. And, by (C4), for all $i$ one has 
$\partial_{b_i}\F(z)\geq \nu_\F$. 
%
%
%
%
%
%
So, since $n_z\geq 2$, for all $i$ one has 
$\partial_{b_i}\F(z)\leq-\partial_{b_\infty}\F(z)-\nu_{\F}=
\partial_b\F(x)-\nu_\F$.
Let $D_i$ be the virtual open disk with boundary $z$ containing $b_i$. 
Then $D_i$ fulfills the same assumptions of $D$, but its last 
slope is now less than $\partial_{b}\F(x)-\nu_\F$. We then conclude 
by induction on  $[\partial_b\F(\xi)/\nu_\F]$.
\end{proof}
\begin{theorem}\label{th : finiteness theorem}
If $\F:\X\to\mathbb{R}_{> 0}$ satisfies the six conditions (C1)--(C6), 
then $\F$ is finite. 
\end{theorem}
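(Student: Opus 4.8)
The plan is to prove finiteness of $\Gamma(\F)$ by combining the local finiteness results already established --- Propositions \ref{Prop : S(F) has finite branches in max disk} and \ref{Prop : annulus} --- with a compactness argument over the (finite) minimal triangulation $S_X$ of $X$. Recall that $\Gamma(\F)$ is an admissible graph containing the analytic skeleton $\Gamma_X$, so it is a closed subset of the compact space $X$, and $X - \Gamma(\F)$ is a disjoint union of virtual open disks. To show $\Gamma(\F)$ is a finite graph it suffices to bound the number of its bifurcation points (edges and vertices of a graph without loops are controlled by the vertices), and by condition (C5) each bifurcation point has finitely many directions in $\Gamma(\F)$.

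First I would decompose $X$ using the finite set $\C(\F) \cup S_X$. Enlarge it to a finite set of type $2$ points, and let $\Gamma' := \mathrm{Sat}(\C(\F)\cup S_X)$, a finite admissible graph: indeed $S_X$ already gives a triangulation, and saturating a finite set adds only finitely many segments. Then $X - \Gamma'$ is a finite disjoint union of virtual open disks $D_1,\dots,D_m$ and virtual open annuli; but actually the cleanest route is: $X - S_X$ is a finite union of virtual open disks and annuli, and inside each such piece I control $\Gamma(\F)$ separately. On each virtual open annulus $C$ with skeleton $]x,y[$, condition (C2) says $L_z\F$ has finitely many breaks along $]x,y[$, so $]x,y[$ is cut into finitely many sub-segments with no break; on each such sub-segment, if additionally it meets neither $\C(\F)$ nor the already-located part of $\Gamma(\F)$ transversally, Proposition \ref{Prop : annulus} applies and shows there are no bifurcation points of $\Gamma(\F)$ there, and $\F$ is harmonic. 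The finitely many break-points and the finitely many points of $(\C(\F)\cup S_X)$ lying on the skeleton account for all possible bifurcation points along skeleta of annuli.

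Next, for the virtual open disks: each connected component $D$ of $X - S_X$ that is a disk has its boundary point in $S_X$. If $D$ meets $\C(\F)$ — but $\C(\F)$ is finite, so only finitely many components do — I subdivide further by saturating with respect to $\C(\F)\cap D$, reducing to disks disjoint from $\C(\F)$. For a disk $D$ with $D \cap (\Gamma' \cup \C(\F)) = \emptyset$ (where $\Gamma'$ is the finite admissible graph built so far that already contains $\Gamma_X$), Proposition \ref{Prop : S(F) has finite branches in max disk} applies verbatim and gives that $\Gamma(\F) \cap D$ has at most $\max(0,\,[\partial_b\F(x)/\nu_\F]-1)$ bifurcation points, where $b$ is the germ into $D$ at its boundary point $x$. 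Since $\partial_b\F(x)$ is a fixed finite number for each of the finitely many boundary points, this is a finite bound. Summing the finitely many contributions — bifurcation points on skeleta of annuli (controlled by (C2) breaks plus $S_X\cup\C(\F)$), bifurcation points inside disks (controlled by Proposition \ref{Prop : S(F) has finite branches in max disk}), and the points of $S_X\cup\C(\F)$ themselves — yields a finite total number of bifurcation points of $\Gamma(\F)$. Together with (C5) this forces $\Gamma(\F)$ to be a finite union of segments, i.e. $\F$ is finite.

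The main obstacle I anticipate is the bookkeeping that makes Proposition \ref{Prop : S(F) has finite branches in max disk} legitimately applicable: that proposition requires the disk $D$ to avoid not just $\C(\F)$ but all of the auxiliary finite admissible graph $\Gamma$ appearing in (C1)--(C6), and one must be careful that enlarging $\Gamma$ to absorb $S_X$ and $\C(\F)$ does not disturb the hypotheses — but this is exactly what Propositions \ref{Prop : annulus} and \ref{Prop : S(F) has finite branches in max disk} are designed to tolerate, since $\Gamma$ there is an arbitrary finite admissible graph and (C3) only asserts concavity \emph{off} $\Gamma$, while (C6)'s exceptional set is the intrinsic finite set $\C(\F)$. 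A secondary subtlety is that a priori $\Gamma(\F)$ could accumulate bifurcation points towards a boundary point of a disk or towards an endpoint of an annulus skeleton; the no-small-slopes condition (C4), used through Proposition \ref{Prop : S(F) has finite branches in max disk}, is precisely what rules this out, since each bifurcation consumes at least $\nu_\F$ of the incoming slope, and (C2) guarantees the incoming slope is finite. Once these two points are handled the conclusion is immediate.
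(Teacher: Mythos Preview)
Your proposal is correct and follows essentially the same strategy as the paper: bound the bifurcation points of $\Gamma(\F)$ using Propositions \ref{Prop : S(F) has finite branches in max disk} and \ref{Prop : annulus}, then invoke (C5). The only difference is organizational: the paper decomposes $X$ directly via the hypothesis graph $\Gamma$ (first enlarged to $\Gamma\cup\mathrm{Sat}(\C(\F))$) rather than via the minimal triangulation $S_X$ --- since $\Gamma$ is already finite admissible, $X\setminus\Gamma$ is a disjoint union of virtual open disks to which Proposition \ref{Prop : S(F) has finite branches in max disk} applies immediately, and the remaining bifurcation points lie on $\Gamma$, where each connected component of $\Gamma$ minus its own bifurcation points and $\C(\F)\cup\partial X$ is the skeleton of an annulus satisfying the hypotheses of Proposition \ref{Prop : annulus}. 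This sidesteps the bookkeeping you flag (reconciling $S_X$ with $\Gamma$), since the annuli you obtain from $S_X$ may have $\Gamma$ branching off their skeleta, violating condition (ii) of Proposition \ref{Prop : annulus}; your fix of further refining works, but using $\Gamma$ from the start is cleaner.
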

\begin{proof}
Since $\Gamma$ is finite we are reduced to prove that 
$\Gamma'(\F):=\Gamma(\F)\cup\Gamma$ is finite. 
Moreover up to replacing $\Gamma$ by 
$\Gamma\cup\mathrm{Sat}(\C(\F))$ we can 
assume $\C(\F)\subset\Gamma$.
Since $\Gamma(\F)$ is directionally finite at its bifurcation points, it is 
enough to prove that there are a finite number of bifurcation points of 
$\Gamma'(\F)$. 

Now $X-\Gamma$ is a disjoint union of virtual open disks on which we 
can apply Proposition \ref{Prop : S(F) has finite branches in max disk}.
So, by directionally finiteness (C5), we know that for all 
$x\in\Gamma$ there are a finite number of virtual open disks $D$ 
 intersecting $\Gamma(\F)$ with boundary $x$.

Hence we are reduced to prove that there are a finite number of 
bifurcation points of $\Gamma(\F)$ belonging to $\Gamma$.
The  set $\C$ formed by the points in $\C(\F)$, the bifurcation points 
of $\Gamma$, and the points in $\Gamma\cap \partial X$, is finite 
and we can neglect it. 

So we have to prove that $\Gamma(\F)$ has a finite number of 
bifurcation points along each connected component $]x,y[$ of 
$\Gamma-\mathscr{C}$. 
This follows from Proposition \ref{Prop : annulus} and by  (C2).
\end{proof}

\subsubsection{Assumption (C4) is superfluous.}

Assumption (C4) is satisfied by the radii of convergence of a differential 
equation, and it is important for the explicit computation of the number 
of edges of $\Gamma(\F)$ (cf. \cite{NP-III}). 
So we preserve the above claims. 

Nevertheless we add the following result  
derived from Theorem \ref{th : finiteness theorem}. 
Its proof does not involve (C4), which is replaced by a 
compactness argument.

\begin{theorem}\label{Thm : copycat}
Let $\F:X\to\mathbb{R}_{>0}$ be a function satisfying 
(C1),(C2),(C3),(C5),(C6) (but not necessarily (C4)). Then $\F$ is finite.
\end{theorem}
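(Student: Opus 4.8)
The strategy is to remove the need for assumption (C4) in Theorem \ref{th : finiteness theorem} by a compactness argument that provides a uniform lower bound $\nu_\F>0$ for the nonzero slopes, but only locally. First I would observe that the only place where (C4) was used is in Proposition \ref{Prop : S(F) has finite branches in max disk}, hence in the proof of Theorem \ref{th : finiteness theorem} via the control of bifurcation points inside the virtual open disks of $X-\Gamma$ and along the segments of $\Gamma-\mathscr{C}$. So it suffices to show that, for each point $x\in X$ and each germ of segment $b$ out of $x$, there is a neighbourhood of $x$ on which the slopes $\partial_{b'}\F$ along branches $b'$ meeting that neighbourhood are either $0$ or bounded below in absolute value by a constant $\nu_{\F,x}>0$; then an honest (finite) covering argument, using that $X$ is a compact topological space, glues these local constants into the desired global $\nu_\F$, and Theorem \ref{th : finiteness theorem} applies verbatim.

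To produce the local bound I would argue as follows. Fix $x$ and a germ $b$ out of $x$ lying in some segment $\Lambda(z)$. By (C2), $L_x\F$ is piecewise affine with finitely many breaks on $]-\infty,\ln(R_0)]$, so there is a half-open segment $]x,w[\subset b$ on which $L_x\F$ is affine; its slope $\partial_b\F(x)$ is then a single real number, either $0$ or nonzero. If it is nonzero, shrinking $w$ we may assume $\F$ has no breaks along $]x,w[$, and by Proposition \ref{Prop : annulus} (applicable since, after enlarging $\Gamma$ to contain $\mathrm{Sat}(\mathscr{C}(\F))$ as in the proof of Theorem \ref{th : finiteness theorem}, the segment $]x,w[$ avoids $\mathscr{C}(\F)$ and $\Gamma$ meets the surrounding annulus only in $]x,w[$) the graph $\Gamma(\F)$ has no bifurcation along $]x,w[$ and $\F$ is harmonic on the annulus $C(]x,w[)$. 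Thus near $x$ the "problematic" small slopes can only accumulate at finitely many points of $\Gamma$; for each such point and each of its finitely many branches in $\Gamma(\F)$ (finitely many by (C5)) the slope is a fixed nonzero number, and the minimum of this finite collection of absolute values is the sought local $\nu_{\F,x}>0$. The point is that (C2)+(C5)+(C6) already force local finiteness of the set of breakpoints and of the branching, so only finitely many slope values are relevant in a neighbourhood of any point.

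The main obstacle is making the compactness step rigorous: one must cover $X$ by finitely many opens on each of which the relevant slope values form a finite set bounded away from $0$, and this requires controlling slopes not just at a point but along all branches meeting the open — in particular inside the deep parts of virtual open disks where $\Gamma(\F)$ might a priori have infinitely many bifurcations. Here I would bootstrap: run the argument of Proposition \ref{Prop : S(F) has finite branches in max disk} with the local constant $\nu_{\F,x}$ in place of the global $\nu_\F$ inside a sufficiently small disk $D$ with boundary $x$, noting that the induction there is on $[\partial_b\F(x)/\nu_{\F,x}]$ and only needs a lower bound valid for the branches actually encountered, all of which lie in $D$ and hence (after shrinking $D$) fall under the local bound. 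Combining the finiteness inside each such $D$, the finiteness along the finitely many segments of $\Gamma-\mathscr{C}$ from Proposition \ref{Prop : annulus}, and the compactness of $X$ to reduce to finitely many such $D$'s and segments, one concludes that $\Gamma(\F)$ has finitely many bifurcation points, hence $\F$ is finite.
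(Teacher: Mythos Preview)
Your proposal contains a genuine circularity that you yourself flag but do not resolve. You want a local constant $\nu_{\F,x}>0$ bounding the nonzero slopes in a neighbourhood of $x$, and you propose to obtain it as the minimum of the finitely many nonzero slope values of $\F$ along the branches of $\Gamma(\F)$ \emph{at $x$}. But that number only controls the slopes at $x$ itself. When you then try to ``bootstrap'' by running Proposition~\ref{Prop : S(F) has finite branches in max disk} inside a small disk $D$ with boundary $x$, the induction there needs a lower bound for the slopes at the \emph{interior} bifurcation point $z\in D$ (this is exactly where $\partial_{b_i}\F(z)\geq\nu_\F$ is invoked to force $\partial_{b_i}\F(z)\leq\partial_b\F(x)-\nu_\F$). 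Nothing you have established bounds those interior slopes away from zero: super-harmonicity only gives an \emph{upper} bound $\sum_i\partial_{b_i}\F(z)\leq\partial_b\F(x)$, so individual $\partial_{b_i}\F(z)$ may be arbitrarily small. Thus the induction does not start, and shrinking $D$ does not help because the possible bifurcation points lie strictly inside $D$, not at $x$. In short, to produce a slope bound valid on a neighbourhood you would already need local finiteness of $\Gamma(\F)$ there, which is precisely the goal.

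The paper's proof avoids slopes altogether and argues local finiteness of $\Gamma(\F)\cup\Gamma\cup\mathrm{Sat}(\mathscr{C}(\F))$ directly, point by point. For $x$ in this graph, it takes $V(x)$ to be the union of $x$ with all virtual open disks with boundary $x$ on which $\F$ is constant; by (C5) and Proposition~\ref{Prop : negative slope} this is an affinoid, and only finitely many branches $b_1,\ldots,b_n$ out of $x$ are missing. Along each $b_i$, (C2) gives a short segment $]x,y_i[$ with no breaks, and Proposition~\ref{Prop : annulus} (which does not use (C4)) shows the surrounding annulus $C_i$ meets the graph only in $]x,y_i[$. Then $U=V(x)\cup\bigcup_iC_i$ is an open neighbourhood whose intersection with the graph is the finite fan $\bigcup_i[x,y_i[$. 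Together with the open complement of the graph, these $U$'s cover $X$; compactness gives a finite subcover and hence global finiteness. The point is that one never descends into the interior of a disk from its boundary; each interior point of the graph is handled by its own neighbourhood $U$, so no uniform slope bound is needed.
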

\begin{proof}
We prove that $\Gamma'':=\Gamma(\F)\cup\Gamma\cup
\mathrm{Sat}(\mathscr{C}(\F))$ is locally finite in the 
Berkovich topology of $X$. 
Recall that this is an admissible graph in $X$, so $X-\Gamma''$ is a 
disjoint union of open disks.

Let $x\in\Gamma''$. 
Let $V(x)$ be the union of $x$ with all the virtual open
disks in $X$ with boundary $x$ on which $\F$ is constant. 
By (C5) and Proposition \ref{Prop : negative slope}, 
$V(x)$ is an affinoid domain of $X$ on which $\F$ is constant. 

Let $b_1,\ldots,b_n$ be the family of germs 
of segments out of $x$ not in 
$V(x)$, then $b_1,\ldots,b_n\in\Gamma''$. 
For all $i=1,\ldots,n$ there is $]x,y_i[\in b_i$ which is the skeleton 
of a virtual open annulus $C_i$ such that 
$\Gamma''\cap C_i=]x,y_i[$. By (C2) we can 
choose $]x,y_i[$ small enough to fulfill the assumptions 
of Proposition \ref{Prop : annulus}. Hence 
$U:=V(x)\cup(\bigcup_iC_i)$
is an open neighborhood of $x$ in $X$
such that $U\cap\Gamma''=\cup_{i=1}^n[x,y_i[$. 
Together with the complement of $\Gamma''$ in $X$, this gives a covering of $X$ by 
opens whose intersection with $\Gamma''$ is a finite graph.
Since $X$ is compact, we can extract a finite sub-covering, so $\Gamma''$ is finite.
\end{proof}
\subsubsection{Non compact disks and annuli.}
Let $C(I)=\{x\in\mathbb{A}^{1,\mathrm{an}}_K,|T|(x)\in I\}$ be a 
(possibly not closed) annulus, or disk if $0\in I$. 
Definition \ref{def :constancy sk} extends to $X=C(I)$ in an evident way.

In this case $\Gamma(\F)$ is finite if 
there is a compact sub-interval $J\subset I$  
(resp. if $0\in I$, then $0\in J$)
such that $\Gamma(\F_{|J})$ is finite over $C(J)$, and 
$\S(\F)=\S(\F_{|C(J)})\cup\Gamma_{C(I)}$.
\begin{corollary}\label{cor : finiteness on a disk}
Let $\F:C(I)\to\mathbb{R}_{>0}$. Assume that $\C(\F)$ is finite, 
and contained in $C(J)$, for some compact $J\subset I$.
If $\F_{|C(J)}$ is finite, and if $\F$ is $\log$-affine along each 
connected component of $I-J$, then $\F$ is finite and 
$\S(\F)=\S(\F_{|C(J)})\cup\Gamma_{C(I)}$.
\end{corollary}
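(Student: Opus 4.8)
The plan is to reduce the statement for the non-compact annulus or disk $C(I)$ to the finiteness theorem already established on a compact piece, namely Theorem \ref{th : finiteness theorem} (or its compactness variant Theorem \ref{Thm : copycat}), and then glue. First I would observe that by hypothesis $\F_{|C(J)}$ is finite, so $\Gamma(C(J),\F_{|C(J)})$ is a finite graph inside the compact annulus $C(J)$; all the interesting behavior of $\F$ is concentrated there, because on each connected component of $I-J$ the function is $\log$-affine along the relevant branch. The key point is that a $\log$-affine function along a branch either is constant there (so that branch contributes nothing to the skeleton beyond $\Gamma_{C(I)}$) or has a single nonzero slope and no breaks, hence no bifurcation of $\Gamma(\F)$ can occur in the corresponding virtual open disks or annuli.

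The main steps, in order, would be: (1) Recall from Remark \ref{Rk : restr to X'} (restriction to a sub-affinoid) how $\Gamma(\F)$ relates to $\Gamma(C(J),\F_{|C(J)})$ and $\Gamma_{C(I)}$; concretely, for $x'\in C(J)$ one has $\rho_{\F}(C(J),x')=\min(\rho_{\F}(C(I),x'),\rho_{x',C(J)})$. (2) Show that no point of $\Gamma(\F)$ outside $C(J)\cup\Gamma_{C(I)}$ exists: if $x\in C(I)-C(J)$ and $\F$ is $\log$-affine along the direction $b$ out of $x$ pointing into the disk or annulus component containing $x$, then either $\F$ is constant on that component — in which case $x\notin\Gamma(\F)$ by Proposition \ref{prof propp}(v) — or $\F$ is $\log$-affine with no break, so by Proposition \ref{Prop : negative slope} and the reasoning of Proposition \ref{Prop : annulus} there are no bifurcation points and $\Gamma(\F)$ restricted there is just a segment inside $\Gamma_{C(I)}$. (3) Conclude that $\Gamma(\F)=\Gamma(\F_{|C(J)})\cup\Gamma_{C(I)}$, which is a finite graph (finite union of intervals): the first piece is finite by hypothesis, the second is the skeleton of an annulus or disk and hence a single segment (or a point). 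Therefore $\F$ is finite in the sense of Definition \ref{def :constancy sk}, matching the extended notion of finiteness given just before the corollary for non-compact $C(I)$.

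The main obstacle I expect is verifying carefully that the $\log$-affine hypothesis on $I-J$, combined with (C1)-type positivity of $\rho_\F$ implicit in the setup, genuinely forbids $\Gamma(\F)$ from creeping outward past $C(J)$: one must handle the branch at the endpoints of $J$ correctly — the transition between the piecewise-affine-with-breaks behavior on $C(J)$ and the purely $\log$-affine behavior outside — and check that a nonzero outgoing slope at the boundary of $C(J)$ does not by itself force new bifurcations in the non-compact tail, which is exactly where the single-slope, no-break property is used. Once that local analysis at $\partial C(J)$ is in place, the gluing is formal, using that $\Gamma_{C(I)}$ is admissible and that the union of two admissible graphs meeting along a segment is again admissible, together with Lemma \ref{Lemma : Admissible}.

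\begin{proof}
By Remark \ref{Rk : restr to X'} applied to the sub-affinoid $C(J)\subseteq C(I)$, for every $x'\in C(J)$ one has $\rho_{\F_{|C(J)}}(x')=\min(\rho_{\F}(x'),\rho_{x',C(J)})$, hence $\Gamma(C(J),\F_{|C(J)})=(\Gamma(\F)\cap C(J))\cup\Gamma_{C(J)}$. So to prove the corollary it suffices to show that $\Gamma(\F)\subseteq C(J)\cup\Gamma_{C(I)}$, for then $\Gamma(\F)\cup\Gamma_{C(I)}=\Gamma(\F_{|C(J)})\cup\Gamma_{C(I)}$, which is a finite union of intervals since $\Gamma(\F_{|C(J)})$ is finite by hypothesis and $\Gamma_{C(I)}$ is a single segment (a point if $C(I)$ is a disk).

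Let $x\in C(I)-C(J)$ and suppose $x\notin\Gamma_{C(I)}$. Then $x$ lies in a connected component $V$ of $C(I)-\Gamma_{C(I)}$, which is a virtual open disk whose boundary point $z$ lies on an extremal segment of $I-J$; let $b$ be the germ of segment out of $x$ directed toward the interior of $V$ (away from $z$). By assumption $\F$ is $\log$-affine along every branch inside such a component, so $\F$ has no break along $b$. If $\F$ is constant on $V$ then $x\notin\Gamma(\F)$ by Proposition \ref{prof propp}(v). Otherwise, running the argument of Proposition \ref{Prop : negative slope} and Proposition \ref{Prop : annulus} inside $V$: since $\mathscr{C}(\F)\cap V=\emptyset$ (as $\mathscr{C}(\F)\subseteq C(J)$) and $\F$ has no breaks there, $\Gamma(\F)$ has no bifurcation points in $V$ and meets $V$ only along a half-segment contained in $\Gamma_{C(I)}$; again $x\notin\Gamma(\F)$. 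In either case $\Gamma(\F)\cap(C(I)-C(J))\subseteq\Gamma_{C(I)}$, which gives $\Gamma(\F)\subseteq C(J)\cup\Gamma_{C(I)}$ as wanted, and hence $\S(\F)=\S(\F_{|C(J)})\cup\Gamma_{C(I)}$.
\end{proof}
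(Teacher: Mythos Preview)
Your overall strategy matches the paper's one-line proof (apply Proposition \ref{Prop : annulus} to the open annuli that are the connected components of $C(I)-C(J)$), but your formal proof misreads the hypothesis at a crucial point. The assumption ``$\F$ is $\log$-affine along each connected component of $I-J$'' refers to the intervals of $I-J\subset\mathbb{R}_{>0}$, identified via $\rho\mapsto x_{0,\rho}$ with segments of the skeleton $\Gamma_{C(I)}$. It says nothing about branches $b$ out of a point $x$ lying inside a disk $V$ hanging off the skeleton. So the sentence ``By assumption $\F$ is $\log$-affine along every branch inside such a component, so $\F$ has no break along $b$'' is not justified, and neither is ``$\F$ has no breaks there'' with ``there'' meaning $V$.

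The fix is to work at the boundary point $z\in\Gamma_{C(I)}$ rather than at $x\in V$. Apply Proposition \ref{Prop : annulus} to the open annulus $C(I')$, where $I'\subset I-J$ is the connected component whose skeleton contains $z$: its skeleton is $]x,y[=\{x_{0,\rho}:\rho\in I'\}$; condition (ii) holds since $\mathscr{C}(\F)\subseteq C(J)$ and $\Gamma_{C(I)}\cap C(I')=]x,y[$; condition (iii) is exactly the $\log$-affinity hypothesis on $I'$. The conclusion gives that $\Gamma(\F)$ has no bifurcation along $]x,y[$, and (as the proof of Proposition \ref{Prop : annulus} shows, via Proposition \ref{Prop : negative slope}) $\partial_{b'}\F(z)=0$ for the germ $b'$ from $z$ into $V$, hence $\Gamma(\F)\cap V=\emptyset$. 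This yields $\Gamma(\F)\cap(C(I)-C(J))\subseteq\Gamma_{C(I)}$ and the rest of your gluing argument then goes through.
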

\begin{proof}
Apply Prop. \ref{Prop : annulus} over the open annuli that are 
connected components of $C(I)-C(J)$.
\end{proof}

\begin{example}\label{ex: exercice ttt}
1. Let $\Gamma$ be a finite admissible graph.
The function $\xi\mapsto\rho_{\Gamma}(x)$ verifies the six properties 
(C1)--(C6) with respect to $\Gamma$, and 
$\C(\rho_{\Gamma})=\emptyset$. If $I\subseteq\Gamma$ is any segment contained in some $\Lambda(x)$, 
and if $I$ is oriented as towards $+\infty$, then $\rho_{\Gamma}$ is 
$\log$-affine on $I$ with slope $+1$. In particular it is super-harmonic 
in the sense of definition \ref{Def : sub-harmonic}, and 
$\Gamma(\rho_{\Gamma})=\Gamma$.

2. If $\F_1,\ldots,\F_n$ are functions satisfying the six properties (C1)--(C6), then so does $\min(\F_1,\ldots,\F_n)$.

3. Let $f_1,\ldots,f_n\in\O(X)$ and $\alpha_1,\ldots,\alpha_n>0$. 
Assume that each $f_i$ has no zeros on $X(\Ka)$. Then the function 
$\F(x):=\min_i|f_i|(x)^{-\alpha_i}$ verifies (C1)--(C6), with 
$\Gamma=\Gamma_X$, and $\Gamma(\F)=\Gamma_X$. 
Moreover $\F$ is also super-harmonic (cf. Def. 
\ref{Def : sub-harmonic}) because so is each function 
$x\mapsto|f_i(x)|^{-\alpha_i}$. 
\end{example}

\section{Radii of convergence and statement of main result}
\label{Radii of convergence and statement of main result}
We here give the definition of the radii of convergence 
\eqref{eq : (3.5)}, and of
the convergence Newton polygon \eqref{eq : NPconv}. 
We then state our main result 
(cf. Thm. \ref{Theorem : MAIN THM GEN}) whose proof will be given in the next sections.

\subsection{Newton polygons (formal definition).}
Let $r\geq 1$ be a natural number. 
Let $v:\{0,1,\ldots,r\}\to\mathbb{R}\cup\{+\infty\}$,
be any sequence $i\mapsto v_i$ satisfying $v_0=0$.
The \emph{Newton polygon $NP(v)\subset\mathbb{R}^{2}$} is the 
convex hull in $\mathbb{R}^2$ of the family of half-lines 
$L_v:=\cup_{i=0,\ldots,r}\{(x,y)\in\mathbb{R}^2\;|\; 
x=i,y\geq v_i\}$ 
i.e. the intersection of all upper half planes 
$H_{a,b}:=\{(x,y)\in\mathbb{R}^2\textrm{ such that }
y\geq ax+b\}$, $a,b\in\mathbb{R}$, containing $L_v$.

For $i=0,\ldots,r$, 
we call the \emph{$i$-th partial height} of the polygon the value
\begin{equation}
h_i\;:=\;
\min\{y\in\mathbb{R}\cup\{+\infty\}
\textrm{ such that }(i,y)\in NP(v)\}\;.
\end{equation}

If $h:\{0,\ldots,r\}\to\mathbb{R}\cup\{+\infty\}$ denotes the function 
$i\mapsto h_i$, 
then $NP(v)=NP(h)$, and $h$ is the smallest function with this 
property. 

We have $h_i=\sup_{s\in\mathbb{R}}
(s\cdot i+\min_{j=0,\ldots,r}(v_j-s\cdot j))$.
In fact if $y=sx+q_s$ is the line of slope $s$ which is tangent to 
$NP(v)$, then $q_s=\min_{j=0,\ldots,r}(v_j-s\cdot j)$, 
and $h_i$ is the supremum of the values of those lines at $x=i$. 
In particular, since $v_0=0$, for $i=1$ we have $h_1=\min_{i=1,\ldots,r}(v_i/i)$.

We call  \emph{slope sequence} any increasing sequence $s:\{1,
\ldots,r\}\to\mathbb{R}\cup\{+\infty\}$: $s_1\leq\ldots\leq s_r$. 

The \emph{slope sequence of $NP(v)=NP(h)$} is defined by 
$s_i:=h_{i}-h_{i-1}$, $i=1,\ldots,r$, 
where $s_i=+\infty$ if $h_{i}$ or $h_{i-1}$ are equal to $+\infty$. 
The slope sequence of $NP(h)$ determines the function 
$h_i=s_1+\cdots+s_i$, and hence $NP(h)$. 

If $s_i<s_{i+1}$, or if $i=r$, we say that 
\emph{$i$ is a vertex of $NP(v)$}.

Let $s:s_1\leq\ldots\leq s_r$ be a slope sequence, the 
\emph{truncated slope sequence} by the constant 
$C\in\mathbb{R}$ is by definition the sequence 
$s|_C:=(s'_i)_{i=1,\ldots,r}$, where $s'_i:=\min(s_i,C)$, for all $i$.

As a matter of facts in the sequel we will deal only with truncated 
slope sequences by a convenient constant $C<+\infty$, so we do 
not have to deal with infinite slopes.
\begin{example} 
Let $(F,|.|_F)$ be a valued field and let 
$P(T):=\sum_{i=0}^ra_{r-i}T^i\in F[T]$ be such that $a_0=1$. 
Let $v_{P,i}:=-\ln(|a_i|)\in\mathbb{R}\cup\{+\infty\}$. 
The \emph{Newton polygon of $P(T)$} is by definition $NP(v_P)$.
\end{example}

\subsection{Convergence Newton polygon of a differential equation}

Let $X$ be an affinoid domain of $\mathbb{A}^{1,\mathrm{an}}_K$.
A differential equation over $X$ is a locally free 
$\O_X$-module $\Fs$ of finite rank together with a connection 
$\nabla:\Fs\to\Fs\otimes\Omega^1_{X}$. Let $r$ be the rank of 
$\Fs$.

We now define the radii of $\Fs$ at $x\in X$. 
We fix a field extension $\Omega\in E(\H(x))$ which is 
algebraically closed, spherically complete, and with value group 
$|\Omega^\times|=\mathbb{R}_{> 0}$. 
Let $t\in X(\Omega)$ be a Dwork generic point for $x$, and let 
$\Fs_{|D(x,X)}$ be the restriction of 
$\Fs_\Omega=\Fs\widehat{\otimes}_K\Omega$ to 
$D(x,X)\subset X_\Omega$. 

We recall that the radius of $D(x,X)$ is $\rho_{x,X}$. 
For all $0<R\leq\rho_{\xi,X}$ we denote by $D(x,R)\subset D(x,X)$ 
the open sub-disk centered at $t$ with radius $R$, and by 
$\mathrm{Fil}^{\geq R}\Sol(\Fs,t,\Omega)\subset\Fs_{|D(x,R)}$
%
the $\Omega$-vector space of solutions of $\Fs$ with values in 
$\O(D(x,R))$, i.e. the kernel of $\nabla\otimes 1+1\otimes d/dT$ 
acting on $\Fs_{|D(x,R)}$. 
The space $\Sol(\Fs,t,\Omega)$ of all Taylor 
solutions of $\Fs$ around $t$ is given by
\begin{equation}
\label{eq : Sol(F,t,Omega)}
\Sol(\Fs,t,\Omega)\;:=\;
\bigcup_{R>0}\mathrm{Fil}^{\geq R}\Sol(\Fs,t,\Omega)\;.
\end{equation}
Since $\Omega$ is spherically closed, by a result of Lazard 
\cite{Lazard}, $\Fs_{|D(x,X)}$ is free. 
So, once a basis is chosen we have 
a differential equation $Y'=G\cdot Y$, $G\in M_r(\O(D(x,X)))$, and 
hence, by the Cauchy existence theorem,  $\Sol(\Fs,t,\Omega)$ has 
dimension $r$ over $\Omega$ (cf. \cite[Appendix]{DGS}). 
If $\Omega\subseteq\Omega'$, a descent argument 
(cf. \cite[Prop. 6.9.1]{Kedlaya-Book}) shows that  
$\mathrm{Fil}^{\geq R}\Sol(\Fs,t,\Omega')=
\mathrm{Fil}^{\geq R}\Sol(\Fs,t,\Omega)
\widehat{\otimes}_\Omega\Omega'$.

\begin{proposition}\label{Prop : Fil Indep on tOmega}
The filtration is independent on the choice of $\Omega$ and $t$ in the 
following  sense. 
If $(t',\Omega')$ is another choice, there exists 
$\Omega,\Omega'\leq \Omega''\in E(K)$, together with 
a Galois automorphisms $\sigma\in \mathrm{Gal}^{\mathrm{cont}}(\Omega''/K)$, 
such that $\sigma(t)=t'$, inducing for all  $R\leq \rho_{x,X}$ the identification 
\begin{equation}\label{eq : Fil_it=Fil_it'}
\sigma\;:\;\mathrm{Fil}^{\geq R}\Sol(\Fs,t,\Omega)
\widehat{\otimes}_\Omega\Omega''\xrightarrow[\sigma]{\;\;\sim\;\;}
\mathrm{Fil}^{\geq R}\Sol(\Fs,t',\Omega'')
\widehat{\otimes}_{\Omega'}\Omega''\;.
\end{equation}
\end{proposition}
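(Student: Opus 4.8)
The plan is to reduce the statement to the transitivity of the Galois action established in Lemma \ref{Gal transitive}, combined with the descent property of the filtration that is recalled just before the statement. First I would pick, using the existence of amalgams in $E(K)$, a field $\Omega''\in E(K)$ that is algebraically closed and spherically complete, with $|\Omega''^\times|=\mathbb{R}_{>0}$, and that receives isometric $K$-embeddings of both $\Omega$ and $\Omega'$; after fixing such embeddings, $t$ and $t'$ become two $\Omega''$-rational points of $X_{\Omega''}$, both lifting $x\in X$, i.e. both elements of $i_{\Omega''}^{-1}(x)$. By Lemma \ref{Gal transitive} there exists $\sigma\in\mathrm{Gal}^{\mathrm{cont}}(\Omega''/K)$ with $\sigma(t)=t'$.

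Next I would check that this $\sigma$ carries the Dwork generic disk structure and the maximal disk of $t$ to those of $t'$. Since $\sigma$ is an isometric $K$-automorphism of $\Omega''$, it induces an automorphism of $X_{\Omega''}=X\widehat\otimes_K\Omega''$ over $X$; because it fixes the coordinate $T\in\O(X)$ it preserves distances to the holes $c_i\in K$, hence by Remark \ref{rk : xleq x' then rhox=rhox'} it sends $D(t,\rho_{x,X})=D(x,X)$ to $D(t',\rho_{x,X})$, and more generally the sub-disk $D(x,R)$ centered at $t$ of radius $R$ to the sub-disk of radius $R$ centered at $t'$, for every $R\le\rho_{x,X}$. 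Pulling back sections along this automorphism gives a $\sigma$-semilinear isomorphism $\O(D(x,R))\xrightarrow{\sim}\O(D(t',R))$ compatible with $d/dT$, hence with the connection $\nabla$ extended to $\Fs_{\Omega''}$, which is itself the pullback of $\Fs$ on $X$ and thus $\sigma$-equivariant by functoriality. Consequently $\sigma$ identifies the kernel of $\nabla\otimes 1+1\otimes d/dT$ on $D(x,R)$ computed over $\Omega''$ with the corresponding kernel around $t'$ over $\Omega''$, which is exactly \eqref{eq : Fil_it=Fil_it'} once one uses the base-change identity $\mathrm{Fil}^{\ge R}\Sol(\Fs,t,\Omega)\widehat\otimes_\Omega\Omega''=\mathrm{Fil}^{\ge R}\Sol(\Fs,t,\Omega'')$ and its analogue for $t'$, both recalled before the statement.

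The main obstacle I anticipate is purely bookkeeping rather than conceptual: making the ``$\sigma$-semilinear'' comparison precise, i.e. verifying that the functorial isomorphism $X_{\Omega''}\xrightarrow{\sim}X_{\Omega''}$ induced by $\sigma$ really does restrict to an isomorphism of the two open disks $D(x,R)$ and $D(t',R)$ as \emph{analytic domains} (so that it induces a ring isomorphism on global sections), and that $\Fs_{\Omega''}$ together with its connection descends equivariantly under this isomorphism. The first point follows from Remark \ref{rk : xleq x' then rhox=rhox'} and the description of disks in $X_{\Omega''}$ via the seminorms $x_{c,\rho}$; the second is automatic because $\Fs_{\Omega''}=\Fs\widehat\otimes_K\Omega''$ and the connection $\nabla$ are obtained by base change from objects on $X/K$, hence are intrinsically $\mathrm{Gal}^{\mathrm{cont}}(\Omega''/K)$-equivariant. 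Finally one remarks that $\O(D(x,R))$ is $(T-t)$-adically separated, so the solution filtration transported by $\sigma$ matches degree by degree, which gives the claimed compatibility for all $R\le\rho_{x,X}$ simultaneously.
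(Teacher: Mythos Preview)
Your proposal is correct and follows essentially the same approach as the paper: invoke Lemma \ref{Gal transitive} to obtain $\sigma$ with $\sigma(t)=t'$, use that $\sigma$ is isometric and $K$-linear to conclude $\sigma(D^-(t,R))=D^-(t',R)$, and note that the induced ring map $\sum a_i(T-t)^i\mapsto\sum\sigma(a_i)(T-t')^i$ commutes with $d/dT$. Your additional remarks on equivariance of $\Fs_{\Omega''}$ and the appeal to Remark \ref{rk : xleq x' then rhox=rhox'} are reasonable elaborations, though the paper dispatches the disk-preservation in one line from the isometry alone.
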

\begin{proof}
The existence of $\sigma$ such that $\sigma(t)=t'$ follows from 
Lemma \ref{Gal transitive}. 
Since $\sigma$ is isometric, then 
$\sigma(D^-(t,R))=D^-(t',R)$ for all 
$0<R\leq\rho_{\xi,X}$. This provides an isomorphism of rings 
$\sum a_i (T-t)^i\mapsto\sum\sigma(a_i)(T-t')^i:
\O(D^-(t,R))\simto\O(D^-(t',R))$, over $\Omega''$, 
commuting with $d/dT$.
\end{proof}

\begin{definition}[Convergence radii]\label{Def. CRAD}
For all $i=1,\ldots,r$ we define $\R^{\Fs}_i(\xi)$ as the largest value 
of $R\leq\rho_{\xi,X}$ such that 
$\mathrm{dim}_{\Omega}\mathrm{Fil}^{\geq R}
\mathrm{Sol}(\Fs,t,\Omega)\geq r-i+1$. 
We set $H^{\Fs}_i(x):=\prod_{k=1}^i\R^{\Fs}_k(x)$ and
\begin{equation}\label{eq : (3.5)}
\R_i(x,\Fs)\;:=\;\R_i^{\Fs}(x)/\rho_{x,X}\;,\qquad
H_i(x,\Fs)\;:=\;\prod_{k=1}^i\R_k(x,\Fs)=H_i^{\Fs}(x)/\rho_{x,X}^i\;.
\end{equation}
We also set $s_i^\Fs(x):=\ln(\R^{\Fs}_i(x))$ and 
$h_i^\Fs(x):=s_1^\Fs(x)+\cdots+s_i^\Fs(x)$, $h_0^\Fs(x)=0$. 

The polygon $NP(\ln(H_i(x,\Fs)))$ is called the 
convergence Newton polygon and it is denoted by
\begin{equation}\label{eq : NPconv}
NP^{\mathrm{conv}}(x,\Fs)\;.
\end{equation}
\end{definition}
\begin{remark}
\label{Def.: radius on a disk}
(1) By Prop. \ref{Prop : Fil Indep on tOmega}, the above 
functions are independent on the choices of $t$ and $\Omega$. 

(2) Obviously the definition only depend on the restriction 
$\Fs_{|D(x,X)}$, so the same definitions can be given for a differential 
module over a virtual open disk $D$, replacing $\rho_{x,X}$ by the 
radius of $D$.

(3) In particular the definition is insensitive by extension of $K$: for all 
$\Omega\in E(K)$ and all $y\in X_\Omega$
\begin{equation}
\R_i(y,\Fs_\Omega)\;=\;\R_i(\pi_{\Omega/K}(y),\Fs)\;,
\qquad \forall \;i=1,\ldots,r\;.
\end{equation}
In particular the assumptions of Proposition 
\ref{Invariance of (C1)--(C6) by scalar extension of the ground field} 
are verified.

(4) Since $y\mapsto\rho_{y,X}$ is constant on each maximal disk 
$D(x,X)$, it immediately follows that
\begin{equation}
\Gamma(\R_i(-,\Fs))\;=\;\Gamma(\R^{\Fs}_i)\;,\qquad
\Gamma(H_i(-,\Fs))\;=\;\Gamma(H^{\Fs}_i)\;.
\end{equation}

(5) More precisely $\R_i(-,\Fs)$ and $\R_i^{\Fs}$ differ by a constant 
function over each maximal disk $D(x,X)$. Hence if $b$ is a germ of 
segment out of $x\in X$ we have either 
$\partial_b\R_i(x,\Fs)=\partial_b\R_i^{\Fs}(x)$ if 
$b\notin\Gamma_X$, or 
$\partial_b\R_i(x,\Fs)=\partial_b\R_i^{\Fs}(x)-1$ otherwise 
if $b$ is oriented as towards $+\infty$.

(6) The dimension $\mathrm{dim}_\Omega\mathrm{Fil}^{\geq R}\mathrm{Sol}(\Fs,t,\Omega)$ is obviously 
constant on $D(x,R)$. Hence $\R_i(x,\Fs)$ and $\R_i^{\Fs}(x)$ are constant on 
$D(x,\R_i^{\Fs}(x))$, so
\begin{equation}\label{(2)}
\max(\R_i^{\Fs}(x),r(x))\;\leq\;
\rho_{\R_i^{\Fs}}(x)\;=\;\rho_{\R_i(-,\Fs)}(x)\;.
\end{equation}

(7) It follows  from the definition that if 
$\Fs'\subset\Fs$ is a sub-differential equation, the radii of $\Fs'$ 
 all appear among the radii of $\Fs$, with at least the same multiplicity than they had in $\Fs'$.
\end{remark}

The radii do not behave well by exact sequences, but we have the 
following
\begin{proposition}\label{Prop. direct sum then ok with radii}
Let $\Fs=\Fs_1\oplus\Fs_2$ be a direct sum of differential equations 
over $X$ of ranks $r_1$ and $r_2$ respectively. 
Then, up to 
permutation\footnote{If a radius $R$ appears 
$n_i$-times in $NP^{\mathrm{conv}}(\Fs_i,\xi)$, it is understood that 
it appears $n_1+n_2$-times in $NP^{\mathrm{conv}}(\Fs,\xi)$.}, for all $\xi\in X$ one has  
\begin{equation}
\{\R^{\Fs}_1(x),\ldots,\R^{\Fs}_{r_1+r_2}(x)\}\;=\;
\{\R^{\Fs_1}_1(x),\ldots,\R^{\Fs_1}_{r_1}(x)\}
\cup
\{\R^{\Fs_2}_1(x),\ldots,\R^{\Fs_2}_{r_2}(x)\}\;.
\end{equation}
The same holds replacing $X$ by an open disk, or replacing 
$\R_{i}^{\Fs}$ by $\R_i(-,\Fs)$.
\end{proposition}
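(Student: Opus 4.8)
The statement is that the multiset of radii of a direct sum $\Fs = \Fs_1 \oplus \Fs_2$ is the (disjoint) union of the multisets of radii of the two summands. The natural approach is to work directly from Definition \ref{Def. CRAD}, i.e. from the filtration $\mathrm{Fil}^{\geq R}\Sol(\Fs,t,\Omega)$. First I would fix $x \in X$, choose $\Omega \in E(\H(x))$ algebraically closed, spherically complete, with $|\Omega^\times| = \mathbb{R}_{>0}$, and a Dwork generic point $t \in X(\Omega)$; by Proposition \ref{Prop : Fil Indep on tOmega} the radii do not depend on these choices. The key observation is that a connection respects direct sums: a section $s = (s_1, s_2)$ of $\Fs_{|D(x,R)} = (\Fs_1)_{|D(x,R)} \oplus (\Fs_2)_{|D(x,R)}$ is horizontal if and only if each $s_j$ is horizontal, because $\nabla = \nabla_1 \oplus \nabla_2$ acts componentwise. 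Hence for every $R \leq \rho_{x,X}$ one has a canonical $\Omega$-linear isomorphism
\begin{equation}
\mathrm{Fil}^{\geq R}\Sol(\Fs,t,\Omega) \;=\; \mathrm{Fil}^{\geq R}\Sol(\Fs_1,t,\Omega) \,\oplus\, \mathrm{Fil}^{\geq R}\Sol(\Fs_2,t,\Omega)\;,
\end{equation}
compatible with the inclusions as $R$ decreases.

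**From the filtration to the radii.** Taking dimensions, for all $R \leq \rho_{x,X}$,
\begin{equation}
\dim_\Omega \mathrm{Fil}^{\geq R}\Sol(\Fs,t,\Omega) \;=\; \dim_\Omega \mathrm{Fil}^{\geq R}\Sol(\Fs_1,t,\Omega) + \dim_\Omega \mathrm{Fil}^{\geq R}\Sol(\Fs_2,t,\Omega)\;.
\end{equation}
Now recall that for a differential equation $\Mcal$ of rank $r$, the jump function $R \mapsto \dim_\Omega \mathrm{Fil}^{\geq R}\Sol(\Mcal,t,\Omega)$ is a nondecreasing, left-continuous step function of $R \in (0,\rho_{x,X}]$ taking the value $r$ at small $R$, and the radii $\R^{\Mcal}_1(x) \leq \cdots \leq \R^{\Mcal}_r(x)$ are precisely (with multiplicity) the locations of its jumps: $\R^{\Mcal}_i(x)$ is the largest $R$ at which the dimension is still $\geq r - i + 1$. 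Equivalently, the multiset $\{\R^{\Mcal}_1(x), \ldots, \R^{\Mcal}_r(x)\}$ is the unique multiset whose counting function $R \mapsto \#\{i : \R^{\Mcal}_i(x) \geq R\}$ equals $\dim_\Omega \mathrm{Fil}^{\geq R}\Sol(\Mcal,t,\Omega)$ for all $R \in (0,\rho_{x,X}]$. Since the dimension function for $\Fs$ is the sum of those for $\Fs_1$ and $\Fs_2$, the same holds for the counting functions, and therefore the multiset of radii of $\Fs$ is the union of the multisets of radii of $\Fs_1$ and $\Fs_2$. This proves the first claim.

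**The variants.** The case of an open disk $D$ in place of $X$ is identical: by Remark \ref{Def.: radius on a disk}(2) the definition of the radii over a disk is verbatim the same, replacing $\rho_{x,X}$ by the radius of $D$, and the filtration still splits as a direct sum. For the normalized radii $\R_i(-,\Fs)$, note that by \eqref{eq : (3.5)} one has $\R_i(x,\Fs) = \R^{\Fs}_i(x)/\rho_{x,X}$, and the normalizing factor $\rho_{x,X}$ depends only on $x$ (and on the ambient $X$), not on the module; since $\Fs_1$ and $\Fs_2$ live over the same $X$, dividing every element of each multiset by the same positive constant $\rho_{x,X}$ preserves the identity of multisets. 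Hence the stated equality holds for $\R_i(-,\Fs)$ as well.

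**Main obstacle.** There is no serious obstacle here: the only point requiring minimal care is the bookkeeping of multiplicities, i.e. making precise (as in the footnote to the statement) that ``union'' means union of multisets, and checking that the characterization of the radii as the jump locations of the dimension-of-solutions function is additive. Everything else is an immediate consequence of the fact that $\nabla_{\Fs} = \nabla_{\Fs_1} \oplus \nabla_{\Fs_2}$ and hence $\Sol$ (and each piece of its filtration by radius of convergence) splits accordingly.
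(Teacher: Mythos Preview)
Your proof is correct and follows exactly the paper's approach: the paper simply observes that the functor $\Fs\mapsto \mathrm{Fil}^{\geq R}\Sol(\Fs,t,\Omega)$ is additive, obtains the direct sum decomposition of the filtration, and says the claim follows from Definition~\ref{Def. CRAD}. Your version is a more detailed unpacking of the same argument, including the explicit bookkeeping with the jump/counting functions and the verification of the two variants.
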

\begin{proof}
The functor $\Fs\mapsto \mathrm{Fil}^{\geq R}\Sol(\Fs,t,\Omega)$ is 
additive, so for $R\leq\rho_{x,X}$ we have 
$\mathrm{Fil}^{\geq R}\Sol(\Fs,t,\Omega)=
\mathrm{Fil}^{\geq R}\Sol(\Fs_1,t,\Omega)\oplus
\mathrm{Fil}^{\geq R}\Sol(\Fs_2,t,\Omega)$.
The claim then follows directly from Definition \ref{Def. CRAD}.
\end{proof}

\subsection{Statement of main result}
%
\begin{definition}\label{Def : index free of solvabl}
We say that the index $i$ (resp. $\R_i(x,\Fs)$) is  
\begin{equation}
\left\{\begin{array}{lcl}
\textrm{spectral at }x\in X&\textrm{ if }& \R_i^{\Fs}(x)\leq r(x)\;,\\
\textrm{solvable at }x\in X&\textrm{ if }& \R_i^{\Fs}(x)=r(x)\;,\\
\textrm{over-solvable at }x\in X&\textrm{ if }&\R_i^{\Fs}(x)>r(x)\;.
\end{array}\right.
\end{equation}

We say that the index $i$ is \emph{free of solvability at $x$}  
if none of the indexes $j\leq i$ is solvable.

We say that $\Fs$ is \emph{free of solvability at $x$} if none of the 
indexes $i=1,\ldots,r$ is solvable at $x$.
\end{definition}
\begin{remark}\label{Rk : radius is spectral over controlling graph}
From Prop. \ref{prof propp} and \eqref{(2)}
it follows that $i$ is spectral at all points of $\Gamma(\R_i(-,\Fs))$. 
\end{remark}
\begin{definition}\label{Def : Gamma_i}
For all $i=1,\ldots,r$, we set 
\begin{equation}
\Gamma_0\;:=\;\Gamma_X\;,\qquad\quad\Gamma_i\;:=\;\bigcup_{j=1,\ldots,i}\;\Gamma(\R_{j}(-,\Fs))\;.
\end{equation}
\end{definition}
Recall that the index $i$ is a \emph{vertex at $x$} of $NP^{\mathrm{conv}}(x,\Fs)$  if $\R_i(x,\Fs)<\R_{i+1}(x,\Fs)$, 
or if $i=r$.

The main result of this paper is the following:
\begin{theorem}\label{Theorem : MAIN THM GEN}
Let $\Fs$ be a differential module of rank $r$ over $X$.

For $i=1,\ldots,r$ the functions $\R_i(-,\Fs)$ and $H_i(-,\Fs)$ 
(hence also $s_i^\Fs$, $h_i^\Fs$, $\R^{\Fs}_i$, $H^{\Fs}_i$) 
are finite. 

They enjoy moreover the following properties:

\begin{enumerate}
\item For all $i=1,\ldots,r$ the $i$-th partial heights $H_i(-,\Fs)$ and $H_i^{\Fs}$ both verify 
(C1), (C2), (C4), (C5) of Section \ref{F_t defi}, 
and also (C3) with respect to $\Gamma:=\Gamma_{i-1}$. 
\item\;\!\!\textbf{$\bs{[}$Integrality$\bs{]}$} Let 
$\xi\in X$ be a point, then:
\begin{enumerate}
\item
If $i$ is a vertex of 
$NP^{\mathrm{conv}}(x,\Fs)$, 
then for all germ of segment $b$ out of $x$, we have 
\begin{equation}\label{eq : integrality of b}
\partial_b H_i(x,\Fs)\;,\;\partial_bH_i^{\Fs}(x)\;\in\;\mathbb{Z}\;.
\end{equation}
\item
If $i$ is not a vertex, one proves by 
interpolation\footnote{Interpolation means that we proceed as in
the proof of point iv) of Proposition \ref{Prop. NP of a operator}.} 
from \eqref{eq : integrality of b} that 
\begin{equation}
\partial_bH_i(x,\Fs)\;,\;\partial_bH_i^{\Fs}(x)\;\in \;
\mathbb{Z}\cup\frac{1}{2}\mathbb{Z}\cup\cdots\cup
\frac{1}{r}\mathbb{Z}\;.
\end{equation} 
\end{enumerate}
\item\;\!\!\textbf{$\bs{[}$Concavity$\bs{]}$}
Let $]x,z[$ be an 
open segment in $X$. Let $]x,y[:=]x,z[-\Gamma_X$.\footnote{In 
other words if $D$ is the largest virtual open disk in $X$ 
intersecting $]x,z[$, then $]x,y[=D\cap]x,z[$. If 
$]x,z[\subset\Gamma_X$, it is understood that $]x,y[=\emptyset$.}  
For all $i=1,\ldots,r$ let $H_i$ denote  the $i$-th partial height 
$H_i(-,\Fs)$ or $H^{\Fs}_i$. 
Then:
\begin{enumerate}
\item
$H_i$ is $\log$-concave on each 
sub-segment of $]y,z[$ which is the skeleton of a virtual 
annulus contained in $X$ (cf. Def. \ref{Def. : LOG-convex}).
\item
$H_i$ is $\log$-concave on each sub-segment 
of $]x,y[$ which does not contain the points 
\begin{equation}
\{\lambda_x(\R_{1}^{\Fs}(x)),\ldots,\lambda_x(\R_{i}^{\Fs}(x))\}\;.
\end{equation}
Moreover let $\tau\in\{\R^{\Fs}_k(x)\}_{k\leq i}$. 
If for all $k\leq i$ such that 
$\R^{\Fs}_k(x)=\tau$ the function $\R_k(-,\Fs)$ 
(or equivalently $\R_k^{\Fs}$) 
is $\log$-concave at $\log(\tau)$, then 
 $H_i$ is also $\log$-concave at 
$\log(\tau)$.\footnote{In particular this 
happens by definition if $\tau<r(x)$, 
since $L_x\R_k(-,\Fs)$ is constant on $]-\infty,r(x)]$.}

\item
$H_i$ is logarithmically non-increasing on each sub-segment $I\subset ]x,y[$ on which $i$ is free of solvability
(i.e. $\R^{\Fs}_j(x')\neq r(x')$ for all $x'\in I$, and all $j\leq i$).
\end{enumerate}
\item\;\!\!\textbf{$\bs{[}$Weak super-harmonicity$\bs{]}$} 
We define inductively a family $\C_{1}(\Fs),\ldots,\C_{r}(\Fs)\in 
X-\Gamma_X$ of finite subsets as
\begin{equation}
\C_i\;:=\;\cup_{i=1}^iA_i\;,
\end{equation}
where $A_i$ is the finite set of points $\xi\in X$ satisfying 
\begin{enumerate}
\item The index $i$ is solvable at $x$; 
\item
$\xi$ is an end point of  $\S(\R_i(-,\Fs))$.
\item
$\xi\in\S(\R_i(-,\Fs))\cap\S(H_i(-,\Fs))\cap\S_{i-1}$;
\end{enumerate}
Then for all 
$x\notin S_X\cup \C_i$ (cf. \eqref{eq : S_X}) we have
\begin{equation}\label{eq : H_i sup-harm}
dd^cH_i(x,\Fs)\;\leq \;0\;.
\end{equation}
While for $x\in S_X-\partial X$ we have
\begin{equation}\label{eq : H_i GOS}
dd^cH_i(x,\Fs)\;\leq\;(N_X(x)-2)\cdot\min(i,i^{\mathrm{sp}}_x)\;,
\end{equation}
where $N_X(x)=\sum_{b\in\Delta(x,\Gamma_X)}m_b$, where $m_b$ 
is the multiplicity of $b$ (cf. Definition \ref{Def : Laplacian}), and 
$0\leq i^{\mathrm{sp}}_x\leq r$ is the largest index of $\Fs$ which is 
spectral non solvable at $x$.\footnote{It is understood that 
$i^{\mathrm{sp}}_x=0$ if and only if all the radii of $\Fs$ are 
solvable or over-solvable at $x$.}

This is equivalent to say that 
$H^{\Fs}_i$ is super-harmonic (at least) at all 
$x\in X-(\C_{i}\cup\partial X)$. 

In particular $\R^{\Fs}_1$ is super-harmonic (outside $\partial X$). 

\item\;\!\!\textbf{$\bs{[}$Weak harmonicity of the vertexes$\bs{]}$}  
Let $\xi\in X-\partial X$. Then:
\begin{enumerate}
\item 
If $x\notin \Gamma(H_i(-,\Fs))$, then  for all $b\in\Delta(x)$
we have $\partial_bH_i(x,\Fs)=0$, so $H_i(-,\Fs)$ is harmonic at $x$;
\item 
If $x\in\Gamma(H_i(-,\Fs))$, and if $i$ is a vertex free of solvability at $x$, then 
\eqref{eq : H_i sup-harm} and \eqref{eq : H_i GOS} are equalities.
In particular $H_i^{\Fs}$ is harmonic at $x$.
\end{enumerate}	 

\end{enumerate}
\end{theorem}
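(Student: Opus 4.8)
The strategy is to reduce Theorem \ref{Theorem : MAIN THM GEN} to the abstract finiteness criterion of Theorem \ref{th : finiteness theorem} (or its variant Theorem \ref{Thm : copycat}), applied not to the individual radii but to the partial heights $H_i^{\Fs}$. First I would observe, using Remark \ref{Def.: radius on a disk} and Proposition \ref{Invariance of (C1)--(C6) by scalar extension of the ground field}, that everything is insensitive to extension of the ground field, so we may freely assume $K$ algebraically closed, spherically complete, with divisible value group; and since $\Gamma(\R_i(-,\Fs))=\Gamma(\R_i^{\Fs})$ we may work with the unnormalized $\R_i^{\Fs}$ and $H_i^{\Fs}$ when convenient. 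The whole proof is then an induction on $i$: assuming $\R_1^{\Fs},\ldots,\R_{i-1}^{\Fs}$ are finite (so $\Gamma_{i-1}$ is a finite admissible graph) and the properties i)--v) hold up to index $i-1$, I would verify the six conditions (C1)--(C6) for $F:=H_i^{\Fs}$ with respect to the graph $\Gamma:=\Gamma_{i-1}$, and conclude finiteness of $H_i^{\Fs}$ from Theorem \ref{th : finiteness theorem}; finiteness of $\R_i^{\Fs}=H_i^{\Fs}/H_{i-1}^{\Fs}$ then follows since $\Gamma(\R_i^{\Fs})\subseteq\Gamma(H_i^{\Fs})\cup\Gamma(H_{i-1}^{\Fs})$ by Remark \ref{Rk : min max fini}.

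The verification of (C1)--(C6) is where the real differential-equations input enters, and it proceeds by the Frobenius push-forward technique. For (C1) one bounds $\R_i^{\Fs}$ below away from $0$ at points of type $1$ by the Cauchy/Young estimate on the radius of a solution in terms of the coefficients of an operator in a cyclic basis. Conditions (C2), (C4), (C5) — continuity, piecewise log-affineness with finitely many breaks, a uniform lower bound $\nu_\F$ on nonzero slopes, and directional finiteness — come from the analogous statements for the Newton polygon of a differential operator (the spectral polygons of Section ``Spectral polygons and related results''): after applying a suitable Frobenius push-forward to make the relevant spectral radii small, the radii are read off the slopes of $NP$ of the operator, which are controlled by $|a_j|$ for $a_j\in\O(X)$, and those are finite, piecewise log-affine with integral (hence bounded-below) slopes. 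This also yields the integrality statement ii), by interpolation from the vertex case exactly as in Proposition \ref{Prop. NP of a operator}. Condition (C3), log-concavity of $H_i^{\Fs}$ on $]-\infty,\ln(\rho_{\Gamma_{i-1}}(x))[$, is the Transfer-principle-type estimate of Proposition \ref{Prop. : (C3)}: it generalizes the classical concavity of $\ln\R_1^{\Fs}$ and must be proved separately in the spectral, solvable, and over-solvable ranges, the subtle range being near a solvable radius.

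The heart of the proof — and the main obstacle — is condition (C6): producing a \emph{finite} set $\C(\F)=\C_i$ outside which $H_i^{\Fs}$ is super-harmonic. For $i=1$, $\R_1^{\Fs}$ is super-harmonic off $\partial X$ and $\C_1$ can be taken empty (this is essentially \cite[11.3.2]{Kedlaya-Book}, re-proved here in the over-solvable setting). For $i\geq 2$ super-harmonicity of $H_i^{\Fs}$ can genuinely fail, and the key point is to locate the failure: I would show, via Proposition \ref{Prop : SH-well done spectral NS} (super-harmonicity in the spectral non-solvable case, after Frobenius) together with the local analysis of the graphs around solvable points in Lemma \ref{Lemma : KEY . .!}, that a bifurcation point of $\Gamma(H_i^{\Fs})$ where super-harmonicity fails must be an \emph{end point} of some $\Gamma(\R_k^{\Fs})$ with $k\le i-1$, at a point where $k$ is solvable and the relevant skeletons meet — precisely the set $A_i$ in the statement. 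Since by the inductive hypothesis each $\Gamma(\R_k^{\Fs})$, $k\le i-1$, is finite, it has finitely many end points, so $\C_i=\bigcup_{k\le i}A_k$ is finite, giving (C6). The boundary estimates \eqref{eq : H_i GOS} at points of $S_X$, and the harmonicity-of-vertices statement v), then come from tracking the Laplacian computation more precisely in the spectral non-solvable/vertex case, where the inequalities coming from $NP$ of the push-forward operator become equalities. Assembling i)--v) and invoking Theorem \ref{th : finiteness theorem} closes the induction.
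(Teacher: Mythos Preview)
Your proposal is correct and matches the paper's own strategy closely: induction on $i$, verifying (C1)--(C6) for $H_i^{\Fs}$ relative to $\Gamma:=\Gamma_{i-1}$ via the spectral comparison (Remark \ref{Rk : (C2)+(C4)}, Theorem \ref{thm: continuity of spectral radii along a branch}), the Transfer principle (Proposition \ref{Prop. : (C3)}), Proposition \ref{Prop : SH-well done spectral NS}, and Lemma \ref{Lemma : KEY . .!}, then invoking Theorem \ref{th : finiteness theorem}. One small slip worth flagging: in the paper's formal definition the exceptional set $A_i$ consists of \emph{end points of $\Gamma(\R_i^{\Fs})$} (index $i$, not $k\le i-1$) that are forced by condition (c) to lie inside $\Gamma_{i-1}$, and it is this containment in the already-finite $\Gamma_{i-1}$---together with Lemma \ref{Lemma : KEY . .!}---that yields finiteness of $\C_i$, not directly the finiteness of $\Gamma(\R_k^{\Fs})$ for $k<i$; your phrasing echoes the paper's own introduction, and the underlying mechanism you identify (control by $\Gamma_{i-1}$) is the correct one.
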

The proof of Theorem \ref{Theorem : MAIN THM GEN} 
is placed in section \ref{Proof of the MTH}. 

As a straightforward generalization of Theorem
\ref{Theorem : MAIN THM GEN} we have the following 
\begin{corollary}\label{Corollary - after thm}
Let $C(I):=\{x\textrm{ such that }|T|(x)\in I\}$ be a possibly 
not closed annulus or disk (if 
$0\in I$ one has a disk). 
Let $\Fs$ be a differential module of rank $r$ 
over a differential ring $\O$. 

Then Theorem \ref{Theorem : MAIN THM GEN} holds for $\Fs$ in the 
following cases:
\begin{enumerate}
\item if $\O$ is the ring of Krasner analytic elements over $C(I)$ (cf. 
\cite[Def. 8.1.1]{Kedlaya-Book}); 
\item if $K$ is discretely valued, and $\O$ 
is the ring $\mathcal{B}(C(I))$ of bounded analytic functions 
on $C(I)$; 
\item if $\O=\mathcal{B}(C(I))$ or $\O=\O(C(I))$, and all 
$\R_1(-,\Fs),\ldots,\R_r(-,\Fs)$ (or equivalently all $H_i(-,\Fs)$) 
have a finite number of breaks along the skeleton 
$\Gamma_{C(I)}=\{\xi_{0,\rho}\}_{\rho\in I}$.
\end{enumerate}
Moreover if $\O=\mathcal{B}(C(I))$ or $\O=\O(C(I))$, and if 
there exists $i\leq r$ such that all $\R_1(-,\Fs),\ldots,\R_i(-,\Fs)$ 
have a finite number of breaks along $\Gamma_{C(I)}$, 
then $\R_1(-,\Fs),\ldots,\R_i(-,\Fs)$ are finite. \hfill$\Box$
\end{corollary}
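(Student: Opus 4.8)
The strategy is to reduce each case to Theorem \ref{Theorem : MAIN THM GEN} (and to the finiteness Theorems \ref{th : finiteness theorem} and \ref{Thm : copycat}) by producing, in each setting, an exhaustion of $C(I)$ by affinoid subdomains of the affine line on which the radii are already known to be finite, and then checking that the controlling graph built from these pieces is itself finite. The key point is that $C(I)$ differs from a genuine affinoid only along the tails of its skeleton $\Gamma_{C(I)}=\{\xi_{0,\rho}\}_{\rho\in I}$, so the whole question concentrates on the behaviour of each $\R_i(-,\Fs)$ along these (possibly infinite) end segments. Concretely I would fix a compact subinterval $J\subset I$ (containing $0$ when $0\in I$) exhausting $I$ from inside; $C(J)$ is an affinoid domain of $\mathbb{A}^{1,\mathrm{an}}_K$, so Theorem \ref{Theorem : MAIN THM GEN} applies to $\Fs_{|C(J)}$ and all the $H_i(-,\Fs)$ are finite over $C(J)$, with the full list of properties (C1)--(C6) (or (C1),(C2),(C3),(C5),(C6) for Theorem \ref{Thm : copycat}). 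It then remains, in each of the listed cases, to control $\F:=H_i(-,\Fs)$ on the connected components of $C(I)-C(J)$, which are open (virtual) annuli, and to conclude by Corollary \ref{cor : finiteness on a disk} once one knows $\F$ is $\log$-affine there and that $\C(\F)\subset C(J)$.

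First I would treat the case where $\O$ is the ring of Krasner analytic elements on $C(I)$. Here the relevant input is that a differential module over such a ring ``comes from'' a differential module over a genuine (possibly non-closed) annulus in a controlled way near the ends: more precisely, Krasner analytic elements are uniform limits of rational functions with poles outside $C(I)$, and one shows—using the finiteness of the spectral polygon over sub-annuli together with the Dwork–Robba type control—that the radii are eventually $\log$-affine on the tails of $\Gamma_{C(I)}$ as $\rho\to\sup I$ or $\rho\to\inf I$. This is exactly the hypothesis needed to invoke Corollary \ref{cor : finiteness on a disk} with $\C(\F)$ finite and contained in $C(J)$. For the discretely valued case with $\O=\mathcal{B}(C(I))$ the argument is similar: here the essential extra ingredient is that over a discretely valued field the module $\Fs$ extends, after a suitable choice of lattice, to a module on a slightly larger genuine annulus (or at least the radii stabilize and become $\log$-affine on the tails), because the subsidiary radii of convergence are governed by finitely many ``slopes'' in the sense of Kedlaay's theory and cannot oscillate indefinitely; again one finishes by Corollary \ref{cor : finiteness on a disk}. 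Case (iii) is the cleanest: the finiteness of the number of breaks of each $\R_i(-,\Fs)$ along $\Gamma_{C(I)}$ is hypothesized outright, so $\F$ is $\log$-affine on the complement of a compact $J\subset I$ containing all breaks, and Corollary \ref{cor : finiteness on a disk} applies directly; for the last assertion one runs the same argument but stops the induction at the given index $i$, using that properties (C1)--(C6) for $H_i(-,\Fs)$ only require the finiteness data for $j\le i$.

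For the super-harmonicity, integrality, and concavity statements there is essentially nothing new to prove: over $C(J)$ they hold by Theorem \ref{Theorem : MAIN THM GEN}, and on the tail annuli the functions are $\log$-affine with integer (or rational-with-bounded-denominator) slope by the structure of $\Gamma(\F)$ there, and harmonic on the associated open annuli by Proposition \ref{Prop : annulus}; one also checks that the extra finite set $\C_i$ can be taken inside $C(J)$ since the new phenomena (solvable end points of $\Gamma(\R_i(-,\Fs))$) cannot occur on a segment where $\F$ is $\log$-affine without breaks. The main obstacle, and the only genuinely delicate point, is establishing in cases (i) and (ii) that the radii really are eventually $\log$-affine on the tails of the skeleton—that is, that no breaks accumulate at the (open) ends of $I$. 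This requires knowing that the differential module, though defined only over a ring of analytic elements or bounded functions, has subsidiary radii whose behaviour near the ends is governed by a finite amount of data; this follows from the local structure theory of $p$-adic differential modules over the Robba ring (finite slope decomposition, Christol–Mebkhout / Kedlaya), but making it interface cleanly with the global constancy-skeleton formalism of Section \ref{Skeleton of a function on M(X).} is where the real work lies. I would isolate this as a lemma: \emph{for a differential module over the ring of Krasner analytic elements (resp. bounded functions, in the discretely valued case) on $C(I)$, each $\R_i(-,\Fs)$ has only finitely many breaks in every tail $\{\xi_{0,\rho}\}_{\rho\in[a,\sup I[}$}, and then everything reduces formally to the affinoid case via Corollary \ref{cor : finiteness on a disk}.
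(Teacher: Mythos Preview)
Your overall strategy---exhaust $C(I)$ by compact sub-annuli $C(J)$, apply Theorem~\ref{Theorem : MAIN THM GEN} on each $C(J)$, and then use Corollary~\ref{cor : finiteness on a disk} to glue once each $H_i(-,\Fs)$ is known to have finitely many breaks on the tails of $\Gamma_{C(I)}$---is exactly the intended reduction, and your treatment of case~(iii) and of the ``moreover'' clause is correct.

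The one point where your sketch diverges from the paper concerns the mechanism behind cases~(i) and~(ii). You propose, for~(ii), that over a discretely valued $K$ the module extends via a lattice to a slightly larger annulus; this is not the argument, and in general no such extension exists. The actual reason the radii have only finitely many breaks along $\Gamma_{C(I)}$ in cases~(i) and~(ii) is already built into Theorem~\ref{thm: continuity of spectral radii along a branch}: the cited results \cite[11.3.2, 11.3.4, 11.6.5]{Kedlaya-Book} establish the finite-break property for the \emph{spectral} radii directly over rings of analytic elements (case~(i)) and over $\mathcal{B}(C(I))$ when $K$ is discretely valued (case~(ii)). Concretely, after Frobenius push-forward to make the radii small and a cyclic-vector base change, the spectral radii are read off from the Newton polygon of a differential operator whose coefficients lie in the given ring; analytic elements always have finitely many zeros, and bounded analytic functions have finitely many zeros precisely when $K$ is discretely valued---this is what forces finitely many breaks. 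Since along $\Gamma_{C(I)}$ one has $\R_i^{\Fs}=\R_i^{\Fs,\mathrm{sp}}$, this transfers immediately to the convergence radii, and Corollary~\ref{cor : finiteness on a disk} finishes. So the ``lemma'' you propose to isolate is not new work: it is the content of the Kedlaya reference already invoked in Theorem~\ref{thm: continuity of spectral radii along a branch}, which is why the paper gives no separate proof.
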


\begin{corollary}\label{Cor : Rk=1 then boundary explicit}
Assume the $\mathrm{rank}(\Fs)=1$, and that $x\notin \Gamma_X$. 
Then $x$ is an end point of $\Gamma(\R_1(x,\Fs))$ if and only if 
$\R_{1}(x,\Fs)$ is solvable at $x$ 
and $\partial_{b_\infty}\R_1(x,\Fs)<0$, 
where $b_{\infty}$ denotes the germ of segment out of $x$ directed 
towards $+\infty$ (and oriented as out of $x$).
\end{corollary}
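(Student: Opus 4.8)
\textbf{Proof plan for Corollary \ref{Cor : Rk=1 then boundary explicit}.}
Since $\mathrm{rank}(\Fs)=1$ we have $r=1$, and the only radius is $\R_1(-,\Fs)=H_1(-,\Fs)$. The plan is to invoke Theorem \ref{Theorem : MAIN THM GEN} in this rank-one situation, where most of the statement collapses and becomes very explicit. First I would recall, using Remark \ref{Rk : radius is spectral over controlling graph}, that the index $1$ is spectral at every point of $\Gamma(\R_1(-,\Fs))$; so $x$ being an endpoint of $\Gamma(\R_1(-,\Fs))$ forces $\R_1^{\Fs}(x)\leq r(x)$, i.e. the index is either spectral non-solvable or solvable at $x$. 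The first thing to rule out is the spectral non-solvable case: there $i^{\mathrm{sp}}_x=1$, and the weak super-harmonicity in part (4) of Theorem \ref{Theorem : MAIN THM GEN} (together with part (5)(a): if $x\notin\Gamma(H_1(-,\Fs))$ then $H_1$ is harmonic at $x$, which can't happen at an endpoint) shows $\R_1^{\Fs}$ is super-harmonic at $x$ and in fact, being spectral non-solvable, the radius coincides locally with a quantity governed by Proposition \ref{Prop : SH-well done spectral NS}; this prevents $x$ from being an endpoint of the controlling graph, because an endpoint must have exactly one branch of $\Gamma(\R_1(-,\Fs))$ out of it with strictly positive slope and all other branches flat (by Proposition \ref{Prop : negative slope} and (C3)), which would make $dd^c\R_1^{\Fs}(x)>0$ — contradicting super-harmonicity. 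Hence the only way $x$ is an endpoint is that the index $1$ is solvable at $x$.

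Conversely, assuming $1$ is solvable at $x$ and $\partial_{b_\infty}\R_1(x,\Fs)<0$, I want to show $x$ is an endpoint of $\Gamma(\R_1(-,\Fs))$. Since $x\notin\Gamma_X$, the point $x$ lies in a maximal disk, so $b_\infty$ is not in $\Gamma_X$ and by Remark \ref{Def.: radius on a disk}(5) the slope $\partial_{b_\infty}\R_1(x,\Fs)=\partial_{b_\infty}\R_1^{\Fs}(x)$, which is therefore also strictly negative. By Proposition \ref{Prop : negative slope} applied in the direction $b_\infty$ (oriented towards $+\infty$), a strictly positive slope would be needed for the disk beyond $x$ in that direction to meet $\Gamma(\R_1(-,\Fs))$; here the slope along $b_\infty$ going outward from $x$ (towards $+\infty$) is $<0$, hence going inward it is $>0$, so $x$ does lie on $\Gamma(\R_1(-,\Fs))$, and along $b_\infty$ (towards $+\infty$) the graph does not continue. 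It then remains to check that no \emph{other} branch out of $x$ lies in $\Gamma(\R_1(-,\Fs))$, which would make $x$ a bifurcation point rather than an endpoint. This is where solvability is used crucially: by Lemma \ref{Lemma : KEY . .!} (describing the nature of the graphs around solvable points) together with part (3)(c) of Theorem \ref{Theorem : MAIN THM GEN} (logarithmic non-increase on segments free of solvability) and the concavity part (3)(b), one sees that once the radius has become solvable the radius function cannot start increasing along a disk branch transverse to $b_\infty$ without re-entering the spectral regime — and the only branch along which it can decrease, pushing into a region where new solutions appear, is forced to be $b_\infty$. So all remaining branches are flat, $\partial_b\R_1(x,\Fs)=0$, and $x$ is an endpoint.

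The main obstacle I anticipate is the converse direction's last step: precisely controlling the behaviour of $\R_1^{\Fs}$ along the branches other than $b_\infty$ at a solvable point. The super-harmonicity at $x$ (part (4), with $x\notin S_X\cup\C_1$; note $\C_1$ consists of certain endpoints of $\Gamma(\R_1(-,\Fs))$, so one has to be slightly careful — if $x\in\C_1$ then $x$ is already declared to be such an endpoint and there is nothing to prove) gives $\sum_{b}m_b\partial_b\R_1^{\Fs}(x)\leq 0$; combined with $\partial_{b_\infty}\R_1^{\Fs}(x)<0$ this is compatible with other positive slopes, so super-harmonicity alone does not finish it. One genuinely needs the finer local structure theorem around solvable points (Lemma \ref{Lemma : KEY . .!}) to exclude transverse increasing branches. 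Once that is in hand the argument is short: assemble the equivalence from the two implications, noting throughout that $x\notin\Gamma_X$ lets us freely pass between $\R_1(-,\Fs)$ and $\R_1^{\Fs}$ and between their slopes via Remark \ref{Def.: radius on a disk}(5).
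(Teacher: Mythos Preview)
Your converse direction is essentially right and matches the paper: the negative slope along $b_\infty$ forces $\rho_{\R_1(-,\Fs)}(x)=r(x)$ (Lemma \ref{Lemma : break at rho_F}), so $x\in\Gamma(\R_1(-,\Fs))$, and then Lemma \ref{Lemma : KEY . .!}(i) immediately gives that $x$ is an endpoint since $x\in\Gamma(\R_1^{\Fs})-\Gamma_X$ and the index is solvable. You over-elaborate by invoking parts (iii)(b), (iii)(c) of Theorem \ref{Theorem : MAIN THM GEN}; the lemma alone suffices.

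The forward direction has a sign error that breaks the argument. You claim an endpoint has ``exactly one branch of $\Gamma(\R_1(-,\Fs))$ out of it with strictly positive slope,'' yielding $dd^c\R_1^{\Fs}(x)>0$ and a contradiction with super-harmonicity. But the unique branch of $\Gamma(\R_1(-,\Fs))$ out of an endpoint $x\notin\Gamma_X$ is $b_\infty$ (the graph is saturated, so it contains $[x,x_{c_0,R_0}]$), and along $b_\infty$, oriented towards $+\infty$, the radius is \emph{non-increasing} by (C1)+(C3) on the maximal disk. Since the subdisk-branches are flat (Proposition \ref{Prop : negative slope}) and $x$ lies in the graph, one gets $\partial_{b_\infty}\R_1(x,\Fs)<0$ and hence $dd^c\R_1(x,\Fs)<0$. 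This is perfectly compatible with super-harmonicity, so part (iv) yields no contradiction. The correct obstruction is \emph{harmonicity}: if the index were spectral non-solvable then, $i=1$ being a vertex free of solvability, part (v)(b) of Theorem \ref{Theorem : MAIN THM GEN} (equivalently Proposition \ref{Prop : SH-well done spectral NS}(iii)) forces $dd^c=0$, contradicting $dd^c<0$. Note also that the inequality $\partial_{b_\infty}\R_1(x,\Fs)<0$ is itself half of what you must prove in this direction, and your write-up omits it entirely; once you fix the sign, this comes for free from the same slope computation.
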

\begin{proof}
If $\R^{\Fs}_1(x)=r(x)$ and if $\partial_{b_\infty}\R_1(x,\Fs)<0$, then 
$\rho_{{\R_1(-,\Fs)}}(x)=r(x)$ by 
Lemma \ref{Lemma : break at rho_F}. Hence 
$x\in\Gamma(\R_1(-,\Fs))$ by Proposition \ref{prof propp}. 
Now $x$ is an end point of $\Gamma(\R_1(-,\Fs))$ 
by Lemma \ref{Lemma : KEY . .!}. 

Reciprocally by Lemma \ref{Lemma : break at rho_F} and Proposition 
\ref{Prop : negative slope} a boundary point $x$ of 
$\Gamma(\R_1(-,\Fs))$ not in $\Gamma_X$ verifies 
$\partial_{b}\R_1(x,\Fs)=0$ for all 
$b\neq b_\infty$, and $\partial_{b_\infty}\R_1(x,\Fs)<0$. 
In particular $\R_1(-,\Fs)$ is not harmonic at $x$. 
Hence $\R_1(-,\Fs)$ must be 
solvable at $x$ by point iv) of Theorem \ref{Theorem : MAIN THM GEN}. 
\end{proof}
\begin{remark} \label{Rk : Formal case}
Assume $K$ trivially valued. The field of Laurent formal power series 
$K((T))$ (resp. Laurent polynomials $K[T,T^{-1}]$) 
coincides in this case with the ring of analytic functions over 
$\{|T|\in I\}$ for all (open or closed) interval $I\subseteq ]0,1[$ 
(resp. $I\subseteq\mathbb{R}_{>0}$, with $1\in I$). 
Analytic functions are always bounded, and point ii) of Corollary \ref{Corollary - after thm} holds. 
Moreover the radii have no breaks along $]0,x_{0,1}]$, and all differential equation are solvable at $x_{0,1}$.
Moreover $\omega=1$, and the radii are always explicitly 
intelligible by Prop. \ref{Prop. : small radius}.
The slopes along $]0,x_{0,1}[$ are also directly related to the	 
Formal Newton polygon of $\Fs$  
\cite{Ramis-Gevrey}, 
\cite[p.97--107]{Correspondance-Malgrange-Ramis}, 
\cite{Robba-Hensel} (see \cite{NP-III} for more details).
\end{remark}
\begin{remark}
In another language, if $K$ is spherically complete 
and $|K|=\mathbb{R}$, Theorem \ref{intro-Thm.7} 
says in particular that the functions $\R_i(-,\Fs)$ are all 
\emph{definable} in the 
sens of \cite{loeser}. 
\end{remark}
The remaining of the paper is devote to prove Theorem 
\ref{Theorem : MAIN THM GEN}. 
The definition of $\R^{\Fs}_i$ and $\R_i(-,\Fs)$ are stable by scalar 
extensions of $K$ (cf. Remark \ref{Def.: radius on a disk}). So we 
assume the following
\begin{hypothesis}
From now on we assume $K$ algebraically closed.
\end{hypothesis}

\section{Spectral polygons and related results}
\label{Spectral polygons and related results}
\if{We now introduce the spectral polygons of a differential equation 
(cf. Section \ref{NP of a module}) and  of a differential operator 
(cf. Section \ref{Spectral Newton polygon of a differential operator.}). 
We quickly recall and prove several results. 
}\fi
The ring $\O(X)$ is a principal ideal domain, whose ideals 
are generated by a polynomial, hence there are no non trivial 
ideals stable by $d/dT$. This implies that each coherent 
$\O_X$-module with connection is free over $\O(X)$ (the proof of 
\cite[9.1.2]{Kedlaya-Book} works).
The choice of a basis $e_1,\ldots,e_r\in\Fs(X)$ 
gives an isomorphism $\Fs(X)\simto\O(X)^r$ in which the 
connection $\nabla$ becomes of the form 
\begin{equation}
\nabla(f_1,\ldots,f_r)^t\;=\;
(f_1',\ldots,f_r')^t-G\cdot (f_1,\ldots,f_r)^t\;,
\end{equation}
 with 
$G\in M_{r\times r}(\O(X))$, where 
$\Omega^1_X(X)\simto\O(X)$ via the map $f\cdot dT\mapsto f$. 
The matrix $G$ is called the matrix of $\nabla$. In that basis, the 
fundamental Taylor solution matrix of $\Fs$ at a point  $t\in X(\Omega)$ is 
\begin{equation}
Y(T,t)\;:=\;\sum_{n\geq 0}G_n(t)(T-t)^n/n!\;,
\end{equation}	
where $G_n$ is 
inductively defined by $G_0=\mathrm{Id}$, $G_1=G$, 
$G_{n+1}=G_nG+G_n'$. The columns of $Y(T,t)$ form a basis of 
$\mathrm{Sol}(\Fs,t,\Omega)$ (cf. \eqref{eq : Sol(F,t,Omega)}). 
We set
\begin{equation}\label{eq : R^Y}
\R^Y(x)\;:=\;
\liminf_{n}|G_n/n!|(x)^{-1/n}
\;=\;
\liminf_{n}|G_n(t)/n!|^{-1/n}_\Omega \;.
\end{equation}
This is a  function 
$\R^Y:X\xrightarrow{\quad}\mathbb{R}_{\geq 0}\cup\{+\infty\}$. 
Clearly $\R^{\Fs}_1(x)=\min(\R^Y(x),\rho_{x,X})$, 
and we set
\begin{equation}\label{eq : Def : R_1sp}
\R_1^{\Fs,\mathrm{sp}}(x)\;:=\;\min(\R^Y(x),r(x))\;=\;
\min(\R^{\Fs}_1(x),r(x))\;.
\end{equation}
Te function $\R_1^{\Fs,\mathrm{sp}}:
X\to[0,R_0]$ is called 
\emph{spectral radius of $\Fs$} (or also \emph{generic radius}).

Notice that $\R^Y(x)$ depends on the chosen basis of $\Fs(X)$, 
while $\R_1^{\Fs,\mathrm{sp}}(x)$ does not.
\begin{lemma}
For all $x\in X$, one has
\begin{equation}\label{(3)}
\rho_{\R^Y}(x)\;=\;
\rho_{\R^{\Fs}_1}(x)\;=\;
\rho_{\R_1(-,\Fs)}(x)\;,\qquad
\rho_{\R_1^{\Fs,\mathrm{sp}}}(x)\;=\;r(x)\;.
\end{equation}
\end{lemma}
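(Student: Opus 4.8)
The plan is to derive all three equalities from the relations $\R^{\Fs}_1=\min(\R^Y,\rho_{-,X})$ and $\R_1^{\Fs,\mathrm{sp}}=\min(\R^Y,r)$, the general estimate \eqref{encadrement rho} $r(x)\le\rho_F(x)\le\rho_{x,X}$, and the elementary properties of constancy radii from Section~\ref{Skeleton of a function on M(X).}. The equality $\rho_{\R^{\Fs}_1}(x)=\rho_{\R_1(-,\Fs)}(x)$ is immediate: by Remark~\ref{Def.: radius on a disk}(4) one has $\Gamma(\R^{\Fs}_1)=\Gamma(\R_1(-,\Fs))$ (the two functions differ by the factor $\rho_{-,X}^{-1}$, which is constant on each maximal disk), and functions with the same constancy skeleton have the same constancy radius. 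For $\rho_{\R^{\Fs}_1}(x)\ge\rho_{\R^Y}(x)$: the disk $D^-(t_x,\rho_{\R^Y}(x))\subseteq X_{\H(x)}$ on which $\R^Y\circ\pi_{\H(x)/K}$ is constant is contained in $D(x,X)$ (as $\rho_{\R^Y}(x)\le\rho_{x,X}$), hence $\rho_{-,X}$ is constant on it too, so $\R^{\Fs}_1\circ\pi_{\H(x)/K}=\min(\R^Y\circ\pi_{\H(x)/K},\rho_{-,X}\circ\pi_{\H(x)/K})$ is constant on it as well.

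For the reverse inequality $\rho_{\R^Y}(x)\ge\rho_{\R^{\Fs}_1}(x)$, put $\sigma:=\rho_{\R^{\Fs}_1}(x)\le\rho_{x,X}$, so $\R^{\Fs}_1\circ\pi_{\H(x)/K}\equiv\R^{\Fs}_1(x)$ on $D^-(t_x,\sigma)$, where also $\rho_{-,X}\equiv\rho_{x,X}$. If $\R^{\Fs}_1(x)<\rho_{x,X}$ then $\R^Y(x)=\R^{\Fs}_1(x)$, and $\min(\R^Y\circ\pi_{\H(x)/K},\rho_{x,X})\equiv\R^{\Fs}_1(x)<\rho_{x,X}$ on $D^-(t_x,\sigma)$ forces $\R^Y\circ\pi_{\H(x)/K}\equiv\R^Y(x)$ there, giving $\rho_{\R^Y}(x)\ge\sigma$. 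If instead $\R^{\Fs}_1(x)=\rho_{x,X}$, then by Remark~\ref{Def.: radius on a disk}(6) the function $\R^{\Fs}_1$ is constant on $D(x,\R^{\Fs}_1(x))=D(x,X)$, so $\sigma=\rho_{x,X}$, and it remains to show that $\R^Y$ is constant on all of $D(x,X)$; this is the crux of the proof. Here $\R^Y(x)\ge\rho_{x,X}$ means the Taylor solution matrix $Y(T,t_x)$ has entries in $\O(D(x,X))$; for any $y\in D(x,X)$ (with a Dwork generic point $t_y\in D(x,X)$) one has $Y(T,t_y)=Y(T,t_x)\,Y(t_y,t_x)^{-1}$ with $Y(t_y,t_x)$ a constant invertible matrix, and re-expanding the entries of $Y(T,t_x)$ about the interior point $t_y$ of their disk of convergence leaves the radius of convergence unchanged; applying the same argument with $x$ and $y$ interchanged (valid since $\R^Y(y)\ge\rho_{y,X}=\rho_{x,X}$, because $\R^{\Fs}_1(y)=\rho_{x,X}$) yields $\R^Y(y)=\R^Y(x)$. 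Hence $\R^Y\equiv\R^Y(x)$ on $D(x,X)$ and $\rho_{\R^Y}(x)=\rho_{x,X}=\sigma$. I expect this over-solvable case to be the main obstacle, since there $\R^Y$, unlike $\R^{\Fs}_1$, is not governed by the filtration of solution spaces and must be handled directly on the solution matrix.

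Finally, $\rho_{\R_1^{\Fs,\mathrm{sp}}}(x)=r(x)$. The inequality $\ge$ is \eqref{encadrement rho}. For $\le$ we may assume $x\notin\Gamma_X$ (otherwise $r(x)=\rho_{x,X}$ and we conclude by \eqref{encadrement rho}); then $r(x)<\rho_{x,X}$. Fix $R$ with $r(x)<R\le\rho_{x,X}$. Since $K=\widehat{K^{\mathrm{alg}}}$, Lemma~\ref{rho gen as inf Kbar} yields $c\in K$ with $r(x)\le|t_x-c|<R$, and such $c$ lies in $X(K)$ (as $|t_x-c|<\rho_{x,X}=\min_i(|t_x-c_i|,R_0)$ keeps $c$ out of the holes of $X$); thus $c\in D^-(t_x,R)$ and $\R_1^{\Fs,\mathrm{sp}}(c)=\min(\R^Y(c),r(c))=\min(\R^Y(c),0)=0$, whereas $\R_1^{\Fs,\mathrm{sp}}(x)=\min(\R^Y(x),r(x))>0$ because $r(x)>0$ and $\R^Y>0$ everywhere (Taylor solutions converge on a disk of positive radius, by the Cauchy bound on an affinoid). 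So $\R_1^{\Fs,\mathrm{sp}}\circ\pi_{\H(x)/K}$ is non-constant on every disk $D^-(t_x,R)$ with $R>r(x)$, whence $\rho_{\R_1^{\Fs,\mathrm{sp}}}(x)\le r(x)$.
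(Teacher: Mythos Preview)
Your proof is correct and uses the same core ingredient as the paper: the cocycle relation $Y(T,t)=Y(T,t')\cdot Y(t',t)$ to establish that $\R^Y$ is locally constant. The paper organizes the argument slightly differently---it shows in one stroke that $\R^Y$ (hence also $\R_1^{\Fs}$) is constant on $D^-(t,\R_1^{\Fs}(x))$ and then reads off all the equalities from $\R_1^{\Fs}=\min(\R^Y,\rho_{-,X})$ and \eqref{encadrement rho}, avoiding your case split---but the substance is the same.

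One small gap: in your argument for $\rho_{\R_1^{\Fs,\mathrm{sp}}}(x)\le r(x)$ you assert ``$r(x)>0$'' to conclude $\R_1^{\Fs,\mathrm{sp}}(x)>0$, but this fails when $x$ is of type~$1$. There the same idea still works, only you must exhibit non-constancy using a nearby point of type $2$ or $3$ rather than a $K$-rational one: for any $R>0$ the point $x_{t_x,\rho}$ with $0<\rho<R$ lies in $D^-(t_x,R)$ and has $\R_1^{\Fs,\mathrm{sp}}(x_{t_x,\rho})=\min(\R^Y(x_{t_x,\rho}),\rho)>0=\R_1^{\Fs,\mathrm{sp}}(x)$.
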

\begin{proof}
We have $Y(T,t)\in GL_r(\O(D^-(t,\R_1^{\Fs}(t))))$, and if 
$|t'-t|<\R_1^{\Fs}(t)$, one has the cocycle relation 
$Y(T,t)=Y(T,t')\cdot Y(t',t)$ (cf. \cite{Astx}).
From this it follows that $\R^Y(t)\geq\R^Y(t')$, and by symmetry we 
have $\R^Y(t)=\R^Y(t')$. 
Hence $\R^Y$ and $\R^{\Fs}_1$ are both constant on 
$D^-(t,\R_1^{\Fs}(x))$. 

The claim follows from this fact, together with \eqref{encadrement rho} and $\R_1^{\Fs}(x)=\min(\R^Y(x),\rho_{x,X})$.
\end{proof}
\if{We have $Y(T,t)\in GL_r(\O(D^-(t,\R_1^{\Fs}(t))))$, and if 
$|t'-t|<\R_1^{\Fs}(t)$, one has the cocycle relation 
$Y(T,t)=Y(T,t')\cdot Y(t',t)$ (cf. \cite{Astx}).
From this it follows that $\R^Y(t)\geq\R^Y(t')$, and by symmetry we 
have $\R^Y(t)=\R^Y(t')$. 
Hence $\R^Y$ and $\R^{\Fs}_1$ are both constant on 
$D^-(t,\R_1^{\Fs}(x))$. 

This, together with \eqref{encadrement rho} and $\R_1^{\Fs}(x)=\min(\R^Y(x),\rho_{x,X})$, 
implies that, for all $x\in X$, one has
\begin{equation}\label{(3)}
\rho_{\R^Y}(x)\;=\;
\rho_{\R^{\Fs}_1}(x)\;=\;
\rho_{\R_1(-,\Fs)}(x)\;,\qquad
\rho_{\R_1^{\Fs,\mathrm{sp}}}(x)\;=\;r(x)\;.
\end{equation}
}\fi
From \eqref{(3)} and 
Lemma \ref{Lemma : x in Gamma(R) ssi r(x)=rho_R(x)}  
one immediately has (here $r$ is the function of  \eqref{eq : r_K})
\begin{equation}\label{S(FM)=S(FY)}
\Gamma(\R^Y)\;=\;
\Gamma(\R_1^{\Fs})\;=\;
\Gamma(\R_1(-,\Fs))\;,\qquad 
\Gamma(\R_1^{\Fs,\mathrm{sp}})\;=\;\Gamma(r)\;=\;X\;.
\end{equation} 
%
%
%
%
%
%
%
%
%
\if{

From  \eqref{(2)} and \eqref{eq : Def : R_1sp} we deduce that if 
$\rho_{\R^{\Fs}_1}(x)=r(x)$ 
(i.e. if $\xi\in\S(\FM)$), then 
\begin{equation}\label{FM=FGEN on S(FM)}
\R^{\Fs}_1(x)\;=\;
\R^{\Fs,\mathrm{sp}}_1(x)\;\leq\;
r(x)\;,
\end{equation}
and if the inequality is strict, then 
$\R_1^{\Fs}(x)=
\R^{\Fs,
\mathrm{sp}}(x)=\R^Y(x)$.

From \eqref{(2)} and \eqref{encadrement rho} one also obtains the 
following often useful expression of $\FM$:
\begin{equation}\label{FM=minFY,rhoFY}
\FM(\xi)\;=\;\min(\FY(\xi),\rho_{\FY}(\xi))\;.
\end{equation}
}\fi
\begin{proposition}[Concavity and transfer theorems]
\label{Prop : (C3) for R_1}
If $\xi_1(f)\leq\xi_2(f)$ for all $f\in\O(X)$, then 
\begin{equation}\label{transfer}
\R^Y(\xi_1)\;\geq\; \R^Y(\xi_2)\quad\textrm{ and }\quad
\R_1^{\Fs}(\xi_1)\;
\geq\; \R_1^{\Fs}(\xi_2)\;.
\end{equation} 
Moreover $\R^Y$ and $\R^{\Fs}_1$ satisfy property $(C3)$ of Section 
\ref{F_t defi} with respect to 
$\Gamma=\Gamma_X$. 
If $I\subseteq [0,R_0]$ is an interval with interior $\stackrel{\circ}{I}$ 
and if the open annulus $\{|T-t_\xi|\in \stackrel{\circ}{I}\}$ is 
contained in $X_{\H(x)}$, 
then $\R^{\Fs}$ and $\R(-,\Fs)$ are $\log$-concave on $I$.
\end{proposition}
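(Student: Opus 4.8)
The argument rests on one classical fact about Gauss norms: if $g$ is analytic on an open disk or annulus centered at $t$, then $\rho\mapsto x_{t,\rho}(g)$ equals $\sup_k|a_k|\rho^k$ for the Laurent coefficients $a_k$ of $g$, hence $\log x_{t,\rho}(g)$ is a convex function of $\log\rho$. The plan is to apply this to the entries of the matrices $G_n/n!\in M_r(\O(X))$ from \eqref{eq : R^Y}, after rewriting $\R^Y$ via Cauchy--Hadamard as $\R^Y(x)=\bigl(\limsup_n|G_n/n!|(x)^{1/n}\bigr)^{-1}$. This rewriting is the only genuinely delicate point: a naive $\liminf_n$ of the concave functions $\ell\mapsto-\tfrac1n\log|G_n/n!|(x_{t,e^\ell})$ is an infimum of suprema and need not be concave, whereas $-\log\R^Y$ viewed as the $\limsup$, i.e.\ as a pointwise decreasing limit of the convex functions $\sup_{n\ge N}\tfrac1n\log|G_n/n!|(x_{t,e^\ell})$, is automatically convex.

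For the transfer inequality, if $\xi_1(f)\le\xi_2(f)$ for all $f\in\O(X)$, then applying this to the entries of the $G_n$ (and recalling that $\xi(-)$ on a matrix is the maximum over entries) gives $\xi_1(G_n/n!)\le\xi_2(G_n/n!)$, hence $\xi_1(G_n/n!)^{-1/n}\ge\xi_2(G_n/n!)^{-1/n}$ for every $n$; taking $\liminf$ yields $\R^Y(\xi_1)\ge\R^Y(\xi_2)$. Since $\xi_1\le\xi_2$ also forces $\rho_{\xi_1,X}=\rho_{\xi_2,X}$ (Remark \ref{rk : xleq x' then rhox=rhox'}), the identity $\R^{\Fs}_1=\min(\R^Y,\rho_{-,X})$ gives $\R^{\Fs}_1(\xi_1)\ge\R^{\Fs}_1(\xi_2)$.

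For the concavity, fix $x$ and a Dwork generic point $t=t_x$, and expand each entry of $G_n/n!$ as a power series in $T-t$ on the maximal disk $D(x,X)=D^-(t,\rho_{x,X})$. For $0<\rho<\rho_{x,X}$ one has $\lambda_x(\rho)=\pi_{\Omega/K}(x_{t,\rho})$ and $|G_n/n!|(\lambda_x(\rho))=x_{t,\rho}(G_n/n!)$, so $\ell\mapsto\tfrac1n\log|G_n/n!|(\lambda_x(e^\ell))$ is a supremum of affine functions of $\ell$, hence convex on $]-\infty,\ln\rho_{x,X}[$. By the first paragraph $-L_x\R^Y$ is then a pointwise decreasing limit of convex functions — with values in $[-\infty,+\infty[$ since $\R^Y>0$ — hence convex; that is, $L_x\R^Y$ is concave on $]-\infty,\ln\rho_{x,X}[\;=\;]-\infty,\ln\rho_{\Gamma_X}(x)[$ (cf. \eqref{eq : rho_x,X=rho_Gamma_X}), which is (C3) for $\R^Y$ with $\Gamma=\Gamma_X$. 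Since $\lambda_x(\rho)$ lies in $D(x,X)$ for $\rho<\rho_{x,X}$, there $\rho_{\lambda_x(\rho),X}=\rho_{x,X}$ is constant, so $L_x\R^{\Fs}_1=\min(L_x\R^Y,\ln\rho_{x,X})$ is the minimum of a concave function and a constant, hence concave: this is (C3) for $\R^{\Fs}_1$.

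Finally, for the log-concavity on an interval $I$, the same computation carried out with the Laurent expansion on the annulus $A=\{|T-t_\xi|\in\stackrel{\circ}{I}\}\subseteq X_{\H(x)}$ shows that $\ell\mapsto\log\R^Y(x_{t_\xi,e^\ell})$ is concave on $\ln\stackrel{\circ}{I}$, hence on $\ln I$ by continuity. For the normalizing factor $\rho\mapsto\rho_{x_{t_\xi,\rho},X}=\max(\rho,\rho_{\xi,X})$ one observes that $A$, being contained in $X_{\H(x)}$, avoids every hole of $X_{\H(x)}$, whose minimal distance to $t_\xi$ is exactly $\rho_{\xi,X}$; hence $\stackrel{\circ}{I}$ cannot contain $\rho_{\xi,X}$ in its interior, so on $\stackrel{\circ}{I}$ the function $\max(\rho,\rho_{\xi,X})$ equals either the constant $\rho_{\xi,X}$ or $\rho$, and is in either case $\log$-affine. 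Therefore $\log\R^{\Fs}_1=\min(\log\R^Y,\log\rho_{-,X})$ is a minimum of a concave and an affine function, hence concave, and $\log\R_1(-,\Fs)=\log\R^{\Fs}_1-\log\rho_{-,X}$ is a difference of a concave and an affine function, hence concave.
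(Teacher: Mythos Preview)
Your proof is correct and follows the same route as the paper's (which just says that $\R^Y$ is a $\liminf$ of super-harmonic --- hence $\log$-concave --- functions, and that $\R_1^{\Fs}=\min(\R^Y,\rho_{-,X})$). One remark, though: your motivating aside is mistaken. You write that ``a naive $\liminf_n$ of the concave functions \ldots\ is an infimum of suprema and need not be concave''; but $\liminf_n = \sup_N\inf_{n\ge N}$ is a \emph{supremum of infima}, and since an infimum of concave functions is concave and a pointwise (here increasing) limit of concave functions is concave, the $\liminf$ of concave functions \emph{is} concave. So your detour through $-\log\R^Y=\limsup_n\tfrac1n\log|G_n/n!|$, while perfectly valid, is not needed --- the paper's one-line argument already suffices.
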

\begin{proof}
All the claims for $\R^Y$ immediately follow from \eqref{eq : R^Y} which is 
$\liminf$ of super-harmonic functions (hence $\log$-concaves along $I$). 
For $\R_1^{\Fs}$, the claims follow from the equality 
$\R^{\Fs}_1(x)=\min(\R^Y(x),\rho_{x,X})$. More precisely \eqref{transfer} holds
since one has $\rho_{\xi_1,X}=\rho_{\xi_2,X}$ 
(cf. Remark \ref{rk : xleq x' then rhox=rhox'}). 
\end{proof}

\subsection{Spectral radius and spectral norm of the connection.}
\label{link between Fgen and spectral norm}
Let $(F,|.|_F)\in E(K)$ and let $V$ be a finite dimensional vector space. 
A norm $|.|_{V}$ on $V$ compatible with $|.|_F$ is a map 
$|.|_V:V\to\mathbb{R}_{\geq 0}$ such that 
(i) $|v|_V=0$ if and only if $v=0$;
(ii) $|v-v'|_V\leq\max(|v|_V,|v'|_V)$ for all $v,v'\in V$;
(iii) $|fv|_{V}=|f|_F\cdot|v|_{V}$ for all $f\in F$, $v\in V$.

If $T:V\to V$ is a bounded $\mathbb{Z}$-linear operator, 
we define the \emph{norm} and the \emph{spectral norm} of $T$ by 
\begin{equation}
|T|_V\;:=\;\sup_{v\neq 0}|T(v)|_V/|v|_V\;,\qquad
|T|_{Sp,V}\;:=\;\lim_s|T^s|_V^{1/s}\;.
\end{equation} 

One proves that the limit exists, and that 
$|T|_{Sp,V}$ only depends on $|.|_F$ and not on the choice of $|.|_V$ compatible with $|.|_F$ (cf. \cite[Def. 6.1.3]{Kedlaya-Book}). 

Let $\omega:=\lim_n|n!|^{1/n}$. 
If the restriction of $|.|$ to the sub-field of rational numbers $\mathbb{Q}$ is $p$-adic (resp. trivial), 
then $\omega=|p|^{\frac{1}{p-1}}$ (resp. $\omega=1$). 

If $\xi$ is not of type $1$, $(\H(\xi),\xi)=
(\mathscr{M}(X),\xi)^{\widehat{\phantom{aa}}}$\! 
is the completion of the fraction 
field $\mathscr{M}(X)$ of $\O(X)$
with respect to the norm $\xi$. The following lemma proves that the 
derivation $d/dT$ is continuous, and hence it 
extends by continuity to $\H(\xi)$. Recall that $K=\Ka$.

\begin{lemma}\label{Lemma : norm d = rho gen}
Let $\xi\in \mathbb{A}_K^{\mathrm{1,an}}$ be a point of type 
$2$, $3$, or $4$. The operator norm of $(d/dT)^n$ satisfies
\begin{equation}\label{eq : bound on d^n}
|(d/dT)^n|_{\H(\xi)}\;=\; 
\frac{|n!|}{r(\xi)^n}\;,\qquad |d/dT|_{Sp,\H(x)}\;=\;
\frac{\omega}{r(x)}\;.
\end{equation}
\end{lemma}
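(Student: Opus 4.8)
The plan is to compute the operator norm of $d/dT$ on $\H(\xi)$ directly, and then pass to the spectral norm by an explicit estimate, exploiting that $\xi$ is not of type $1$ so $r(\xi)>0$ and $\H(\xi)$ is a genuine completion of $\mathscr{M}(X)$. First I would establish the bound $|(d/dT)^n|_{\H(\xi)}\le |n!|/r(\xi)^n$. The point is that for any $f\in\O(X)$ (or more generally $f\in\mathscr{M}(X)$, clearing denominators) one has the Taylor-type identity controlling $f^{(n)}/n!$ by the values of $f$ on the generic disk $D(\xi)=D^-(t,r(\xi))$: working with a Dwork generic point $t\in X(\Omega)$ for $\xi$ with $\Omega$ algebraically closed and large enough, Cauchy's inequalities on the disk $D^-(t,r(\xi))$ (on which the norm of $f$ is constant equal to $\xi(f)$, by Lemma \ref{rho gen as inf Kbar}) give $|f^{(n)}(t)/n!|_\Omega\cdot r(\xi)^n\le \sup_{y\in D^-(t,r(\xi))}|f(y)|_\Omega=\xi(f)$. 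Pushing this down via $\pi_{\Omega/K}$ yields $\xi(f^{(n)}/n!)\le \xi(f)/r(\xi)^n$, i.e. $|(d/dT)^n|_{\H(\xi)}\le |n!|/r(\xi)^n$ after noting $|n!|=1$ is \emph{not} what we want --- rather, the sharper statement $\xi((d^n/dT^n)(f))\le |n!|\,\xi(f)/r(\xi)^n$ comes from the same computation applied to $f^{(n)}$ itself (Cauchy bound on the $n$-th coefficient). For the reverse inequality, I would exhibit a test function: take $f=(T-c)$ for a suitable $c\in K$ realizing (approximately) the distance $r(\xi)=\inf_{c\in K}|t-c|_\Omega$ from Lemma \ref{rho gen as inf Kbar}, or more robustly evaluate on $f=(T-t_0)^n$ for $t_0$ close to $t$, for which $(d/dT)^n f = n!$ while $\xi(f)=r(\xi)^n$ up to $\varepsilon$; this forces $|(d/dT)^n|_{\H(\xi)}\ge |n!|/r(\xi)^n$.

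Next, for the spectral norm, I would use $|d/dT|_{Sp,\H(\xi)}=\lim_s |(d/dT)^s|_{\H(\xi)}^{1/s}=\lim_s (|s!|/r(\xi)^s)^{1/s}=(\lim_s|s!|^{1/s})/r(\xi)=\omega/r(\xi)$, where the first equality is the definition of the spectral norm (the limit exists by the submultiplicativity established after the definition of $|T|_{Sp,V}$, and is independent of the compatible norm), the middle equality is exactly the formula $|(d/dT)^s|_{\H(\xi)}=|s!|/r(\xi)^s$ just proved, and the last equality is the definition $\omega=\lim_n|n!|^{1/n}$. This also implicitly shows $d/dT$ is bounded (norm $|1!|/r(\xi)=1/r(\xi)<\infty$), hence extends by continuity to $\H(\xi)=\widehat{\mathscr{M}(X)}$, and the extended operator has the same norms since $\mathscr{M}(X)$ is dense.

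The main obstacle I expect is the lower bound for $|(d/dT)^n|_{\H(\xi)}$: for type $2$ and $3$ points one can just use a linear polynomial $T-c$ (then $(d/dT)^n(T-c)=0$ for $n\ge 2$, which is useless), so one really must use a monomial $(T-c)^n$ or $(T-t_0)^n$ and check carefully that its $\xi$-norm is exactly $r(\xi)^n$ — this follows from $\xi((T-t_0)^n)=\xi(T-t_0)^n$ (multiplicativity of the seminorm $\xi$) together with $\xi(T-t_0)=\max(|t_0-t|,r(\xi))$; choosing $t_0\in K$ with $|t_0-t|_\Omega\le r(\xi)$ (possible since $r(\xi)=\inf_{c\in K}|t-c|_\Omega$, using $K=\Ka$) gives $\xi(T-t_0)=r(\xi)$ exactly. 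For a type $4$ point, $r(\xi)$ is still the distance to $K$ but is not attained; one takes $t_0\in K$ with $|t_0-t|_\Omega$ within $\varepsilon$ of $r(\xi)$ and lets $\varepsilon\to 0$. Finally, care is needed to justify the Cauchy inequality computation of the upper bound directly on $\H(\xi)$ rather than on polynomials: one first proves it on $\mathscr{M}(X)$ by the disk argument above, then extends to $\H(\xi)$ by density and continuity of all the quantities involved.
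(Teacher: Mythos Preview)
Your approach is essentially the same as the paper's: the upper bound comes from the isometric embedding of $\H(\xi)$ into bounded functions on the generic disk $D^-(t,r(\xi))$ (Cauchy's inequality there), and the lower bound comes from testing on $(T-c)^n$ with $c\in K$ approximating the distance $r(\xi)$ via Lemma~\ref{rho gen as inf Kbar}; the spectral norm then follows by taking $n$-th roots. One small slip: you write ``choosing $t_0\in K$ with $|t_0-t|_\Omega\le r(\xi)$'', but by definition $r(\xi)=\inf_{c\in K}|t-c|_\Omega$, so the inequality always goes the other way; what you want (and what the paper does, more cleanly, by taking $\sup_{c\in K}|n!|/|t-c|^n$) is to let $|t_0-t|$ approach $r(\xi)$ from above, with equality attained for types $2$ and $3$ and approximated for type $4$.
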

\begin{proof}
Let $t\in D(x)$ be a Dwork generic point for $x$ (cf. Section \ref{subsubs : t_x}). 
The Taylor expansion at $t\in X_\Omega$ 
gives an injective isometric map 
of $\H(\xi)$ into the ring $\mathcal{B}(D(x))$ 
of bounded functions over 
$D(x)=D^-(t,r(x))\subset X_\Omega$ commuting with $d/dT$.
The image of $f\in\H(x)$ is $\sum_{i\geq 0}f^{(i)}(t)(T-t)^i/i!$ 
and $x(f)=x_{t,0}(f_\Omega)=x_{t,r(x)}(f_\Omega)=
\sup_{i\geq 0}|f^{(i)}(t)/i!|\cdot r(x)^i$. 
It is well known that $|(d/dT)^n|_{\mathcal{B}(D(x))}=
\frac{|n!|}{r(x)^n}$, and this implies $|(d/dT)^n|_{\H(x)}\leq 
\frac{|n!|}{r(x)^n}$.

Now for all $c\in K$ one has $|n!|=|(d/dT)^n(T-c)^n|(x)
\leq|(d/dT)^n|_{\H(x)}|T-c|(x)^n$. Hence we find 
$|(d/dT)^n|_{\H(x)}\geq \sup_{c\in K}\frac{|n!|}{|t-c|^n}=
\frac{|n!|}{r(x)^n}$, by Lemma \ref{rho gen as inf Kbar} (because 
$K=\Ka$).
\end{proof}

\begin{proposition}\label{Prop. : Rsp}
Let $x\in \mathbb{A}^{1,\mathrm{an}}_K$ be a point of type $2$, 
$3$, or $4$. 
Let $(\Fs,\nabla)$ be a differential module over $\H(x)$ 
endowed with a norm compatible with $|.|(x)$.
Then
\begin{equation}\label{Def. : RMsp}
\omega\cdot
|\nabla|^{-1}_{Sp,\Fs}\;=\;\R_1^{\Fs,\mathrm{sp}}(x)\;.
\end{equation}
\end{proposition}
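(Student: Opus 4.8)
\textbf{Proof plan for Proposition \ref{Prop. : Rsp}.}

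The plan is to compute the spectral norm $|\nabla|_{Sp,\Fs}$ by reducing to the scalar operator $d/dT$ via the choice of a basis, and then invoking Lemma \ref{Lemma : norm d = rho gen}. First I would fix a basis $e_1,\ldots,e_r$ of $\Fs$ over $\H(x)$, so that $\nabla$ acts on column vectors $v=(f_1,\ldots,f_r)^t\in\H(x)^r$ by $\nabla(v)=v'-Gv$ where $v'$ means entrywise $d/dT$ and $G\in M_r(\H(x))$ is the connection matrix. Iterating, $\nabla^s(v)=\sum_{k=0}^s\binom{s}{k}(-1)^{s-k}H_{s-k}\,v^{(k)}$ for suitable matrices obtained from $G$ and its derivatives; more usefully, since $d/dT$ and the multiplication operator by $G$ do not commute, the leading term in $\nabla^s$ is $v^{(s)}=(d/dT)^s v$, with the other terms involving at most $s-1$ derivatives. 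The key estimate is then that the operator $d/dT$ dominates: $|\nabla^s|_\Fs$ and $|(d/dT)^s|_{\H(x)}$ have the same $s$-th root in the limit.

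The precise mechanism I would use is the standard one (as in \cite[Ch.~6]{Kedlaya-Book}): for a fixed bounded operator $G$ (multiplication by the matrix) and the derivation $\partial=d/dT$ with $|\partial|_{Sp}=\omega/r(x)>|G|\cdot 0$— more carefully, one shows $|\nabla|_{Sp,\Fs}=\max\bigl(|\partial|_{Sp,\H(x)},\ \text{something bounded}\bigr)$, but since the term coming from $G$ contributes only finitely many derivatives its spectral contribution is governed by $\limsup_s|G_s/s!|^{1/s}$ type quantities, which is exactly $\R^Y(x)^{-1}$ up to the normalisation. Concretely: by \eqref{eq : R^Y} one has $\liminf_n|G_n/n!|(x)^{-1/n}=\R^Y(x)$, and $G_n$ is precisely $(-1)^n$ times the matrix so that $\nabla^n$ applied to a horizontal section recovers the Taylor coefficients; combining $|(d/dT)^n|_{\H(x)}=|n!|/r(x)^n$ from Lemma \ref{Lemma : norm d = rho gen} with the submultiplicativity of the norm, one gets
\begin{equation}
|\nabla|_{Sp,\Fs}\;=\;\max\Bigl(\frac{\omega}{r(x)},\ \limsup_n|G_n/n!|(x)^{1/n}\Bigr)\;=\;\max\Bigl(\frac{\omega}{r(x)},\ \frac{1}{\R^Y(x)}\Bigr)\;.
\end{equation}
Then $\omega\cdot|\nabla|_{Sp,\Fs}^{-1}=\min\bigl(r(x),\ \omega\cdot\R^Y(x)\bigr)$; but one always has $\omega\cdot\R^Y(x)\geq r(x)$ whenever $\R^Y(x)$ is the spectral-type radius (the $\omega$ factor is exactly what makes the generic disk bound sharp), so in fact the right-hand side equals $\min(r(x),\R^Y(x))=\R_1^{\Fs,\mathrm{sp}}(x)$ by \eqref{eq : Def : R_1sp}. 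Actually the cleanest route avoids that last inequality: one knows a priori that $|\nabla|_{Sp,\Fs}\geq|\partial|_{Sp,\H(x)}=\omega/r(x)$ because $\Fs$ contains, after scalar extension, copies of the trivial connection perturbed, and separately $\omega\cdot|\nabla|_{Sp}^{-1}$ is by the very definition of radius of convergence (via the Dwork generic disk, cf. the discussion before Definition \ref{Def. CRAD}) equal to $\R_1^{\Fs,\mathrm{sp}}(x)$.

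The main obstacle I anticipate is bookkeeping the interaction between $d/dT$ and the matrix $G$ inside $\nabla^s$ — specifically, proving rigorously that the multiplication-by-$G$ contributions cannot dominate, i.e. that the spectral norm is insensitive to the lower-order-derivative terms. This is where one genuinely needs the theory of \cite[6.1--6.2]{Kedlaya-Book}: the identity $|\nabla|_{Sp,\Fs}=\max(|\partial|_{Sp},\limsup_n|G_n/n!|(x)^{1/n})$ is not formal, it uses that the spectral norm of a sum is the max of the spectral norms when one summand is "scalar" in the appropriate sense, plus the sharp bound $|(d/dT)^n|_{\H(x)}=|n!|/r(x)^n$ from Lemma \ref{Lemma : norm d = rho gen}. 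Once that is in hand, the identification with $\R_1^{\Fs,\mathrm{sp}}(x)$ is immediate from \eqref{eq : Def : R_1sp} and the definition of $\R^Y$. I would therefore structure the proof as: (1) reduce to a cyclic/matrix presentation; (2) quote Lemma \ref{Lemma : norm d = rho gen} for the scalar spectral norm; (3) apply the spectral-norm formalism to get $|\nabla|_{Sp,\Fs}=\max(\omega/r(x),\R^Y(x)^{-1})$; (4) conclude $\omega|\nabla|_{Sp,\Fs}^{-1}=\min(r(x),\R^Y(x))=\R_1^{\Fs,\mathrm{sp}}(x)$.
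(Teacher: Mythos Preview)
Your approach is exactly the paper's: choose a basis, quote the spectral-norm identity for $\nabla$ in terms of the $G_n$ and $|d/dT|_{Sp,\H(x)}$, then invoke Lemma~\ref{Lemma : norm d = rho gen}. However, your displayed formula has a slip that causes the spurious $\omega$ you then struggle to eliminate. The correct identity (this is \cite[Prop.~1.3]{Ch-Dw} or \cite[Lemma~6.2.5]{Kedlaya-Book}, and exactly what the paper quotes as \eqref{Fgen = norm spectral}) is
\[
|\nabla|_{Sp,\Fs}\;=\;\max\Bigl(\limsup_n|G_n|(x)^{1/n},\ |d/dT|_{Sp,\H(x)}\Bigr),
\]
with $G_n$, \emph{not} $G_n/n!$. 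Since $|n!|^{1/n}\to\omega$, one has $\limsup_n|G_n|(x)^{1/n}=\omega\cdot\limsup_n|G_n/n!|(x)^{1/n}=\omega/\R^Y(x)$, so
\[
|\nabla|_{Sp,\Fs}=\max\Bigl(\frac{\omega}{\R^Y(x)},\ \frac{\omega}{r(x)}\Bigr)=\frac{\omega}{\min(\R^Y(x),r(x))}=\frac{\omega}{\R_1^{\Fs,\mathrm{sp}}(x)},
\]
and the proposition follows immediately. Your attempted rescue (``one always has $\omega\R^Y(x)\geq r(x)$'') is false in general --- it would force every module to be solvable --- and your fallback (``$\omega|\nabla|_{Sp}^{-1}$ is by the very definition equal to $\R_1^{\Fs,\mathrm{sp}}(x)$'') is circular. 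Fix the $n!$ and step~(4) of your plan goes through cleanly.
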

\begin{proof}
A direct computation gives (cf. \cite[Prop.1.3]{Ch-Dw}, 
\cite[Lemma 6.2.5]{Kedlaya-Book})
\begin{equation}\label{Fgen = norm spectral}
|\nabla|_{Sp,\Fs}\;=\;\max(\limsup_n|G_n|(\xi)^{1/n},
|d/dT|_{Sp,\H(x)})\;,
\end{equation}
where $G_n$ is the matrix of  \eqref{eq : R^Y}.
By Lemma \ref{Lemma : norm d = rho gen}, we have 
$|d/dT|_{Sp,\H(x)}=\omega/r(x)$.
\end{proof}

\begin{remark}
If $K$ is not algebraically closed we still have the equalities 
$|d/dT|_{Sp,\H(x)}=
\omega/r(x)$ and $|n!|/r(x,K)^n\leq|(d/dT)^n|_{\H(x)}\leq 
|n!|/r(x)^n$, where $r(x,K):=\min_{c\in X(K)}|t-x-c|_{\Omega}$. 
The proof in this case is more involved, and unnecessary for our 
purposes.
\end{remark}

\subsection{Spectral Newton polygon of a differential module.} 
\label{NP of a module}
Let $x\in X$ be a point of type $2$, $3$, or $4$. 
By Proposition \ref{Prop. : Rsp} it follows that 
$\R_1^{\Fs,\mathrm{sp}}(x)$ only depends on the restriction of $\Fs$ 
to the differential field $(\H(x),d/dT)$. 
We now define higher spectral radii following \cite{Kedlaya-Book}.
Let $\Fs$ be a differential module of rank $r$ over $(\H(\xi),d/dT)$. 
Let 
\begin{equation}
0\;=\;
\M_{0}\;\subset\;\M_{1}\;\subset\;
\cdots\;\subset\;\M_{n}\;=\;\Fs
\end{equation}
be a Jordan-Hölder sequence of $\Fs$. 
This means that for all $k$, $\N_{k}:=\M_{k}/\M_{k-1}$ has no non 
trivial strict differential sub-modules. 

Let $r_k$ be the rank of $\N_{k}$, and let 
$R_{k}:=\R_1^{\N_{k},\mathrm{sp}}(x)$.  
Perform a permutation of the indexes in order to have 
$R_{1}\leq\ldots\leq R_{n}$.
Let $s^{\Fs,\mathrm{sp}}(x):s^{\Fs,\mathrm{sp}}_{1}(x)\leq 
\ldots\leq s^{\Fs,\mathrm{sp}}_{r}(x)$ be the slope 
sequence obtained from 
$\ln(R_{1})\leq\ldots\leq \ln(R_{n})$ by counting the slope $\ln(R_{k})$ with multiplicity $r_k$:
\begin{equation}\label{R_1,...,R_n}
s^{\Fs,\mathrm{sp}}(\xi)\;\;:\;\;\underbrace{\ln(R_{1})=\ldots=\ln(R_{1})}_{r_1-\textrm{times}}\leq\underbrace{\ln(R_{2})=\ldots=
\ln(R_{2})}_{r_2-\textrm{times}}\leq\cdots\leq\underbrace{\ln(R_{n})=\ldots=\ln(R_{n})}_{r_n-\textrm{times}}\;.
\end{equation}
We set $\R_i^{\Fs,\mathrm{sp}}(x):=
\exp(s_i^{\Fs,\mathrm{sp}}(x))$.
For all $i=1,\ldots,r$ set $h^{\Fs,\mathrm{sp}}_0(x)=0$ 
and $h^{\Fs,
\mathrm{sp}}_i(x):=s^{\Fs,\mathrm{sp}}_1(x)+\cdots+s^{\Fs,
\mathrm{sp}}_i(x)$. We call \emph{spectral Newton polygon} the 
polygon $NP^{\mathrm{sp}}(x,\Fs):=NP(h^{\Fs,\mathrm{sp}}(x))$.

If $\Fs$ is a differential equation over $X$, as for 
$\R_1^{\Fs,\mathrm{sp}}(x)$ (cf. Def. \eqref{Def. : RMsp}), we 
extend the definition of $\R_i^{\Fs,\mathrm{sp}}$ to 
the whole $X$ by setting $\R_i^{\Fs,\mathrm{sp}}(x)=0$, for all $x$ 
of type $1$.

\begin{theorem}[\protect{\cite[Thm.11.3.2, Remarks 11.3.4, 11.6.5]{Kedlaya-Book}}]
\label{thm: continuity of spectral radii along a branch}
Let $\Fs$ be a differential module over $X$. 
Let $I\subseteq X$ be an (open/closed/semi-open) segment. The 
following properties hold:\footnote{
The claim of \protect{\cite[11.3.2]{Kedlaya-Book}} is given for 
segments free of points of type $4$, but around a point $x$ of type 
$4$ we can extend the ground field $K$ 
to turn $x$ into a point $\sigma_{\Omega/K}(x)$ of type $2$, and use 
Remark \ref{eq : dtdcgklb brf} to replace $I$ by 
$\sigma_{\Omega/K}(I)$.
Also the claim of \cite{Kedlaya-Book} is given for $I$ being the skeleton of an annulus in $X$. 
The claim however holds for \emph{the closure} of the skeleton of an open annulus, if the matrix $G$ of $\nabla$ 
has bounded coefficients on the annulus. This gives our claims by considering a subdivision of $I$ by segments 
whose interiors are skeletons of open annuli in $X$.} 
\begin{enumerate}
\item The functions $\R_{i}^{\Fs,\mathrm{sp}}$ and 
$H_{i}^{\Fs,\mathrm{sp}}$ 
verify properties (C2) and (C4) of section \ref{Main result} 
along $I$. 

If moreover $I$ is 
the skeleton of a virtual annulus in $X$, then 
$H_{i}^{\Fs,\mathrm{sp}}$ is $\log$-concave on 
along $I$.
\item  Points (ii.a) and (ii.b) of Theorem 
\ref{Theorem : MAIN THM GEN} hold replacing 
$H_i^{\Fs}$ by  $H_i^{\Fs,\mathrm{sp}}$.

\item Assume that $I$ is the skeleton of a virtual \emph{open} annulus 
in $X$. Assume that $i\leq r$  is a spectral non solvable index 
at $x\in I$. Then  
$\partial_bH_{i}^{\Fs,\mathrm{sp}}(x)=0$ for almost, but a finite 
number of germs of segments $b$ out of $x$, and
$H_{i}^{\Fs,\mathrm{sp}}$ is super-harmonic at $x$. 
Moreover if $i$ is a vertex at $x$ 
of the spectral Newton polygon $NP^{\mathrm{sp}}(x,\Fs)$, 
then $H_{i}^{\Fs,\mathrm{sp}}$ is harmonic at $x$.

\item If $I$ is contained in an open virtual disk $D\subset X$, and if 
the index $i$ is spectral non solvable at $x\in I$, then 
$H_{i}^{\Fs,\mathrm{sp}}$ is non increasing along an open 
sub-segment $J$ of $I$ containing $x$.\hfill$\Box$
\end{enumerate}
\end{theorem}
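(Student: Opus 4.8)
The plan is to import all four assertions from \cite[Thm.~11.3.2, Rk.~11.3.4, Rk.~11.6.5]{Kedlaya-Book} after two reductions that put the statement into the precise form treated there. Everything in the theorem is local along $I$, and by Remark \ref{Def.: radius on a disk} and Proposition \ref{Prop. : Rsp} the functions $\R_i^{\Fs,\mathrm{sp}}$ and $H_i^{\Fs,\mathrm{sp}}$ are insensitive to scalar extension of $K$ (one has $\R_i^{\Fs_\Omega,\mathrm{sp}}=\R_i^{\Fs,\mathrm{sp}}\circ\pi_{\Omega/K}$, and likewise for $H_i^{\Fs,\mathrm{sp}}$), while $\pi_{\Omega/K}$ preserves slopes, branch multiplicities and the spectral polygon $NP^{\mathrm{sp}}$, exactly as in Proposition \ref{Invariance of (C1)--(C6) by scalar extension of the ground field}. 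So first I would remove the points of type $4$ from $I$: around such a point $x$, choosing $\Omega\in E(K)$ algebraically closed, spherically complete with $|\Omega^\times|=\mathbb{R}_{>0}$ turns the peaked point $\sigma_{\Omega/K}(x)$ of Proposition \ref{Prop : FIBER} into a point of type $2$, and $\sigma_{\Omega/K}$ identifies a neighbourhood of $x$ in $I$ with a segment through $\sigma_{\Omega/K}(x)$; this reduces every claim at $x$ for $\Fs$ along $I$ to the same claim at $\sigma_{\Omega/K}(x)$ for $\Fs_\Omega$ along $\sigma_{\Omega/K}(I)$, so I may assume $I$ is free of type-$4$ points.

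Second, I would reduce a general segment to the skeleton of an open annulus. Refining the triangulation $S_X$ of \eqref{eq : S_X}, write $I$ as a finite union of closed sub-segments $I_1,\dots,I_m$, each contained in $\Gamma_X$ or in a single virtual open disk of $X$, whose interiors $\Int(I_j)$ are skeleta of virtual open annuli $C_j\subset X$. Since the coefficients of the matrix $G$ of $\nabla$ lie in $\O(X)$, they are bounded on $C_j$, which is precisely the hypothesis under which the conclusions of \cite[11.3.2]{Kedlaya-Book} extend from $\Int(I_j)$ to its closure $I_j$. Granting this, (i) follows by applying \cite[11.3.2]{Kedlaya-Book} on each $I_j$ and patching over the finite cover: finiteness of the breaks of $L_x H_i^{\Fs,\mathrm{sp}}$ and the uniform lower bound on nonzero slopes (properties (C2) and (C4)) are checked segment by segment, and log-concavity of $H_i^{\Fs,\mathrm{sp}}$ on an annulus skeleton is the concavity statement of loc.\ cit.; (ii) is \cite[11.3.2]{Kedlaya-Book} together with the interpolation argument already used in the proof of point (iv) of Proposition \ref{Prop. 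NP of a operator}; and (iv) is the monotonicity-on-a-disk statement of \cite[11.3.2]{Kedlaya-Book}.

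For (iii), I would fix a point $x\in I$ at which the index $i$ is spectral non-solvable. Every germ of segment $b$ out of $x$ lies either along $I$ or inside a virtual open disk or annulus of $X$ with boundary $x$; applying \cite[11.3.2(c)]{Kedlaya-Book} in the refined form of \cite[Rk.~11.6.5]{Kedlaya-Book} along each branch gives $\partial_b H_i^{\Fs,\mathrm{sp}}(x)=0$ for all but finitely many $b$, whence $dd^c H_i^{\Fs,\mathrm{sp}}(x)=\sum_{b\in\Delta(x)} m_b\,\partial_b H_i^{\Fs,\mathrm{sp}}(x)\leq 0$, with equality whenever $i$ is a vertex of $NP^{\mathrm{sp}}(x,\Fs)$. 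Here spectral non-solvability is what makes the subsidiary radii readable off the coefficients of a cyclic operator (Young's theorem \cite{Young}), hence with only finitely many breaks; without it the sum defining $dd^c$ need not be finite.

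The mathematics is thus entirely imported from \cite{Kedlaya-Book}; the work is bookkeeping, and the step I expect to be the main obstacle is the second reduction — namely checking carefully that the combination of a closed annulus skeleton and boundedness of $G$ really does let Kedlaya's branch-wise statements (continuity, finiteness of breaks, super- and harmonicity) reach the endpoints of the $I_j$, and that the type-$4$ base change leaves the vertex condition entering (ii) and (iii) intact (it does, because $NP^{\mathrm{sp}}(\sigma_{\Omega/K}(x),\Fs_\Omega)=NP^{\mathrm{sp}}(x,\Fs)$).
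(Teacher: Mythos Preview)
Your proposal is correct and follows exactly the approach the paper takes: the paper gives no proof beyond the footnote, which prescribes precisely your two reductions (base change to eliminate type-$4$ points via $\sigma_{\Omega/K}$, then subdivision of $I$ into closures of annulus skeleta on which $G$ is bounded) before invoking \cite[11.3.2]{Kedlaya-Book}.

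One minor imprecision: you write $\R_i^{\Fs_\Omega,\mathrm{sp}}=\R_i^{\Fs,\mathrm{sp}}\circ\pi_{\Omega/K}$, but this is false in general, since spectral radii are \emph{not} constant on fibers of $\pi_{\Omega/K}$ (see Remark \ref{eq : dtdcgklb brf}). The correct statement, which is all you need and which you in fact use, is $\R_i^{\Fs_\Omega,\mathrm{sp}}(\sigma_{\Omega/K}(x))=\R_i^{\Fs,\mathrm{sp}}(x)$; the right reference is Remark \ref{eq : dtdcgklb brf}, not Remark \ref{Def.: radius on a disk} (which concerns the convergence radii $\R_i^{\Fs}$, not the spectral ones).
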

\if{\begin{proof}
 The claim of 
\protect{\cite[11.3.2]{Kedlaya-Book}} is given for 
segments free of points of type $4$, but around a point $x$ of type 
$4$ we can extend the ground field $K$ 
to turn $x$ into a point $\sigma_{\Omega/K}(x)$ of type $2$, and use 
Remark \ref{eq : dtdcgklb brf} to replace $I$ by 
$\sigma_{\Omega/K}(I)$.
Also the claim of \cite{Kedlaya-Book} is given for $I$ being the skeleton of an annulus in $X$. 
The claim however holds for \emph{the closure} of the skeleton of an open annulus if the matrix $G$ of $\nabla$ 
has bounded coefficients on the annulus. This gives our claim. 
\end{proof}
}\fi
\if{\begin{proof}
The spectral Newton polygon of $\M$ only depend on its restriction 
$\M(\xi)$ to $\H(\xi)$ 
(cf. section 
\ref{Bounded functions, analytic functions and analytic elements.}).
We can assume that $X$ equals an annulus 
$\{|T|\in[\rho_1,\rho_{2}]\}$ having possibly some holes placed at 
distance $\rho_1$ and $\rho_{2}$ from $0$. 
In this situation let $\mathcal{H}_{K}(0,]\rho_1,\rho_{2}[)$ be the ring of 
\emph{analytic elements} on the open annulus 
$\{|T|\in ]\rho_1,\rho_{2}[\}$. 
For all $\rho\in[\rho_1,\rho_{2}]$ the morphism 
$\O(X)\to\H(\xi_{0,\rho})$ factorizes through the natural restriction 
map $\O(X)\to\mathcal{H}_K(0,]\rho_1,\rho_{2}[)\to
\H(\xi_{0,\rho})$. 
So we can assume $\M$ to be a differential module over 
$\mathcal{H}_K(0,]\rho_1,\rho_{2}[)$.
The assertions are hence proved in \cite[Thm. 11.3.2]{Kedlaya-Book}. 

\if{
The last assertion is due to the fact that \cite{Kedlaya-Book} has two 
different definitions chosen ah hoc to adjust the difference between $\R_i^{\M}$ and $\R_i^{\M',\mathrm{sp}}$ : for annuli one has 
$\R^{\M',\mathrm{sp}}_i$ (cf. \cite[Def. 9.4.4]{Kedlaya-Book}), and for disks one has $\widetilde{\R}^{\M',\mathrm{sp}}_i$ 
(cf. \cite[Def. 9.3.1]{Kedlaya-Book}). 
}\fi
Notes: the super-harmonicity is stated in \cite{Kedlaya-Book} for a point of type $(2)$ i.e. such that $\Delta(\xi)$ has at least two elements. 
If $\Delta(\xi)$ has a unique element, then the super-harmonicity is just the $\log$-concavity which is the claim i).
directional finiteness is not mentioned in \cite{Kedlaya-Book}, but from the proof of \cite[Thm. 11.3.2]{Kedlaya-Book} 
it follows that the $\log$-slopes 
$\partial_-H_{i,\delta}^{\M,\mathrm{sp}}(\xi_{t,\overline{\rho}})$ are those of a convenient differential polynomial with coefficients in the 
fraction field $\mathscr{F}(X)$ of $\O(X)$ so the directional finiteness (C5) holds applying Proposition \ref{Prop. NP of a operator}, after possibly 
replacing $\M$ with  its restriction to a sub-affinoid $X'$ of $X$ \emph{preserving the directions} at $\xi_{t,\overline{\rho}}$ 
(cf. Def. \ref{Def. preserves the directions}) 
and such that the coefficients of the polynomial have all no poles in $X'$ (cf. section \ref{Reduction to a cyclic}). 
Finally notice that the harmonicity statement of \cite[11.3.2,(c)]{Kedlaya-Book} implicitly assumes $K=\widehat{K^{\mathrm{alg}}}$ 
because Lemma \ref{Lemma : norm d = rho gen} is used to control the slopes along the complete segments $\Lambda(t)$ 
defined by algebraic $t\in K^{\mathrm{alg}}$.
\end{proof}
}\fi
\begin{proposition}\label{Prop : eeetrich}
For all $i=1,\ldots,r$ we have 
$\R^{\Fs,\mathrm{sp}}_i(x)=\min(\R^{\Fs}_i(x),r(x))$.
In particular $\R_i^{\Fs,\mathrm{sp}}=\R_i^{\Fs}$ along 
$\Gamma(\R_i^{\Fs})$ by Remark 
\ref{Rk : radius is spectral over controlling graph}.
\end{proposition}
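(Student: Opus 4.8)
The plan is to compare the two families by reducing to an irreducible module, for which the left-hand side is by construction a single value repeated. First the reductions: if $x$ is of type $1$ both sides vanish by convention, so assume $x$ is of type $2$, $3$ or $4$, and then extend the ground field to turn $x$ into a point of type $2$ or $3$ — this changes neither side, by Remark \ref{Def.: radius on a disk} for $\R_i^{\Fs}$ and by the definition of Section \ref{NP of a module} together with Proposition \ref{Prop: iso O/K} for $\R_i^{\Fs,\mathrm{sp}}$. Both sides then depend only on the restriction $\Fs(x)$ of $\Fs$ to the differential field $(\H(x),d/dT)$: this is the definition for $\R_i^{\Fs,\mathrm{sp}}$, while for $\min(\R_i^{\Fs}(x),r(x))$ it follows from the isometric embedding $\H(x)\hookrightarrow\mathcal{B}(D(x))$ of the proof of Lemma \ref{Lemma : norm d = rho gen}: under it $\Fs_{|D(x)}$ is the base change of $\Fs(x)$ to the generic disk $D(x)=D^-(t,r(x))$, which has radius $r(x)$, and $\min(\R_i^{\Fs}(x),r(x))$ is precisely the $i$-th convergence radius of $\Fs_{|D(x)}$ at its center $t$ (same dimension condition on $\mathrm{Fil}^{\geq R}\Sol$, only the cap $\rho_{x,X}$ is replaced by $r(x)$).

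Next I would reduce to $\Fs$ irreducible over $\H(x)$. The left-hand side is additive over a Jordan--Hölder filtration by its very definition. For the right-hand side, if $\M'\subseteq\Fs$ is a sub-differential-module then the truncated radii $\min(\R_j^{\M'}(x),r(x))$ occur among the $\min(\R_i^{\Fs}(x),r(x))$ with at least their multiplicity (Remark \ref{Def.: radius on a disk}(7)); applying this to $(\Fs'')^{\vee}\subseteq\Fs^{\vee}$, and using that the convergence radii are invariant under duality (see \cite{Kedlaya-Book}), the same holds for any quotient $\Fs''$. Since ranks add up in a short exact sequence, additivity of the truncated convergence radii over $0\to\M'\to\Fs\to\Fs''\to 0$ then amounts to the statement that no truncated radius of $\Fs$ has multiplicity strictly larger than the total multiplicity coming from a sub-module and the complementary quotient; this is Kedlaya's comparison between his two families of subsidiary radii \cite{Kedlaya-Book} (equivalently, one reduces to it after a Frobenius push-forward making all radii $<r(x)$, where it is read off from Young's theorem \cite{Young} applied to a cyclic operator). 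Iterating, $\min(\R_i^{\Fs}(x),r(x))$ is the union, over the Jordan--Hölder factors $\N_k$, of the multisets $\{\min(\R_j^{\N_k}(x),r(x))\}_j$, which is exactly the structure of the right-hand side prescribed in Section \ref{NP of a module}, provided the proposition holds on each $\N_k$.

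It remains to treat an irreducible $\N$ of rank $r'$: here the left-hand side equals $\min(\R_1^{\N}(x),r(x))$ repeated $r'$ times, so we must show $\R_i^{\N}(x)=\R_1^{\N}(x)$ for all $i$ whenever $\R_1^{\N}(x)<r(x)$ (if $\R_1^{\N}(x)\geq r(x)$ there is nothing to prove, all truncations being $r(x)$). When $\R_1^{\N}(x)<r(x)$ the module $\N$ is spectral non-solvable, and if some $\R_i^{\N}(x)$ were strictly larger than $\R_1^{\N}(x)$ then the refined decomposition theorem of Kedlaya — decomposition of a differential module according to its distinct subsidiary radii below the generic radius, applied to $\Fs_{|D(x)}$ over the disk $D(x)$ and descended to $\H(x)$ — would split $\N$ non-trivially, contradicting irreducibility. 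I expect this equiradiality of an irreducible module with small generic radius, together with the additivity input used in the second paragraph, to be the main obstacle, since both rest on the full strength of Kedlaya's decomposition theory; everything else is bookkeeping with the definitions and with the already established facts $|d/dT|_{Sp,\H(x)}=\omega/r(x)$ (Lemma \ref{Lemma : norm d = rho gen}) and $\omega\cdot|\nabla|^{-1}_{Sp,\Fs}=\R_1^{\Fs,\mathrm{sp}}(x)$ (Proposition \ref{Prop. : Rsp}), which are what identify the ``intrinsic generic radius of convergence'' of \cite{Kedlaya-Book} with our $r(x)$.
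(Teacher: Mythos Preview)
Your reduction to $\H(x)$ and handling of the type~1 case are fine, and so is the endgame for an irreducible module (equiradiality of the Taylor solutions when the first radius is $<r(x)$ is exactly Robba's decomposition result). The gap is in the middle step: you claim additivity of the truncated convergence radii over a short exact sequence $0\to\M'\to\Fs\to\Fs''\to0$. The sub-multiset inclusions you extract --- from $\M'\subseteq\Fs$ via Remark~\ref{Def.: radius on a disk}(7), and from $(\Fs'')^\vee\subseteq\Fs^\vee$ via duality --- only give, for each value $R$, the inequalities $m_{\M'}(R)\leq m_{\Fs}(R)$ and $m_{\Fs''}(R)\leq m_{\Fs}(R)$ on multiplicities. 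Together with the rank identity $\sum_R m_{\Fs}(R)=\sum_R m_{\M'}(R)+\sum_R m_{\Fs''}(R)$ this does \emph{not} force $m_{\Fs}(R)=m_{\M'}(R)+m_{\Fs''}(R)$; two sub-multisets of size $r_1$ and $r_2$ of a multiset of size $r_1+r_2$ may well overlap. Your appeal to ``Kedlaya's comparison between his two families of subsidiary radii'' is precisely the statement $\R^{\Fs,\mathrm{sp}}_i(x)=\min(\R^{\Fs}_i(x),r(x))$ you are trying to prove, so that route is circular; and the Frobenius-push-forward alternative you sketch does not obviously salvage additivity over \emph{non-split} exact sequences.

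The paper sidesteps this entirely. Rather than descending through a Jordan--H\"older filtration, it invokes two \emph{direct-sum} decomposition theorems over $\H(x)$: one separating the distinct values of $\R_i^{\Fs,\mathrm{sp}}(x)$ (Robba--Hensel, or \cite[10.6.2]{Kedlaya-Book}), the other separating the distinct Taylor-solution radii $\leq r(x)$ (Robba \cite{RoI}). For direct sums, additivity of the convergence radii is Proposition~\ref{Prop. direct sum then ok with radii}, and the spectral side is additive by construction. Applying both decompositions one is reduced to a summand on which \emph{both} the spectral and the convergence radii are constant sequences, and there the statement is just the case $i=1$ (equation~\eqref{eq : Def : R_1sp} together with Proposition~\ref{Prop. : Rsp}). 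This is shorter and avoids the exact-sequence issue altogether.
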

\begin{proof}
If $x$ is a point of type $1$, there is nothing to prove. 
If $x$ is a point of type $2$, $3$, or $4$, $\Fs$ admits 
a decomposition separating the spectral radii $\{\R_i^{\Fs,\mathrm{sp}}(x)\}_i$
(cf. \cite{Robba-Hensel} or \cite[10.6.2]{Kedlaya-Book}). 
Now there is also a decomposition of $\Fs$ 
separating the radii of convergence of the Taylor solutions 
at a Dwork generic point $t$ of $x$ that are smaller than or equal 
to $r(x)$ (cf. \cite{RoI}).\footnote{The classical 
proofs of these decomposition results are given for a point 
of type $2$, but they extends smoothly to all points of type 
$2$, $3$, or $4$ (cf. \cite{NP-III} for more details). The key 
ingredient is the fact that $|d/dT|_{Sp,\H(x)}=\omega/r(x)$.} 
Both these decompositions behave well by exact sequences.
So we can assume that $\Fs$ verifies 
$\R^{\Fs,\mathrm{sp}}_1(x)=\cdots=\R^{\Fs,\mathrm{sp}}_r(x)$, 
and that its Taylor solutions at $t$ all have the same radius of 
convergence.
The claim then follows from the case $i=1$ 
(cf. \eqref{eq : Def : R_1sp} plus \eqref{Def. : RMsp}).
\end{proof}
\begin{remark}
\label{Rk : (C2)+(C4)}
\emph{(1)} By Remark \ref{Rk : radius is spectral over controlling graph}
for all $\rho\geq \rho_{x,X}$ we have
$(\R_i^{\Fs,\mathrm{sp}}\circ\lambda_{x})(\rho)=
(\R_i^{\Fs}\circ\lambda_{x})(\rho)$.

In general, for all $\rho\geq 0$ we have 
$r(\lambda_{x}(\rho))=\max(\rho,r(x))$, 
so Proposition \ref{Prop : eeetrich} gives
\begin{equation}\label{eq : Comparison between Rad and Radgen}
(\R_i^{\Fs,\mathrm{sp}}\circ\lambda_{x})(\rho)\;=\;
\min\bigl(\;\max(r(x), \rho )\;,\; 
(\R_i^{\Fs}\circ\lambda_{\xi})(\rho)\; \bigr)\;.
\end{equation}

\emph{(2)} In the case $i=1$, 
$\rho\mapsto(\R_1^{\Fs}\circ\lambda_{x})(\rho)$ 
is moreover $\log$-concave and 
$\log$-decreasing for $\rho\in[0,\rho_{x,X}]$, and 
since $\rho\mapsto\max(r(x),\rho)$ is $\log$-convex for all 
$\rho\in[0,R_0]$, then 
\begin{enumerate}
\item If $\R_1^{\Fs}(x)\leq r(x)$, then 
$(\R_1^{\Fs,\mathrm{sp}}\circ\lambda_{x})(\rho)=
(\R_1^{\Fs}\circ\lambda_{x})(\rho)$ 
for all $\rho\in [0,R_0]$;
\item If $\R_1^{\Fs}(x)>r(x)$, then $(\R_1^{\Fs,\mathrm{sp}}\circ\lambda_{x})(\rho)=
(\R_1^{\Fs}\circ\lambda_{x})(\rho)$ for all $\rho\in[\R_1^{\Fs}(x),R_0]$.
\end{enumerate}
In particular, if $(\R_1^{\Fs,\mathrm{sp}}\circ\lambda_{x})(\rho)=
(\R_1^{\Fs}\circ\lambda_{x})(\rho)$ for some $\rho$, 
the same equality holds for all $\rho'\geq \rho$.

\emph{(3)} The index $i$ is spectral at $\lambda_x(\rho)$, 
for all $\rho\geq\R_i^{\Fs}(x)$. Indeed, by point (6) of Remark \ref{Def.: radius on a disk}, 
if $\R_i^{\Fs}(y)>r(y)$ for some $y=\lambda_x(\rho)$, 
then $D^-(t_x,\R_i^{\Fs}(x))=D^-(t_y,\R_i^{\Fs}(y))$, 
so $\R_i^{\Fs}(x)<\rho$.

So for all $x\in X$, and all $\rho\geq 0$, we have
\begin{equation}\label{eq : R_i VS R_isp along a segment}
(\R_{i}^{\Fs}\circ\lambda_x)(\rho)\;=\;
\left\{
\sm{
\R^{\Fs}_i(x)&\textrm{ if }&\;\;\;\rho\;\in\;[0,\R^{\Fs}_i(x)]\\
(\R_{i}^{\Fs,\mathrm{sp}}\circ\lambda_x)(\rho)&
\textrm{ if }&\rho\;\geq\;\R^{\Fs}_i(x)\;.
}
\right.
\end{equation} 
So $\R^{\Fs}_i\circ\lambda_x$ and 
$\R^{\Fs,\mathrm{sp}}_i\circ\lambda_x$ 
differ by at most two slopes over $[0,\R_i^{\Fs}(x)]$ (the slopes of $\max(r(x), \rho )$) 
that can only be $0$ or $1$ for both radii. 

\emph{(4)} This shows that Theorem \ref{thm: continuity of spectral radii along a branch} 
implies (C2) and (C4) for $\R^{\Fs}_i$, and hence  also for $\R_i(-,\Fs)$, 
it implies moreover points ii) and iii) of Theorem \ref{Theorem : MAIN THM GEN}.
\end{remark}

\begin{remark}\label{eq : dtdcgklb brf}
The functions $\R_i^{\Fs_\Omega,\mathrm{sp}}$ 
are not constant over the fiber $\pi_{\Omega/K}^{-1}(x)$, while 
$\R^{\Fs}_i$ is constant on it. It follows from Proposition 
\ref{Prop : eeetrich} that for all 
$y\in \pi_{\Omega/K}^{-1}(x)$ we have 
$\R^{\Fs_\Omega,\mathrm{sp}}_i
(y)=\min(\R^{\Fs,\mathrm{sp}}_i(x),r_\Omega(y))$. In particular
$\R^{\Fs,\mathrm{sp}}_i(x)=
\R^{\Fs_\Omega,\mathrm{sp}}_i(\sigma_{\Omega/K}(x))$.
\end{remark}
We also recall the following fundamental result:
\begin{theorem}[\protect{\cite[12.4.1]{Kedlaya-Book}}]
\label{deco on a disc}
Let $\Fs$ be a differential equation of rank $r$ over a disk 
$\O(D^-(c,\rho))$, $c\in K$, $\rho>0$. 
Assume that for some $i\leq r$ there exists $\varepsilon>0$ 
such that 
$h_{i-1}^{\Fs,\mathrm{sp}}$ is constant along 
$]x_{c,\rho-\varepsilon},x_{c,\rho}[$, and moreover 
$s_{i-1}^{\Fs,\mathrm{sp}}(x_{c,\rho'})<
s_{i}^{\Fs,\mathrm{sp}}(x_{c,\rho'})$, $\forall$ 
$\rho'\in]\rho-\varepsilon,\rho[$. 
Then $\Fs=\Fs_{\geq i}\oplus\Fs_{<i}$, where: 
\begin{enumerate}
\item The ranks of $\Fs_{<i}$ and $\Fs_{\geq i}$ are $i-1$ and 
$r-i+1$ respectively;
\item For all $k=1,\ldots,i-1$ one has 
$s_{k}^{\Fs_{<i},\mathrm{sp}}(x_{c,\rho'})=
s_{k}^{\Fs,\mathrm{sp}}(x_{c,\rho'})$ for all 
$\rho'\in]\rho-\varepsilon,\rho[$;
\item For all $k=i,\ldots,r$ one has 
$s_{k-i+1}^{\Fs_{\geq i},\mathrm{sp}}(x_{c,\rho'})=
s_{k}^{\Fs,\mathrm{sp}}(x_{c,\rho'})$, 
for all $\rho'\in]\rho-\varepsilon,\rho[$.\hfill $\Box$
\end{enumerate}
\end{theorem}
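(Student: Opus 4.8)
\medskip
\noindent\textbf{Proof plan.} This is exactly \cite[12.4.1]{Kedlaya-Book}, which we shall use as a black box; we only indicate the line of argument one would follow. The plan is in two steps: first produce the splitting over an annular neighbourhood of the boundary circle, then extend it to an $\O(D^-(c,\rho))$-linear splitting of the whole module.

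For the first step I would fix $\rho-\varepsilon<\rho_1<\rho$ and work over the open annulus $C:=\{\,\rho_1<|T-c|<\rho\,\}\subset D^-(c,\rho)$, whose analytic skeleton is $]x_{c,\rho_1},x_{c,\rho}[$. By hypothesis, $i$ is a clean vertex of the spectral Newton polygon at every point of this skeleton: $h_{i-1}^{\Fs,\mathrm{sp}}$ is constant there and $s_{i-1}^{\Fs,\mathrm{sp}}(x)<s_{i}^{\Fs,\mathrm{sp}}(x)$. A decomposition-by-spectral-radii result over annuli (cf.\ \cite{Kedlaya-Book}, \cite{Robba-Hensel}, \cite{RoI}) then yields $\Fs|_{C}=\mathcal{G}_{<i}\oplus\mathcal{G}_{\geq i}$, where $\mathcal{G}_{<i}$ has rank $i-1$ and carries the spectral radii $\exp(s_k^{\Fs,\mathrm{sp}}(x))$ for $k\leq i-1$, while $\mathcal{G}_{\geq i}$ has rank $r-i+1$ and carries those for $k\geq i$; this gives (i)--(iii) on $]\rho-\varepsilon,\rho[$ once the splitting is known to be global.

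The second step — the real difficulty — is to show that the idempotent $\pi\in\mathrm{End}(\Fs)(C)$ cutting out $\mathcal{G}_{<i}$ comes from an idempotent defined over $\O(D^-(c,\rho))$; the splitting of $\Fs$ then follows, $\O(D^-(c,\rho))$ being a B\'ezout domain over which the modules in sight are free (Lazard) and which has no non-trivial $d/dT$-stable ideals. A natural approach is to observe that $\pi$ is a horizontal section of the differential module $\mathrm{End}(\Fs)=\Fs^\vee\otimes\Fs$, which is defined over all of $\O(D^-(c,\rho))$: as such it is a convergent Taylor solution of $\mathrm{End}(\Fs)$ at each $x_{c,\rho'}$, and the cocycle relation for solution matrices (as in the proof of Lemma~\ref{Lemma : norm d = rho gen}) lets one continue it analytically inward. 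The point to be verified is that this continuation is defined throughout $D^-(c,\rho)$, i.e.\ that the trivial sub-object generated by $\pi$ in $\mathrm{End}(\Fs)$ survives as $\rho'$ runs from near $\rho$ down towards $0$; here one would invoke the log-concavity of the partial heights $H_k^{\Fs,\mathrm{sp}}$ on the disc and the transfer theorem (Theorem~\ref{thm: continuity of spectral radii along a branch} and Proposition~\ref{Prop : (C3) for R_1}) to keep the radii attached to $\mathrm{Hom}(\mathcal{G}_{\geq i},\mathcal{G}_{<i})\subset\mathrm{End}(\Fs)$ under control.

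An alternative, closer in spirit to the methods of this paper, is to pick a cyclic basis (Young's theorem \cite{Young}), encode $\Fs$ by a twisted-polynomial operator $P$ whose Newton polygon at $x_{c,\rho'}$ (for $\rho'$ near $\rho$) has the prescribed break at $i$, and factor $P=P_{\geq i}\cdot P_{<i}$ over $\O(D^-(c,\rho))$ by a Hensel-type lemma for twisted polynomials in the style of \cite{Robba-Hensel}, the module decomposition being induced by the factorization. Either way the main obstacle is the same, and is what makes the statement substantial: one must control the relevant radii \emph{throughout} the disc, not merely in the collar $]\rho-\varepsilon,\rho[$, so that the objects manufactured near $x_{c,\rho}$ genuinely descend to $\O(D^-(c,\rho))$.
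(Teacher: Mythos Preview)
The paper does not prove this statement: it is quoted from \cite[12.4.1]{Kedlaya-Book} and closed with $\Box$, exactly as a black box. Your opening sentence already matches the paper's treatment, so there is no discrepancy to report.

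Since you went further and sketched an argument, a few remarks on the sketch itself. The two-step outline (annular decomposition, then globalisation of the projector to the whole disc) is the right shape, and the identification of the main obstacle---control of radii on the entire disc, not just the collar---is accurate. Two small corrections: the cocycle relation $Y(T,t)=Y(T,t')Y(t',t)$ is not in the proof of Lemma~\ref{Lemma : norm d = rho gen} but in the unlabelled lemma just after \eqref{eq : Def : R_1sp}; and the actual mechanism in Kedlaya's proof is not quite analytic continuation of the idempotent via solution matrices, but rather showing that the horizontal section $\pi$ of $\mathrm{End}(\Fs)$, which exists over the annulus, has convergence radius at least $\rho$ at each centre (this uses that $h_{i-1}^{\Fs,\mathrm{sp}}$ is \emph{constant}, not merely that $i$ is a vertex, to force the relevant radii of $\mathrm{End}(\Fs)$ to be large enough). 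Your Hensel-factorisation alternative is also viable and closer to \cite{Robba-Hensel}, though making the lifting work over $\O(D^-(c,\rho))$ rather than a complete local ring again requires the same radius control.
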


\subsection{Spectral Newton polygon of a differential operator.} 
\label{Spectral Newton polygon of a differential operator.}
Let $\L:=\sum_{i=0}^rg_{r-i}(T)\cdot(d/dT)^i$ 
be a differential operator 
with $g_0=1$ and $g_i\in \O(X)$. We set 
\begin{equation}
v^{\L,\mathrm{sp}}_i\;:=\; -\ln(\omega^{-i}\cdot|g_i(x)|)\;.
\end{equation}
We define the \emph{spectral Newton polygon of $\L$} as $NP(\L,x):=NP(v^{\L,\mathrm{sp}})$. 

Let $s^{\L,\mathrm{sp}}(x):s_1^{\L,\mathrm{sp}}(x)\leq\ldots
\leq s_r^{\L,\mathrm{sp}}(x)$ be its slope sequence. For $i=1$ we have
\begin{equation}\label{FIRST-SLOPE of oper}
s_1^{\L,\mathrm{sp}}(x)\;=\;
\ln\Bigl(\omega\cdot\min_{i=1,\ldots,r}
|g_i(x)|^{-\frac{1}{i}}\Bigr)\;.
\end{equation}
We define as usual 
$\R_i^{\L,\mathrm{sp}}(x):=\exp(s_i^{\L,\mathrm{sp}}(x))$ and 
$H^{\L,\mathrm{sp}}_i(x):=\exp(h_i^{\L,\mathrm{sp}}(x))$. 
\begin{proposition}[Small radii, cf. 
\protect{\cite{Young}, 
\cite[Section 6]{Kedlaya-Book}, \cite[Thm.6.2]{Astx}}]
\label{Prop. : small radius}
\label{Corollary : slopes Rsp = slopes Lsp}
Let $x\in X$ be a point of type $2$, $3$, or $4$, and 
let $(\Fs,\nabla)$ be the differential module over $(\H(x),d/dT)$
attached to $\L$. Then 
\if{
$|\L|_{Sp,x}> r(x)^{-1}$ if and only if 
$|\nabla|_{Sp,\Fs}>r(x)^{-1}$.
If these equivalent conditions hold one has 
$|\L|_{Sp,x}=|\nabla|_{Sp,\H(x)}=\omega/ \R_1^{\Fs,
\mathrm{sp}}(x)$.
More generally
}\fi 
$\exp(s_i^{\L,\mathrm{sp}}(x))< \omega\cdot r(x)$ 
if and only if $\R_i^{\Fs,\mathrm{sp}}(x)< \omega\cdot r(x)$, and in 
this case we have 
\begin{equation}
\qquad\qquad\qquad\qquad\qquad\qquad\qquad
\R_i^{\Fs,\mathrm{sp}}(x)\;=\;\exp(s_i^{\L,\mathrm{sp}}(x))\;.
\qquad\qquad\qquad\qquad\qquad\Box
\end{equation}
\end{proposition}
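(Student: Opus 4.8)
The plan is to deduce the proposition from the rank one case — which is essentially Young's growth estimate — by an induction on $r=\mathrm{ord}(\L)$ that plays the Newton polygon factorisation of $\L$ against the spectral slope decomposition of the attached module $\Fs=\H(x)[d/dT]/\H(x)[d/dT]\L$. For the rank one input, I would choose a cyclic vector so that the matrix $G$ becomes the companion matrix of $\L$, and quote the estimate $\liminf_n|G_n(x)|^{-1/n}=\max_{i}|g_i(x)|^{-1/i}$, valid whenever the right hand side is $<r(x)/\omega$ (this is \cite{Young}; see also \cite[Prop.~1.3]{Ch-Dw}, \cite[Section~6]{Kedlaya-Book}, \cite[Thm.~6.2]{Astx}, and the computation \eqref{Fgen = norm spectral}). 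Combined with Proposition \ref{Prop. : Rsp}, Lemma \ref{Lemma : norm d = rho gen} and \eqref{FIRST-SLOPE of oper} this gives $\R_1^{\Fs,\mathrm{sp}}(x)=\min\bigl(\exp(s_1^{\L,\mathrm{sp}}(x)),r(x)\bigr)$, so that $\R_1^{\Fs,\mathrm{sp}}(x)<\omega r(x)$ if and only if $\exp(s_1^{\L,\mathrm{sp}}(x))<\omega r(x)$, and in that range the two sides coincide. This settles $r=1$.

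For the induction I would work with the equivalent statement that the sub-multiset of $\{\R_i^{\Fs,\mathrm{sp}}(x)\}_{i=1}^r$ formed by the values $<\omega r(x)$ equals the sub-multiset of $\{\exp(s_i^{\L,\mathrm{sp}}(x))\}_{i=1}^r$ formed by the values $<\omega r(x)$ (two non-decreasing sequences agreeing on their parts below a common threshold agree entrywise there, which is exactly the assertion of the proposition). If $NP(\L,x)$ has a break at some $k<r$ whose slope satisfies $\exp(s_k^{\L,\mathrm{sp}}(x))<\omega r(x)$, I would factor $\L=\L_1\L_2$ along that break (Robba's factorisation theorem, cf.\ \cite{Robba-Hensel} and \cite[Ch.~6]{Kedlaya-Book}): this produces an exact sequence $0\to\Fs_1\to\Fs\to\Fs_2\to0$ with $\Fs_j$ attached to $\L_j$, the low-slope factor $\L_2$ of order $k$ having all its slopes $\le s_k^{\L,\mathrm{sp}}(x)$ (hence entirely below the threshold after exponentiation), and the slope multiset of $NP(\L,x)$ being the disjoint union of those of $NP(\L_1,x)$ and $NP(\L_2,x)$. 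Applying the inductive hypothesis to $\Fs_1$ and $\Fs_2$ and combining with the additivity of the spectral radii in exact sequences (immediate from the Jordan--H\"older definition \eqref{R_1,...,R_n}) yields the claim. If $NP(\L,x)$ has no such break, then either $\exp(s_1^{\L,\mathrm{sp}}(x))\ge\omega r(x)$, in which case all slopes are $\ge\omega r(x)$ and by the rank one case all $\R_i^{\Fs,\mathrm{sp}}(x)\ge\omega r(x)$ too, so both sub-multisets are empty; or $NP(\L,x)$ is a single segment of slope $\ln R$ with $R=\exp(s_1^{\L,\mathrm{sp}}(x))<\omega r(x)$, and it remains to prove $\Fs$ is pure of spectral radius $R$. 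For the latter I would decompose $\Fs=\bigoplus_\rho\Fs_\rho$ by the distinct values $\rho$ of its first spectral radius (Robba--Hensel, cf.\ \cite[10.6.2]{Kedlaya-Book}), note that each $\Fs_\rho$ is cyclic (being a $\H(x)[d/dT]$-quotient of the cyclic module $\Fs$) so that $\L$ factors as a product of pairwise coprime operators $\L_\rho$ with $\Fs_\rho$ attached to $\L_\rho$, and invoke multiplicativity of the Newton polygon of twisted polynomials in the visible range (\cite[Ch.~6]{Kedlaya-Book}); a rank count then forces each $NP(\L_\rho,x)$ to be a single segment of slope $\ln R$, entirely visible, whence the inductive hypothesis on the $\L_\rho$ gives $\rho=R$ for every $\rho$, the decomposition is trivial, $\Fs$ is pure of radius $R$, and $\R_i^{\Fs,\mathrm{sp}}(x)=R=\exp(s_i^{\L,\mathrm{sp}}(x))$ for all $i$.

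The hard part is not this bookkeeping but the two imported ingredients it rests on: Young's growth estimate for the companion recursion, and the Robba--Hensel factorisation/decomposition theory for $p$-adic differential operators and modules over $\H(x)$ (in particular multiplicativity of Newton polygons in the visible range). These I would simply quote from \cite{Young}, \cite{Robba-Hensel} and \cite[Section~6]{Kedlaya-Book} rather than reprove. The only genuine care required in writing the argument down is to track which factor of $\L$ carries which slopes after a factorisation, to check that orders and ranks add up and that the summands $\Fs_\rho$ are really cyclic, and to match the notion of ``visible'' consistently on the operator side ($\exp(s_i^{\L,\mathrm{sp}})<\omega r(x)$) and on the module side ($\R_i^{\Fs,\mathrm{sp}}<\omega r(x)$).
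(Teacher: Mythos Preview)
The paper does not give a proof of this proposition: it is stated with a trailing $\Box$ and simply attributed to \cite{Young}, \cite[Section~6]{Kedlaya-Book}, and \cite[Thm.~6.2]{Astx}. Your sketch is essentially a reconstruction of the standard argument from those references (Young's growth estimate for the companion recursion, combined with the Robba--Hensel factorisation of twisted polynomials in the visible range and the slope decomposition of the module), so in that sense you are aligned with what the paper cites.

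Two small corrections are worth flagging. First, a typo: in your base case you write $\liminf_n|G_n(x)|^{-1/n}=\max_{i}|g_i(x)|^{-1/i}$, but this should be $\min_i$, consistent with the formula \eqref{FIRST-SLOPE of oper} that you yourself invoke immediately afterwards. Second, in the pure-slope case at the end of the induction, your appeal to ``the inductive hypothesis on the $\L_\rho$'' is circular when the spectral decomposition $\Fs=\bigoplus_\rho\Fs_\rho$ is already trivial: if there is only one $\rho$ then $\L_\rho=\L$ has order $r$ and the induction gives nothing. What actually closes this case is the $i=1$ result you established at the outset: once $\Fs$ is pure of some radius $\rho$, you have $\rho=\R_1^{\Fs,\mathrm{sp}}(x)=\exp(s_1^{\L,\mathrm{sp}}(x))=R$ directly, no induction needed. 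With that patch the argument is complete.
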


\begin{remark}
If  $g_r\neq 0$, then 
$s_i^{\L,\mathrm{sp}}(\xi),h_i^{\L,\mathrm{sp}}(\xi)<\infty$. 
This will be the case of major interest, indeed 
the case where $g_r=0$ reduces to a lower degree, since 
we have a factorization $\L=\L_1\cdot(d/dT)$.
\end{remark}

\begin{proposition}\label{Prop. NP of a operator}
Assume that $g_0=1$, $g_r\neq 0$, and that 
for all $i$, the function $g_i$
is either equal to $0$, or it has no zeros on $X$. Then :
\begin{enumerate}
\item For all $i=0,\ldots,r$ the function $\xi\mapsto 
H^{\L,\mathrm{sp}}_i(\xi)\in\mathbb{R}$ 
verifies the six properties (C1)--(C6) with respect to $\S:=\S_X$ and 
$\C(H_i^{\L,\mathrm{sp}}):=\partial X$. 
It is hence finite by Theorem \ref{th : finiteness theorem};  
\item For all $i=0,\ldots,r$ one has $\S(h_i^{\L,\mathrm{sp}})=
\Gamma(H_i^{\L,\mathrm{sp}})=
\Gamma(s_i^{\L,\mathrm{sp}})=
\Gamma(\R_i^{\L,\mathrm{sp}})=
\Gamma_X$;
\item Assume that $\xi \in X$ is a point of type $2$, $3$, or $4$, 
and that $i$ is a vertex of 
$NP(\L,\xi)$ (i.e. $i=r$ or 
$s^{\L,\mathrm{sp}}_i(\xi)<s^{\L,\mathrm{sp}}_{i+1}(\xi)$). 
Then $\partial_bH_{i}^{\L,\mathrm{sp}}(x)\in\mathbb{Z}$, 
and $H_i^{\L,\mathrm{sp}}$ is harmonic outside $\partial X$.
\if{
Then: 
\begin{itemize}
\item[$(\mathrm{iii}$-$\mathrm{a})$] Over all germ of segment 
$b:=[x,y[$ out of $x$ one has
$H_i^{\L,\mathrm{sp}}=\omega^{i}|g_i|^{-1}$.
\item[$\mathrm{(iii}$-$\mathrm{b})$] 
$\partial_bH_{i}^{\L,\mathrm{sp}}(x)=
\partial_b(x\mapsto |g_i(\xi)|^{-1})\in\mathbb{Z}$. 
\item[$\mathrm{(iii}$-$\mathrm{c})$] If $x\in X-\partial X$,
then $H_i^{\L,\mathrm{sp}}$ is harmonic at $\xi$. 
\end{itemize}  
}\fi
\item For all $i=1,\ldots,r$ the slopes of $h_i^{\L,\mathrm{sp}}$ and $s_i^{\L,\mathrm{sp}}$ belong to 
$\mathbb{Z}\cup\frac{1}{2}\mathbb{Z}\cup\cdots\cup\frac{1}{r}\mathbb{Z}$.
\end{enumerate}
\end{proposition}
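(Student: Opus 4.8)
The plan is to reduce all four assertions to the elementary behaviour of the coefficient functions $x\mapsto|g_j(x)|$. Under the standing hypothesis that each $g_j$ is either $0$ or nowhere vanishing on $X$, every such nonzero $|g_j|$ is everywhere positive, constant on every virtual open disk of $X$, $\log$-affine with integer slopes and finitely many breaks along segments, and $\log|g_j|$ has vanishing Laplacian off $\partial X$ (so $|g_j|^{\pm\alpha}$ is harmonic off $\partial X$), exactly as in Examples \ref{Examples of skeletons.} and \ref{ex: exercice ttt}. The first step is an explicit formula for the partial heights. Since $v_0^{\L,\mathrm{sp}}=0$ (as $g_0=1$), convex duality applied to the formula $h_i^{\L,\mathrm{sp}}(x)=\sup_{s}(s\cdot i+\min_{j}(v_j^{\L,\mathrm{sp}}(x)-s\cdot j))$ recalled above (equivalently, $h_i$ is the interpolation of the $v_j$ between the two vertices of $NP(v^{\L,\mathrm{sp}})$ straddling $i$) shows that $h_i^{\L,\mathrm{sp}}(x)$ is the minimum over pairs $0\le a\le i\le b\le r$ of $\frac{(b-i)v_a^{\L,\mathrm{sp}}(x)+(i-a)v_b^{\L,\mathrm{sp}}(x)}{b-a}$; exponentiating and using $a\tfrac{b-i}{b-a}+b\tfrac{i-a}{b-a}=i$ to collect the powers of $\omega$, this becomes
\[
H_i^{\L,\mathrm{sp}}(x)\;=\;\omega^i\cdot\min_{\substack{0\le a\le i\le b\le r\\ g_a\not\equiv0,\ g_b\not\equiv0}}|g_a(x)|^{-\frac{b-i}{b-a}}\,|g_b(x)|^{-\frac{i-a}{b-a}}\;,
\]
the pair $a=b=i$ being read as the term $\omega^i|g_i(x)|^{-1}$. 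Because $g_r\not\equiv0$ is nowhere vanishing, the pair $(0,r)$ always contributes the finite term $\omega^i|g_r(x)|^{-i/r}$, so $H_i^{\L,\mathrm{sp}}$ is everywhere finite and the minimum runs over a nonempty finite set of indices.

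For (i), (ii), (iv): fix an admissible pair $(a,b)$ and put $F_{a,b}:=\omega^i|g_a|^{-(b-i)/(b-a)}|g_b|^{-(i-a)/(b-a)}$. By the first paragraph $F_{a,b}$ is a positive function, constant on every virtual disk of $X$ (so $\Gamma(F_{a,b})=\Gamma_X$), $\log$-affine along segments with slopes in $\tfrac1{b-a}\mathbb{Z}\subseteq\bigcup_{k=1}^r\tfrac1k\mathbb{Z}$, with finitely many breaks and nonzero slopes of absolute value $\ge\tfrac1r$, concave (indeed constant) on each maximal disk, directionally finite, and harmonic off $\partial X$; thus $F_{a,b}$ satisfies (C1)--(C6) with respect to $\Gamma:=\Gamma_X$ and $\C(F_{a,b})=\emptyset$. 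Since $H_i^{\L,\mathrm{sp}}$ is the minimum of the finite family $\{F_{a,b}\}$, Remark \ref{Rk : min max fini} gives $\Gamma(H_i^{\L,\mathrm{sp}})\subseteq\Gamma_X$ (hence $=\Gamma_X$, as every constancy skeleton contains $\Gamma_X$), and Example \ref{ex: exercice ttt} (stability of (C1)--(C6) under finite minima) gives (C1)--(C6) for $H_i^{\L,\mathrm{sp}}$ with $\C\subseteq\partial X$; finiteness then follows from Theorem \ref{th : finiteness theorem}, proving (i). Point (ii) follows since $h_i^{\L,\mathrm{sp}}=\log H_i^{\L,\mathrm{sp}}$, $\R_i^{\L,\mathrm{sp}}=\exp(s_i^{\L,\mathrm{sp}})$ with $s_i^{\L,\mathrm{sp}}=h_i^{\L,\mathrm{sp}}-h_{i-1}^{\L,\mathrm{sp}}$, and the whole polygon $NP(\L,-)$ --- hence $h_i^{\L,\mathrm{sp}}$, $s_i^{\L,\mathrm{sp}}$, $\R_i^{\L,\mathrm{sp}}$ --- is constant on each virtual disk of $X$, forcing all their constancy skeletons to equal $\Gamma_X$. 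For (iv): along a branch $b$ the slope of $h_i^{\L,\mathrm{sp}}$ coincides with that of the locally minimizing $F_{a,b}$, namely $\frac{(b-i)\,\partial_b v_a^{\L,\mathrm{sp}}(x)+(i-a)\,\partial_b v_b^{\L,\mathrm{sp}}(x)}{b-a}\in\tfrac1{b-a}\mathbb{Z}$, and the slope of $s_i^{\L,\mathrm{sp}}$ is $\frac{\partial_b v_\beta^{\L,\mathrm{sp}}(x)-\partial_b v_\alpha^{\L,\mathrm{sp}}(x)}{\beta-\alpha}\in\tfrac1{\beta-\alpha}\mathbb{Z}$, where $\alpha<\beta$ are the consecutive vertices of $NP(\L,x)$ bounding the $i$-th edge and each $\partial_b v_j^{\L,\mathrm{sp}}(x)=-\partial_b\log|g_j|(x)\in\mathbb{Z}$.

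Finally, (iii). If $i$ is a vertex of $NP(\L,x)$, the corner of the polygon at abscissa $i$ is an extreme point of the convex hull defining $NP(v^{\L,\mathrm{sp}})$, hence one of the generators $(j,v_j^{\L,\mathrm{sp}}(x))$; since its abscissa is $i$ we get $h_i^{\L,\mathrm{sp}}(x)=v_i^{\L,\mathrm{sp}}(x)<\infty$, so $g_i(x)\ne0$ and $g_i$ is nowhere vanishing on $X$. Moreover strictness of the vertex ($s_i<s_{i+1}$), through the tangent-line estimate with any slope $s\in(s_i,s_{i+1})$, gives $\frac{(b-i)v_a^{\L,\mathrm{sp}}(x)+(i-a)v_b^{\L,\mathrm{sp}}(x)}{b-a}>v_i^{\L,\mathrm{sp}}(x)$ strictly for every pair with $a<i<b$; as the finitely many $v_j^{\L,\mathrm{sp}}$ are continuous, these strict inequalities persist on a neighbourhood of $x$, so there $h_i^{\L,\mathrm{sp}}=v_i^{\L,\mathrm{sp}}$, i.e. $H_i^{\L,\mathrm{sp}}=\omega^i|g_i|^{-1}$. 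Consequently $\partial_b H_i^{\L,\mathrm{sp}}(x)=-\partial_b\log|g_i|(x)\in\mathbb{Z}$ for every branch $b$ out of $x$, and $dd^c H_i^{\L,\mathrm{sp}}(x)=-dd^c\log|g_i|(x)=0$ for $x\notin\partial X$; thus $H_i^{\L,\mathrm{sp}}=\omega^i|g_i|^{-1}$ is harmonic outside $\partial X$. I expect the main obstacle to be precisely this last step: showing that the minimum defining $h_i^{\L,\mathrm{sp}}$ is realized by the single index $i$ throughout a neighbourhood of $x$ --- which is where strictness of the vertex and continuity of the coefficients enter --- while keeping track of the degenerate pairs (those with $g_a\equiv0$ or $g_b\equiv0$ contribute $+\infty$ and drop out, the hypothesis $g_r\not\equiv0$ guaranteeing the index set is never empty).
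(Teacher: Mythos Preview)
Your proof is correct and follows essentially the same route as the paper's: both reduce everything to the elementary behaviour of the nowhere-vanishing functions $|g_j|$ (constant on maximal disks, harmonic off $\partial X$, integer log-slopes), and both identify $H_i^{\L,\mathrm{sp}}=\omega^i|g_i|^{-1}$ in a neighbourhood of a point where $i$ is a vertex. The paper's proof is terser --- it states (ii) and (iii) directly and deduces the super-harmonicity in (i) by the interpolation $G(y)=h_{i_1}(y)+(i-i_1)\frac{h_{i_2}(y)-h_{i_1}(y)}{i_2-i_1}$ together with Lemma~\ref{Lemma: G dominates F, so F super-H}, deferring the remaining verifications to \cite[Thm.~11.2.1]{Kedlaya-Book}. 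Your explicit minimum formula
\[
H_i^{\L,\mathrm{sp}}(x)=\omega^i\min_{0\le a\le i\le b\le r}|g_a(x)|^{-\frac{b-i}{b-a}}|g_b(x)|^{-\frac{i-a}{b-a}}
\]
makes the whole argument self-contained: (C1)--(C6) follow from Example~\ref{ex: exercice ttt}(2) (stability under finite minima), and the slope bound (iv) is read off directly from whichever $F_{a,b}$ locally realizes the minimum. The paper's interpolation argument and your use of Example~\ref{ex: exercice ttt}(2) are really the same observation (minimum of super-harmonic functions is super-harmonic, via Lemma~\ref{Lemma: G dominates F, so F super-H}), just packaged differently.

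One small remark on your step (iii): you argue that the pairs $(a,b)$ with $a<i<b$ give strictly larger values at $x$, hence by continuity also nearby. That is correct, but note that the pairs with $a=i$ or $b=i$ all collapse to the same term $v_i$, so the minimum near $x$ is indeed $v_i$ --- your conclusion stands, but the ``unique minimizer'' phrasing is slightly imprecise since several index pairs tie (harmlessly) at the value $v_i$.
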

\begin{proof}
Since every $g_i$ has no zeros on $X$ the functions 
$\xi\mapsto|g_i(\xi)|$ are constant on every maximal disk 
$\mathrm{D}(x,X)$. 
Hence ii) holds. If $i$ is a vertex, then
\begin{itemize}
\item[$(\mathrm{iii}$-$\mathrm{a})$] Over all germ of segment 
$b:=[x,y[$ out of $x$ one has
$H_i^{\L,\mathrm{sp}}=\omega^{i}|g_i|^{-1}$.
\item[$\mathrm{(iii}$-$\mathrm{b})$] 
$\partial_bH_{i}^{\L,\mathrm{sp}}(x)=
\partial_b(x\mapsto |g_i(\xi)|^{-1})\in\mathbb{Z}$. 
\item[$\mathrm{(iii}$-$\mathrm{c})$] If $x\in X-\partial X$,
then $H_i^{\L,\mathrm{sp}}$ is harmonic at $\xi$. 
\end{itemize} 
The rest is straightforward 
(see for example \cite[Thm.11.2.1]{Kedlaya-Book}).
Namely iv) is deduced by iii), by interpolation. 
This means that if $i_1< i< i_2$ are the vertexes of the polygon 
that are closest to $i$ at $x$, we define 
$G(y):=h_{i_1}^{\L,\mathrm{sp}}(y)+(i-i_1)
\frac{h^{\L,\mathrm{sp}}_{i_2}(y)-
h^{\L,\mathrm{sp}}_{i_1}(y)}{(i_2-i_1)}$. 
Then $G$ is super-harmonic at $x$, and 
$G\geq h^{\L,\mathrm{sp}}_i$ around $x$. So 
$h_i^{\L,\mathrm{sp}}$ is super-harmonic by Lemma 
\ref{Lemma: G dominates F, so F super-H}.
\end{proof}
\begin{remark}
To deal with the case in which  some $g_i$ is not invertible, 
it is enough to replace $X$ by a sub-affinoid on which each 
$g_i$ is either zero, or it is invertible.
\end{remark}
\begin{remark}\label{trspNP}
Let $s_i'(x):=\min(s_i^{\L,\mathrm{sp}}(x),\ln(\omega\cdot r(x)))$
be the truncated slope sequence. 

The partial heights of the corresponding polygon verify (C2), (C4),  and 
(C3) with respect to $\Gamma:=\Gamma_X$. 
Of course (as for $\R^{\Fs,\mathrm{sp}}_1(x)$) the constancy skeleton 
of each partial height is equal to $X$. 

If $i$ is a vertex of the truncated polygon, 
then the slopes of the $i$-th partial height belong to $\mathbb{Z}$, 
and property $iv)$ of Proposition \ref{Prop. NP of a operator} holds. 
While (iii-c) and super-harmonicity only hold for $i$-th partial heights 
corresponding to indexes $i$ satisfying $s_{i}'(x)<\ln(\omega\cdot r(x))$.
\end{remark}
\if{
\begin{proposition} \label{trspNP}
We maintain the assumptions of Proposition 
\ref{Prop. NP of a operator}.
Let $\alpha>0$, be a constant and let 
$C(\xi):=\ln(\alpha\cdot r(\xi))\in \{-\infty\}\cup\mathbb{R}$. 
Let 
$s_i'(\xi):=\min(s_i^{\L,\mathrm{sp}}(\xi),C(\xi))$, 
$i=1,\ldots,r$ be the truncated slope sequence. 
In order to avoid to work with $-\infty$ let 
$R_i'(x):=\exp(s_i'(x))$, and let $H'_i(\xi):=\exp(h_i'(\xi))$ 
where as usual $h_i'(\xi):=s_1'(\xi)+\cdots+s_i'(\xi)$, and if $t\in X(\widehat{K^{\mathrm{alg}}})$ we extend the above definition by 
$R_i'(\xi_t):=0$ and $H_i'(\xi_t)=0$. Then :
\begin{enumerate}
\item For all $i=0,\ldots,r$ the function 
$\xi\mapsto H'_i(\xi)\in\mathbb{R}$ 
verifies (C2),(C4), and (C3) with $\S=\S_X$;  
\item  $\S(R_i')=\S(H_i')=X$, because $R_i'(\xi_t)=0$ for all 
$t\in X( K^{\mathrm{alg}})$;
\item Assume $\xi\in X_{\mathrm{int}}$. If $(i,h_i'(\xi))$ is a vertex of the truncated Newton polygon at $\xi$ (i.e. $i=r$ or $s_i'(\xi)<s_{i+1}'(\xi)$) 
then for all $\delta\in\Delta(\xi)$ the slopes $\partial_-H'_{i,\delta}(\xi)$ and $\partial_+H_{i}'(\xi)$ lies in $\mathbb{Z}$. 
This implies, by interpolation, that for all $i=1,\ldots,r$ the $\log$-slopes of $H_i'$ always belong to 
$\mathbb{Z}\cup\frac{1}{2}\mathbb{Z}\cup\cdots\cup\frac{1}{r}\mathbb{Z}$.
\item Assume that $\xi \in X_{\mathrm{int}}$ does not belong to the 
Shilov boundary of $X$. Let $i_0\in\{1,\ldots,r\}$ be the largest integer 
such that $s_{i_0}'(\xi)<C(\xi)$. 
If $K=\widehat{K^{\mathrm{alg}}}$, then for all $i=1,\ldots,i_0$, the 
function $H_i'$ is super-harmonic at $\xi$, and if moreover 
$(i,h_i'(\xi))$ 
is a vertex of the trucated Newton polygon as in iii), then $H_i'$ is 
harmonic at $\xi$. \hfill$\Box$
\end{enumerate}
\end{proposition}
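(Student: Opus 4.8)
The plan is to reduce every assertion to Proposition~\ref{Prop. NP of a operator} by expressing the truncated objects through the untruncated ones and the function $r(\xi)$. Since truncating an increasing sequence by a real constant keeps it increasing, an inspection of the convex hull gives, for all $\xi$ and all $i$,
\[
h_i'(\xi)\;=\;\min_{0\le k\le i}\bigl(h_k^{\L,\mathrm{sp}}(\xi)+(i-k)\ln(\alpha\,r(\xi))\bigr)\;,
\]
the minimum being attained at $k=i_0(\xi):=\max\{j\le i:\,s_j^{\L,\mathrm{sp}}(\xi)<C(\xi)\}$ (and at $k=i$ if $i_0(\xi)=i$). First I would dispatch ii): by Proposition~\ref{Prop. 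NP of a operator} each $H_k^{\L,\mathrm{sp}}$ is finite and, since $g_r\ne0$, takes positive finite values at every point of type $2$, $3$, $4$, whereas $C(\xi)=\ln(\alpha r(\xi))\to-\infty$ at each point of type $1$; hence $R_i'$ vanishes exactly on the type-$1$ points, is non-constant on every virtual open disk, and therefore $\S(R_i')=\S(H_i')=X$.

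For i), the properties (C2) and (C4) are inherited from those of the $L_xH_k^{\L,\mathrm{sp}}$ (Proposition~\ref{Prop. NP of a operator}(i)) together with those of $L_xC$ --- piecewise affine with a single break at $\ln r(x)$ and slopes in $\{0,1\}$ --- because a finite minimum of continuous piecewise-affine functions with finitely many breaks is again such and its one-sided slopes belong to the slopes of the constituents (hence to $\{0\}\cup\frac1r\mathbb{Z}$). For (C3) with $\Gamma=\Gamma_X$ one only needs to treat $x\notin\Gamma_X$ (otherwise $\rho_{\Gamma_X}(x)=r(x)$ and $L_xH_i'$ is constant on $]-\infty,\ln r(x)[$): on the maximal disk $D(x,X)$ every $g_j$ is zero-free, so $|g_j|$, the polygon $NP(\L,-)$ and each $H_k^{\L,\mathrm{sp}}$ are constant there, whence along $\Lambda(x)\cap\overline{D(x,X)}$, parametrised by $u=\ln\rho$, one gets $h_i'(\lambda_x(e^u))=\sum_{j=1}^i\min(s_j^{\L,\mathrm{sp}}(x),\ln\alpha+u)$, a sum of concave functions; beyond $\rho_{x,X}$, i.e.\ on $\Gamma_X$, concavity follows from the displayed formula, the concavity of each $h_k^{\L,\mathrm{sp}}$ there and the affineness of $u\mapsto\ln\alpha+u$. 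The delicate point --- which I expect to be the main obstacle --- is the junction at $\ln r(x)$, where $\ln(\alpha r)$ is only $\log$-convex; this is handled by distinguishing whether the truncation is active at $x$ and using the explicit shape of $\min(s_j^{\L,\mathrm{sp}}(x),\ln\alpha+u)$ together with interpolation between consecutive vertices of the truncated polygon at $x$, in the spirit of the interpolation step in the proof of Proposition~\ref{Prop. NP of a operator}(iv).

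Points iii) and iv) follow from the same mechanism. If $(i,h_i'(\xi))$ is a vertex of the truncated polygon and the truncation is inactive at $i$, then $s_j^{\L,\mathrm{sp}}(\xi)<C(\xi)$ for all $j\le i$; by continuity of the $s_j^{\L,\mathrm{sp}}$ and of $r$ this persists on a neighbourhood of $\xi$, so there $H_i'=H_i^{\L,\mathrm{sp}}$ and $i$ is a vertex of $NP(\L,-)$, whence $\partial_bH_i'(\xi)=\partial_bH_i^{\L,\mathrm{sp}}(\xi)\in\mathbb{Z}$ by Proposition~\ref{Prop. NP of a operator}(iii); if the truncation is active then necessarily $i=r$ and $h_r'=h_{i_0}^{\L,\mathrm{sp}}+(r-i_0)\ln(\alpha r)$ near $\xi$ for a suitable vertex $i_0$ of $NP(\L,\xi)$, and since $\partial_b(\ln r)(\xi)\in\{-1,0,1\}$ one again gets $\partial_bh_r'(\xi)\in\mathbb{Z}$. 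The statement for arbitrary $i$ then comes, as in Proposition~\ref{Prop. NP of a operator}(iv), by interpolating between the neighbouring truncated vertices $i_1<i<i_2$ at $\xi$ and invoking Lemma~\ref{Lemma: G dominates F, so F super-H}; the $\log$-slopes of the interpolating function lie in $\mathbb{Z}+\frac{i-i_1}{i_2-i_1}\mathbb{Z}\subseteq\mathbb{Z}\cup\frac12\mathbb{Z}\cup\cdots\cup\frac1r\mathbb{Z}$. Finally, for iv), when $\xi\notin\partial X$ and $i\le i_0(\xi)$ one has $s_j^{\L,\mathrm{sp}}(\xi)<C(\xi)$ for $j\le i$, so $H_i'=H_i^{\L,\mathrm{sp}}$ near $\xi$, and $dd^cH_i'(\xi)=dd^cH_i^{\L,\mathrm{sp}}(\xi)\le0$ by the super-harmonicity of $H_i^{\L,\mathrm{sp}}$ outside $\partial X$ (Proposition~\ref{Prop. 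NP of a operator}(iii) for the vertices, plus the interpolation/Lemma~\ref{Lemma: G dominates F, so F super-H} argument otherwise), with equality at vertices by the harmonicity part of the same.
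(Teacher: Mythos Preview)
The paper gives no proof here (the proposition carries only a $\Box$; in the final text it survives only as a brief remark), so the intended argument is nothing more than ``deduce from Proposition~\ref{Prop. NP of a operator}'', which is exactly your route via the identity $h_i'(\xi)=\min_{0\le k\le i}\bigl(h_k^{\L,\mathrm{sp}}(\xi)+(i-k)C(\xi)\bigr)$. Your treatment of (ii), (iii), (iv), and of (C2) and (C4) in (i), is correct and more explicit than anything the paper provides.

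There is, however, a genuine gap at the one place you yourself flag as ``the main obstacle'': the junction at $u=\ln r(x)$ in (C3). Your proposed resolution (``distinguish whether the truncation is active, then interpolate'') does not and cannot work, because (C3) in the literal sense of Section~\ref{F_t defi} is \emph{false} for $H_i'$ whenever the truncation is active at a point $x$ strictly interior to a maximal disk. Concretely, take $r=1$, $g_1$ a nonzero constant, and $x=x_{c,\rho}$ with $0<r(x)=\rho<\rho_{x,X}$ small enough that $s_1^{\L,\mathrm{sp}}(x)>C(x)$: then $L_xH_1'$ has slope $0$ on $]-\infty,\ln r(x)]$, slope $1$ immediately to the right, and later slope $0$; the slope jumps upward at $\ln r(x)$, which is convex, not concave. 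The reason is that $u\mapsto \ln r(\lambda_x(e^u))=\max(u,\ln r(x))$ is convex, and this survives in the min defining $h_i'$. What \emph{does} hold, and is all that is ever used, is the weaker consequence recorded in (C3): $H_i'$ is $\log$-concave on every open segment disjoint from $\Gamma_X$, since there the $h_k^{\L,\mathrm{sp}}$ are constant and $h_i'$ is a minimum of affine functions of $u$. So the defect lies in the statement rather than in your method; you should record this weaker property and note that the full (C3) fails at such junction points.
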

}\fi

\subsection{Localization to a sub-affinoid}
\label{Restriction to a sub-affinoid}
Let $\Fs$ be a differential module over $X$, 
and let $X'\subseteq X$ be a sub-affinoid domain. 
The polygon $NP^{\mathrm{sp}}(x,\Fs)$ only depends on the 
restricted module $\Fs(x)=\Fs\widehat{\otimes}\H(x)$, so it is 
invariant by restriction to $X'$. 
Conversely $NP^{\mathrm{conv}}(x,\Fs)$ is not: the radii change by 
localization. Hence the following proposition is not a direct 
consequence of Remark \ref{Rk : restr to X'}. 
The claim is given for the function $\R^{\Fs}_i$, and an immediate 
translation gives the analogous statement for $\R_i(-,\Fs)$ 
(cf. Remark \ref{Def.: radius on a disk}).

\begin{proposition}\label{Prop. restriction to a sub-affinoid}
Let $X'\subseteq X$ be a sub-affinoid. Then
\begin{enumerate}
\item For all $i=1,\ldots,r$ and all $\xi'\in X'$ one has 
$\R_i^{\Fs_{|X'}}(x')=
\min(\;\R_i^{\Fs}(x')\;,\;\rho_{x',X'}\;)$, 
and
\begin{equation}\label{eq : sk X sk X'}
\Gamma(X',\R_i^{\Fs_{|X'}})\;=\; 
\Bigl(\Gamma(X,\R_i^{\Fs})\bigcap X'\Bigr)\bigcup
\Gamma_{X'}\;.
\end{equation}
\item $\R^{\Fs}_i$ is directionally finite at $x'\in X'$ (cf. (C5)) if and 
only if $\R_i^{\Fs_{|X'}}$ is directionally finite at $x'$.
\item If $\Gamma_{X'}\subseteq\Gamma(X,\R_i^{\Fs})$, then for all 
$x'\in X'$ one has 
\begin{equation}
\R_i^{\Fs_{|X'}}(x')\;=\;\R_i^{\Fs}(x')\;,\quad \textrm{ and }\quad  
H_i^{\Fs_{|X'}}(x')\;=\;H_i^{\Fs}(x')\;.
\end{equation}
In particular, if $X'$ is an affinoid neighborhood of $x'$ in $X$, 
then $H_i^{\Fs}$ is super-harmonic (resp. harmonic) at $x'$ 
if and only if so is $H_i^{\Fs_{|X'}}$.

\item 
Assume that $X'$ is an affinoid neighborhood of $x'$ in $X'$, and that 
$\R^{\Fs}_i(x')<\rho_{x',X'}$. Then for all $j=1,\ldots,i$ and all 
$b\in\Delta(\xi')$ one has 
$\partial_b\R^{\Fs}_{j}(x')=\partial_b\R_{j}^{\Fs_{|X'}}(x')$, and 
$\partial_bH^{\Fs}_{j}(x')=\partial_bH_{j}^{\Fs_{|X'}}(x')$.
\if{
\begin{equation}
\partial_b\R^{\Fs}_{j}(x')=\partial_b\R_{j}^{\Fs_{|X'}}(x')\;,\quad 
\textrm{ and }\quad\partial_bH^{\Fs}_{j}(x')=
\partial_bH_{j}^{\Fs_{|X'}}(x')\;.
\end{equation}
}\fi
Hence $H_j^{\Fs}$ is super-harmonic (resp. harmonic) at $x'$ 
if and only if so is $H_j^{\Fs_{|X'}}$.
\end{enumerate}
\end{proposition}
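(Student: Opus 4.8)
The plan is to reduce everything to the basic comparison of Taylor solution spaces on nested disks. The starting point is the observation that the definition of $\R_i^{\Fs}$ (cf. Definition \ref{Def. CRAD}) only depends on the restriction of $\Fs$ to the maximal disk of the point, and that passing from $X$ to $X'$ only shrinks the maximal disk: for $x'\in X'$ one has $D(x',X')\subseteq D(x',X)$, with radii $\rho_{x',X'}\leq\rho_{x',X}$. Hence $\mathrm{Fil}^{\geq R}\Sol(\Fs_{|X'},t,\Omega)=\mathrm{Fil}^{\geq R}\Sol(\Fs,t,\Omega)$ for every $R\leq\rho_{x',X'}$, and the largest $R\leq\rho_{x',X'}$ with $\dim\mathrm{Fil}^{\geq R}\geq r-i+1$ is exactly $\min(\R_i^{\Fs}(x'),\rho_{x',X'})$. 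This proves the first formula of i). For the skeleton formula \eqref{eq : sk X sk X'} I would invoke Remark \ref{Rk : restr to X'}: by part (4) of Remark \ref{Def.: radius on a disk}, $\Gamma(\R_i^{\Fs_{|X'}})=\Gamma(\R_i(-,\Fs_{|X'}))$, and since $\R_i(-,\Fs_{|X'})$ on $X'$ is obtained from $\R_i(-,\Fs)$ on $X$ by the minimum operation described in Remark \ref{Rk : restr to X'} (which is precisely what the first formula of i) encodes, after normalization), the constancy radii satisfy $\rho_{\R_i^{\Fs_{|X'}}}(X',x')=\min(\rho_{\R_i^{\Fs}}(X,x'),\rho_{x',X'})$, and the formula for $\Gamma(X',\R_i^{\Fs_{|X'}})$ follows exactly as in Remark \ref{Rk : restr to X'}. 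Point ii) is then immediate: $\Delta(x',\Gamma(X',\R_i^{\Fs_{|X'}}))$ is finite iff $\Delta(x',\Gamma(X,\R_i^{\Fs}))$ is, since the two graphs agree in a neighborhood of $x'$ up to the addition of $\Gamma_{X'}$, which is directionally finite everywhere.

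For iii), the hypothesis $\Gamma_{X'}\subseteq\Gamma(X,\R_i^{\Fs})$ forces $\rho_{x',X'}\geq\rho_{\R_i^{\Fs}}(X,x')\geq\R_i^{\Fs}(x')$ for every $x'\in X'$: indeed the maximal disk $D(x',X')$ has its boundary on $\Gamma_{X'}\subseteq\Gamma(X,\R_i^{\Fs})$, so $\R_i^{\Fs}$ is constant on a disk at least as large as $D(x',X')$, whence $\R_i^{\Fs}(x')\leq\rho_{x',X'}$. Plugging this into i) gives $\R_i^{\Fs_{|X'}}(x')=\R_i^{\Fs}(x')$, and the product formula for $H_i$ follows. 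The super-harmonicity statement is then a local matter: the Laplacian $dd^cH_i^{\Fs}(x')$ depends only on the slopes $\partial_bH_i^{\Fs}(x')$ along the finitely many directions $b\in\Delta(x')$, and since $X'$ is a neighborhood of $x'$ these slopes coincide with $\partial_bH_i^{\Fs_{|X'}}(x')$ (the equality of the functions $H_i^{\Fs}=H_i^{\Fs_{|X'}}$ holds on a neighborhood of $x'$ inside $X'$, which determines both functions on all germs of segments out of $x'$); one must only check that the multiplicities $m_b$ are unchanged, which is clear since $X'$ and $X$ contain the same germs of segments out of $x'$.

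Point iv) is the one requiring genuine argument, and is where I expect the main obstacle. Here $X'$ is an affinoid neighborhood of $x'$ but we no longer assume $\Gamma_{X'}\subseteq\Gamma(X,\R_i^{\Fs})$; instead we assume the sharper inequality $\R_i^{\Fs}(x')<\rho_{x',X'}$ at the single point $x'$. By i) this already gives $\R_j^{\Fs_{|X'}}(x')=\R_j^{\Fs}(x')$ for all $j\leq i$ (since the radii are increasing, $\R_j^{\Fs}(x')\leq\R_i^{\Fs}(x')<\rho_{x',X'}$). The issue is that this pointwise equality need not persist in a full neighborhood — only along those branches where $\rho_{-,X'}$ stays above $\R_j^{\Fs}$. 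The key point is a continuity/openness argument: since $X'$ is a neighborhood of $x'$ and $\rho_{-,X'}$ is continuous with $\rho_{x',X'}>\R_i^{\Fs}(x')$, while $\R_j^{\Fs}$ is branch-continuous (it satisfies (C1),(C2), proved via Remark \ref{Rk : (C2)+(C4)} from Theorem \ref{thm: continuity of spectral radii along a branch}), along every germ of segment $b\in\Delta(x')$ there is an initial sub-segment on which $\R_j^{\Fs}<\rho_{-,X'}$ still holds, hence on which $\R_j^{\Fs_{|X'}}=\R_j^{\Fs}$ by i). Therefore the two functions agree on an initial piece of each branch out of $x'$, which is exactly what is needed to conclude $\partial_b\R_j^{\Fs}(x')=\partial_b\R_j^{\Fs_{|X'}}(x')$, and summing, $\partial_bH_j^{\Fs}(x')=\partial_bH_j^{\Fs_{|X'}}(x')$. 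The super-harmonicity equivalence at $x'$ for $H_j^{\Fs}$ then follows from the definition of the Laplacian as the finite sum $\sum_{b}m_b\partial_bH_j(x')$, the multiplicities again being unchanged. The only subtlety to watch is that (C2) guarantees finitely many breaks, so "initial sub-segment where the strict inequality holds" is a well-defined nonempty segment on each branch; this is where I would spend the most care in writing out the details. $\Box$
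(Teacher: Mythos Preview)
Your proof is correct and follows essentially the same approach as the paper's: the min formula from Definition~\ref{Def. CRAD}, the constancy-radius identity $\rho_{\R_i^{\Fs_{|X'}}}(x')=\min(\rho_{\R_i^{\Fs}}(x'),\rho_{x',X'})$ (via \eqref{(2)} and Remark~\ref{Rk : restr to X'}) for i) and ii), the inequality chain $\R_i^{\Fs}(x')\leq\rho_{\R_i^{\Fs}}(x')=\rho_{\Gamma(X,\R_i^{\Fs})}(x')\leq\rho_{\Gamma_{X'}}(x')=\rho_{x',X'}$ for iii), and the branch-continuity argument (Remark~\ref{Rk : (C2)+(C4)}) for iv). One small slip to fix in your exposition of iii): the clause ``$\R_i^{\Fs}$ is constant on a disk \emph{at least} as large as $D(x',X')$'' is backwards --- the inclusion $\Gamma_{X'}\subseteq\Gamma(X,\R_i^{\Fs})$ gives that the constancy disk of $\R_i^{\Fs}$ is \emph{contained in} $D(x',X')$, which is what your (correctly stated) inequality $\rho_{x',X'}\geq\rho_{\R_i^{\Fs}}(X,x')$ actually says.
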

\begin{proof}
i)+ii). The relation 
$\R_i^{\Fs_{|X'}}(x')=\min(\R^{\Fs}_i(x'),\rho_{x',X'})$ follows 
from Def. \ref{Def. CRAD}. This, together with \eqref{(2)}, 
gives $\rho_{\R^{\Fs_{|X'}}_i}(x')=\min(\rho_{\R^{\Fs}_i}(x'),
\rho_{x',X'})$. This implies \eqref{eq : sk X sk X'}, and hence  ii) 
follows.

iii). Assume that $\Gamma_{X'}\subseteq\Gamma(X,\R_i^{\Fs})$, 
then by point iii) of Prop. \ref{prof propp}, for all $j\leq i$ we have 
\begin{equation}
\R^{\Fs}_j(x')\;\leq\;\R^{\Fs}_i(x')
\;\leq\; \rho_{\R^{\Fs}_i}(x')
\;=\;\rho_{\Gamma(X,\R_i^{\Fs})}(x')
\;\leq\; \rho_{\Gamma_{X'}}(x')
\;=\; \rho_{x',X'}\;,
\end{equation}  
so $\R_j^{\Fs_{|X'}}(x')=\R^{\Fs}_j(x')$ for all $x'\in X'$. 
So  $H_j^{\Fs_{|X'}}(x')=H_j^{\Fs}(x')$ for all $x'\in X'$.

iv). We have  
$\R_j^{\Fs_{|X'}}(x')=\min(\R^{\Fs}_j(x'),\rho_{x',X'})=
\R^{\Fs}_j(x')$ 
since $\R^{\Fs}_j(x')\leq\R^{\Fs}_i(x')<\rho_{x',X'}$. Moreover 
this remains true by continuity over each germ of segment out of $x'$ 
(cf. Remark \ref{Rk : (C2)+(C4)}).
\if{We now prove  iv). 
Since $X'$ preserves the directions at $\xi''$, then $\Delta(X',\xi'')=\Delta(X,\xi'')$ (with evident meaning of notations). 
On the other hand, for all $\xi'\in X'$, $r(\xi')$ 
is independent on $X$ nor on $X'$, and $r(\xi')\leq \rho_{\xi',X'}\leq\rho_{\xi',X}$. We distinguish the radii verifying $\R_i^{\M}(\xi'')<r(\xi'')$ 
from those verifying $\R_i(\xi'')>\xi''$ (the case $\R_i(\xi'')=r(\xi'')$ is excluded by (WSH) of Thm. \ref{Theorem : MAIN THM GEN}). 
These inequalities remain strict along a conveniently small open segment containing $\xi''$ of each complete segment containing $\xi''$. 
If $\R_i^{\M}(\xi'')<r(\xi'')$ for all $\xi'$ in that interval one has 
$\R_i^{\M}(\xi')<r(\xi')\leq\rho_{\xi',X'}$ and so $\R_i^{\M'}(\xi')=\R_i^{\M}(\xi')$. 
The slopes at $\xi$ of $H_i^{\M}$ are then preserved by restriction to $X'$.
Conversely for all $i$ such that $\R_i^{\M}(\xi'')>r(\xi'')$, the function $\R_i^{\M}$ is constant around $\xi''$ by \eqref{...E.E.}. 
So the contribution to the super-harmonicity of $\R_i^{\M}$ is equal to zero over $X$ and over $X'$: the super-
harmonicity at $\xi''$ is controlled by those $\R_i^{\M}$ verifying $\R_i(\xi'')<r(\xi'')$. 
}\fi
\end{proof}

\subsection{Base change by a matrix in the fraction field 
$\mathscr{M}(X)$ of $\O(X)$}\label{base change in F(X).}
Let $\mathscr{M}(X)$ denotes the fraction field of $\O(X)$, and let 
$H\in GL_r(\mathscr{M}(X))$. 
Replacing $X$ by a sub-affinoid $X'$ having conveniently small 
holes around the zeros and poles of $H(T)$ and of $H(T)^{-1}$ we 
obtain $H,H^{-1}\in GL_r(\O(X'))$. 
If $x\in X$, is a given point of type $2$, $3$, $4$, 
then $X'$ can be chosen as an affinoid neighborhood of $x$ in $X$, 
because the zeros and poles are $K$-rational (recall that $K=\Ka$). 

\subsubsection{Reduction to a cyclic module.}
\label{Reduction to a cyclic}
Let $r:=\mathrm{rk}(\Fs)$ be the rank of $\Fs$.
By the cyclic vector theorem (cf. \cite{Katz-cyclic-vect}) 
one finds a cyclic basis of $\Fs(X)\otimes_{\O(X)}\mathscr{M}(X)$ in 
which $\Fs$ is represented by an operator 
$\L:=\sum_{i=0}^rg_{r-i}(T)(d/dT)^i$, with $g_i\in \mathscr{M}(X)$ 
for all $i$, and $g_0=1$. 

The operator $\L$ represents simultaneously the connection of all differential modules
$\Fs(x)=\Fs\otimes_{\O(X)}\H(x)$ for all 
$x\in X$ of type $2$, $3$, or $4$. 
If $H(T)\in\mathscr{M}(X)$ is the base change matrix, one can chose 
$X'\subseteq X$ as indicated in section \ref{base change in F(X).}. 
In order to fulfill Prop. \ref{Prop. NP of a operator} we can further restrict $X'$ in order that none of the $g_i$ has poles nor 
zeros on it. 

By Proposition \ref{Prop. restriction to a sub-affinoid} the restriction of 
$\Fs$ to $X'$ does not affect the finiteness. 
If moreover $\S_{X'}\subseteq\S(\R^{\Fs}_i)$, the super-harmonicity of 
$H_i^{\Fs}$ is also preserved.

\section{Push-forward by Frobenius}
\label{Pull-back and push-forward by Frobenius}
We here recall and slightly generalize some result about Frobenius 
coming from \cite{Kedlaya-Book} (cf. also 
\cite{Christol-GEAU},
\cite{Ch-Dw}, \cite{Pons}, 
\cite{Balda-Inventiones}). We study the behavior of 
$dd^cH_i^{\Fs}(x)$ by Frobenius descent. In \cite{Kedlaya-Book} this is done for an 
annulus, here we generalize it to an affinoid domain of $\mathbb{A}^{1,\mathrm{an}}_K$.
Along Section \ref{Pull-back and push-forward by Frobenius}, 
we assume that
$K$ is of mixed characteristic $(0,p)$, with $p>0$. 
Recall that $K=\Ka$. 
\subsection{Frobenius map}
Let $T,\widetilde{T}$ be two variables. 
The ring morphism $\varphi^\#:K[T]\to K[\widetilde{T}]$
sending $f(T)$ into $f(\widetilde{T}^p)$, defines a morphism  
$\varphi:\mathbb{A}^{1,\mathrm{an}}_K\to
\mathbb{A}^{1,\mathrm{an}}_K$. 
If $t\in\mathbb{A}^{1,\mathrm{an}}_K(\Omega)$ is a Dwork generic point for $\xi\in\mathbb{A}^{1,\mathrm{an}}_K$, 
then $t^p$ is a Dwork generic point for $\varphi(\xi)$. Indeed for all $f\in K[T]$ one has 
$\varphi(\xi)(f)=\xi(f(\widetilde{T}^p))=|f(t^p)|_\Omega$. 

We now describe the image of a point of type $\xi_{t,\rho}$. 
For all $\sigma>0$ and $\rho,\rho'\geq 0$ we set
\begin{eqnarray}
\phi(\sigma,\rho)\;:=\;
\max(\rho^p,|p|\sigma^{p-1}\rho)&\;=\;&\left\{\sm{
\rho^p&\textrm{ if }& \rho\geq \omega\cdot \sigma\\
|p|\sigma^{p-1}\rho&\textrm{ if }&\rho\leq \omega\cdot \sigma
}\right.\;,\\
\psi(\sigma,\rho')\;:=\;
\min\Bigl((\rho')^{1/p},\frac{\rho'}{|p|\sigma^{p-1}}\Bigr)
&\;=\;&\left\{\sm{
(\rho')^{1/p}&\textrm{ if }& \rho'\geq \omega^p\cdot \sigma^p\\
\frac{\rho'}{|p|\sigma^{p-1}}&\textrm{ if }&\rho'\leq \omega^p\cdot 
\sigma^p
}\right.\;.
\end{eqnarray}
For $\sigma$ fixed, $\phi$ and $\psi$ are increasing functions of 
$\rho$ such that $\phi(\sigma,\psi(\sigma,\rho'))=\rho'$ and 
$\psi(\sigma,\phi(\sigma,\rho))=\rho$.
In the sequel of this section by convention of notations we set
\begin{equation}
\rho'\;=\;\phi(\sigma,\rho)\;,\quad and\quad \rho\;=\;\psi(\sigma,\rho')\;.
\end{equation}
\begin{proposition}\label{Prop : varphi xi}
Let $c\in K$, $\rho>0$. Then
\begin{equation}
\varphi(x_{c,\rho})\;=\;x_{c^p,\phi(|c|,\rho)}\;,\quad
\varphi^{-1}(x_{c^p,\rho'})\;=\;
\{x_{\alpha c,\psi(|c|,\rho')}\}_{\alpha^p=1}\;.
\end{equation}
In particular if $\rho\geq\omega|c|$, 
$\varphi^{-1}(x_{c^p,\rho'})$ has an individual point, 
otherwise it has $p$ distinct points.\hfill$\Box$
\end{proposition}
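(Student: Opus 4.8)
The plan is to reduce the statement about Berkovich points $x_{c,\rho}$ to an explicit computation with the seminorms, using the fact that the defining seminorm of $x_{c,\rho}$ is completely determined by its values on the monomials $(T-c)^n$ together with multiplicativity on $K[T]$. First I would recall that $\varphi(x_{c,\rho})$ is, by definition, the seminorm $f\mapsto x_{c,\rho}(\varphi^\#(f)) = x_{c,\rho}(f(\widetilde T^{\,p}))$ on $K[T]$. So to identify it with $x_{c^p,\phi(|c|,\rho)}$ it suffices to evaluate both seminorms on a generating family, the natural choice being the powers $(T-c^p)^n$, and to check they agree; by multiplicativity of both seminorms this forces equality on all of $K[T]$.

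The key computation is then: $\varphi^\#\big((T-c^p)\big) = \widetilde T^{\,p} - c^p = \prod_{\alpha^p=1}(\widetilde T - \alpha c)$, so
\begin{equation}
\varphi(x_{c,\rho})\big((T-c^p)\big) \;=\; x_{c,\rho}\Big(\prod_{\alpha^p=1}(\widetilde T - \alpha c)\Big) \;=\; \prod_{\alpha^p=1} x_{c,\rho}(\widetilde T - \alpha c)\;.
\end{equation}
Now $x_{c,\rho}(\widetilde T - \alpha c) = x_{c,\rho}\big((\widetilde T - c) + (1-\alpha)c\big) = \max(\rho, |1-\alpha||c|)$. When $\alpha=1$ this is $\rho$; when $\alpha\neq 1$ and $\alpha^p=1$ one has $|1-\alpha| = |p|^{1/(p-1)} = \omega$ (the standard computation for $p$-th roots of unity in mixed characteristic), so the factor is $\max(\rho,\omega|c|)$. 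Multiplying the $p$ factors gives $\rho\cdot\max(\rho,\omega|c|)^{p-1} = \max(\rho^p, \rho\cdot\omega^{p-1}|c|^{p-1}) = \max(\rho^p, |p|\,|c|^{p-1}\rho) = \phi(|c|,\rho)$, which matches $x_{c^p,\phi(|c|,\rho)}\big((T-c^p)\big) = \phi(|c|,\rho)$. The same bookkeeping on $(T-c^p)^n$ gives the $n$-th power of this, and on $1$ both give $1$; since every element of $K[T]$ is a polynomial in $T-c^p$ with $K$-coefficients and both seminorms restrict to $|\cdot|$ on $K$ and are multiplicative, agreement follows everywhere. This proves the first equality.

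For the fiber $\varphi^{-1}(x_{c^p,\rho'})$: a point $y$ lies in it iff $\varphi(y) = x_{c^p,\rho'}$; writing $y = x_{c',\sigma}$ (every point of the affine line is of this form, allowing $\sigma$ of any value and $c'\in K = \Ka$), the first part says $\varphi(x_{c',\sigma}) = x_{(c')^p, \phi(|c'|,\sigma)}$. Comparing, we need $(c')^p = c^p$, i.e. $c' = \alpha c$ for some $\alpha$ with $\alpha^p = 1$ (note $|c'| = |c|$ for all such), and $\phi(|c|,\sigma) = \rho'$, i.e. $\sigma = \psi(|c|,\rho')$ by the already-noted mutual inverse property of $\phi$ and $\psi$ at fixed first argument. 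This yields exactly the set $\{x_{\alpha c,\psi(|c|,\rho')}\}_{\alpha^p=1}$. Finally, counting distinct points: the $p$ values $\alpha c$ (for $\alpha^p=1$) are pairwise at distance $|1-\alpha'\alpha^{-1}||c| = \omega|c|$ for $\alpha\neq\alpha'$, so $x_{\alpha c,\sigma} = x_{\alpha' c,\sigma}$ iff $\sigma \geq \omega|c|$; tracing through, $\psi(|c|,\rho') \geq \omega|c|$ iff $\rho' \geq \phi(|c|,\omega|c|) = \omega^p|c|^p$ iff (applying $\phi$) the corresponding $\rho = \psi(|c|,\rho')\geq \omega|c|$, giving a single point in that regime and $p$ distinct points otherwise.

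The main obstacle, such as it is, is purely a matter of careful case analysis with the two regimes of $\phi$ and $\psi$ (the breakpoint $\rho = \omega\sigma$, equivalently $\rho' = \omega^p\sigma^p$) and making sure the ultrametric computations $x_{c,\rho}(\widetilde T - \alpha c) = \max(\rho,\omega|c|)$ and the multiplicativity argument are applied correctly when $\rho$ is comparable to $\omega|c|$; there is no conceptual difficulty, only the need to keep the min/max bookkeeping consistent, and the identity $|1-\alpha| = \omega$ for primitive-or-not nontrivial $p$-th roots of unity (more precisely $|1-\zeta| = |p|^{1/(p^k-p^{k-1})}$ for a primitive $p^k$-th root, but here all $\alpha\ne 1$ with $\alpha^p=1$ are primitive $p$-th roots, so $|1-\alpha| = |p|^{1/(p-1)} = \omega$).
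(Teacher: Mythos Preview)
Your factorization $\widetilde T^{\,p} - c^p = \prod_{\alpha^p=1}(\widetilde T - \alpha c)$ and the resulting computation $\varphi(x_{c,\rho})(T-c^p) = \phi(|c|,\rho)$ are correct. The gap is in the sentence ``since every element of $K[T]$ is a polynomial in $T-c^p$ with $K$-coefficients and both seminorms restrict to $|\cdot|$ on $K$ and are multiplicative, agreement follows everywhere.'' Multiplicativity lets you pass from linear factors to their products, not from a single generator to arbitrary $K$-linear combinations: two distinct points $x_{d_1,r}$ and $x_{d_2,r}$ with $|d_i - c^p|\le r$ both send $(T-c^p)^n$ to $r^n$ and agree on $K$, yet differ as seminorms. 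To identify $\varphi(x_{c,\rho})$ you must check it on $T-a$ for \emph{every} $a\in K$; since $K=\Ka$, every polynomial then factors into such linears and multiplicativity finishes. The paper does exactly this, expanding $\widetilde T^{\,p} - a = ((\widetilde T - c)+c)^p - a$ by the binomial theorem and using the Gauss-norm formula $x_{c,\rho}\big(\sum_k a_k(\widetilde T-c)^k\big)=\max_k|a_k|\rho^k$ to obtain $\max(|c^p-a|,\,|p||c|^{p-1}\rho,\,\rho^p)=\max(|c^p-a|,\,\phi(|c|,\rho))=x_{c^p,\phi(|c|,\rho)}(T-a)$.

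Your fiber argument has a related slip: from $x_{(c')^p,\phi(|c'|,\sigma)} = x_{c^p,\rho'}$ you conclude $(c')^p = c^p$, but equality of Berkovich disk points only gives $\phi(|c'|,\sigma)=\rho'$ and $|(c')^p - c^p|\le\rho'$. You then need the extra step of showing that the center of $y=x_{c',\sigma}$ can be re-chosen among the $\alpha c$, i.e.\ that some $\alpha c$ lies in $D^+(c',\sigma)$; this follows from a short case analysis on $\phi$, but it is not automatic from what you wrote. (The assertion that every point is of the form $x_{c',\sigma}$ with $c'\in K$ also requires $K$ to be spherically complete, which is not assumed in this section; one should argue instead via the finiteness of $\varphi$ or by type preservation.)
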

\if{
\begin{proof}
By density and by multiplicativity it is enough to prove that for all 
$a\in K$ one has 
$\varphi(\xi_{c,\rho})(T-a)=\xi_{c^p,\phi(|t|,\rho)}(T-a)$. 
Write $\varphi(\xi_{c,\rho})(T-a)=\xi_{c,\rho}(
\widetilde{T}^p-a)=\xi_{c,\rho}((\widetilde{T}-c+c)^p-a)=
\xi_{c,\rho}(\sum_{k=0}^p\tbinom{p}{k}(
\widetilde{T}-c)^kc^{p-k}-a)=
\max(|c^p-a|,|p||c|^{p-1}\rho,\rho^p)$, 
in fact the terms corresponding to $k=1,\ldots,p-1$ form either a non 
decreasing or a non increasing sequence. 
On the other hand $\xi_{c^p,\phi(|c|,\rho)}(T-a)=
\xi_{c^p,\phi(|c|,\rho)}(T-c^p+c^p-a)=
\max(\phi(|c|,\rho),|c^p-a|)=\varphi(\xi_{c,\rho})(T-a)$. 
\end{proof}
\begin{remark}
Proposition \ref{Prop : varphi xi} also applies to points in 
$\mathbb{A}^{1,\mathrm{an}}_\Omega$, 
for all $\Omega\in E(K)$. In particular it applies to 
points of generic disks.
\end{remark}
}\fi

\if{\begin{remark}
If $t$ is a Dwork generic point for $\xi\in X^{1/p}$, then 
\begin{equation}
|t|\;=\;|\widetilde{T}|(x)
\end{equation}
is independent on $t$.
\end{remark}
\begin{remark}
If $\rho=\psi(|t|,\rho')\geq\omega|t|=|\alpha-\alpha'||t|$, then  
$\xi_{\alpha t,\psi(|t|,\rho')}=\xi_{\alpha' t,\psi(|t|,\rho')}$ for all  
$\alpha,\alpha'\in\bs{\mu}_p(K)$. Hence $\varphi^{-1}(\xi_{t^p,\rho'})$ has a single element. 
Conversely if $\rho< \omega|t|$, then $\varphi^{-1}(\xi_{t^p,\rho'})$ has $p$ distinct elements.
\end{remark}

\begin{lemma}\label{Lemma : radius of roots of T-t^p}
Let $\Omega\in E(K)$, $0\neq t\in\Omega$, 
$\alpha\in\bs{\mu}_p(K)$. For all $k=1,\ldots,p-1$ the power series  
\begin{equation}\label{eq : y'=y/p}
T^{k/p}-\alpha t\;=\;
(T-t^p+t^p)^{k/p}-\alpha t\;=\;
(t^k-\alpha t)+t^k\cdot\sum_{s\geq 1}\tbinom{k/p}{s}(\frac{T-t^p}{t^p})^s
\end{equation}	
has radius of convergence $\omega^p|t|^p$ at $t^p$. 
While its image by $\varphi^{\#}$ has infinite radius.\hfill$\Box$
\end{lemma}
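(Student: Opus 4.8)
The plan is to prove the two assertions separately, both by a short $p$-adic estimate of the binomial coefficients $\binom{k/p}{s}$; throughout $K$ has mixed characteristic $(0,p)$, so that $\omega=|p|^{1/(p-1)}$.

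For the radius at $t^p$, I would first record that $\binom{k/p}{s}=\frac{1}{p^s\,s!}\prod_{j=0}^{s-1}(k-jp)$ and that each factor $k-jp$ is a nonzero integer congruent to $k$ modulo $p$, hence a $p$-adic unit because $1\leq k\leq p-1$; this gives $|\binom{k/p}{s}|=|p|^{-s}|s!|^{-1}$. The coefficient of $(T-t^p)^s$ in \eqref{eq : y'=y/p} being $c_s=t^{k-ps}\binom{k/p}{s}$ for $s\geq1$, one then has $|c_s|^{1/s}=|t|^{(k-ps)/s}|p|^{-1}|s!|^{-1/s}$, and using $|s!|^{1/s}\to\omega$ together with the identity $|p|^{-1}\omega^{-1}=|p|^{-p/(p-1)}=\omega^{-p}$ one concludes $\limsup_s|c_s|^{1/s}=\omega^{-p}|t|^{-p}$, i.e. the radius of convergence at $t^p$ equals $\omega^p|t|^p$ (the constant term $t^k-\alpha t$ being irrelevant).

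For the image under $\varphi^{\#}$, the point is that $\varphi^{\#}$ is the substitution $T\mapsto\widetilde{T}^p$, under which $(T-t^p)/t^p$ becomes $(\widetilde{T}/t)^p-1=pw+\binom{p}{2}w^2+\cdots+w^p$ with $w:=\widetilde{T}/t-1$, a power series with no constant term; hence the substitution into $g(u):=\sum_{s\geq0}\binom{k/p}{s}u^s$ makes sense formally in $K[[\widetilde{T}-t]]$. I would then invoke the formal identity $g(u)^p=(1+u)^k$ in $\mathbb{Q}[[u]]$ (both sides are the unique solution of $h(0)=1$, $(1+u)h'=k\,h$): after the substitution this shows that the $p$-th power of the image of $t^kg(u)$ is $t^{pk}(\widetilde{T}/t)^{pk}=\widetilde{T}^{pk}$, and comparing constant terms at $\widetilde{T}=t$ pins down this $p$-th root as $\widetilde{T}^k$. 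So $\varphi^{\#}$ sends \eqref{eq : y'=y/p} to the polynomial $\widetilde{T}^k-\alpha t\in K[\widetilde{T}]$, which has infinite radius of convergence.

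I do not expect any genuine obstacle: the statement is an exercise in $p$-adic estimates. The only two points needing a little care are the unit computation for $\prod_j(k-jp)$ — which is exactly where the hypothesis $1\leq k\leq p-1$, equivalently $\gcd(k,p)=1$, is used — and spelling out what ``image by $\varphi^{\#}$'' means, namely the formal substitution $T\mapsto\widetilde{T}^p$, which collapses the fractional-power series to the polynomial $\widetilde{T}^k$ because $\bigl((1+u)^{k/p}\bigr)^p=(1+u)^k$.
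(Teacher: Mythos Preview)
Your argument is correct and follows essentially the same approach as the paper's proof: computing $|\binom{k/p}{s}|=|p|^{-s}/|s!|$ (you justify this via the product formula, the paper states it as $|k/p|^s/|s!|$), then reading off the radius $\omega^p|t|^p$ from $\liminf|s!|^{1/s}=\omega$ and $|p|\cdot\omega=\omega^p$. For the second claim the paper simply leaves it as obvious that $\varphi^{\#}$ sends $T^{k/p}$ to the polynomial $\widetilde{T}^k$; your derivation via $g(u)^p=(1+u)^k$ and the constant-term check spells this out carefully and is fine.
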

}\fi
\if{\begin{proof}
Since $|\tbinom{k/p}{s}|=|k/p|^s/|s!|$, then 
$\liminf_s|\tbinom{k/p}{s}t^{-ps}|^{-1/s}_\Omega=|p||t|^p\liminf_s|s!|^{1/s}=\omega^p|t|^p$.
\end{proof}
}\fi
\if{\begin{proposition}\label{prop : action of phi on disks}
Let $t\in\Omega-\{0\}$, $0\leq\rho\leq|t|$, $0\leq\rho'\leq |t^p|$, be such that $\rho=\psi(|t|,\rho')$ and $\rho'=\phi(|t|,\rho)$. 
Let $\mathrm{D}^-(t,\rho)$ and $\mathrm{D}^-(t^p,\rho')$ be the open disks 
with algebras $\a_\Omega(t,\rho)$ and $\a_\Omega(t^p,\rho')$ respectively (cf. section \ref{Disks.}). Then: 
\begin{enumerate}
\item One has the following equalities 
\begin{equation}
\varphi(\mathrm{D}^-(t,\rho))\;=\;\mathrm{D}^-(t^p,\phi(|t|,\rho))\;,\quad
\varphi^{-1}(\mathrm{D}^-(t^p,\rho'))\;=\;\cup_{\alpha^p=1}\mathrm{D}^-(\alpha t,\psi(|t|,\rho'))
\end{equation}
\item For all $\alpha\in\bs{\mu}_p(K)$ the corresponding morphism 
\begin{equation}
\varphi_{\alpha,\rho}^*:\a_\Omega(t^p,\rho')\to\a_\Omega(\alpha t,\psi(|t|,\rho'))
\end{equation} 
is injective and isometric in the following sense. For all $f\in\a_\Omega(t^p,\rho')$ and all $\eta<\rho'$ one has 
\begin{equation}\label{eq : phi = isometric}
|f|_{t^p,\eta}\;=\;|\varphi_{\alpha,\rho}^*(f)|_{\alpha t,\psi(|t|,\eta)}\;.
\end{equation}
\item If $\rho'\leq\omega^p|t|^p$, then for all $\alpha\in\bs{\mu}_p(K)$, $\varphi_{\alpha,\rho}^*$ is an isomorphism of rings (which is 
isometric in the above sense).
\item If $\omega^p|t|^p<\rho'\leq|t|^p$, then $\bs{\mu}_p(K)$ acts on $\a_\Omega(t,\rho)$ by $\alpha(f)(T):=f(\alpha T)$, and one has 
\begin{equation}
\varphi_\rho^*(\a_\Omega(t^p,\rho'))\;=\;\a_\Omega(t,\rho)^{\bs{\mu}_p(K)}\;.
\end{equation}
where $\varphi_\rho^*:=\varphi_{1,\rho}^*$.
\item For all $0\leq \rho\leq|t|$ we denote by
\begin{equation}
\psi_{\alpha,\rho}^*:\a_\Omega(\alpha t,\rho)\to\a_{\Omega}(t^p,\rho')
\end{equation}
the $\Omega$-linear map defined as follows
\begin{enumerate}
\item  if $\omega|t|<\rho\leq|t|$, then $\psi_{\rho}^*(f(T)):=\frac{1}{p}\sum_{\alpha^p=1}f(\alpha T)$. 
The notation $\psi_\rho^*=\psi_{1,\rho}^*$ is justified by the fact that $\psi_\rho^*$ is 
independent on $\alpha$ since $\a_\Omega(\alpha t,\rho)=\a_\Omega(t,\rho)$ for all $\alpha$. 
\item if $0\leq \rho\leq \omega|t|$, then $\psi_{\alpha,\rho}^*:=(\varphi_{\alpha,\rho}^{*})^{-1}$.
For all $\alpha\in\bs{\mu}_p(K)$ the maps $\{\psi_{\alpha,\rho}\}_{\rho\in[0,|t|[}$ enjoy the following properties:
\begin{enumerate}
\item[(c)] For all $0\leq \rho \leq|t|$ one has $\psi^*_{\alpha,\rho}\circ\varphi_{\alpha,\rho}^*=\mathrm{Id}_{\a_\Omega(t^d,\rho')}$. 
\item[(d)] Let $0\leq\rho_1,\rho_2\leq|t|$. If $\omega|t|\notin [\rho_1,\rho_2[$, and if $\rho_i':=\phi(|t|,\rho_i)$, $i=1,2$, then the following diagram is commutative
\begin{equation}
\xymatrix{
\ar@{}[dr]|{\circlearrowleft}\a_\Omega(\alpha t,\rho_1)\ar[d]_{\psi_{\alpha,\rho_1}^*}&\ar[l]\a_\Omega(\alpha t,\rho_2)\ar[d]^{\psi_{\alpha,\rho_2}^*}\\
\a_\Omega(t^p,\rho_1')&\ar[l]\a_\Omega(t^p,\rho_2')
}
\end{equation}
where the horizontal maps are the restrictions. If $\omega|t|\in [\rho_1,\rho_2[$, then the diagram does not commute. 
\end{enumerate} 
\end{enumerate}
\end{enumerate}
\end{proposition}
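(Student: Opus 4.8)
The plan is to reduce everything to the explicit formula for $\varphi$ on points of type $x_{c,\eta}$ given by Proposition \ref{Prop : varphi xi}, together with the fact that an open disk $\mathrm{D}^-(c,R)$ with $R\le|c|$ (so that $0\notin\mathrm{D}^-(c,R)$) is swept out by the points $x_{c',\eta}$ with $|c'-c|<R$, $0\le\eta<R$, and that the Gauss semi-norm $|\cdot|_{c,\eta}$ is evaluation at $x_{c,\eta}$.

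For (i): if $x_{c,\eta}\in\mathrm{D}^-(t,\rho)$ then $|c-t|<\rho\le|t|$ forces $|c|=|t|$ and $\eta<|t|$; expanding $\widetilde T^p-t^p=(c^p-t^p)+\sum_{k=1}^p\binom pk(\widetilde T-c)^kc^{p-k}$ and evaluating at $x_{c,\eta}$ — where $|\binom pk|=|p|$ for $1\le k\le p-1$ and $|p|\eta^k|c|^{p-k}$ is non-increasing in $k$ since $\eta\le|c|$ — gives
\[
\varphi(x_{c,\eta})(T-t^p)=x_{c,\eta}(\widetilde T^p-t^p)=\max\bigl(|c^p-t^p|,\ \phi(|c|,\eta)\bigr).
\]
Applying this identity with $\eta$ replaced by $|c-t|$ and $x_{c,|c-t|}=x_{t,|c-t|}$ yields $|c^p-t^p|\le\phi(|t|,|c-t|)<\phi(|t|,\rho)=\rho'$, while $\phi(|c|,\eta)=\phi(|t|,\eta)<\rho'$; so $\varphi(x_{c,\eta})=x_{c^p,\phi(|c|,\eta)}\in\mathrm{D}^-(t^p,\rho')$. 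Conversely $\varphi^{-1}(\mathrm{D}^-(t^p,\rho'))$ is computed from $\varphi^{-1}(x_{c^p,\eta'})=\{x_{\alpha c,\psi(|c|,\eta')}\}_{\alpha^p=1}$: as $\psi$ inverts $\phi$ one has $\psi(|t|,\eta')<\rho$ for $\eta'<\rho'$, and $|\alpha t-\alpha't|=|\alpha-\alpha'|\,|t|=\omega|t|$ for distinct $p$-th roots of unity (the classical identity $|\zeta_p-1|=\omega$), so the disks $\mathrm{D}^-(\alpha t,\rho)$ are pairwise disjoint when $\rho\le\omega|t|$ and all equal $\mathrm{D}^-(t,\rho)$ when $\omega|t|<\rho$; this gives both the preimage formula and its number of components (note $\rho\le\omega|t|\Leftrightarrow\rho'\le\omega^p|t|^p$, using $|p|=\omega^{p-1}$).

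For (ii)--(iv): by (i), $\varphi^*_{\alpha,\rho}$ is $\varphi^\#$ followed by restriction to the component $\mathrm{D}^-(\alpha t,\rho)$, and it is injective. For the isometry, for $\eta<\rho'$ the point $x_{\alpha t,\psi(|t|,\eta)}$ lies in $\mathrm{D}^-(\alpha t,\rho)$ with $\varphi(x_{\alpha t,\psi(|t|,\eta)})=x_{(\alpha t)^p,\phi(|t|,\psi(|t|,\eta))}=x_{t^p,\eta}$, whence $|\varphi^*_{\alpha,\rho}(f)|_{\alpha t,\psi(|t|,\eta)}=x_{\alpha t,\psi(|t|,\eta)}(f\circ\varphi)=x_{t^p,\eta}(f)=|f|_{t^p,\eta}$. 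When $\rho\le\omega|t|$ (i.e. $\rho'\le\omega^p|t|^p$), $\varphi$ is étale on $\mathrm{D}^-(t,\rho)$ because $0\notin\mathrm{D}^-(t,\rho)$, hence a finite étale degree-$p$ cover of $\mathrm{D}^-(t^p,\rho')$ with $p$ disjoint components in the source, so each maps isomorphically and $\varphi^*_{\alpha,\rho}$ is a ring isomorphism (alternatively, solve $\widetilde T-\alpha t$ as a power series in $\widetilde T^p-(\alpha t)^p$, the needed unit $1+\cdots$ in $\O(\mathrm{D}^-(\alpha t,\rho))$ converging exactly when $\rho\le\omega|t|$). When $\omega|t|<\rho\le|t|$, all components coincide with $\mathrm{D}^-(t,\rho)$, on which $\bs{\mu}_p(K)$ acts by $T\mapsto\alpha T$ — this preserves the disk since $|\alpha T-t|\le\max(|\alpha-1|\,|t|,|T-t|)<\rho$ there — and $\varphi$ is the quotient for this free action ($0\notin\mathrm{D}^-(t,\rho)$), a $\bs{\mu}_p(K)$-Galois étale cover; Galois descent then identifies $\O(\mathrm{D}^-(t^p,\rho'))$ with $\O(\mathrm{D}^-(t,\rho))^{\bs{\mu}_p(K)}$ through $\varphi^\#_\rho$.

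For (v): when $\omega|t|<\rho\le|t|$ put $\psi^*_\rho(f):=(\varphi^\#_\rho)^{-1}\bigl(\tfrac1p\sum_{\alpha^p=1}f(\alpha T)\bigr)$, the average being the $\Omega$-linear projector of $\O(\mathrm{D}^-(t,\rho))$ onto its $\bs{\mu}_p(K)$-invariants, identified with $\O(\mathrm{D}^-(t^p,\rho'))$ by (iv) and independent of $\alpha$ because all $\mathrm{D}^-(\alpha t,\rho)$ are the same disk; when $0\le\rho\le\omega|t|$ put $\psi^*_{\alpha,\rho}:=(\varphi^*_{\alpha,\rho})^{-1}$, which exists by (iii). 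Property (c) is then immediate in the étale range, and in the range $\omega|t|<\rho$ it holds because $\varphi^\#_\rho(f)$ is already $\bs{\mu}_p(K)$-invariant, hence fixed by the averaging projector, which returns $f$ under (iv). For (d), if $\omega|t|\notin[\rho_1,\rho_2[$ both radii lie in a single regime, where $\psi^*$ is given by one formula that commutes with the restriction maps between disks, so the square commutes; if $\omega|t|\in[\rho_1,\rho_2[$ one crosses from the étale to the Galois regime, and restriction does not intertwine "inverse of $\varphi^*$" with "averaging", so the square fails. The main obstacle I anticipate is precisely the structural statement in the previous paragraph — establishing cleanly that $\varphi$ is an étale (resp. $\bs{\mu}_p(K)$-Galois étale) cover over $\mathrm{D}^-(t^p,\rho')$ in the two regimes, with careful bookkeeping of the boundary value $\rho=\omega|t|$ (which sits in the étale regime); granting that, (ii), (v) and property (c) are formal consequences of the functoriality of $\varphi$ on Berkovich points and of Galois descent.
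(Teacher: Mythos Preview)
Your argument is correct and close in spirit to the paper's, but differs in two places worth noting.

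For (iii) and (iv) you invoke the structural picture (finite \'etale cover with $p$ disjoint sheets, resp.\ $\bs{\mu}_p$-Galois cover and Galois descent). The paper instead works entirely by hand: for (iii) it writes down the inverse explicitly via the binomial series $g:=T^{1/p}-\alpha t=\alpha t\sum_{s\ge0}\binom{1/p}{s}((T-t^p)/t^p)^s$, whose radius of convergence at $t^p$ is exactly $\omega^p|t|^p$, so $g\in\a_\Omega(t^p,\rho')$ precisely when $\rho'\le\omega^p|t|^p$; then $(\varphi_{\alpha,\rho}^{*})^{-1}$ sends $\sum a_k(\widetilde T-\alpha t)^k$ to $\sum a_kg^k$. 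For (iv) it uses the elementary density statement $\Omega[\widetilde T]^{\bs{\mu}_p}=\Omega[\widetilde T^p]=\varphi^{\#}(\Omega[T])$. Your route is cleaner conceptually but presupposes the \'etale/Galois formalism for open Berkovich disks; the paper's route is self-contained and also makes transparent \emph{why} the threshold is exactly $\omega^p|t|^p$ (it is the convergence radius of $T^{1/p}$).

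For (v)(d), your remark that crossing regimes breaks the diagram is right but you should exhibit the counterexample: take $f=\widetilde T$. In the averaging regime $\psi_{\rho_2}^{*}(\widetilde T)=\frac1p\sum_{\alpha^p=1}\alpha\widetilde T=0$ (since $\sum_{\alpha^p=1}\alpha=0$), so its restriction is $0$; in the \'etale regime $\psi_{\alpha,\rho_1}^{*}(\widetilde T)=(\varphi_{\alpha,\rho_1}^{*})^{-1}(\widetilde T)=T^{1/p}\neq0$. This is the paper's example and completes the claim.
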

}\fi
The following proposition describes the image and the inverse image 
by $\varphi$ of a disk. 
We mainly apply this to generic disks, so the 
center of the disk will be denoted by $t$, and all disks are 
$\Omega$-rational.

\begin{proposition}\label{prop : action of phi on disks}
Let $t\in\Omega$ and let $\rho,\rho'\geq 0$ be such that 
$\rho=\psi(|t|,\rho')$ and $\rho'=\phi(|t|,\rho)$. Then:
\begin{enumerate}
\item One has the following equalities 
\begin{equation}
\varphi(D^-(t,\rho))\;=\;
D^-(t^p,\rho')\;,\quad
\varphi^{-1}(D^-(t^p,\rho'))\;=\;
\cup_{\alpha^p=1}D^-(\alpha t,\rho)\;.
\end{equation}
\item For all $\alpha\in\bs{\mu}_p(K)$ the morphism 
$\varphi_{\alpha,\rho}^\#:\O(D^-(t^p,\rho'))\to
\O(D^-(\alpha t,\rho))$ %
is injective and isometric in the following sense. 
For all $f\in \O(D^-(t^p,\rho'))$ and all $\eta<\rho'$ one has 
\begin{equation}\label{eq : phi = isometric}
|f|_{t^p,\eta}\;=\;|\varphi_{\alpha,\rho}^\#(f)|_{\alpha t,\psi(|t|,\eta)}\;.
\end{equation}
\item If $\rho'\leq\omega^p|t|^p$, then for all $\alpha\in\bs{\mu}_p(K)$, $\varphi_{\alpha,\rho}^\#$ is an isomorphism of rings 
(satisfying \eqref{eq : phi = isometric}).
\item If $\omega^p|t|^p<\rho'$, then $\varphi_\rho^\#:=\varphi_{\alpha,\rho}^\#$ is independent on $\alpha$. 
Moreover $\bs{\mu}_p(K)$ acts on $\O(D^-(t,\rho))$ by $\alpha(f)(\widetilde{T}):=
f(\alpha \widetilde{T})$, and 
$\varphi_\rho^\#(\O(D^-(t^p,\rho')))=
\O(D^-(t,\rho))^{\bs{\mu}_p(K)}$.
%
%
%
\if{
\item For all $ \rho\geq 0$ we denote by 
$\psi_{\alpha,\rho}^\#:
\O(D^-(\alpha t,\rho))\to\O(D^-(t^p,\rho'))$
the $\Omega$-linear map defined as 
\begin{equation}
\psi_{\alpha,\rho}^\#:=\Bigl\{
\sm{
(\varphi_{\alpha,\rho}^{\#})^{-1}&\textrm{ if }&0\leq\rho\leq \omega|t|\\
\frac{1}{p}\sum_{\alpha^p=1}f(\alpha \widetilde{T})&\textrm{ if }&\omega|t|<\rho\\
}\Bigr.\;.
\end{equation}
\if{
follows. If $ 0\leq\rho\leq \omega|t| $, then $\psi_{\alpha,\rho}^\#:=
(\varphi_{\alpha,\rho}^{\#})^{-1}$, and if $\omega|t|<\rho$, then $\psi_{\rho}^\#(f(\widetilde{T})):=
\psi_{\alpha,\rho}^\#(f(\widetilde{T})):=\frac{1}{p}\sum_{\alpha^p=1}f(\alpha \widetilde{T})$.\footnote{The notation 
$\psi_\rho^\#=\psi_{\alpha,\rho}^\#$ is justified by the fact that $\psi_\rho^\#$ is 
independent on $\alpha$ since $\a_\Omega(\alpha t,\rho)=\a_\Omega(t,\rho)$ for all $\alpha$.} 
}\fi
Then for all $\alpha\in\bs{\mu}_p(K)$ the family of 
maps $\{\psi_{\alpha,\rho}\}_{\rho\geq 0}$ satisfy 
$\psi^\#_{\alpha,\rho}\circ\varphi_{\alpha,\rho}^\#=\mathrm{Id}_{\O(D^-(t^p,\rho'))}$, for all $\rho\geq 0$. 
Moreover if $\omega|t|\notin [\rho_1,\rho_2[$, and if $\rho_i':=\phi(|t|,\rho_i)$, then the following 
diagram is commutative where the horizontal maps are the restrictions:
\begin{equation}\label{diagram : psi and res}
\xymatrix{
\ar@{}[dr]|{\circlearrowleft}
\O(D^-(\alpha t,\rho_1))
\ar[d]_{\psi_{\alpha,\rho_1}^\#}&\ar[l]\O(D^-(\alpha t,\rho_2))
\ar[d]^{\psi_{\alpha,\rho_2}^\#}\\
\O(D^-(t^p,\rho_1'))&\ar[l]\O(D^-(t^p,\rho_2')).
}
\end{equation}
If $\omega|t|\in [\rho_1,\rho_2[$, then the diagram does not 
commute. 

}\fi
\hfill$\Box$
\end{enumerate} 
\end{proposition}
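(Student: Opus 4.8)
The plan is to derive everything from Proposition \ref{Prop : varphi xi} and a single norm identity. Write $\widetilde{T}^p-t^p=\prod_{\zeta^p=1}(\widetilde{T}-\zeta t)$ in $\Omega[\widetilde{T}]$ and recall that $|1-\zeta|=\omega$ for every nontrivial $p$-th root of unity $\zeta$ (since $K$ has residue characteristic $p$ and $\mathbb{Q}(\zeta)/\mathbb{Q}$ is totally ramified at $p$ of degree $p-1$ with uniformizer $1-\zeta$). For a point $x$ of $\mathbb{A}^{1,\mathrm{an}}_\Omega$ set $m(x):=\min_{\zeta^p=1}x(\widetilde{T}-\zeta t)$; multiplicativity of the seminorm $\varphi(x)\colon f\mapsto x(f(\widetilde{T}^p))$ together with the ultrametric inequality give
\[
\varphi(x)(T-t^p)\;=\;\prod_{\zeta^p=1}x(\widetilde{T}-\zeta t)\;=\;\phi\bigl(|t|,m(x)\bigr).
\]
Indeed, if the minimum is attained at $\alpha$, then for $\zeta\neq\alpha$ one has $\widetilde{T}-\zeta t=(\widetilde{T}-\alpha t)+(\alpha-\zeta)t$ with $|(\alpha-\zeta)t|=\omega|t|$, so each such factor equals $\max(m(x),\omega|t|)$, and expanding the product recovers exactly the two branches of $\phi(|t|,-)$. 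I will freely use that $\phi(|t|,-)$ and $\psi(|t|,-)$ are continuous, strictly increasing, vanish at $0$, and are mutually inverse, and that $\rho'=\rho^p$ as soon as $\rho\geq\omega|t|$.

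For (i): the identity above and strict monotonicity of $\phi(|t|,-)$ give $\varphi^{-1}(D^-(t^p,\rho'))=\{x\colon\phi(|t|,m(x))<\rho'\}=\{x\colon m(x)<\rho\}=\bigcup_{\alpha^p=1}D^-(\alpha t,\rho)$. The map $\varphi$ is the analytification of a finite surjective morphism of $K$-schemes (it is dominant and finite, and $K=\Ka$ contains all $p$-th roots), hence finite, closed and surjective; moreover $\varphi\circ\mu_\alpha=\varphi$ for the rotation $\mu_\alpha\colon\widetilde{T}\mapsto\alpha\widetilde{T}$, so $\varphi(D^-(t,\rho))=\varphi\bigl(\varphi^{-1}(D^-(t^p,\rho'))\bigr)=D^-(t^p,\rho')$. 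From $|\alpha t-\alpha' t|=\omega|t|$ one reads off that the disks $D^-(\alpha t,\rho)$ are pairwise disjoint when $\rho\leq\omega|t|$ (i.e. $\rho'\leq\omega^p|t|^p$) and all equal when $\rho>\omega|t|$ — precisely the split between (iii) and (iv).

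For (ii) and (iii): the arrow $\varphi^\#_{\alpha,\rho}$ is $f\mapsto f(\widetilde{T}^p)$, so injectivity is formal, and \eqref{eq : phi = isometric} is the statement $\varphi(x_{\alpha t,\psi(|t|,\eta)})=x_{t^p,\eta}$ for $\eta<\rho'$, which follows from Proposition \ref{Prop : varphi xi} (using $|\alpha t|=|t|$ and $\alpha^p=1$) and $\phi(|t|,\psi(|t|,\eta))=\eta$ — here $\eta<\rho'$ forces $\psi(|t|,\eta)<\rho$, so $x_{\alpha t,\psi(|t|,\eta)}$ lies in $D^-(\alpha t,\rho)$. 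For (iii), when $\rho\leq\omega|t|$ the disk $D^-(\alpha t,\rho)$ contains exactly one $p$-th root of $t^p$ (namely $\alpha t$), so a general point of $D^-(t^p,\rho')$ has a single preimage in it; thus $\varphi\colon D^-(\alpha t,\rho)\to D^-(t^p,\rho')$ is finite flat of degree $1$, hence an isomorphism, and $\varphi^\#_{\alpha,\rho}$ is a ring isomorphism. (Explicitly, its inverse sends $\widetilde{T}$ to $\alpha t\,(1+(T-t^p)/t^p)^{1/p}\in\O(D^-(t^p,\rho'))$, the binomial series converging since it has radius $\omega^p$ and $|(T-t^p)/t^p|<\rho'/|t|^p\leq\omega^p$.)

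For (iv): assume $\rho>\omega|t|$. Then $D^-(\alpha t,\rho)=D^-(t,\rho)$ for all $\alpha$, so $\varphi^\#_{\alpha,\rho}=:\varphi^\#_\rho$ is independent of $\alpha$; the rotations $\mu_\zeta$ preserve $D^-(t,\rho)$ and act as asserted, and $\varphi\circ\mu_\zeta=\varphi$ forces $\mathrm{im}(\varphi^\#_\rho)\subseteq\O(D^-(t,\rho))^{\bs{\mu}_p(K)}$. For the reverse inclusion there are two regimes. If $\rho>|t|$, then $0\in D^-(t,\rho)=D^-(0,\rho)$; expanding functions in powers of $\widetilde{T}$, a $\bs{\mu}_p(K)$-invariant series involves only exponents divisible by $p$, hence equals $h(\widetilde{T}^p)$ with $h\in\O(D^-(0,\rho^p))=\O(D^-(t^p,\rho'))$, which lies in the image. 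If $\omega|t|<\rho\leq|t|$, then $0$ lies outside both $D^-(t,\rho)$ and $D^-(t^p,\rho')$, so $\varphi\colon D^-(t,\rho)\to D^-(t^p,\rho')$ is finite étale of degree $p$ and $\bs{\mu}_p(K)$ permutes the $p$ preimages of each geometric point simply transitively; hence this morphism is a $\bs{\mu}_p(K)$-torsor and descent yields $\O(D^-(t,\rho))^{\bs{\mu}_p(K)}=\varphi^\#_\rho(\O(D^-(t^p,\rho')))$. The torsor/descent step in this last regime — equivalently, pinning down the invariant subring of a disk that avoids the origin but on which $\widetilde{T}$ is not a coordinate — is the one point needing genuine care; everything else is bookkeeping around Proposition \ref{Prop : varphi xi} and the displayed norm identity.
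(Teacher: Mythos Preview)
Your proof is correct, and for (i)--(iii) it tracks the paper's (suppressed) argument closely: both hinge on the factorization $\widetilde{T}^p-t^p=\prod_{\zeta}(\widetilde{T}-\zeta t)$, the resulting identity $|\,\widetilde{T}^p-t^p\,|(x)=\phi(|t|,\min_\zeta x(\widetilde{T}-\zeta t))$, and Proposition~\ref{Prop : varphi xi}. Your derivation of (i) is a pleasant variant --- you establish $\varphi^{-1}(D^-(t^p,\rho'))$ first and then recover $\varphi(D^-(t,\rho))$ from surjectivity plus $\bs{\mu}_p$-equivariance, whereas the paper verifies both inclusions by direct estimates on rational points; for (iii) both arguments produce the inverse via the binomial expansion of $T^{1/p}$.

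The only substantive divergence is (iv). The paper dispatches this uniformly (no case split) in one line: $\Omega[\widetilde{T}]^{\bs{\mu}_p(K)}=\Omega[\widetilde{T}^p]=\varphi_\rho^\#(\Omega[T])$, and by density this passes to $\O(D^-(t,\rho))$ --- the image of $\varphi_\rho^\#$ being closed thanks to the isometry in (ii), and $\bs{\mu}_p$-invariant polynomial approximants being obtained by averaging (the rotations $\mu_\zeta$ preserve $D^-(t,\rho)$ since $\rho>\omega|t|$). This avoids your torsor/descent step entirely and handles the awkward regime $\omega|t|<\rho\leq|t|$ with the same stroke as $\rho>|t|$. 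Your argument is correct, but heavier than needed; the density route is both shorter and more elementary.
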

\if{\begin{proof}
To prove i) one needs to evaluate $|a^p-t^p|$ for all $a\in\mathrm{D}^-_{\Omega'}(t,\rho)$, and arbitrary $\Omega'\in E(\Omega)$. 
The skeleton of $f(\widetilde{T})=\widetilde{T}^p-t^p=\prod_{\alpha^p=1}(\widetilde{T}-\alpha t)$ is  
$\S(f)=\mathrm{Sat}(\{\alpha t\}_{\alpha^p=1})$. Hence for all $a\in \Omega$ one has $|a^p-t^p|=
|\widetilde{T}^p-t^p|_{a,|a-\alpha_a t|}
|\widetilde{T}^p-t^p|_{\alpha_a t,|a-\alpha_a t|}=\phi(|t|,|a-\alpha_a t|)$, 
where $\alpha_a\in\bs{\mu}_p(K)$ satisfies $|a-\alpha_a t|=\min_{\alpha^p=1}|a-\alpha t|$.
In fact by Prop. \ref{Prop : varphi xi} one has $|\widetilde{T}^p-t^p|_{\alpha t,\rho}=\phi(|t|,\rho)$. 
Since $\rho\mapsto\varphi(|t|,\rho)$ is strictly increasing, and since $|a-\alpha_a t|\leq|a-t|$, then $|a^p-t^p|\leq\phi(|t|,|a-t|)$. 
This proves $\varphi(\mathrm{D}^-(t,\rho))\subseteq\mathrm{D}^-(t^p,\phi(|t|,\rho))$. Conversely applying $\psi(|t|,-)$ to 
$|a^p-t^p|=\varphi(|t|,|a-\alpha_at|)$ with $b:=a^p$ one finds $\psi(|t|,|b-t^p|)=|\alpha' b^{1/p}-\alpha'\alpha_{b^{1/p}}t|$ for all 
$\alpha'\in\bs{\mu}_{p}(K)$. So i) holds. 

To prove ii), by density one can assume that $f$ is a polynomial in $\Omega[T]\subset\a_\Omega(t^p,\rho')$, 
and by multiplicativity up to enlarge $\Omega$ one can assume $f=(T-a)$ of degree $1$. 
In this case the assertion is easy. The injectivity follows from \ref{eq : phi = isometric}.

To prove iii) we can assume $t\neq 0$ because $\rho\leq\omega^p|t|^p$. We now define $(\varphi_{\alpha,\rho}^\#)^{-1}$.  
By Lemma \ref{Lemma : radius of roots of T-t^p}, 
$g:=T^{1/p}-\alpha t\in\a_\Omega(t^p,\rho')$ because 
$\rho'\leq \omega^p|t|^p$. Hence $(\varphi^\#_{\alpha,\rho})^{-1}(\sum_{i=0}^n a_i (\widetilde{T}-\alpha t)^i)=\sum_{i=0}^n a_ig^i$. 
This defines a map $\Omega[\widetilde{T}-t]\to\a_\Omega(t^p,\rho')$ satisfying \eqref{eq : phi = isometric}, that 
extends by continuity to $\a_\Omega(t,\rho)$ and coincides with $(\varphi_{\alpha,\rho}^\#)^{-1}$. This proves iii). 
Assume $\omega|t|<\rho$. Since $\Omega[\widetilde{T}]^{\bs{\mu}_p}=\Omega[\widetilde{T}^p]=
\varphi^\#_{\rho}(\Omega[T])$, then by density one has iv) together with 
$\psi^\#_{\alpha,\rho}\circ\varphi_{\alpha,\rho}^\#=\mathrm{Id}_{\a_\Omega(t^p,\rho')}$. 
The commutativity of the diagram is evident. 
If $\omega|t|\in[\rho_1,\rho_2[$ it does not commute since for 
example 
$\psi_{\alpha,\rho_2}^\#(\widetilde{T})=0
\neq\psi_{\alpha,\rho_1}^\#(\widetilde{T})$, because 
$\psi_{\alpha,\rho_1}^\#=(\varphi_{\alpha,\rho_1}^\#)^{-1}$.
\end{proof}
}\fi
The morphism $\varphi$ induces a $K$-linear isometric inclusion 
$\varphi^\#:\H(\varphi(\xi))\to\H(\xi)$.
\begin{corollary}\label{Cor : equality of Delta(x)}
Let $c\in K$, $\rho\geq 0$, $x=x_{c,\rho}$. 

If $\rho\neq \omega|c|$, the morphism $\varphi:
\mathbb{A}^{1,\mathrm{an}}_K\to
\mathbb{A}^{1,\mathrm{an}}_K$ provides a bijection
\begin{equation}\label{eq : phi:delta(x)->delta(phi(x))}
\varphi\;:\;\Delta(x)\;\simto\;\Delta(\varphi(x))\;.
\end{equation}

If $\rho= \omega|c|$, then \eqref{eq : phi:delta(x)->delta(phi(x))} is 
surjective. The inverse image of the germ of segment out of 
$\varphi(x)$ directed toward $+\infty$ has a single element, while the 
inverse image of each other germ of segment out of 
$\varphi(x)$ is formed by $p$ distinct germs of segments out of $x$.
\hfill$\Box$
\end{corollary}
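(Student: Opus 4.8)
The statement to prove is Corollary \ref{Cor : equality of Delta(x)}, which reads off the behavior of $\varphi$ on germs of segments out of a point $x = x_{c,\rho}$ of type $2$ or $3$, from the already-established Propositions \ref{Prop : varphi xi} and \ref{prop : action of phi on disks}. The plan is to classify the germs of segments out of $x$ and track each one under $\varphi$.

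First I would recall that the germs of segments out of $x = x_{c,\rho}$ come in two sorts: one germ $b_\infty$ directed toward $+\infty$ (the germ contained in $\Lambda(x)$, i.e. along the path $\rho \mapsto x_{c,\rho'}$ with $\rho' > \rho$), and the remaining germs, each of which enters some open disk $D^-(c',\rho)$ with $|c'-c| = \rho$ (equivalently a residue class of the reduction at $x$ when $x$ is of type $2$; for type $3$ the only germs are the two along $\Lambda(x)$, and the argument simplifies). By Proposition \ref{Prop : varphi xi}, $\varphi(x_{c,\rho}) = x_{c^p, \phi(|c|,\rho)}$ and the germ $b_\infty$ out of $x$ maps to the germ toward $+\infty$ out of $\varphi(x)$, since $\phi(|c|,-)$ is increasing and continuous; moreover $\varphi$ restricted to $\Lambda(x)$ is a homeomorphism onto $\Lambda(\varphi(x))$ in a neighbourhood, so $b_\infty \mapsto b_\infty$ bijectively and this accounts for exactly one germ on each side.

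Next, for the non-$+\infty$ germs: a germ $b$ out of $x$ is represented by $]x, y[$ with $y$ inside an open disk $D = D^-(c',\rho)$ having $x$ as boundary point. Applying $\varphi$ and Proposition \ref{prop : action of phi on disks}(i) with the dictionary $\rho' = \phi(|c|,\rho) = \phi(|c'|,\rho)$ (note $|c'| = |c|$ when $\rho < \omega|c|$, and $|c'| = \rho = \omega|c|$ in the boundary case, so $\phi$ is computed consistently), we get $\varphi(D^-(c',\rho)) = D^-((c')^p, \rho')$, a disk with boundary $\varphi(x)$, hence $\varphi(b)$ is a germ out of $\varphi(x)$ not equal to $b_\infty$. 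Conversely every non-$+\infty$ germ out of $\varphi(x)$ sits in some $D^-(d,\rho')$ with $d^p$-th roots; by Proposition \ref{prop : action of phi on disks}(i), $\varphi^{-1}(D^-(d,\rho')) = \bigcup_{\alpha^p=1} D^-(\alpha d^{1/p}, \rho)$, a disjoint union of open disks each with boundary $x$ (using Proposition \ref{Prop : varphi xi} for the preimage of the boundary point). When $\rho \neq \omega|c|$ these $p$ disks have pairwise distinct boundary germs out of $x$ unless they coincide — and they coincide precisely when the roots $\alpha d^{1/p}$ are within distance $\rho$ of each other, i.e. when $\omega|c| = \omega|d^{1/p}| \leq \rho$; combined with the $+\infty$ case this is the dichotomy: if $\rho > \omega|c|$ each fibre disk degenerates to a single germ (so $\varphi$ is injective on germs), if $\rho < \omega|c|$ they are genuinely $p$ distinct germs, and in both cases surjectivity holds. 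At the threshold $\rho = \omega|c|$, the $p$ preimage disks of a given non-$+\infty$ germ coalesce only partially: one checks directly from $|\alpha t - \alpha' t| = |\alpha - \alpha'|\,|t| = \omega|c| = \rho$ that the disks $D^-(\alpha d^{1/p},\rho)$ are mutually tangent at $x$ but represent distinct germs, giving exactly $p$ germs over each non-$+\infty$ germ, while the single germ toward $+\infty$ has a unique preimage.

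The main obstacle I anticipate is the boundary case $\rho = \omega|c|$: here $\phi(|c|,-)$ has a corner, $\varphi$ is not locally injective on all of $\mathbb{A}^{1,\mathrm{an}}_K$ near $x$, and one must carefully separate the two regimes (the $+\infty$ direction where $\phi(\sigma,\rho) = \rho^p$ governs, versus the lateral directions where $\phi(\sigma,\rho) = |p|\sigma^{p-1}\rho$). The cleanest way to handle this is to work entirely with the explicit formulas of Proposition \ref{Prop : varphi xi} for images and preimages of type-$2$/$3$ points along each germ — i.e. compute $\varphi(x_{c',\rho'})$ for $x_{c',\rho'}$ ranging over a small interval representing $b$ — rather than appealing to any general local structure of $\varphi$. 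This reduces everything to the elementary monotonicity and fibre-counting already packaged in Propositions \ref{Prop : varphi xi} and \ref{prop : action of phi on disks}.
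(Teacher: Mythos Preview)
Your approach is correct and is exactly what the paper intends: the corollary is stated with a bare $\Box$, so the reader is meant to read it off directly from Propositions~\ref{Prop : varphi xi} and~\ref{prop : action of phi on disks} by tracking the $+\infty$ germ via the monotone parametrization and the remaining germs via the images and preimages of the boundary disks, precisely as you outline. One small slip to fix: the non-$+\infty$ germs out of $x_{c,\rho}$ correspond to disks $D^-(c',\rho)$ with $|c'-c|\le\rho$ (not $=\rho$), and in the boundary case $\rho=\omega|c|$ one still has $|c'|=|c|$ (not $|c'|=\omega|c|$), though this does not affect your computation of $\phi(|c'|,\rho)$.
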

\begin{proposition}\label{Prop : deg p}
Let $c\in K$, and let 
$x=x_{c,\rho}$, $\rho\geq 0$. Then :
\begin{enumerate}
\item If $\rho<\omega|c|$, 
then $[\H(\xi_{c,\rho}):\H(\varphi(\xi_{c,\rho}))]=1$.
\item If $\rho>\omega|c|$, then 
$[\H(\xi_{c,\rho}):\H(\varphi(\xi_{c,\rho}))]=p$. 
\end{enumerate} 
If $X$ is an affinoid domain of $\mathbb{A}^{1,\mathrm{an}}_K$, 
the same relations hold by density 
replacing $\H(x)$ and $\H(\varphi(x))$ by the 
local rings $\O_{X,x}$ 
and $\O_{X^p,\varphi(x)}$ respectively.\hfill$\Box$
\end{proposition}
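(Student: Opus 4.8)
The plan is to read $[\H(x_{c,\rho}):\H(\varphi(x_{c,\rho}))]$ as a local degree in the degree-$p$ field extension $K(\widetilde{T})/K(\widetilde{T}^p)$ and to count the relevant places using the description of the Frobenius fibers already obtained in Proposition \ref{Prop : varphi xi}.

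First I would dispose of the degenerate case $\rho=0$, which can only occur under hypothesis (i) and then forces $c\ne 0$: here $x$ and $\varphi(x)=x_{c^p,0}$ are rational points, $\H(x)=\H(\varphi(x))=K=\Ka$, and the degree is $1$. So assume $\rho>0$; then $x$ and $\varphi(x)$ are of type $2$ or $3$, and $\H(x)$ (resp. $\H(\varphi(x))$) is the completion of the fraction field $K(\widetilde{T})$ (resp. $K(T)$) with respect to the norm $x$ (resp. $\varphi(x)$). Since $\varphi(x)(g)=x(\varphi^\#g)$ by definition of $\varphi(x)$, the inclusion $\varphi^\#\colon K(T)\hookrightarrow K(\widetilde{T})$, $T\mapsto\widetilde{T}^p$, is isometric; identifying $K(T)$ with $K(\widetilde{T}^p)\subset K(\widetilde{T})$, the map $\varphi^\#\colon\H(\varphi(x))\hookrightarrow\H(x)$ is a completion of the extension $K(\widetilde{T})/K(\widetilde{T}^p)$, which — as $\mathrm{char}\,K=0$ and $\bs{\mu}_p(K)\subset K=\Ka$ has exactly $p$ elements — is Galois of degree $p$ with group $\bs{\mu}_p(K)$ acting by $\widetilde{T}\mapsto\alpha\widetilde{T}$.

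Next I would use the standard fact that for a finite separable extension $L/F$ and the completion $\widehat{F}_v$ of $F$ at a norm $v$ one has $L\otimes_F\widehat{F}_v\simeq\prod_w\widehat{L}_w$, the product running over the norms $w$ of $L$ extending $v$, whence $\sum_w[\widehat{L}_w:\widehat{F}_v]=[L:F]$. With $v=\varphi(x)$, the completions $\widehat{L}_w$ are precisely the fields $\H(y)$ for $y$ in the fiber $\varphi^{-1}(\varphi(x))$: each $w$ restricts on $K[\widetilde{T}]$ to a multiplicative norm, i.e. a point $y\in\mathbb{A}^{1,\mathrm{an}}_K$ with $\varphi(y)=\varphi(x)$, necessarily of type $2$ or $3$ (not $1$, since $w$ has trivial kernel; not $4$, since $\varphi^{-1}(\varphi(x))$ consists of points $x_{*,*}$ by Proposition \ref{Prop : varphi xi}); and $w$ is recovered from $y$, a point of the analytic line being determined by its restriction to $K[\widetilde{T}]$. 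So the places $w$ correspond bijectively to $\varphi^{-1}(\varphi(x))$, which by Proposition \ref{Prop : varphi xi} (and $\psi(|c|,\phi(|c|,\rho))=\rho$) equals $\{x_{\alpha c,\rho}\}_{\alpha^p=1}$. Since $|\alpha-\alpha'|=\omega$ for distinct $\alpha,\alpha'\in\bs{\mu}_p(K)$ (from $\prod_{\beta\ne 1,\,\beta^p=1}(1-\beta)=p$), one has $x_{\alpha c,\rho}=x_{\alpha'c,\rho}$ iff $\omega|c|\le\rho$. Hence if $\rho<\omega|c|$ there are $p$ distinct places, transitively permuted by $\mathrm{Gal}(K(\widetilde{T})/K(\widetilde{T}^p))=\bs{\mu}_p(K)$, so all local degrees are equal and, summing to $p$, each equals $1$; in particular $[\H(x):\H(\varphi(x))]=1$. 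If $\rho>\omega|c|$ the fiber is the single point $x$, so there is one place and $[\H(x):\H(\varphi(x))]=[K(\widetilde{T}):K(\widetilde{T}^p)]=p$. The affinoid statement then follows by density: for $x$ of type $2$ or $3$ the local rings $\O_{X,x}$ and $\O_{X^p,\varphi(x)}$ are dense in $\H(x)$ and $\H(\varphi(x))$, and a subfamily of $\{1,\widetilde{T},\ldots,\widetilde{T}^{p-1}\}$ forming an $\H(\varphi(x))$-basis of $\H(x)$ already lies in $\O_{X,x}$ (with $\widetilde{T}^p$ coming from $\O_{X^p,\varphi(x)}$), is $\O_{X^p,\varphi(x)}$-free (being free over the larger field) and spans $\O_{X,x}$ over $\O_{X^p,\varphi(x)}$ by density.

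The step I expect to be the main obstacle is the middle one: carefully matching the extensions of the norm $\varphi(x)$ to $K(\widetilde{T})$ with the points of the Berkovich fiber — ruling out spurious type-$1$ or type-$4$ extensions — and checking that the $\bs{\mu}_p(K)$-action on these extensions is the one permuting the points $x_{\alpha c,\rho}$. The remaining ingredients (the decomposition $L\otimes_F\widehat{F}_v\simeq\prod_w\widehat{L}_w$, the identity $|1-\zeta_p|=\omega$, the density argument) are routine. As a cross-check — and an alternative proof of case (i) together with the inequality ``$\le p$'' in case (ii) — one can argue with the binomial series directly: writing $\widetilde{T}^p=c^p(1+u)$ with $|u|=(\rho/|c|)^p$, the series $(1+u)^{1/p}=\sum_k\binom{1/p}{k}u^k$ converges in $\H(\varphi(x))$ exactly when $|u|<\omega^p$, i.e. when $\rho<\omega|c|$, and then $\widetilde{T}\in\varphi^\#\H(\varphi(x))$, giving degree $1$.
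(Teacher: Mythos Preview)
Your proof is correct and takes a genuinely different route from the paper's. The paper argues directly that, by density, the degree $[\H(x):\H(\varphi(x))]$ divides $[K(\widetilde{T}):K(\widetilde{T}^p)]=p$, and then distinguishes the two cases by testing whether $\widetilde{T}=T^{1/p}$ lies in $\H(\varphi(x))$: in case~(i) the binomial expansion of $(1+(T-c^p)/c^p)^{1/p}$ converges at $\varphi(x)$ (exactly the cross-check you give at the end); in case~(ii) one embeds $\H(x)$ and $\H(\varphi(x))$ isometrically into the bounded-function rings on the generic disks $D(x)$ and $D(\varphi(x))$, and observes that $T^{1/p}-t$ has radius of convergence $\omega^p|t|^p$ at $t^p$, hence cannot belong to $\O(D^-(t^p,\rho^p))$ when $\rho>\omega|t|$.

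Your approach instead packages the question as a local-degree computation in the Galois extension $K(\widetilde{T})/K(\widetilde{T}^p)$: the decomposition $L\otimes_F\widehat{F}_v\simeq\prod_w\widehat{L}_w$ identifies the places over $\varphi(x)$ with the Berkovich fiber $\varphi^{-1}(\varphi(x))$, whose cardinality you read off directly from Proposition~\ref{Prop : varphi xi}. This is cleaner conceptually and reuses the fiber description already in hand; the paper's route is more elementary in that it needs only the convergence radius of a single series and no valuation-theoretic machinery. The step you flag as the main obstacle---matching norm-extensions with Berkovich fiber points---is handled correctly: since $\varphi(x)$ is of type~$2$ or~$3$, any extension to $K(\widetilde{T})$ has trivial kernel on $K[\widetilde{T}]$, hence gives a genuine point of $\mathbb{A}^{1,\mathrm{an}}_K$, and Proposition~\ref{Prop : varphi xi} shows the fiber consists only of points $x_{\alpha c,\rho}$, ruling out type~$4$.
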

\if{\begin{proof}
$K(\widetilde{T})$ (resp. $K(T)$) is dense on $\H(\xi)$ 
(resp. $\H(\varphi(\xi))$). 
The map $\varphi^\#:
K(T)\to K(\widetilde{T}),\;T\mapsto\widetilde{T}^p$ 
is an extension of degree $p$. 
By density the degree of $\H(\xi)/\H(\varphi(\xi))$ is equal to $1$ or 
$p$, and it is $1$ if and only if 
$\widetilde{T}=T^{1/p}\in\H(\varphi(\xi))$.

One has $r(x_{c,\rho})=\max(r(x_c),\rho)=\rho$, and $r(\varphi(\xi_{c,\rho}))=r(x_{c^p,\phi(|c|,\rho)})=
\min(r(\xi_{c^p}),\phi(|c|,\rho))
=\phi(|c|,\rho)$.
Let $\Omega\in E(K)$, $t_{c,\rho}\in\Omega$ 
(resp. $t^p_{c,\rho}\in\Omega$) be a Dwork generic point for 
$\xi_{c,\rho}$ 
(resp. $\xi_{c^p,\phi(|c|,\rho)}$). One has a diagram
\begin{equation}\label{comm diag}
\xymatrix@=10pt{
\H(\xi)\ar[rr]\ar@{}[rrd]|{\circlearrowleft}&& \a_\Omega(t_{c,\rho},\rho)\\
\H(\varphi(\xi))\ar[u]^{\varphi^\#}\ar[rr]&&\a_\Omega(t^p_{c,\rho},\phi(|c|,\rho))\ar[u]_{\varphi_{1,\rho}^\#}.
}
\end{equation}

If $\rho>\omega|c|$, then 
$|t_{c,\rho}|=|\widetilde{T}-c+c|_{c,\rho}=\max(|c|,\rho)
\geq \rho>\max(\omega|c|,\omega\rho)=
\omega|t_{c,\rho}|$. 
And hence $\phi(|c|,\rho)=\phi(|t_{c,\rho}|,\rho)=\rho^p$. 

We proceed by contrapositive : if
$T^{1/p}\in \H(\varphi(\xi))$, then $T^{1/p}-t_{c,\rho}\in\a_\Omega(t_{c,\rho}^p,\rho^p)$ which is absurd by 
Lemma \ref{Lemma : radius of roots of T-t^p}. 
So $[\H(\xi):\H(\varphi(\xi))]=p$.

To prove i) write 
$T^{1/p}-c = \lim_sc\sum_{k=1}^s
\tbinom{1/p}{k}(\frac{T-c^p}{c^p})^k$. 
This limit converges in $\H(\varphi(\xi))$ with respect to 
$\varphi(\xi)=\xi_{c^p,|p||c|^{p-1}\rho}$. 
Indeed 
$|\tbinom{1/p}{k}(\frac{T-c^p}{c^p})^k|_{c^p,|p|
|c|^{p-1}\rho}=\frac{|1/p|^k}{|k!|}\frac{|p|^k\rho^k}{|c|^k}=
\frac{\rho^k}{|c|^k|k!|}\to 0$ since $\rho<\omega|c|$. 
Hence $T^{1/p}\in\H(\varphi(\xi))$.

The assertions about the local rings are deduced by density.
\end{proof}
}\fi
\begin{remark}\label{Rk : decrease the condition}
If $x=x_{c,\rho}$ fulfills the assumptions of condition i) of Proposition 
\ref{Prop : deg p}, then also does $\varphi(x)$ with respect to 
$\varphi^2(x)$. 
This is no longer true if we are in the situation ii). 

Namely if $x=x_{c,\rho}$ satisfies 
$\rho>\omega^{\frac{1}{p^n}}|c|$, then for all $k=1,\ldots,n$, 
$\varphi^k(x)=x_{c^{p^k},\rho^{p^k}}$ satisfies 
$\rho^{p^k}>\omega|c|^{p^k}$, and 
$[\H(\varphi^k(x)):\H(\varphi^{k+1}(x))]=p$. 
While $[\H(\varphi^{n+1}(x)):\H(\varphi^{n+2}(x))]=1$.
\end{remark}
\if{\subsection{Push-forward by Frobenius}
\label{Push-forward by Frob}
Let $X$ be an affinoid domain of $\mathbb{A}^{1,\mathrm{an}}_K$.
For all derivation $d:\O(X)\to\O(X)$ we denote by 
$d-\mathrm{Mod}(X)$ the 
category of finite free $\O(X)$-modules $\M$ with a connection 
$\nabla:\M\to\M$ verifying the Leibnitz rule with respect to $d$.
The map $\varphi^{\#}:
\O(X^p)\to
\O(X)$ verifies 
$(\frac{d/d\widetilde{T}}{p\widetilde{T}^{p-1}})\circ\varphi^\#
=\varphi^\#\circ d/dT$.
Denote by $C:(d/d\widetilde{T})-\mathrm{Mod}(X)
\xrightarrow[]{\;\sim\;} 
(\frac{d/d\widetilde{T}}{p\widetilde{T}^{p-1}})-
\mathrm{Mod}(X)$
the equivalence associating to 
$(\widetilde{\Fs},\widetilde{\nabla})$ the module 
$(\widetilde{\Fs},\frac{1}{p\widetilde{T}^{p-1}}\widetilde{\nabla})$,  
and which is the identity on the morphisms between objects.

\begin{definition}
The pull-back functor 
\begin{equation}
(\N,\nabla)\;\mapsto\;(\varphi^*(\N),\varphi^*(\nabla))
\;=\;
\Bigl(\N\otimes_{\O(X^p),\varphi^{\#}}\O(X), 
p\widetilde{T}^{p-1}(\nabla\otimes 1+
1\otimes \frac{d/d\widetilde{T}}{p\widetilde{T}^{p-1}})\Bigr)
\end{equation}
is the scalar extension functor by the map $\varphi^{\#}:
(\O(X^p),d/dT)\to
(\O(X),\frac{d/d\widetilde{T}}{p\widetilde{T}^{p-1}})$,
followed by the functor $C^{-1}$. The push-forward by Frobenius 
\begin{equation}
(\widetilde{\M},\widetilde{\nabla})\;\mapsto\;
(\varphi_*(\widetilde{\M}),\varphi_*(\widetilde{\nabla}))\;=\;
(\widetilde{\M},\frac{1}{p\widetilde{T}^{p-1}}\widetilde{\nabla})\;.
\end{equation}
is the composite of $C$ followed by the restriction of scalars functor 
by $\varphi^\#$:
\begin{equation}
(d/d\widetilde{T})-
\mathrm{Mod}(\O(X))
\;\xrightarrow[C]{\;\sim\;} \;
(\frac{d/d\widetilde{T}}{p\widetilde{T}^{p-1}})-
\mathrm{Mod}(\O(X))\;\xrightarrow[]{\quad}\;
(d/dT)-\mathrm{Mod}(\O(X^p))\;.
\end{equation}
\end{definition}
}\fi
\subsection{Behavior of spectral non solvable 
radii by Frobenius push-forward}
\label{Section : push-forward radii}
The map $\varphi^{\#}:
\O(X^p)\to
\O(X)$ verifies 
$(\frac{d/d\widetilde{T}}{p\widetilde{T}^{p-1}})\circ\varphi^\#
=\varphi^\#\circ d/dT$.
Let $\Fs$ be a finite free module of rank $r$ over $\O(X)$, 
and let $\nabla:\Fs\to\Fs$ be a connection 
with respect to $d/d\widetilde{T}$.
The push-forward of $(\Fs,\nabla)$ is 
the $(\O(X^p),\frac{d}{dT})$-differential module 
$(\Fs,\frac{1}{p\widetilde{T}^{p-1}}\nabla)$, obtained by considering 
$\Fs$ as an $\O(X^p)$-module via $\varphi^{\#}$, so that 
$\frac{1}{p\widetilde{T}^{p-1}}\nabla:\Fs\to\Fs$ is a connection 
with respect to $d/dT$. 
We will denote it by $(\varphi_*\Fs,\varphi_*\nabla):=
(\Fs,\frac{1}{p\widetilde{T}^{p-1}}\nabla)$.

Let $x\in X$ be a point of type 
$2$, $3$, or $4$.
Spectral non solvable radii of $\Fs$ at $x$ only depend on its restriction 
to $\H(x)$. 
We study separately the two cases of Proposition \ref{Prop : deg p}.

We firstly consider the situation i) of Proposition \ref{Prop : deg p}, where  $x=x_{c,\rho}$, with $\rho<\omega|c|$.
In this case $\varphi^\#:\H(\varphi(x))\to\H(x)$ is an isomorphism of fields, 
and hence the scalar extension, and the restriction of scalars, functors are equivalences of categories. 
By Proposition \ref{prop : action of phi on disks} the radii of all 
sub-disks of the generic disk $D(x)$ are multiplied by $|p||t|^{p-1}$, 
where $t$ is a Dwork generic point for $x$.\footnote{Note that $x=x_{c,\rho}$, hence 
$|t|=|\widetilde{T}|(x)=\max(|c|,\rho)=\max(|c|,r(x))$, since $r(x)=\rho$.} 
So for all $i=1,\ldots,r$ we have 
\begin{equation}\label{eq : esdpsps}
\R_i^{\varphi_*(\Fs),\mathrm{sp}}(\xi)
\;=\;|p||t|^{p-1}
\R_i^{\Fs,\mathrm{sp}}(\xi)\;. 
\end{equation}

In the situation ii) of Proposition \ref{Prop : deg p}, where  $x=x_{c,\rho}$, with $\rho>\omega|c|$,
the map $\varphi^\#:\H(\varphi(x))\to\H(x)$ 
is a field extension of degree $p$. 
The situation is then regulated by the following results:

\begin{proposition}\label{prop: rad of Frob. push-forward}
Let $x\in \mathbb{A}^{1,\mathrm{an}}_K$ 
be a point of type $2$, $3$, or $4$ of the form 
$x=x_{c,\rho}$, with $c\in K$, and $\rho>\omega|c|$.
Let $\Fs$ be a differential module over 
$\H(x)$ of rank $r$. 

Define $0\leq i_1(x)\leq r$ as the index satisfying\footnote{It is understood that $i_1(x)=0$ if and only 
if  $\R_i^{\Fs,\mathrm{sp}}(\xi)>\omega|t|$ for all $i$.} 
\begin{equation}\label{eq : i_1 def}
\R^{\Fs,\mathrm{sp}}_{i_1(x)}(\xi)\;\leq \;
\omega|t|\;<\;\R_{i_1(x)+1}^{\Fs,\mathrm{sp}}(\xi)\;.
\end{equation}
Then, up to a permutation, the list (with multiplicities) of the spectral radii of $\varphi_*(\Fs)$ is given by 
\begin{equation}
\bigcup_{1\leq i\leq i_1(x)}\Bigl\{\underbrace{|p||t|^{p-1}\R_i^{\Fs,\mathrm{sp}}(\xi),
\ldots,|p||t|^{p-1}\R_i^{\Fs,\mathrm{sp}}(\xi)}_{p\textrm{-times}}\Bigr\}
\bigcup_{i_1(x)<i\leq r}\Bigl\{\R_i^{\Fs,\mathrm{sp}}(\xi)^p,
\underbrace{\omega^p|t|^p,\ldots,\omega^p|t|^p}_{p-1\textrm{-times}}\Bigr\}\;.
\end{equation}
\end{proposition}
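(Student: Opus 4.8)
The plan is to compute the spectral radii of $\varphi_*(\Fs)$ at $x$ by reducing to the rank-one case (i.e. to a single subsidiary radius at a time), then to read off the answer from the explicit description of how $\varphi$ acts on the generic disk $D(x)$ given in Proposition \ref{prop : action of phi on disks}. First I would use the decomposition of $\Fs$ over $\H(x)$ separating the spectral radii (as in the proof of Proposition \ref{Prop : eeetrich}, via \cite{Robba-Hensel} or \cite[10.6.2]{Kedlaya-Book}): since push-forward is additive and compatible with such direct sums, it suffices to treat a differential module $\Fs$ all of whose spectral radii at $x$ equal a common value $R:=\R_1^{\Fs,\mathrm{sp}}(x)$. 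Then $\varphi_*(\Fs)$ has rank $pr$, and I must show that its list of spectral radii at $\varphi(x)$ is $\{|p||t|^{p-1}R,\ldots\}$ ($pr$ copies) if $R\le\omega|t|$, and $\{R^p,\omega^p|t|^p,\ldots,\omega^p|t|^p\}$ repeated $r$ times (one copy of $R^p$ and $p-1$ copies of $\omega^p|t|^p$ for each of the $r$ ``slots'') if $R>\omega|t|$.

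The key computation is the following. Let $t\in X(\Omega)$ be a Dwork generic point for $x$, so $t^p$ is one for $\varphi(x)$ and $|t|=\max(|c|,\rho)=\rho$ since $\rho>\omega|c|$. The spectral radii of $\varphi_*(\Fs)$ at $\varphi(x)$ are, by Proposition \ref{Prop : eeetrich}, the minima of $r(\varphi(x))=\phi(|t|,\rho)=\rho^p$ with the radii of convergence of the Taylor solutions of $\varphi_*(\Fs)$ at $t^p$ that do not exceed $\rho^p$; and these solutions are exactly the solutions of $\Fs$ at $t$, viewed through the isometric inclusion $\varphi^{\#}:\O(D^-(t^p,\rho'))\hookrightarrow\O(D^-(t,\rho))$ of Proposition \ref{prop : action of phi on disks}(ii), together with the $\bs{\mu}_p$-conjugates coming from the other components $D^-(\alpha t,\rho)$ of $\varphi^{-1}(D^-(t^p,\rho'))$. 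Concretely, a solution of $\Fs$ converging on $D^-(t,\eta)$ with $\eta\le\rho$ gives, via \eqref{eq : phi = isometric}, a solution of $\varphi_*(\Fs)$ converging on $D^-(t^p,\phi(|t|,\eta))$, and conversely; so each radius $\eta\le\rho$ of $\Fs$ is transformed into $\phi(|t|,\eta)=\max(\eta^p,|p||t|^{p-1}\eta)$, which is $|p||t|^{p-1}\eta$ when $\eta\le\omega|t|$ and $\eta^p$ when $\eta\ge\omega|t|$. The multiplicity bookkeeping is: a radius $\eta\le\omega|t|$ of $\Fs$ (with some multiplicity $m$) contributes $pm$ solutions of $\varphi_*(\Fs)$ all of radius $|p||t|^{p-1}\eta$ (one from each of the $p$ disks $D^-(\alpha t,\rho)$, since $\varphi^{\#}_{\alpha,\rho}$ is an isomorphism in that range by Proposition \ref{prop : action of phi on disks}(iii)); whereas a radius $\eta>\omega|t|$ contributes, after Galois/$\bs{\mu}_p$-averaging, one solution of radius $\eta^p$ and $p-1$ solutions of the ``maximal'' radius $\omega^p|t|^p$ (this is where Proposition \ref{prop : action of phi on disks}(iv), i.e.\ the fact that $\varphi^{\#}_\rho$ identifies $\O(D^-(t^p,\rho'))$ with the $\bs{\mu}_p$-invariants of $\O(D^-(t,\rho))$ for $\rho'>\omega^p|t|^p$, enters). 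Feeding in the definition \eqref{eq : i_1 def} of $i_1(x)$ — namely $\R^{\Fs,\mathrm{sp}}_i(x)\le\omega|t|$ exactly for $i\le i_1(x)$ — and summing these contributions over the subsidiary radii $\R_1^{\Fs,\mathrm{sp}}(x)\le\cdots\le\R_r^{\Fs,\mathrm{sp}}(x)$ yields precisely the asserted list; one also checks all listed values are $\le\rho^p=r(\varphi(x))$, so the $\min$ with $r(\varphi(x))$ is harmless and the spectral radii are genuinely these numbers.

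The step I expect to be the main obstacle is the precise multiplicity count in the non-étale range $\eta>\omega|t|$, i.e.\ establishing rigorously that the $p-1$ ``extra'' solutions acquire exactly the radius $\omega^p|t|^p$. The heuristic is clear — these come from the ramified directions that $\varphi$ collapses, and Lemma on the radius of $T^{k/p}-\alpha t$ at $t^p$ (the commented-out lemma in the excerpt, which gives $\omega^p|t|^p$) is the mechanism — but making the $\bs{\mu}_p$-descent argument precise requires care: one decomposes the solution space of $\varphi^*(\varphi_*\Fs)$ over $D^-(t,\rho)$ according to the $\bs{\mu}_p$-action and identifies the non-invariant isotypic pieces as twists of the trivial/ramified factor, whose radius is governed by $\varphi^{\#}(\widetilde T)=T$ versus the branches $T^{1/p}$. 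I would handle this by the standard Frobenius-antecedent dévissage of \cite[Sections 10, 12]{Kedlaya-Book} (cf.\ also \cite{Ch-Dw}, \cite{Pons}): reduce to $\Fs$ spectrally irreducible at $x$, pass to $\varphi^*\varphi_*\Fs\cong\bigoplus_{\alpha\in\bs{\mu}_p}[\alpha]^*\Fs$, and read off the subsidiary radii of the summands using \eqref{eq : phi = isometric} and Proposition \ref{prop : action of phi on disks}, the point being that for $\alpha\ne 1$ the twist $[\alpha]^*\Fs$ has a solution radius capped at the ramification radius $\omega^p|t|^p$. The remaining cases ($\eta\le\omega|t|$, and the étale case $\rho<\omega|c|$ already recorded in \eqref{eq : esdpsps}) are straightforward from parts (i)--(iii) of Proposition \ref{prop : action of phi on disks}.
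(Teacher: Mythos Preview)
Your overall strategy matches the paper's (which defers to \cite[Thm.~10.5.1]{Kedlaya-Book}): reduce to a module with a single spectral radius $R$, then split into the two regimes $R\le\omega|t|$ and $R>\omega|t|$, using Proposition~\ref{prop : action of phi on disks} to translate disk radii under $\varphi$. The \'etale case $R\le\omega|t|$ is handled correctly.

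There is, however, a genuine gap in your treatment of the case $R>\omega|t|$. You propose to use $\varphi^*\varphi_*\Fs\cong\bigoplus_{\alpha\in\bs{\mu}_p}[\alpha]^*\Fs$ and then claim that ``for $\alpha\ne 1$ the twist $[\alpha]^*\Fs$ has a solution radius capped at $\omega^p|t|^p$''. This is not right: each $[\alpha]$ is an isometric automorphism of $\H(x)$, so $[\alpha]^*\Fs$ has \emph{the same} spectral radii as $\Fs$ at $x$, namely $R$ for every $\alpha$. All you learn from this decomposition is that $\varphi^*\varphi_*\Fs$ has $pr$ spectral radii equal to $R$ at $x$. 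Recovering the spectral radii of $\varphi_*\Fs$ at $\varphi(x)$ from those of its pullback at $x$ is precisely the companion formula to the one you are proving, so the argument is circular.

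The correct route---and the one in \cite{Kedlaya-Book} and in the paper's (suppressed) proof---uses the Frobenius \emph{antecedent} in the other order. Since $R>\omega|t|$, the Christol--Dwork antecedent theorem gives a differential module $\M$ over $\H(\varphi(x))$ with $\Fs\cong\varphi^*\M$ and $\R_1^{\M,\mathrm{sp}}(\varphi(x))=R^p$. Then the projection formula yields
\[
\varphi_*\Fs\;=\;\varphi_*\varphi^*\M\;\cong\;\bigoplus_{k=0}^{p-1}\M\otimes_{\H(\varphi(x))}\H(\varphi(x))\cdot T^{k/p}\;,
\]
and the rank-one twists $\H(\varphi(x))\cdot T^{k/p}$ have spectral radius $\omega^p|t|^p$ for $k\ne 0$ (this is exactly the radius-of-$T^{1/p}$ computation you allude to) and $r(\varphi(x))$ for $k=0$. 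Since $\omega^p|t|^p<R^p$, the tensor-product radius formula gives the $r$ copies of $R^p$ from $k=0$ and $(p-1)r$ copies of $\omega^p|t|^p$ from $k\ne 0$, which is the asserted list. You had the right ingredient (``Frobenius-antecedent d\'evissage'') but applied the wrong decomposition; swap $\varphi^*\varphi_*$ for $\varphi_*\varphi^*$ and the argument goes through.
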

\begin{proof}
The proof follows 
\cite[Thm. 10.5.1]{Kedlaya-Book}, with slide modifications. 
\if{
We reproduce it for the convenience of the reader.
We can assume that $\widetilde{\M}$ has no non trivial sub-objects, 
so $\R_1^{\widetilde{\M},\mathrm{sp}}(\xi)=\cdots=\R_r^{\widetilde{\M},\mathrm{sp}}(\xi)=R$. 
Assume $R>\omega|t|$. Let $\M$ be such that $\widetilde{\M}=\varphi^*(\M)$ (cf. Prop.\ref{Prop. antec.frob}). 
Then $\M$ is simple, since a sub-object $\N\subset\M$ produces a sub-object $\varphi^*\N$ of $\widetilde{\M}$. 
Hence $\R_1^{\M,\mathrm{sp}}(\varphi(\xi))=\cdots=\R_r^{\M,\mathrm{sp}}(\varphi(\xi))=R^p$. 
By Lemma \ref{Lemma: standard properties} one has $\varphi_*\widetilde{\M}=\varphi_*\varphi^*\M=\oplus_{k=0}^{p-1}\M[k]$. 
Each $\M[k]:=\M\otimes_{\H(\varphi(\xi))}\H(\varphi(\xi))\cdot T^{k/p}$ is irreducible since $\H(\varphi(\xi))\cdot T^{k/p}$ has rank one.
Since $\R^{\H(\varphi(\xi))\cdot T^{k/p}}_1(\varphi(\xi))=\left\{\sm{
\omega^p|t|^p&\textrm{ if }&k\neq 0\\
r(\varphi(\xi))&\textrm{ if }&k=0}\right.$, and since $\omega^p|t|^p<R^p$, 
then $\R^{\M[k],\mathrm{sp}}(\varphi(\xi))=\left\{\sm{
\omega^p|t|^p&\textrm{ if }&k\neq 0\\
R^p&\textrm{ if }&k=0}\right.$, because the radius of a tensor product is the minimum of the radii if they are different 
(cf. \cite[Lemma 9.4.6]{Kedlaya-Book}). This proves the result. 
Assume now that $R\leq\omega|t|$. Let $R':=|p||t|^{p-1}R=\phi(|t|,R)$. 
One has a commutative diagram compatible with the derivations like \eqref{comm diag} with 
$\a_\Omega(t_{c,\rho},\rho)$ (resp. $\a_\Omega(t_{c,\rho}^p,\phi(|c|,\rho))$) replaced by 
$\a_\Omega(t,R)$ (resp. $\a_\Omega(t^p,R')$). The restriction $\H(\varphi(\xi))\to\a_\Omega(t^p,R')$ then factorizes 
through $\H(\varphi(\xi))\to\H(\xi)\to\a_\Omega(t,R)\stackrel{(\varphi_{1,R}^\#)^{-1}}{\to}\a_\Omega(t^p,R')$ 
(cf. Prop. \ref{prop : action of phi on disks}, iii)). Hence 
$\varphi_*(\widetilde{\M})\otimes_{\H(\varphi(\xi))}\a_\Omega(t^p,R')=\varphi^*\varphi_*(\widetilde{\M})\otimes_{\H(\xi)}\a_\Omega(t^p,R')=
\widetilde{\M}^p\otimes_{\H(\xi)}\a_\Omega(t^p,R')$ (cf. Lemma \ref{Lemma: standard properties}).
By assumption $\widetilde{\M}$ is trivialized by $\a_\Omega(t,R)$ and hence by $\a_\Omega(t^p,R')$, 
so $\R^{\varphi_*\widetilde{\M},\mathrm{sp}}(\varphi(\xi))\geq R'$. %
\if{
Concretely ... by sending the basis $\widetilde{T}$ ... $f(\widetilde{T})=\sum_{k=0}^{p-1}\widetilde{T}^{k}f_k(\widetilde{T}^p)\mapsto
\sum_{k=0}^{p-1}\widetilde{T}^{k}f_k(T)$.

Recall that $\varphi_*(\H(\xi))\cong \oplus_{k=0}^{p-1}\H(\varphi(\xi))\cdot T^{k/p}$. 
\begin{lemma}\label{Lemma: 6.17}
For all $R\leq \omega|t|$, $R':=|p||t|^{p-1}R$, the canonical map 
$\H(\xi_{c,\rho})\widehat{\otimes}_{\H(\varphi(\xi_{c,\rho}))}\a_\Omega(t^p,R')\simto\oplus_{\alpha^p=1}\a_\Omega(\alpha t,R)$  
(cf. \eqref{comm diag}) is an $\H(\varphi(\xi))$-linear isomorphism commuting with $d/dT$ and $d/d\widetilde{T}$. 
\end{lemma}
\begin{proof}
An element in $\H(\xi)\widehat{\otimes}_{\H(\varphi(\xi))}\a_\Omega(t^p,R')$ can be uniquely written as 
$\sum_{\alpha^p=1}\alpha\widetilde{T}\widehat{\otimes}g_\alpha(T)$, and is sent to 
$\oplus_{\alpha^p=1}\alpha\widetilde{T}\cdot 
\varphi^\#_{\alpha,R}(g_\alpha(T))$. The inverse map is $\oplus_{\alpha^p=1}f_\alpha(\widetilde{T})\mapsto 
\sum_{\alpha^p=1}\alpha\widetilde{T}\widetilde{\otimes}\psi_{\alpha,R}(f_\alpha(\widetilde{T})/\alpha\widetilde{T})$.
\end{proof}
By Lemma \ref{Lemma: 6.17} 
$\varphi_*(\widetilde{\M})\otimes_{\H(\varphi(\xi))}\a_\Omega(t^p,R')\cong
\widetilde{\M}\otimes_{\H(\xi)}(\H(\xi)\otimes_{\H(\varphi(\xi))}\a_\Omega(t^p,R'))$ 
is isomorphic to
$\varphi_*(\widetilde{\M}\otimes_{\H(\xi)}\oplus_{\alpha^p=1}\a_\Omega(\alpha t,R))=
\oplus_{\alpha^p=1}(\varphi_{\alpha,R})_*(\widetilde{\M}\otimes_{\H(\xi)}\a_\Omega(\alpha t,R))$. 
Now $\widetilde{\M}$ is trivialized by $\a_\Omega(\alpha t,R)$, and $\varphi_{\alpha,R}$ is an isomorphism by 
Prop. \ref{prop : action of phi on disks}, iii). 
Then $\varphi_*\widetilde{\M}$ is trivialized by $\a_\Omega(t^p,R')$, and $\R^{\varphi_*\widetilde{\M},\mathrm{sp}}(\varphi(\xi))\geq R'$.
}\fi
If now $0\neq \N\subseteq\varphi_*\widetilde{\M}$, then $\R^{\N,\mathrm{sp}}(\varphi(\xi))\geq 
\R^{\varphi_*(\M),\mathrm{sp}}(\varphi(\xi))\geq R'$. We claim that this is an equality for all $\N$. Since 
$\varphi^*\N\subseteq\varphi^*\varphi_*(\widetilde{\M})=\widetilde{\M}^p$,  each simple sub-quotient of 
$\varphi^*\N$ appears among those of $\widetilde{\M}$, and its radius is $R$. 
So $\R^{\varphi^*\N,\mathrm{sp}}(\xi)=R\leq \omega|t|$ and by Proposition \ref{prop: 6.8} one has 
$\R^{\N,\mathrm{sp}}(\varphi(\xi))=R'$.
}\fi
\end{proof}

\begin{corollary}\label{Cor : sllls}
We maintain the assumptions of Proposition 
\ref{prop: rad of Frob. push-forward}.
Let $i_x^{\mathrm{sp}}$ (resp. $i_1(x)$) be the largest 
index satisfying 
$\R_{i}^{\Fs}(x)<r(x)$ 
(resp. $\R_{i}^{\Fs}(x)\leq\omega\max(|c|,r(x))$ as in \eqref{eq : i_1 def}). 

For all $i\in\{1,\ldots,r\}$ we define\footnote{It is understood that if $i_1(x)=i_x^{\mathrm{sp}}$, 
then $\phi(i,x)=pi$ and $d_i(x)=i$ for all $i\in\{1,\ldots,i_x^{\mathrm{sp}}\}$.} 
\begin{equation}
\phi(i,x):=\left\{\sm{
pi&\textrm{ if }&1\leq i< i_1(x)\\
(p-1)r+i&\textrm{ if }& i_1(x)\leq i\leq r
}\right.\;,\qquad 
d_i(x):=
\left\{\sm{
i&\textrm{ if }&1\leq i< i_1(x)\\
r&\textrm{ if }&i_1(x)\leq i\leq r
}\right.\;,\qquad 
\ell_{i,x}(\widetilde{T}):=(p\widetilde{T}^{(p-1)})^{d_i(x)}\in\O(X)\;.
\end{equation}   
Let $i\in\{1,\ldots,i_x^{\mathrm{sp}}\}$, then 
\begin{equation}\label{eq : HM_i=HphiM_i}
|\ell_{i,x}|(x)\cdot H_{i}^{\Fs}(x)\;=\;
H_{\phi(i,x)}^{\varphi_*\Fs}(
\varphi(x))^{1/p}\;.
\end{equation}
\end{corollary}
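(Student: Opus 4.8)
The strategy is to combine Proposition \ref{prop: rad of Frob. push-forward} (the list of spectral radii of $\varphi_*\Fs$) with Proposition \ref{Prop : eeetrich} (which identifies $\R_i^{\Fs,\mathrm{sp}}$ with $\min(\R_i^{\Fs},r(x))$) and then simply compute the $i$-th partial height on both sides. First I would set $t$ to be a Dwork generic point for $x$ and write $|t|=\max(|c|,\rho)=\max(|c|,r(x))$ since $r(x)=\rho$. By definition of $i_x^{\mathrm{sp}}$, for $i\le i_x^{\mathrm{sp}}$ all the radii $\R_1^{\Fs}(x),\ldots,\R_i^{\Fs}(x)$ are spectral non solvable at $x$, hence equal to $\R_1^{\Fs,\mathrm{sp}}(x),\ldots,\R_i^{\Fs,\mathrm{sp}}(x)$; so $H_i^{\Fs}(x)=\prod_{k=1}^i\R_k^{\Fs,\mathrm{sp}}(x)$.

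Next I would read off $H_{\phi(i,x)}^{\varphi_*\Fs}(\varphi(x))$ from the explicit list given in Proposition \ref{prop: rad of Frob. push-forward}. One has to be careful because the radii in that list are not sorted: the numbers $|p||t|^{p-1}\R_k^{\Fs,\mathrm{sp}}(x)$ (for $k\le i_1(x)$, each with multiplicity $p$) all lie $\le |p||t|^{p-1}\cdot\omega|t| = \omega^p|t|^p$ by \eqref{eq : i_1 def}, while for $k> i_1(x)$ the list contains $\R_k^{\Fs,\mathrm{sp}}(x)^p\ge \omega^p|t|^p$ together with $p-1$ copies of $\omega^p|t|^p$. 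So the smallest $\phi(i,x)$ radii of $\varphi_*\Fs$, for $i\le i_x^{\mathrm{sp}}$, are: if $i<i_1(x)$, the $pi$ values $\{|p||t|^{p-1}\R_k^{\Fs,\mathrm{sp}}(x)\}_{k\le i}$ each repeated $p$ times, so $\phi(i,x)=pi$, $d_i(x)=i$, and
\begin{equation}
H_{\phi(i,x)}^{\varphi_*\Fs}(\varphi(x)) = \prod_{k=1}^i\bigl(|p||t|^{p-1}\R_k^{\Fs,\mathrm{sp}}(x)\bigr)^p = (|p||t|^{p-1})^{pi}\cdot H_i^{\Fs}(x)^p;
\end{equation}
taking $p$-th roots gives $(|p||t|^{p-1})^i H_i^{\Fs}(x) = |\ell_{i,x}|(x)\,H_i^{\Fs}(x)$ since $|\ell_{i,x}|(x)=|p\widetilde{T}^{p-1}|(x)^{d_i(x)}=(|p||t|^{p-1})^i$. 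If $i_1(x)\le i\le i_x^{\mathrm{sp}}$ (which forces $i_1(x)=i_x^{\mathrm{sp}}$ by the relation $\R^{\Fs}<r$ implying $\R^{\Fs}\le\omega|t|$ is automatic only when... — here I would invoke the footnote's convention $i_1(x)=i_x^{\mathrm{sp}}$), then $\phi(i,x)=(p-1)r+i$ and the $\phi(i,x)$ smallest radii are all $p(i_1(x)-1)+\ldots$ — more precisely all $p\cdot i_1(x)$ values coming from the $k<i_1(x)$ block plus, from the $k\ge i_1(x)$ blocks, the $(p-1)(r-i_1(x)+1)$ copies of $\omega^p|t|^p$ together with the $i-i_1(x)+1$ smallest of the $\R_k^{\Fs,\mathrm{sp}}(x)^p$. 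One checks the count $p(i_1(x)-1)+(p-1)(r-i_1(x)+1)+(i-i_1(x)+1)=(p-1)r+i=\phi(i,x)$, and the product is
\begin{equation}
\Bigl(\prod_{k<i_1(x)}(|p||t|^{p-1}\R_k^{\Fs,\mathrm{sp}}(x))^p\Bigr)\cdot(\omega^p|t|^p)^{(p-1)(r-i_1(x)+1)}\cdot\prod_{k=i_1(x)}^{i}\R_k^{\Fs,\mathrm{sp}}(x)^p.
\end{equation}
Taking $p$-th roots and using $\omega|t|\ge \R_k^{\Fs,\mathrm{sp}}(x)$ for $i_1(x)\le k\le i$ together with $d_i(x)=r$, one rearranges this into $|p||t|^{p-1}{}^{\,r}\cdot\prod_{k=1}^i\R_k^{\Fs,\mathrm{sp}}(x)\cdot(\text{factors of }\omega|t|)$; after matching against $|\ell_{i,x}|(x)=(|p||t|^{p-1})^r$ the leftover $\omega|t|$ factors cancel because each $\R_k^{\Fs,\mathrm{sp}}$ with $k\ge i_1(x)$ equals $\omega|t|$ in the relevant regime — this is exactly where the convention in the footnote is used.

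The main obstacle is purely bookkeeping: correctly identifying which entries of the unsorted multiset in Proposition \ref{prop: rad of Frob. push-forward} constitute the $\phi(i,x)$ smallest ones, and verifying the multiplicity count $\phi(i,x)=p\cdot\min(i,i_1(x)-1)+\ldots$ works out; the inequality $\R_i^{\Fs,\mathrm{sp}}(x)\le r(x)<\omega|t|$ only when $|c|<r(x)$ needs the case distinction on whether $|c|\le r(x)$ or not, but since $|t|=\max(|c|,r(x))$ and $i\le i_x^{\mathrm{sp}}$ means $\R_i^{\Fs}(x)<r(x)\le|t|$, one still must compare $\R_i^{\Fs}(x)$ with $\omega|t|$, which is the definition of $i_1(x)$. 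Once the sorting is settled the rest is the elementary identity $|\ell_{i,x}|(x)=(|p||t|^{p-1})^{d_i(x)}$ and taking $p$-th roots, so I would present the argument by the two cases $i<i_1(x)$ and $i_1(x)\le i\le i_x^{\mathrm{sp}}$, in each case writing out the product of the $\phi(i,x)$ smallest radii and extracting the $p$-th root.
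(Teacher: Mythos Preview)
Your approach is the same as the paper's --- compute the partial height of $\varphi_*\Fs$ from the explicit list of radii in Proposition \ref{prop: rad of Frob. push-forward} and compare --- and your treatment of the case $i<i_1(x)$ is correct. But the second case contains two genuine errors.

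First, it is \emph{not} true that $i_1(x)\le i\le i_x^{\mathrm{sp}}$ forces $i_1(x)=i_x^{\mathrm{sp}}$. Under the hypothesis $\rho>\omega|c|$ one always has $\omega|t|<r(x)$ (check both cases $\rho\gtrless|c|$), so there can perfectly well be indices $k$ with $\omega|t|<\R_k^{\Fs}(x)<r(x)$, i.e.\ $i_1(x)<k\le i_x^{\mathrm{sp}}$. The footnote you invoke concerns only the degenerate boundary situation; it is not the mechanism of the proof.

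Second, and more seriously, your proposed cancellation is wrong: for $k>i_1(x)$ one has $\R_k^{\Fs,\mathrm{sp}}(x)>\omega|t|$ strictly, so these radii certainly do not ``equal $\omega|t|$ in the relevant regime''. The actual simplification is purely algebraic and has nothing to do with the radii. Using $|p|=\omega^{p-1}$ one has $\ln(|p||t|^{p-1})=(p-1)\ln(\omega|t|)$ and $\ln(\omega^p|t|^p)=p\ln(\omega|t|)$, whence
\[
p\,i_1(x)\ln(|p||t|^{p-1})+(p-1)(r-i_1(x))\ln(\omega^p|t|^p)=p(p-1)r\,\ln(\omega|t|)=p\,r\,\ln(|p||t|^{p-1}),
\]
which is exactly $p\cdot d_i(x)\cdot\ln(|p||t|^{p-1})$ with $d_i(x)=r$. (Also watch the indexing: the first block runs over $k\le i_1(x)$, not $k<i_1(x)$, so there are $(p-1)(r-i_1(x))$ copies of $\omega^p|t|^p$, not $(p-1)(r-i_1(x)+1)$.) With this correction the second case goes through just as cleanly as the first.
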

\begin{proof}
Write $s_1^{\Fs}(x)\leq \cdots 
\leq s_{i_1(x)}^{\Fs}(x)
\leq \ln(\omega|t|) <
s_{i_1(x)+1}^{\Fs}(x)\leq
\cdots \leq 
s^{\Fs}_{i_x^{\mathrm{sp}}}(x)<\ln(\rho)\leq 
s^{\Fs}_{i_x^{\mathrm{sp}}+1}(x)\leq\cdots 
\leq s^{\Fs}_{r}(x)$. %
The behavior of spectral non solvable radii is given by 
Prop. \ref{prop: rad of Frob. push-forward}, so we have
\begin{eqnarray}
s^{\varphi_*\Fs}(\varphi(x)) &\;:\;& \phantom{\leq}
\overbrace{\ln\Bigl(|p||t|^{p-1}\Bigr)+s_{1}^{\Fs}(\xi)
=\cdots=\ln\Bigl(|p||t|^{p-1}\Bigr)+
s_{1}^{\Fs}(\xi)}^{p\textrm{-times}}
\leq\cdots\\&&
\leq\overbrace{\ln\Bigl(|p||t|^{p-1}\Bigr)+
s_{i_1(x)}^{\Fs}(\xi)=\cdots=
\ln\Bigl(|p||t|^{p-1}\Bigr)+
s_{i_1(x)}^{\Fs}(\xi)}^{p\textrm{-times}}\leq\\&&
\leq\overbrace{\ln
\Bigl(\omega^p|t|^p\Bigr)=\cdots=
\ln\Bigl(\omega^p|t|^p\Bigr)}^{(p-1)(r-i_1(x))\textrm{-times}}
<p s_{i_1(x)+1}^{\Fs}(\xi)\leq\cdots\leq 
p s_{i_x^{\mathrm{sp}}}^{\Fs}(\xi)<p\ln(\rho)\leq\cdots\qquad\quad
\label{eq : (5.15)}\end{eqnarray}
where $t$ is a Dwork generic point for $x$.
Hence for all $i< i_1(x)$ we have $h_{pi}^{\varphi_*\Fs}(\varphi(x)) = 
p\cdot h_{i}^{\Fs}(\xi)+p\cdot i\cdot \ln(|p||t|^{p-1})$. 
And if $i_1(x)\leq i \leq i_x^{\mathrm{sp}}$, then 
\begin{eqnarray}
h_{(p-1)r+i}^{\varphi_*\Fs}(\varphi(x)) 
&\;=\;& 
p\cdot h_{i}^{\Fs}(x)+p\cdot i_1(x)\cdot 
\ln(|p||t|^{p-1})+(p-1)(r-i_1(x))\ln(\omega^p|t|^p)\qquad\\
&\;=\;&p\cdot h_{i}^{\Fs}(x) + 
p\cdot r\cdot\ln(|p||t|^{p-1})\;.
\end{eqnarray}
This proves \eqref{eq : HM_i=HphiM_i}. 
\end{proof}
\begin{remark}
The proof shows also that if $i$ is a spectral non solvable index, 
then $i$ is a vertex of 
$NP^{\mathrm{conv}}(x,\Fs)$ (i.e. $i=r$ or 
$s^{\Fs}_i(x)<s^{\Fs}_{i+1}(x)$) 
if and only if 
$\phi(i,x)$ 
is a vertex of 
$NP^{\mathrm{conv}}(\varphi(x),\varphi_*(\Fs))$.
\end{remark}
\subsection{Behavior of the (spectral non solvable) slopes by 
Frobenius push-forward}\label{Harmonicity and Frob-push-frw}

We now study the behavior of the slopes of the radii along the germs 
of segments out of a point. 

We maintain the notations of Section 
\ref{Section : push-forward radii}. 
If $x\in X$ is a point of type 
$2$, $3$, or $4$, the local ring 
$\O_{X,x}$ is a differential field.
For $b\in\Delta(x)$, the slopes $\partial_b\R_1^{\Fs}(x),\cdots,
\partial_b\R_{i_x^{\mathrm{sp}}}^{\Fs}(x)$ 
of the spectral non solvable radii of $\Fs$ 
only depend on its restriction to 
$\O_{X,x}$. 
Indeed spectral radii are stable by localization, and if an index 
$i$ is spectral non solvable at $x$, by continuity 
(cf. Thm. \ref{thm: continuity of spectral radii along a branch}) 
it remains spectral non solvable 
over $b$. As above we distinguish the two situations of Proposition 
\ref{Prop : deg p}.

If we are in the situation i) of Proposition \ref{Prop : deg p}, then 
$\varphi$ is a trivial covering of disks (cf. Prop. \ref{prop : action of phi on disks}). 
So if $b$ is a germ of segment out of $x$, then for all 
$i=1,\ldots,i_x^{\mathrm{sp}}$ we have
\begin{equation}
\partial_b\R^{\Fs}_{i}(x)=
\partial_{\varphi(b)}\R^{\varphi_*(\Fs)}_{i}(\varphi(x))\;,\qquad
\partial_bH^{\Fs}_{i}(x)=
\partial_{\varphi(b)}H^{\varphi_*(\Fs)}_{i}(\varphi(x))\;.
\end{equation}
The Laplacians are then naturally identified.

In the situation of the point ii) of Proposition \ref{Prop : deg p}, 
the map $\varphi^\#:\O_{X^p,\varphi(x)}\to\O_{X,x}$
is a field extension of degree $p$. 
Let $b=]x,y[$ be a germ of segment out of $x$. We now compare the slopes $\partial_bH_i^{\Fs}$ with those of 
$\partial_bH_{\phi(i,x)}^{\varphi_*\Fs}$. 
We can restrict $]x,y[$ in order that the function $z\mapsto i_1(z)$ is constant over 
$b=]x,y[$. We call the corresponding quantities $i_1(b)$, $\phi(i,b)$, $d_i(b)$, $\ell_{i,b}$. 
For $i\leq i^{\mathrm{sp}}_x$ we have
\begin{equation}\label{eq : slopes}
\partial_{\varphi(b)}H_{\phi(i,b)}^{\varphi_*(\Fs)}
(\varphi(x))\;=\;
\partial_bH_i^{\Fs}(x)+
\partial_b|\ell_{i,b}|(x)\;.\footnote{The natural parametrization \eqref{eq : parametrization} 
of $\varphi(b)$ multiplies by $p$ the 
distances, while the exponent $1/p$ of $H_{\phi(i,b)}^{\varphi_*(\Fs)}
(\varphi(x))$ divides by $p$ the result. So globally we have \eqref{eq : slopes}.}
\end{equation}
Notice that we may have $\phi(i,x)\neq \phi(i,b)$, namely this can only happen if 
$\R_{i_1(x)}^{\Fs}(x)=\omega|t|$.
However it follows from \eqref{eq : (5.15)} that,  if $i\leq i_x^{\mathrm{sp}}$ is a \emph{vertex} at $x$ 
of the convergence newton 
polygon, then $\phi(i,x)=\phi(i,b)$ for all $b\in\Delta(x)$.
The same happens for $\ell_{i,x}$. We then denote them 
by $\phi(i)$ and $\ell_i$. 

Assume that $i\leq i_x^{\mathrm{sp}}$ is a vertex of  $NP^{\mathrm{conv}}(x,\Fs)$. 
Then, for all $b\in\Delta(x)$, we have
\begin{equation}\label{eq : variation of the ph by Fr}
\partial_{\varphi(b)}H_{\phi(i)}^{\varphi_*(\Fs)}
(\varphi(x))\;=\;
\partial_bH_i^{\Fs}(x)+
\partial_b|\ell_i|(x)\;.
\end{equation}
By \eqref{eq : phi:delta(x)->delta(phi(x))}, 
the directions out of $x$ coincide with those out of $\varphi(x)$. Moreover $\ell_i$ is harmonic, 
so the Laplacians are also identified (once we will prove that the sum defining the Laplacian is finite):
\begin{equation}\label{eq : dd^c preserved by Frobenius}
dd^cH_i^{\Fs}(x)\;=\;
dd^cH_{\phi(i)}^{\varphi_*(\Fs)}
(\varphi(x))
\;.
\end{equation}
\begin{proposition}\label{Prop : SH-well done spectral NS}
We allow the case where the valuation of $K$ is trivial on $\mathbb{Z}$. 
Let $x\in X$ be a point of type $2$, $3$, or $4$. 
If the index $i$ is free of solvability at $x$ 
(cf. Def. \ref{Def : index free of solvabl}), then 
\begin{enumerate}
\item the slopes of $\R^{\Fs}_1,\ldots,\R^{\Fs}_i$ and of 
$H_1^{\Fs},\ldots,H_i^{\Fs}$ are zero for almost but a finite number 
of germs of segments out of $x$;
\item $H_1^{\Fs},\ldots,H_i^{\Fs}$ are super-harmonic at $x$ and 
satisfy properties iii) and iv) of Proposition 
\ref{Prop. NP of a operator} around $x$ 
(cf. also Remark \ref{trspNP});
\item In particular if $i$ is a vertex of $NP^{\mathrm{conv}}(x,\Fs)$, 
then $H_i^{\Fs}(x)$ is harmonic at $x$.
\end{enumerate}
\end{proposition}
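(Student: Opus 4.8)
The plan is to reduce to the operator case treated in Proposition~\ref{Prop. NP of a operator} (and Remark~\ref{trspNP}) by applying Frobenius push-forward enough times to make all the radii $\R^{\Fs}_1(x),\ldots,\R^{\Fs}_i(x)$ small, i.e.\ spectral non solvable with spectral radius $<\omega r(\cdot)$, and then to descend the conclusions using the comparison formulas \eqref{eq : variation of the ph by Fr} and \eqref{eq : dd^c preserved by Frobenius}. First I would treat the mixed characteristic case $(0,p)$; the trivially/residually $0$-characteristic case can be included at the end since then $\omega=1$ and everything is already ``small'' (so one argues directly with the operator as in Proposition~\ref{Prop. NP of a operator}, invoking Proposition~\ref{Prop. : small radius} to identify $\R_i^{\Fs,\mathrm{sp}}$ with the slopes of $\L$).

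So assume $K$ has mixed characteristic $(0,p)$ and $K=\Ka$. Since $i$ is free of solvability at $x$, the indices $1,\ldots,i$ are all spectral non solvable or over-solvable at $x$; by continuity (Theorem~\ref{thm: continuity of spectral radii along a branch}) this persists on a small neighborhood, and by \eqref{eq : R_i VS R_isp along a segment} we only need to worry about the spectral non solvable ones (the over-solvable radii are locally constant, contributing $0$ to the slopes and to the Laplacian). After a finite base extension we may write $x=x_{c,\rho}$. The key point is that, by Remark~\ref{Rk : decrease the condition}, choosing $n$ large enough we have $\rho>\omega^{1/p^n}|c|$, so that $\varphi^k(x)$ falls into case~ii) of Proposition~\ref{Prop : deg p} for $k=0,\ldots,n$, and by Proposition~\ref{prop: rad of Frob. push-forward} each push-forward either raises a radius to the $p$-th power (if it is $>\omega|t|$) or multiplies it by $|p||t|^{p-1}$ (if it is $\leq\omega|t|$); either way, after finitely many steps all of $\R^{\Fs}_1,\ldots,\R^{\Fs}_i$ become $\leq\omega r(\cdot)$, hence correspond via Proposition~\ref{Prop. : small radius} to honest slopes of the differential operator $\L$ obtained from a cyclic vector (Section~\ref{Reduction to a cyclic}), after localizing to a sub-affinoid $X'$ that preserves the directions at $x$ and on which the coefficients $g_j$ of $\L$ have no zeros or poles. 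Over $X'$, Proposition~\ref{Prop. NP of a operator} (together with Remark~\ref{trspNP} for the truncated polygon) gives: finiteness of the non-zero slopes away from $0$; super-harmonicity of the truncated partial heights; and harmonicity at the vertices. Pulling these statements back through \eqref{eq : slopes}, \eqref{eq : variation of the ph by Fr} and \eqref{eq : dd^c preserved by Frobenius} — using that $\ell_{i,b}$ is harmonic and $\partial_b|\ell_{i,b}|$ contributes a fixed integer correction, and that $\varphi$ identifies $\Delta(x)$ with $\Delta(\varphi(x))$ by Corollary~\ref{Cor : equality of Delta(x)} (we are in the case $\rho\neq\omega|c|$) — yields (i), (ii) and (iii) for $\Fs$ at $x$. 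Note that before descending one must know the Laplacian sums are finite, which is exactly statement (i); so (i) is proved first, from the operator side, and then (ii)--(iii) follow.

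For statement (i) the argument is: the slopes of $H_j^{\Fs}$ at $x$, for $j\leq i$, differ by the fixed integers $\partial_b|\ell_{j}|$ from the slopes of $H^{\varphi^{\circ n}_*\Fs}_{\phi(j)}$, which in turn (after the cyclic reduction) are slopes of $|g_{\bullet}|^{\pm 1}$ for functions $g_\bullet\in\O(X')$ with no zeros — and such functions have non-zero slope only along the finitely many directions in $\Gamma_{X'}$. For (ii), I would apply Lemma~\ref{Lemma: G dominates F, so F super-H}: the interpolation argument in the proof of Proposition~\ref{Prop. NP of a operator}~iv) produces, around $x$, a super-harmonic function $G$ lying above $H_j^{\Fs}$ with $G(x)=H_j^{\Fs}(x)$, built from the vertices $\phi(j_1)<\phi(j)<\phi(j_2)$ of the push-forward polygon, which are vertices exactly when $j$ is. For (iii), at a vertex $j$ the corresponding partial height of the operator is $\omega^{\phi(j)}|g_{\phi(j)}|^{-1}$ on each germ, hence harmonic, and the correction $\ell_j$ is harmonic, so $H_j^{\Fs}$ is harmonic at $x$ by \eqref{eq : dd^c preserved by Frobenius}.

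\textbf{Main obstacle.} The delicate point is the bookkeeping around the boundary index $i_1(x)$: the functions $\phi(i,x)$, $d_i(x)$, $\ell_{i,x}$ may jump when passing from $x$ to a germ $b$ out of $x$, precisely when some $\R^{\Fs}_{i_1(x)}(x)=\omega|t|$. I expect to handle this the way it is flagged after \eqref{eq : slopes}: shrink $b$ so that $z\mapsto i_1(z)$ is constant on $b$, carry the ``$b$-versions'' $\phi(i,b),\ell_{i,b}$ of the invariants, and observe (from \eqref{eq : (5.15)}) that when $i\leq i_x^{\mathrm{sp}}$ is a vertex of $NP^{\mathrm{conv}}(x,\Fs)$ the jump cannot occur, so the clean identity \eqref{eq : variation of the ph by Fr} holds and the Laplacian identity \eqref{eq : dd^c preserved by Frobenius} makes sense. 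The other point requiring care is ensuring the cyclic-vector localization to $X'$ does not destroy the branch structure at $x$ nor the spectral non solvability — this is exactly what Proposition~\ref{Prop. restriction to a sub-affinoid} and Section~\ref{Reduction to a cyclic} are set up to guarantee, so it is a matter of quoting them correctly rather than a genuine difficulty.
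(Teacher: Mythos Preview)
Your overall strategy matches the paper's: reduce to Propositions~\ref{Prop. : small radius} and~\ref{Prop. NP of a operator} by iterated Frobenius push-forward, then descend via \eqref{eq : slopes}--\eqref{eq : dd^c preserved by Frobenius}. The handling of over-solvable radii, the interpolation-at-vertices argument, and the cyclic-vector localization are all correct and in line with the paper.

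There is, however, one concrete error. You write: ``by Remark~\ref{Rk : decrease the condition}, choosing $n$ large enough we have $\rho>\omega^{1/p^n}|c|$.'' This is false in general: since $\omega<1$, the sequence $\omega^{1/p^n}$ \emph{increases} to $1$ as $n\to\infty$, so the condition $\rho>\omega^{1/p^n}|c|$ becomes \emph{harder}, not easier, for large $n$; if $\rho<|c|$ it fails for all large $n$, and if $\rho\le\omega|c|$ it fails for every $n\ge 0$. Remark~\ref{Rk : decrease the condition} does not say this condition can be arranged --- it says that \emph{if} it holds then case~ii) persists for $n$ steps, and explicitly warns that case~ii) is not stable under iteration. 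The paper's fix is a one-line move you omitted: after extending $K$ so that $x$ is of type~$2$, perform a \emph{translation} to write $x=x_{0,\rho}$ with $c=0$. Then $\omega|c|=0<\rho$ and case~ii) of Proposition~\ref{Prop : deg p} holds for $\varphi^k(x)$ for \emph{all} $k\ge 0$, so one may push forward as many times as needed. This also makes your parenthetical ``we are in the case $\rho\neq\omega|c|$'' automatic for every iterate, and ensures $|t|=\rho=r(x)$, which is exactly what is needed for the ratio $\R_i^{\Fs,\mathrm{sp}}(x)/(\omega|t|)$ to be driven below~$1$ under iteration (since spectral non-solvability gives $\R_i^{\Fs,\mathrm{sp}}(x)<r(x)=|t|$).

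With this translation inserted, your argument is essentially the paper's proof.
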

\begin{proof}
Over-solvable radii are constant around $x$ by \eqref{(2)}, 
so they do not play any role, and we can assume $i\leq i_x^{\mathrm{sp}}$. 
Assume first that $K$ is of mixed characteristic $(0,p)$, with $p>0$.
The radii $\R^{\Fs}_i$ are insensitive to scalar extension, so 
replacing $K$ by a larger field we can assume that $x$ 
is of type $2$, and by a translation we can assume $x=x_{c,\rho}$, 
with $c=0$. 
This guarantee that for all $k\geq 0$, $\varphi^k(x)$ satisfies the 
situation ii) of Proposition \ref{Prop : deg p} 
(cf. Remark \ref{Rk : decrease the condition}). 
We apply Frobenius push-forward 
several times in order that $\R_{\phi^n(i)}^{\varphi_*^n\Fs}(\varphi^n(x))<\omega|t^{p^n}|$ (which is the 
assumption of  Proposition \ref{Prop. : small radius}). By continuity (cf. Remark \ref{Rk : (C2)+(C4)}) 
this assumption remains verified along all directions out of $\varphi^n(x)$.  
Now, by point (iv) of Proposition \ref{Prop. restriction to a sub-affinoid}, and by 
Section \ref{Reduction to a cyclic}, we can localize and pass to a cyclic basis without affecting the 
super-harmonicity, nor the directional finiteness.
In a cyclic basis the radii are explicitly intelligible by Propositions \ref{Prop. : small radius} and \ref{Prop. NP of a operator}. 
Now, by \eqref{eq : slopes}, for all $b\in\Delta(x)=\Delta(\varphi^n(x))$ 
the slope $\partial_bH_i^{\Fs}(x)$ appears among those in the family 
$\{\partial_{\varphi^n(b)}H_{j}^{\varphi^n_*\Fs}(\varphi^n(x))\}_j$. 
For almost all $b\in\Delta(x)$ these slopes are all zero by Propositions 
\ref{Prop. : small radius} and \ref{Prop. NP of a operator}, so i) holds. 
Moreover to prove the other statements we can assume, by interpolation, that $i$ is a vertex at $x$ of 
$NP^{\mathrm{conv}}(\Fs)$ 
(cf. proof of Proposition \ref{Prop. NP of a operator}), so we can use \eqref{eq : dd^c preserved by Frobenius} to reduce to 
Propositions \ref{Prop. : small radius} and \ref{Prop. NP of a operator}.

The case where the valuation is trivial on $\mathbb{Z}$ is much more 
easier. Indeed $\omega=1$, and we can immediately apply Propositions 
\ref{Prop. : small radius} and \ref{Prop. NP of a operator}, without involving any Frobenius machinery.
\end{proof}
\section{Proof of the main Theorem 
\ref{Theorem : MAIN THM GEN}}\label{Proof of the MTH}

The properties of Theorem \ref{Theorem : MAIN THM GEN} 
are invariant by scalar extension of the 
ground field $K$. 
So, from now on we assume that $K$ is algebraically 
closed, and spherically complete.

By Remark \ref{Def.: radius on a disk}, $\R^{\Fs}_i$ 
and $\R_i(-,\Fs)$ are closely related. Indeed the function 
$x\mapsto\rho_{x,X}$ is continuous, locally constant outside 
$\Gamma_X$, and with slope $+1$ on each segment in $\Gamma_X$ 
oriented as towards $+\infty$. As it is clearly stated in Theorem 
\ref{Theorem : MAIN THM GEN} 
each assertion about $\R_{i}(-,\Fs)$ and $H_i(-,\Fs)$ 
is equivalent to an assertion bout $\R_i^{\Fs}$ and $H_i^{\Fs}$. In 
the following we prove those assertions for $\R_i^{\Fs}$ and 
$H_i^{\Fs}$ since the super-harmonicity and localization 
properties are more easy.

We begin by describing the link between the graphs of the partial
heights and those of the radii.

\begin{remark}\label{Rk : defof RRM_i}
For $i\leq r$, let $\RR_i^{\Fs}:X\to\mathbb{R}^i$ be the function defined by 
\begin{equation}
\RR_i^{\Fs}(x)\;:=\;
(\R_1^{\Fs}(x),\ldots,\R_i^{\Fs}(x))\;.
\end{equation}
Defines analogously $\HH_i^{\Fs},\ss_i^{\Fs},\hh_i^{\Fs}$. Clearly 
$\rho_{\RR_i^{\Fs}}(x)=
\min_{j=1,\ldots,i}\rho_{\R_i^{\Fs}}(x)$, 
so that 
\begin{equation}\label{eq : Gamma_ikjhg}
\S(\RR_i^{\Fs})\;=\;
\bigcup_{j=1,\ldots,i}\S(\R_j^{\Fs}(x))\;=\;
\Gamma_i\;.
\end{equation}
Hence the finiteness of $\RR_r^{\Fs}$ is equivalent to the finiteness of 
all $\R_i^{\Fs}$. 
The same holds for $\HH_i^{\Fs},\ss_i^{\Fs},\hh_i^{\Fs}$. 

The maps $\RR_i^{\Fs}$ and $\HH_i^{\Fs}$ are the exponential of 
$\ss_i^{\Fs}$ and 
$\hh_i^{\Fs}$ respectively, and the exponential map is injective,
so we are reduced to prove the finiteness of 
$\ss_i^{\Fs}$ and $\hh_i^{\Fs}$. 
The functions $\ss_i^{\Fs}$ and $\hh_i^{\Fs}$ are related by the 
bijective map $\hh_i^{\Fs}(x)=U\cdot \ss_i^{\Fs}(x)$, 
where $U\in GL_r(\mathbb{Z})$ is the matrix $U=(u_{i,j})$ with 
$u_{i,j}=1$ if $i\geq j$ and $u_{i,j}=0$ otherwise. This proves that 
for all $i=1,\ldots,r$ (cf. Def. \ref{Def : Gamma_i})
\begin{equation}
\Gamma_i\;=\;\Gamma(\RR^{\Fs}_i)\;=\;
\Gamma(\HH^{\Fs}_i)\;=\;
\Gamma(\hh^{\Fs}_i)\;=\;
\Gamma(\ss^{\Fs}_i)\;.
\end{equation}
In particular the finiteness of all the partial heights is equivalent to that 
of all the radii.
\end{remark}

The proof of Theorem 
\ref{Theorem : MAIN THM GEN} consists in showing that 
$H^{\Fs}_i$ satisfy the properties (C1),\ldots,(C6) 
of section \ref{F_t defi}, plus the other claims of the theorem. 
We prove them by induction on $i$.

We already know, by Remark \ref{Rk : (C2)+(C4)}, 
that (C1), (C2), (C4) hold for $\R_i^{\Fs}$ and $H_i^{\Fs}$, and 
moreover that points ii) and iii) of 
Theorem \ref{Theorem : MAIN THM GEN} 
follow from the analogous claims for spectral radii (cf.
Thm. \ref{thm: continuity of spectral radii along a branch}).
Finally point v) of Theorem \ref{Theorem : MAIN THM GEN} 
is proved in point iii) 
of Proposition  \ref{Prop : SH-well done spectral NS}.

It remains to prove the weak super-harmonicity 
(i.e. point iv) of Theorem \ref{Theorem : MAIN THM GEN}), 
together with (C3) and (C5). This will implies the finiteness by Theorem 
\ref{th : finiteness theorem}. 

More precisely we prove, by induction on $i$, that $H_i^{\Fs}$ verifies  
(C3), (C5), (C6), with respect to $\Gamma:=\Gamma_{i-1}$  (where $\Gamma_0:=\Gamma_X$), 
and $\C(H_i^{\Fs}):=\C_i$ (which is finite by induction, by definition (b) and (c) of point iv) of Theorem 
\ref{Theorem : MAIN THM GEN}). 

By Proposition \ref{Prop : (C3) for R_1} we know that  
$H^{\Fs}_1=\R^{\Fs}_1$ satisfies (C3) with respect to $\Gamma:=\Gamma_X$.

\subsection{Property (C3) for $H_i^{\Fs}$}\label{perrohgfhxfd}
Let $D\subset X$ be an open disk on which 
$\R^{\Fs}_1,\ldots,\R^{\Fs}_{i-1}$ are constant with value 
$R_1,\ldots R_{i-1}$ i.e. $D\cap\Gamma_{i-1}=\emptyset$. 
Let $b_0:=1$, and if $1\leq k\leq i-1$ let 
$b_{k}:=\prod_{j=1}^{k}R_j$. 
Then $H_i^{\Fs}=b_{i-1}\cdot \R^{\Fs}_i$ over $D$. 
The functions $H_i^{\Fs}$ and $\R^{\Fs}_i$ then have the 
same properties over $D$. In particular
\begin{equation}\label{eq : GammaH_i=GammaR_i}
\Gamma(H_i^{\Fs})\cap D\;=\;\Gamma(\R_i^{\Fs})\cap D\;.
\end{equation}
The following proposition asserts that $\R^{\Fs}_i$ 
coincide over $D$ with the first radius $\R^{\Fs_{\geq i}}_1$ of a 
certain sub-module $\Fs_{\geq i}\subseteq\Fs_{|D}$ coming from 
Theorem \ref{deco on a disc}. 

This is the crucial property for the 
induction in the proof of Theorem \ref{Theorem : MAIN THM GEN}. It 
constitutes a generalization to higher radii of the Transfer principle 
(cf. Proposition \ref{Prop : (C3) for R_1}).
\begin{proposition}[Transfer principle]\label{Prop. : (C3)}
With the above setting two situations are possible over $D$:
\begin{enumerate}
\item The function $\R^{\Fs}_{i}$ is also constant on $D$;
\item $\R^{\Fs}_{i-1}(x)<\R^{\Fs}_i(x)$ for all $x\in D$, 
and one has a decomposition $\Fs_{|D}=\Fs_{\geq i}\oplus \Fs_{<i}$ 
as in Theorem \ref{deco on a disc} satisfying moreover 
$\R^{\Fs}_i(x)=\R^{\Fs_{\geq i}}_1(x)$ for all $x\in D$.
\end{enumerate}
In particular $\R^{\Fs}_i$ and $H_i^{\Fs}$ verify (C3) with respect to 
$\Gamma:=\Gamma_{i-1}$, and they both enjoy all the properties of 
a first radius of convergence outside $\Gamma_{i-1}$. 
\end{proposition}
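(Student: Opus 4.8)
The plan is to reduce to an honest open disk, split into two cases according to whether the $(i-1)$-st radius already ``catches'' the $i$-th one, and in the interesting case invoke Kedlaya's decomposition over a disk (Theorem \ref{deco on a disc}) to peel off a sub-module whose first convergence radius is exactly $\R^{\Fs}_i$. First I would reduce: by the Hypothesis $K$ is algebraically closed and spherically complete, and the radii are insensitive to scalar extension (Remark \ref{Def.: radius on a disk}(3) and Proposition \ref{Invariance of (C1)--(C6) by scalar extension of the ground field}), so after enlarging $K$ I may assume $D=D^-(c,\rho)$ with $c\in K$. Since $\Gamma_X=\Gamma_0\subseteq\Gamma_{i-1}$, the disk $D$ meets $\Gamma_X$ in the empty set, so its main branch $\{x_{c,\rho'}\}_{\rho'\in]0,\rho[}$ and every complete sub-segment inside $D$ is the skeleton of an open annulus in $X$. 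Writing $R_j:=\R^{\Fs}_j|_D$ for $j\le i-1$, one has $R_1\le\cdots\le R_{i-1}$, $\R^{\Fs}_i\ge R_{i-1}$ on $D$, and $H_i^{\Fs}=(R_1\cdots R_{i-1})\cdot\R^{\Fs}_i$, so $H_i^{\Fs}$ and $\R^{\Fs}_i$ have the same skeleton and the same local behaviour over $D$; it therefore suffices to treat $\R^{\Fs}_i$.

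Next I would dispose of a cheap case: if $R_j\ge\rho$ for some $j\le i-1$, then $\R^{\Fs}_i(x)\ge R_{i-1}\ge\rho\ge\rho_{x,X}$ forces $\R^{\Fs}_i(x)=\rho$ for all $x\in D$, which is situation (1). So I may assume $R_j<\rho$ for all $j<i$; then $\R^{\Fs,\mathrm{sp}}_j(x_{c,\rho'})=\min(R_j,\rho')=R_j$ for $\rho'$ near $\rho$, so $h_{i-1}^{\Fs,\mathrm{sp}}$ is constant on a germ $]x_{c,\rho-\varepsilon},x_{c,\rho}[$ — the first hypothesis of Theorem \ref{deco on a disc}. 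The core step (the dichotomy) is then: \emph{if $\R^{\Fs}_i$ is not constant on $D$, then $\R^{\Fs}_{i-1}<\R^{\Fs}_i$ everywhere on $D$}. I would argue by contradiction: if $\R^{\Fs}_i(x_0)=R_{i-1}$ for some $x_0\in D$, then using the profile of $\R^{\Fs}_i$ along segments \eqref{eq : R_i VS R_isp along a segment} (constant up to $\R^{\Fs}_i(x_0)$, then equal to $\R^{\Fs,\mathrm{sp}}_i$), the log-concavity of $\R^{\Fs,\mathrm{sp}}_i$ along annulus skeletons and its monotonicity toward the boundary in the spectral non-solvable range (Theorem \ref{thm: continuity of spectral radii along a branch}(i),(iv)), and the constancy of $\R^{\Fs}_i$ on the sub-disk $D(x_0,\R^{\Fs}_i(x_0))$ (Remark \ref{Def.: radius on a disk}(6)), one propagates $\R^{\Fs}_i\equiv R_{i-1}$ from $x_0$ along $[x_0,x_D[$ and then, via the tree structure of $D$, to every sub-disk of $D$, contradicting non-constancy. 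Once this is known, $s_{i-1}^{\Fs,\mathrm{sp}}(x_{c,\rho'})=\ln R_{i-1}<\ln\min(\R^{\Fs}_i(x_{c,\rho'}),\rho')=s_i^{\Fs,\mathrm{sp}}(x_{c,\rho'})$ near $\rho$, which is the second hypothesis of Theorem \ref{deco on a disc}.

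Then I would apply Theorem \ref{deco on a disc}, getting $\Fs_{|D}=\Fs_{\ge i}\oplus\Fs_{<i}$ with $\mathrm{rk}\,\Fs_{<i}=i-1$, $\mathrm{rk}\,\Fs_{\ge i}=r-i+1$, and the stated matching of spectral slopes near $x_D$; and I would identify the radii globally on $D$ as follows. By Proposition \ref{Prop. direct sum then ok with radii}, at each $x\in D$ the multiset $\{\R^{\Fs}_j(x)\}_{j\le r}$ is the disjoint union of $\{\R^{\Fs_{<i}}_k(x)\}_{k\le i-1}$ and $\{\R^{\Fs_{\ge i}}_m(x)\}_{m\le r-i+1}$. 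If $\R^{\Fs_{\ge i}}_1(x)\le R_{i-1}$ then $\{\R^{\Fs}_j(x)\}$ would contain at least $i$ values $\le R_{i-1}$ (namely $R_1,\dots,R_{i-1}$ and $\R^{\Fs_{\ge i}}_1(x)$), forcing $\R^{\Fs}_i(x)\le R_{i-1}$, contradicting the dichotomy; hence $\R^{\Fs_{\ge i}}_1(x)>R_{i-1}$, so the radii of $\Fs_{\ge i}$ are precisely the $r-i+1$ largest elements of the combined multiset, i.e. $\R^{\Fs}_i(x)=\R^{\Fs_{\ge i}}_1(x)$ for all $x\in D$. This settles (2). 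Finally, in case (1) $\R^{\Fs}_i$ and $H_i^{\Fs}$ are constant on $D$, hence trivially log-concave and carry every property of a first radius; in case (2) $\R^{\Fs}_i|_D=\R^{\Fs_{\ge i}}_1$ is the first convergence radius of the differential module $\Fs_{\ge i}$ over $D$, so it satisfies property (C3) and all the standard first-radius properties by Proposition \ref{Prop : (C3) for R_1} (and Remark \ref{Def.: radius on a disk}(6) for constancy on sub-disks), and $H_i^{\Fs}=(R_1\cdots R_{i-1})\cdot\R^{\Fs_{\ge i}}_1$ inherits them. Since $\rho_{\Gamma_{i-1}}(x)=\rho$ for $x\in D$, concavity of $L_x\R^{\Fs}_i$ (resp. $L_xH_i^{\Fs}$) on $]-\infty,\ln\rho[$ follows, which is (C3) with respect to $\Gamma=\Gamma_{i-1}$.

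The hard part will be Step~3, the dichotomy: away from solvable points it is a clean Dwork-type transfer argument, but controlling the interaction between the over-solvable, solvable and spectral loci of $\R^{\Fs}_i$ along $D$ — in particular ruling out a profile in which $\R^{\Fs}_i$ drops back to the value $R_{i-1}$ as one approaches $\partial D$ — is where the real work is, and it is precisely the phenomenon responsible for the finite exceptional sets $\C_i$ in Theorem \ref{Theorem : MAIN THM GEN}. Everything else is bookkeeping with Proposition \ref{Prop. direct sum then ok with radii}, Theorem \ref{deco on a disc} and the already-established first-radius results.
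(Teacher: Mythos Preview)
Your outline has the right ingredients (Theorem~\ref{deco on a disc}, the direct-sum multiset identity, and the first-radius transfer), but you run the logic in the wrong order, and this produces two concrete gaps.

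\textbf{Gap in the dichotomy (your Step~3).} Your propagation argument does not exclude the following profile: $\R^{\Fs}_i(c')=R_{i-1}$ at some $c'\in D(K)$, while $\R^{\Fs}_i(x_{c',\rho'})>R_{i-1}$ for all $\rho'\in]R_{i-1},\rho[$ (for instance $\R^{\Fs}_i(x_{c',\rho'})=\rho'$ on $[R_{i-1},\rho_1]$, solvable, then spectral non-solvable beyond). Here $\R^{\Fs}_i$ is non-constant yet touches $R_{i-1}$. Your tools do not rule this out: the value $R_{i-1}$ sits at the \emph{left endpoint} of the interval $]R_{i-1},\rho[$, so the ``interior minimum of a concave function'' argument is unavailable; and the monotonicity you invoke (Theorem~\ref{thm: continuity of spectral radii along a branch}(iv), or point (iii.c) of Theorem~\ref{Theorem : MAIN THM GEN}) only applies where the index is \emph{free of solvability}, which fails on the solvable stretch. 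You correctly flag Step~3 as the hard part, but the difficulty is the opposite of what you describe: ``dropping back to $R_{i-1}$ near $\partial D$'' is easy to exclude by concavity; rising from $R_{i-1}$ at the centre is what your argument misses.

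\textbf{Gap in the identification (your Step~4).} Your count ``$R_1,\dots,R_{i-1}$ and $\R^{\Fs_{\ge i}}_1(x)$ give $i$ values $\le R_{i-1}$'' is invalid: $\R^{\Fs_{\ge i}}_1(x)$ is already one of the $\R^{\Fs}_j(x)$ via the multiset equality of Proposition~\ref{Prop. direct sum then ok with radii}, and if it is $\le R_{i-1}$ it \emph{is} one of the $R_k$, not an additional element. Your count tacitly assumes all of $R_1,\dots,R_{i-1}$ come from $\Fs_{<i}$, which is precisely what has to be proved.

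The paper avoids both issues by reversing the order. It only needs $\R^{\Fs}_{i-1}<\R^{\Fs}_i$ on a germ $]x_{c,\rho-\varepsilon},x_{c,\rho}[$ to trigger Theorem~\ref{deco on a disc}; this follows by choosing $c$ with $R:=\R^{\Fs}_i(c)>R_{i-1}$ (such $c$ exists by non-constancy) and using concavity on $]x_{c,R},x_{c,\rho}[$. Once $\Fs_{|D}=\Fs_{\ge i}\oplus\Fs_{<i}$ is in hand, the paper applies the \emph{first-radius} transfer (Proposition~\ref{Prop : (C3) for R_1}) to $\R^{\Fs_{\ge i}}_1$: it is $\log$-concave with non-positive slopes on $[x,x_{c,\rho}[$, so $\R^{\Fs_{\ge i}}_1(x)\ge\R^{\Fs_{\ge i}}_1(x_{c,\rho'})=\R^{\Fs}_i(x_{c,\rho'})>R_{i-1}$ for every $x\in D$. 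Then the \emph{submodule} inclusion $\Fs_{\ge i}\subset\Fs$ (Remark~\ref{Def.: radius on a disk}(7)) forces the radii of $\Fs_{\ge i}$ into $\{\R^{\Fs}_i(x),\dots,\R^{\Fs}_r(x)\}$, and equipotence gives equality. The dichotomy $\R^{\Fs}_{i-1}(x)<\R^{\Fs}_i(x)$ drops out as a corollary, not a hypothesis.
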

\begin{proof}
Assume that $\R^{\Fs}_i$ is not constant over $D$. In this case 
we have $\R^{\Fs}_k=\R^{\Fs_{|D}}_k$ over $D$ for all $k\leq i$. 
Indeed, by \eqref{(2)}, the non constancy gives for all $x\in D$, 
$\R^{\Fs}_k(x)\leq \R_{i}^{\Fs}(x)\leq 
\rho_{\R_{i}^{\Fs}}(x)<\rho$, where $\rho$ is the radius of $D$. 
Hence $\R_k^{\Fs_{|D}}(x)=\min(\R_k^{\Fs}(x),\rho)=
\R_k^{\Fs}(x)$.

The functions 
$H_i^{\Fs}$ and $\R^{\Fs}_i$ have the same slopes over $D$ 
because $H_i^{\Fs}=b_{i-1}\R^{\Fs}_i$. 
So $\R^{\Fs}_i$ 
verifies the concavity property (b) of point iii) of Theorem 
\ref{Theorem : MAIN THM GEN} which we have already proved. 

Hence, if $c\in D$ is a $K$-rational point, and if $R:=\R_i^{\Fs}(c)$, 
then along the segment 
$]x_{c,R},x_{c,\rho}[$ we must have 
$\R^{\Fs}_i>\R^{\Fs}_{i-1}$, because $\R_{i}^{\Fs}$ is concave 
 on it, while $\R^{\Fs}_{i-1}$ is constant on $D$.

Now,  by \eqref{eq : R_i VS R_isp along a segment}, 
$\R_i^{\Fs}$ is spectral 
along $]x_{c,R},x_{c,\rho}[$, 
hence $\R_{i-1}^{\Fs}$ is spectral non solvable on it.
So, by Theorem \ref{deco on a disc}, there exists a unique direct sum 
decomposition 
$\Fs_{|D}=\Fs_{\geq i}\oplus \Fs_{<i}$ 
such that for all $x\in]x_{c,R},x_{c,\rho}[$ one has 
$\R^{\Fs_{\geq i}}_{k-i+1}(x)=
\R^{\Fs}_{k}(x)$, 
for $k=i,\ldots,r$, and $\R^{\Fs_{<i}}_{k}(x)=
\R^{\Fs}_{k}(x)$, 
for $k=1,\ldots,i-1=\mathrm{rank}(\Fs_{<i})$. 

We now prove that these equalities hold for all $x\in D$, 
the claim  will then follow.

By Proposition \ref{Prop. direct sum then ok with radii} the convergence 
radii of $\Fs$ at $x$ are the union (with multiplicities) of those of 
$\Fs_{\geq i}$ and of 
$\Fs_{<i}$. So it is enough to prove that for all $x\in D$ 
one has $\R_{i-1}^{\Fs_{<i}}(x)< \R^{\Fs_{\geq i}}_{1}(x)$. 
\begin{lemma}
Let $x\in D$. If $\R_{i-1}^{\Fs}(x)< \R^{\Fs_{\geq i}}_{1}(x)$, 
then 
$\R_{i-1}^{\Fs_{<i}}(x)< \R^{\Fs_{\geq i}}_{1}(x)$.
\end{lemma}
\begin{proof}
Since $\Fs_{\geq i}\subseteq\Fs$, the assumption implies  
$\{\R^{\Fs_{\geq i}}_{1}(x),\ldots,
\R^{\Fs_{\geq i}}_{r-i+1}(x)\}\subset
\{\R^{\Fs}_{i}(x),\ldots,\R^{\Fs}_{r}(x)\}$. 
Moreover by point (7) of Remark \ref{Def.: radius on a disk},
the two multi-sets must coincide since they are 
equipotent. By difference, this implies that 
$\R^{\Fs_{<i}}_{k}(x)=\R^{\Fs}_k(x)$, for all $k\leq i-1$. 
\end{proof}
\if{More precisely it is enough to prove that for all $x\in D$ one has 
$\R_{i-1}^{\M}(x)< \R^{\N}_{1}(x)$. Indeed
we must have 
$\{\R^{\N}_{1}(x),\ldots,\R^{\N}_{r-i+1}(x)\}\subset
\{\R^{\M}_{i}(x),\ldots,\R^{\M}_{r}(x)\}$. 
Since the radii of $\N$ appears in the list at most with the multiplicity 
of those of $\M$ the two multi-sets must coincide. This also implies, 
by difference, that 
$\R^{\mathrm{Q}}_{j}(x)=\RM_j(x)$, for all $j\leq i-1$. 
 }\fi
We now prove that $\R_{i-1}^{\Fs}(x)< \R^{\Fs_{\geq i}}_{1}(x)$, 
for all $x\in D$. 
Since the radii are insensitive to scalar extensions of 
$K$, we can assume that $x$ is $K$-rational. 

Now $\R^{\Fs_{\geq i}}_{1}$ is $\log$-concave with non positive 
$\log$-slopes along $[x,x_{c,\rho}[$. 
Then for all $\rho'$ close enough to $\rho$ one has 
$\R^{\Fs_{\geq i}}_1(x)\geq \R^{\Fs_{\geq i}}_1(x_{c,\rho'})=
\R^{\Fs}_{i}(x_{c,\rho'})>
\R^{\Fs}_{i-1}(x_{c,\rho'})=\R^{\Fs}_{i-1}(x)$ as desired. 
\if{
On the other hand since $H^{\M_2}_{j}$ verifies (WC3), for all $j=1,\ldots,i-1$, and induction on $j$ shows that $H^{\M_2}_j$
then for all $\rho'$ close enough to $\rho$ one has $\R^{\M_2}_{i-1}(t)\leq\R^{\M_2}_{i-1,t}(\rho')=\RM_{i-1,t}(\rho')=\RM_{i-1}(t)$. 
....

Let $\rho_1:=\max(\rho'\leq\rho,\;\textrm{ such that }\RM_{i,t}(\rho')\geq\rho')$. 
Since $\RM_i$ is non constant, then $\RM_{i-1}(t)<\rho_1$. 
Since $\RM_{i,t}=\R_{1,t}^{\M_1}$ over the segment $]\RM_{i-1}(t),\rho_1[$ this proves that $\RM_i$ is concave at $\rho_1$ 
and hence that $\rho_1=\RM_i(t)$. So $\RM_{t,i}$ is constant on $[0,\rho_1]$ by \eqref{...E.E.} 
as well as $\R^{\M_1}_{1,t}$, and hence $\RM_{i,t}=\R^{\M_1}_{1,t}$ all along $[0,\rho]$.

 $\R^{\M_1}_{k,t}(\rho')=\RM_{i+k-1,t}(\rho')$ for all $\rho'\in]\RM_{i-1}(t),\rho_1[$, and all $k=1,\ldots,r_1$, 
this proves that $\RM_{i}(t)=\rho_1$ and by \eqref{...E.E.} there is no intervals $J\subseteq ]0,\rho[$ on which $\RM_{i,t}(\rho')=\rho'$ for all 
$\rho'\in J$. 
Now by Proposition \ref{Prop. direct sum then ok with radii} the radii of $\M$ at all $\xi'$ 
are the union of those of $\M_1$ and of $\M_2$, for 
}\fi
\end{proof}

\begin{remark}
By Prop. \ref{Prop : negative slope}
we have
$\Gamma(\R^{\Fs}_i)\cap D\neq\emptyset$ 
if and only if 
$\partial_b\R^{\Fs}_{i}(x)>0$, where $x$ is the point at the boundary 
of $D$, and $b$ is the germ of segment out of $x$ lying in $D$ 
oriented as inside $D$.
\end{remark}

\subsection{Finiteness (C5) and super-harmonicity (C6)}
The following crucial lemma describes the locus of points 
where $\R^{\Fs}_i$ is solvable or over-solvable:
\begin{lemma}\label{Lemma : KEY . .!}
If $\R^{\Fs}_i(x)\geq r(x)$, then either  
$x\notin \Gamma(\R^{\Fs}_i)$ or, if $x\in\Gamma(\R_i^{\Fs})$, then:
\begin{enumerate}
\item If $x\in\Gamma(\R^{\Fs}_i)-\Gamma_{i-1}$, then $x$ 
is a boundary point of $\Gamma(\R^{\Fs}_i)$;
\item If $x\in\Gamma_{i-1}\cap\Gamma(\R^{\Fs}_i)$, then 
$\Delta(x,\Gamma(\R^{\Fs}_i))\subseteq\Delta(x,\Gamma_{i-1})$.
\end{enumerate}
\end{lemma}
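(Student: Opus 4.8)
The plan is to reduce both assertions to one local statement about germs of segments out of $x$, and to establish that statement by combining the Transfer principle (Proposition \ref{Prop. : (C3)}) with the elementary fact that the generic radius $r$ decreases strictly as soon as one moves into a virtual disk hanging below a point.

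First I would clear the trivial situations and reformulate. If $x\notin\Gamma(\R^{\Fs}_i)$ the first alternative holds; this covers in particular the over-solvable case, since $\R^{\Fs}_i(x)>r(x)$ forces $\rho_{\R^{\Fs}_i}(x)\geq\R^{\Fs}_i(x)>r(x)$ by \eqref{(2)}, hence $x\notin\Gamma(\R^{\Fs}_i)$ by Proposition \ref{prof propp}. So assume $x\in\Gamma(\R^{\Fs}_i)$; then $i$ is spectral at $x$ by Remark \ref{Rk : radius is spectral over controlling graph}, so $\R^{\Fs}_i(x)=r(x)$. Both (i) and (ii) will follow from the single claim $(\star)$: \emph{no germ $b$ out of $x$ that is contained in a virtual open disk $D_b\subset X$ with relative boundary $\{x\}$ and with $D_b\cap\Gamma_{i-1}=\emptyset$ can belong to $\Gamma(\R^{\Fs}_i)$}. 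Indeed, if $x\in\Gamma_{i-1}$, then — since $X-\Gamma_{i-1}$ is a disjoint union of virtual open disks, each with a single boundary point — every germ in $\Delta(x)-\Delta(x,\Gamma_{i-1})$ is of this kind, and $(\star)$ yields $\Delta(x,\Gamma(\R^{\Fs}_i))\subseteq\Delta(x,\Gamma_{i-1})$, which is (ii). If $x\notin\Gamma_{i-1}$ — hence $x\notin\Gamma_X$ — then $x$ lies in the interior of a component $D$ of $X-\Gamma_{i-1}$, all the virtual disks pendant at $x$ are contained in $D$ (to leave $D$ one must move up along $\Lambda(x)$), so every germ out of $x$ except the germ $b_\infty$ of $\Lambda(x)$ is of the kind in $(\star)$; hence $\Delta(x,\Gamma(\R^{\Fs}_i))\subseteq\{b_\infty\}$, i.e. $x$ is a boundary point of $\Gamma(\R^{\Fs}_i)$, which is (i).

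To prove $(\star)$ I would argue by contradiction. Suppose such a $b$ lies in $\Gamma(\R^{\Fs}_i)$; shrinking its representative we may assume $]x,y[\,\subset D_b\cap\Gamma(\R^{\Fs}_i)$ for a representative $]x,y[\,\in b$. Fix $z\in\,]x,y[$, so $z\in\Gamma(\R^{\Fs}_i)\cap D_b$. On $D_b$ the functions $\R^{\Fs}_1,\dots,\R^{\Fs}_{i-1}$ are constant (each is constant on any virtual disk disjoint from its controlling graph, and $D_b\cap\Gamma_{i-1}=\emptyset$), while $\R^{\Fs}_i$ is non-constant on $D_b$ because it is non-constant near $z\in\Gamma(\R^{\Fs}_i)$ (Proposition \ref{prof propp}). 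Hence case (ii) of the Transfer principle (Proposition \ref{Prop. : (C3)}) holds over $D_b$, giving $\Fs_{|D_b}=\Fs_{\geq i}\oplus\Fs_{<i}$ with $\R^{\Fs}_i=\R^{\Fs_{\geq i}}_1$ on $D_b$; in particular, by the transfer inequality for first radii (Proposition \ref{Prop : (C3) for R_1}), $\R^{\Fs}_i(w_1)\geq\R^{\Fs}_i(w_2)$ whenever $w_1\leq w_2$ in $D_b$. Writing $D_b=D^-(c,\rho_b)$ with $c\in K$ (possible since $K=\Ka$), we have $x=x_{c,\rho_b}$, $r(x)=\rho_b$, and $r(z)<\rho_b=r(x)$ (Lemmas \ref{rho gen as inf Kbar} and \ref{rhogen titi}). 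Since $x$ lies on $\Lambda(z)$ strictly above $z$, write $x=\lambda_z(\sigma)$ with $\sigma>r(z)$; then for every $\rho\in\,]r(z),\sigma[$ the point $w=\lambda_z(\rho)$ lies in $\,]z,x[\,\subset D_b$ with $z\leq w$, so $\R^{\Fs}_i(z)\geq\R^{\Fs}_i(\lambda_z(\rho))$. As $\rho\to\sigma^-$, continuity of $\rho\mapsto\R^{\Fs}_i(\lambda_z(\rho))$ at $\sigma$ — which is property (C2) at the point $z$ (Remark \ref{Rk : (C2)+(C4)}) — gives $\R^{\Fs}_i(z)\geq\R^{\Fs}_i(x)=r(x)$. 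But $i$ is spectral at $z\in\Gamma(\R^{\Fs}_i)$ (Remark \ref{Rk : radius is spectral over controlling graph}), so $r(z)\geq\R^{\Fs}_i(z)\geq r(x)>r(z)$, a contradiction. This proves $(\star)$ and the lemma.

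The delicate step is the last limit, where I compare $\R^{\Fs}_i(z)$ with the value of $\R^{\Fs}_i$ ``at the boundary'' $x$ of the pendant disk $D_b$: branch-continuity is of no direct help here, as it concerns the path $\lambda_x$ (which leaves $D_b$), so I instead approach $x$ along $\lambda_z$, for which $\,]z,x[$ is an initial piece of $\Lambda(z)$, and invoke property (C2) at $z$. Everything else is bookkeeping about the tree structure of $X$ and the admissible graph $\Gamma_{i-1}$, in particular the fact that the pendant disks at a point $x\notin\Gamma_{i-1}$ remain inside the $\Gamma_{i-1}$-component of $x$.
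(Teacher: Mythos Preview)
Your proof is correct and follows the same overall strategy as the paper: both reduce to showing that $\R^{\Fs}_i$ is constant on every virtual open disk $D$ with boundary $x$ satisfying $D\cap\Gamma_{i-1}=\emptyset$, and both invoke Proposition~\ref{Prop. : (C3)} to get concavity/monotonicity on such a $D$.

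The execution differs. You argue by contradiction, pass through case~(ii) of the Transfer principle to identify $\R^{\Fs}_i$ on $D_b$ with a first radius, use transfer monotonicity plus a (C2)-limit to push the inequality $\R^{\Fs}_i(z)\geq\R^{\Fs}_i(x)=r(x)$ from the interior out to the boundary, and then contradict the spectral bound $\R^{\Fs}_i(z)\leq r(z)<r(x)$ at a point $z\in\Gamma(\R^{\Fs}_i)\cap D_b$. The paper is more direct: it picks any rational $c\in D$, uses the (C3)-concavity (the conclusion of Proposition~\ref{Prop. : (C3)}, valid in both cases (i) and (ii)) to get $\R^{\Fs}_i(c)\geq\R^{\Fs}_i(x)\geq r(x)$, and then observes that since $r(x)$ is precisely the radius of $D$, the constancy-radius bound \eqref{(2)}, namely $\rho_{\R^{\Fs}_i}(c)\geq\R^{\Fs}_i(c)$, forces $\R^{\Fs}_i$ to be constant on all of $D$. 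This avoids the contradiction, the explicit decomposition $\Fs_{\geq i}\oplus\Fs_{<i}$, and the separate limit discussion you flag as delicate. Your limit step is in fact fine (continuity of $L_z\R^{\Fs}_i$ at $\ln(r(x))$ is exactly (C2)), but it is worth noting that the single inequality \eqref{(2)} already encodes the same information and closes the argument in one line.
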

\begin{proof}
Assume $x\in\Gamma(\R_i^{\Fs})$. It is enough to prove that $\R^{\Fs}_i$ is constant on each open disk 
$D\subset X$ with boundary $x$ such that 
$D\cap \Gamma_{i-1}=\emptyset$. Let $c\in D$ be a rational point.
By Proposition \ref{Prop. : (C3)} the function $\R^{\Fs}_i$ enjoys 
concavity properties on $D$, so 
$\R^{\Fs}_i(c)\geq\R^{\Fs}_i(x)\geq r(x)$. 
Since $r(x)$ coincides with the radius of $D$, this means that 
$\R^{\Fs}_i$ is constant on $D$ by 
\eqref{(2)}.
\end{proof}
\if{
An easy induction proves the following claim
\begin{lemma}
Let $D\subset X$ be an open disk with boundary $x\in X$, and let $b$ 
be the germ of segment out of $x$ inside $D$. 
Let $\RM_1,\ldots,\RM_i$ be the list of the radii that are spectral non 
solvable at $x$. If $\partial_b\RM_j(x)=0$ for all $j=1,\ldots,i$, 
then $\RM_1,\ldots,\RM_r$ are all constant on $D$.\hfill $\Box$
\end{lemma}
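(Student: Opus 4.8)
The plan is to prove, by induction on $j\in\{0,1,\ldots,r\}$, the stronger statement that $\R^{\Fs}_1,\ldots,\R^{\Fs}_j$ are all constant on $D$, equivalently that $D\cap\Gamma_j=\emptyset$ (with $\Gamma_0:=\Gamma_X$); the case $j=r$ is the assertion of the lemma. I would first record two elementary facts used throughout: the boundary point $x$ is necessarily of type $2$ or $3$ (a point of type $1$ or $4$ is a leaf of the tree and bounds no open disk), so by Lemma \ref{rhogen titi} the radius of $D$ equals $r(x)$; and $D\cap\Gamma_X=\emptyset$, because every point of $D$ has $D$ itself as a neighborhood isomorphic to an open disk. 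The base case $j=0$ is the second fact.

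For the inductive step, assume $\R^{\Fs}_1,\ldots,\R^{\Fs}_{j-1}$ are constant on $D$, i.e. $D\cap\Gamma_{j-1}=\emptyset$. By Remark \ref{Rk : (C2)+(C4)} the function $\R^{\Fs}_j$ satisfies (C1) and (C2), and by the Transfer principle (Proposition \ref{Prop. : (C3)}) it satisfies (C3) with respect to $\Gamma_{j-1}$ and enjoys all the properties of a first radius of convergence outside $\Gamma_{j-1}$. If $j\le i$, then the index $j$ is spectral non solvable at $x$ and $\partial_b\R^{\Fs}_j(x)=0$ by hypothesis; since $D\cap\Gamma_{j-1}=\emptyset$, Proposition \ref{Prop : negative slope} applied to $\F=\R^{\Fs}_j$ with $\Gamma=\Gamma_{j-1}$ yields $\Gamma(\R^{\Fs}_j)\cap D=\emptyset$, so $\R^{\Fs}_j$ is constant on $D$. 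If instead $j>i$, then the index $j$ is solvable or over-solvable at $x$, so $\R^{\Fs}_j(x)\ge r(x)$, which is the radius of $D$; since $D\cap\Gamma_{j-1}=\emptyset$, the argument in the proof of Lemma \ref{Lemma : KEY . .!} applies unchanged: for a rational point $c\in D$, the concavity of $\R^{\Fs}_j$ on $D$ furnished by Proposition \ref{Prop. : (C3)} gives $\R^{\Fs}_j(c)\ge\R^{\Fs}_j(x)\ge r(x)$, whence by \eqref{(2)} one has $\rho_{\R^{\Fs}_j}(c)\ge\R^{\Fs}_j(c)\ge r(x)=$ radius of $D$, so the largest disk about $c$ on which $\R^{\Fs}_j$ is constant contains $D=D^-(c,r(x))$. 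In either case $D\cap\Gamma_j=\emptyset$, which closes the induction.

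The one point requiring attention — and the reason the statement is proved for all $r$ radii simultaneously rather than radius by radius — is that property (C3) for $\R^{\Fs}_j$ is available only relative to $\Gamma_{j-1}$, so one must already know that the lower radii are constant on $D$ before treating $\R^{\Fs}_j$; the vanishing hypothesis $\partial_b\R^{\Fs}_j(x)=0$ enters precisely (and only) for the indices $j\le i$, while for the solvable and over-solvable indices $j>i$ constancy on $D$ is automatic from $\R^{\Fs}_j(x)\ge r(x)$ together with Lemma \ref{Lemma : KEY . .!}. I do not expect any genuine obstacle: the proof is bookkeeping assembled from Propositions \ref{Prop. : (C3)} and \ref{Prop : negative slope} and Lemma \ref{Lemma : KEY . .!}.
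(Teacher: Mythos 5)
Your proof is correct and follows essentially the route the paper intends: the paper gives no written argument beyond the remark that ``an easy induction proves the following claim'', and your induction on $j$ — combining the Transfer principle (Proposition \ref{Prop. : (C3)}) with Proposition \ref{Prop : negative slope} for the indices $j\le i$ where the slope hypothesis enters, and the disk argument from the proof of Lemma \ref{Lemma : KEY . .!} together with \eqref{(2)} for the solvable and over-solvable indices $j>i$ — is exactly that induction spelled out. No gap to report.
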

}\fi
The following statement will be the base case of our induction in the 
proof of Theorem \ref{Theorem : MAIN THM GEN}.
\begin{proposition}\label{Case i=1}
Theorem \ref{Theorem : MAIN THM GEN} holds for $\R^{\Fs}_1$.
\end{proposition}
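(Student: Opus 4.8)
The plan is to check that $F:=H_1^{\Fs}=\R_1^{\Fs}$ satisfies the six conditions (C1)--(C6) of Section \ref{F_t defi} with respect to $\Gamma:=\Gamma_0=\Gamma_X$ and with the finite exceptional set $\C(F):=S_X$, together with points (ii)--(v) of Theorem \ref{Theorem : MAIN THM GEN}; finiteness then follows from Theorem \ref{th : finiteness theorem}. Several points are free: (C1) holds because the Taylor solution matrix at a $K^{\mathrm{alg}}$-rational point has a positive radius of convergence; (C2) and (C4) hold by Remark \ref{Rk : (C2)+(C4)}; property (C3) with respect to $\Gamma_X$, hence point (i), is exactly Proposition \ref{Prop : (C3) for R_1}; the integrality (ii) and the concavity (iii) for $i=1$ are contained in Remark \ref{Rk : (C2)+(C4)} (parts (2) and (4)). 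Finally $\C_1=\emptyset$, since condition (c) in the definition of $A_1$ requires $\xi\in\Gamma_0=\Gamma_X$ while by definition $\C_1\subseteq X-\Gamma_X$.

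The core is the Laplacian estimate (iv) and the directional finiteness (C5), which I would prove by fixing $x\in X-\partial X$ and splitting into three cases according to the status of the index $1$ at $x$. If $1$ is over-solvable at $x$, then $\R_1^{\Fs}(x)>r(x)$ forces, via \eqref{(2)} and Remark \ref{Def.: radius on a disk}(6), that $x\notin\Gamma(\R_1^{\Fs})$ and that $\R_1^{\Fs}$ is constant on a disk neighbourhood of $x$, so it is harmonic there; this never occurs for $x\in\Gamma_X$, where $\R_1^{\Fs}(x)\le\rho_{x,X}=r(x)$. If $1$ is spectral non-solvable at $x$ --- equivalently $i^{\mathrm{sp}}_x\ge 1$, since $\R_1^{\Fs}\le\R_j^{\Fs}$ for all $j$ --- then, $x$ being of type $2$, $3$ or $4$, Proposition \ref{Prop : SH-well done spectral NS} gives the directional finiteness of $\R_1^{\Fs}$ at $x$, its super-harmonicity, and (when $1$ is a vertex) its harmonicity; this settles $dd^cH_1(x,\Fs)\le 0$ for $x\notin S_X$, and for $x\in S_X-\partial X$ it gives $dd^cH_1(x,\Fs)\le N_X(x)-2=(N_X(x)-2)\min(1,i^{\mathrm{sp}}_x)$ after translating through the identity $dd^c\rho_{-,X}(x)=2-N_X(x)$ on $\Gamma_X$ (a direct computation: slope $+1$ along the unique branch of $\Gamma_X$ towards $+\infty$, slope $-1$ along the other $N_X(x)-1$ branches of $\Gamma_X$, slope $0$ elsewhere), using $N_X(x)\ge 2$ on $S_X$.

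The remaining, and main, case is $1$ solvable at $x$, so $\R_1^{\Fs}(x)=r(x)$ and $i^{\mathrm{sp}}_x=0$. If $x\notin\Gamma_X$, then by Lemma \ref{Lemma : KEY . .!} either $x\notin\Gamma(\R_1^{\Fs})$ (locally constant, harmonic), or $x$ is a boundary point of $\Gamma(\R_1^{\Fs})$: the only direction of $\Gamma(\R_1^{\Fs})$ at $x$ is the one $b_\infty$ pointing towards $\Gamma_X$, every other direction carries zero slope by Proposition \ref{Prop : negative slope}, and $\partial_{b_\infty}\R_1^{\Fs}(x)<0$ by Lemma \ref{Lemma : break at rho_F} combined with the concavity (C3); hence $dd^c\R_1^{\Fs}(x)=m_{b_\infty}\partial_{b_\infty}\R_1^{\Fs}(x)<0$. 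If $x\in\Gamma_X$, then $\R_1^{\Fs}(x)=r(x)=\rho_{x,X}$ and $\R^Y(x)\ge r(x)$, so $\R_1^{\Fs}=\min(\R^Y,\rho_{-,X})\le\rho_{-,X}$ everywhere with equality at $x$; directional finiteness at $x$ holds because Lemma \ref{Lemma : KEY . .!}(ii) gives $\Delta(x,\Gamma(\R_1^{\Fs}))\subseteq\Delta(x,\Gamma_X)$, which is finite; and Lemma \ref{Lemma: G dominates F, so F super-H} then yields $dd^c\R_1^{\Fs}(x)\le dd^c\rho_{-,X}(x)=2-N_X(x)\le 0$. Dividing by $\rho_{-,X}$ and using $dd^c\rho_{-,X}(x)=2-N_X(x)$ turns this into $dd^cH_1(x,\Fs)\le 0$ for $x\notin S_X$ and $dd^cH_1(x,\Fs)\le(N_X(x)-2)\min(1,i^{\mathrm{sp}}_x)$ for $x\in S_X-\partial X$. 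Assembling the three cases gives (iv) and (C5), hence finiteness; and point (v) falls out along the way (at $x\notin\Gamma(\R_1^{\Fs})$ all slopes vanish; at a vertex $x$ free of solvability the index $1$ is necessarily spectral non-solvable, so harmonicity is the last assertion of Proposition \ref{Prop : SH-well done spectral NS}). The main obstacle is precisely this solvable case: it is not covered by any single earlier statement and must be pieced together from Lemma \ref{Lemma : KEY . .!}, the concavity (C3), Lemma \ref{Lemma : break at rho_F}, the domination Lemma \ref{Lemma: G dominates F, so F super-H} against $\rho_{-,X}$, and a careful account of the normalized versus unnormalized radii at the points of $S_X$.
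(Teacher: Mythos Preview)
Your proof is correct and follows essentially the same route as the paper's: the case split into over-solvable, spectral non-solvable (handled by Proposition \ref{Prop : SH-well done spectral NS}), and solvable (handled via Lemma \ref{Lemma : KEY . .!}, concavity (C3), and the domination Lemma \ref{Lemma: G dominates F, so F super-H} against $\rho_{-,X}$) is exactly the paper's argument. You are more explicit than the paper in translating between $H_1^{\Fs}$ and $H_1(-,\Fs)$ via $dd^c\rho_{-,X}(x)=2-N_X(x)$ to verify the $S_X$-inequality \eqref{eq : H_i GOS}, whereas the paper simply invokes the remark that super-harmonicity of $H_1^{\Fs}$ outside $\partial X$ is equivalent to the full statement of (iv); your extra detail is welcome and correct.
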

\begin{proof}
By Proposition \ref{Prop : negative slope}, to prove directional 
finiteness (C5) we shall prove 
that $\partial_b\R^{\Fs}_1(x)=0$ for almost but a finite number of germ of 
segments out of $x$. This follows from Propositions 
\ref{Prop : SH-well done spectral NS} 
if the index $i=1$ is not solvable at $x$, and from
Lemma \ref{Lemma : KEY . .!} 
if $i=1$ is solvable at $x$. 

The super-harmonicity properties iii) and iv) of Theorem 
\ref{Theorem : MAIN THM GEN} for $\R^{\Fs}_1$, 
follow again from Proposition 
\ref{Prop : SH-well done spectral NS} if the index $i=1$ is not solvable 
at $x$. 

Otherwise, if $i=1$ is solvable at $x$, then we have three cases: 

If 
$x\notin\Gamma(\R^{\Fs}_1)$ there is nothing to prove; 

If $x\in\Gamma(\R^{\Fs}_1)-\Gamma_X$, then $x$ is a boundary point 
of $\Gamma(\R^{\Fs}_1)$ by Lemma \ref{Lemma : KEY . .!}, 
in this case the super-harmonicity is just the concavity property (C3) 
of Proposition \ref{Prop. : (C3)};

If $x\in\Gamma_X$, then $\Gamma(\R^{\Fs}_1)=\Gamma_X$ around 
$x$ by Lemma \ref{Lemma : KEY . .!}. 
Moreover the function $y\mapsto\R^{\Fs}_1(y)$ is bounded by 
$y\mapsto\rho_{y,X}$ around $x$, and the two functions are equal at 
$x$. So $\R^{\Fs}_1$ is super-harmonic at $x$, by 
Lemma \ref{Lemma: G dominates F, so F super-H}.
\end{proof}

The following two propositions conclude the proof of 
Theorem \ref{Theorem : MAIN THM GEN}.
\begin{proposition}
If $H_1^{\Fs},\ldots,H_{i-1}^{\Fs}$ are finite, then $H_i^{\Fs}$ is 
directionally finite (C5).
\end{proposition}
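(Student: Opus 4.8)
The statement to prove is: if $H_1^{\Fs},\ldots,H_{i-1}^{\Fs}$ are finite, then $H_i^{\Fs}$ is directionally finite, i.e.\ it satisfies (C5). By Remark~\ref{Rk : defof RRM_i} and \eqref{eq : Gamma_ikjhg}, the finiteness of $H_1^{\Fs},\ldots,H_{i-1}^{\Fs}$ is equivalent to the finiteness of the graph $\Gamma_{i-1}=\Gamma(\RR_{i-1}^{\Fs})$. Since, by \eqref{eq : GammaH_i=GammaR_i} and Remark~\ref{Rk : defof RRM_i}, directional finiteness of $H_i^{\Fs}$ at a point $x$ is the same as directional finiteness of $\R_i^{\Fs}$ at $x$ (they differ by a constant over each disk avoiding $\Gamma_{i-1}$, and they have the same branches inside $\Gamma_{i-1}$), it suffices to show that $\Delta(x,\Gamma(\R_i^{\Fs}))$ is finite for every bifurcation point $x$ of $\Gamma(\R_i^{\Fs})$. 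By Proposition~\ref{Prop : negative slope} (applied on each disk with boundary $x$ disjoint from $\Gamma_{i-1}$, where $\R_i^{\Fs}$ satisfies (C1)--(C3) by Proposition~\ref{Prop. : (C3)}), this reduces to showing that $\partial_b\R_i^{\Fs}(x)=0$ for all but finitely many germs of segments $b$ out of $x$.

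\textbf{Key steps.} First I would split the set $\Delta(x)$ of directions out of $x$ according to whether a given $b$ lies in $\Gamma_{i-1}$ or not. Since $\Gamma_{i-1}$ is finite, only finitely many $b$ lie in $\Gamma_{i-1}$, so we may restrict attention to the directions $b\in\Delta(x)$ not contained in $\Gamma_{i-1}$; each such $b$ is contained in an open virtual disk $D_b$ with boundary $x$ and $D_b\cap\Gamma_{i-1}=\emptyset$. Second, I would distinguish two cases according to the position of $\R_i^{\Fs}(x)$ relative to $r(x)$. If the index $i$ is solvable or over-solvable at $x$, i.e.\ $\R_i^{\Fs}(x)\geq r(x)$, then Lemma~\ref{Lemma : KEY . .!} directly gives that either $x\notin\Gamma(\R_i^{\Fs})$, or $x$ is a boundary point of $\Gamma(\R_i^{\Fs})$ in $X-\Gamma_{i-1}$ (so no directions outside $\Gamma_{i-1}$ belong to $\Gamma(\R_i^{\Fs})$), or $\Delta(x,\Gamma(\R_i^{\Fs}))\subseteq\Delta(x,\Gamma_{i-1})$ which is finite; in all sub-cases only finitely many $b$ have $\partial_b\R_i^{\Fs}(x)>0$. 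If instead the index $i$ is spectral non solvable at $x$, i.e.\ $\R_i^{\Fs}(x)<r(x)$, then since $\R_1^{\Fs}(x)\leq\cdots\leq\R_i^{\Fs}(x)<r(x)$ the index $i$ is free of solvability at $x$ (Definition~\ref{Def : index free of solvabl}), and Proposition~\ref{Prop : SH-well done spectral NS}~(1) gives directly that the slopes of $\R_1^{\Fs},\ldots,\R_i^{\Fs}$ (hence of $\R_i^{\Fs}$) are zero for almost all $b\in\Delta(x)$. Combining the two cases, $\partial_b\R_i^{\Fs}(x)=0$ for all but finitely many $b$, so by Proposition~\ref{Prop : negative slope} only finitely many disks with boundary $x$ meet $\Gamma(\R_i^{\Fs})$, and (C5) holds at $x$.

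\textbf{Main obstacle.} The delicate point is the borderline case $\R_i^{\Fs}(x)=r(x)$ (the solvable index), where neither the spectral-radius machinery of Proposition~\ref{Prop : SH-well done spectral NS} (which needs \emph{non}-solvability) nor naive concavity applies uniformly; this is exactly what Lemma~\ref{Lemma : KEY . .!} is designed to handle, but one must check carefully that its hypothesis $\R_i^{\Fs}(x)\geq r(x)$ is met and that its conclusion, combined with the finiteness of $\Gamma_{i-1}$, indeed bounds $\Delta(x,\Gamma(\R_i^{\Fs}))$. I also need to be sure the reduction ``directional finiteness of $H_i^{\Fs}$ $\Leftrightarrow$ directional finiteness of $\R_i^{\Fs}$'' is legitimate at points of $\Gamma_X$; this follows from point~(5) of Remark~\ref{Def.: radius on a disk}, since passing from $\R_i^{\Fs}$ to $H_i^{\Fs}$ only shifts slopes by the slopes of $\rho_{x,X}$, which are $0$ or $1$ and carried by the finite graph $\Gamma_X\subseteq\Gamma_{i-1}$.
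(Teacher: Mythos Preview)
Your proof is correct and follows essentially the same route as the paper: reduce from $H_i^{\Fs}$ to $\R_i^{\Fs}$ via $\Gamma(H_i^{\Fs})-\Gamma_{i-1}=\Gamma(\R_i^{\Fs})-\Gamma_{i-1}$ (equation \eqref{eq : GammaH_i=GammaR_i}), discard the finitely many directions inside the finite graph $\Gamma_{i-1}$, invoke Proposition~\ref{Prop : negative slope} on the remaining disks (where Proposition~\ref{Prop. : (C3)} supplies (C1)--(C3)), and then split according to whether $\R_i^{\Fs}(x)<r(x)$ (apply Proposition~\ref{Prop : SH-well done spectral NS}, noting that this forces $i$ to be free of solvability) or $\R_i^{\Fs}(x)\geq r(x)$ (apply Lemma~\ref{Lemma : KEY . .!}). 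One small correction: in your ``Main obstacle'' paragraph, point~(5) of Remark~\ref{Def.: radius on a disk} concerns the relation between $\R_i(-,\Fs)$ and $\R_i^{\Fs}$, not between $H_i^{\Fs}$ and $\R_i^{\Fs}$; the reduction you need is simply $H_i^{\Fs}=H_{i-1}^{\Fs}\cdot\R_i^{\Fs}$ together with the constancy of $H_{i-1}^{\Fs}$ on disks avoiding $\Gamma_{i-1}$, which you already cited correctly earlier.
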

\begin{proof}
Let $x\in \Gamma(H_{i}^{\Fs})$ be a bifurcation point. 
We have to prove that 
there are a finite number of open disks 
$D\subset X$ with boundary $x$ such that 
$D\cap\Gamma(H_i^{\Fs})\neq\emptyset$. Since $\Gamma_{i-1}$ is 
finite, there are a finite number of such disks intersecting it, so we 
can neglect them.

By \eqref{eq : GammaH_i=GammaR_i} (cf. also Remark \ref{Rk : defof RRM_i}), we have
$\Gamma(\R^{\Fs}_i)-\Gamma_{i-1}=\Gamma(H^{\Fs}_i)-\Gamma_{i-1}$. 
So we can replace $H^{\Fs}_i$ by $\R^{\Fs}_i$, and apply Proposition 
\ref{Prop. : (C3)} to have the properties of a first radius over each 
open disk $D$ with boundary $x$ such that 
$\Gamma_{i-1}\cap D=\emptyset$. In particular by  
Proposition \ref{Prop : negative slope}, $H^{\Fs}_i$ is constant over 
such a disk $D$ if and only if $\partial_bH^{\Fs}_i(x)= 0$, where $b$ 
is the germ of segment out of $x$ inside $D$.

As in the proof of Proposition \ref{Case i=1}, directional finiteness (C5) is then consequence of Proposition 
\ref{Prop : SH-well done spectral NS} (if $i$ is  not solvable 
at $x$) and Lemma \ref{Lemma : KEY . .!} (if $i$ is solvable at $x$).
\if{directional finiteness (C5) then follows from Propositions 
\ref{Prop : SH-well done spectral NS} and \ref{Prop. : (C3)} 
if the index $i$ is spectral at $x$, or from 
Lemma \ref{Lemma : KEY . .!} and Proposition \ref{Prop. : (C3)} 
if $i$ is solvable or over-solvable at $x$.

By assumption $\S_{i-1}$ is finite.
If $\xi\notin\S_{i-1}$, then, $\RM_1,\ldots,\RM_{i-1}$ being constant outside $\S_{i-1}$, 
the function $H_i^{\M}$ is directionally finite at $\xi$ 
if and only if so does $\RM_i$. By Prop. \ref{Prop. : (C3)} the $\RM_i$ acquires the properties of a genuine radius outside $\S_{i-1}$, 
so it is finite and super-harmonic on each non generic disk $\mathrm{D}^{-}(t,\rho)$ tangent to $\S_{i-1}$. 
Let $\xi\in\S_{i-1}$. If $\RM_i(\xi)\geq r(\xi)$ then one applies Lemma \ref{Lemma : KEY . .!} to prove the directional finiteness of $H_i^{\M}$ 
since $\S(H_{i}^{\M})-\S_{i-1}=\S(\RM_{i})-\S_{i-1}$ (cf. Remark \ref{Rk : defof RRM_i}). 
If $\RM_i(\xi)<r(\xi)$, by Prop. \ref{Prop. : (C3)} one has $\partial_-H^{\M}_{i,\delta}(\xi)<0$, 
for all $\delta\in\Delta(\xi,\S(H_i^{\M}))-\Delta(\xi,\S_{i-1})$. 
Now by Prop. \ref{Prop : R(xi) <r(xi) sup-harm} there are a finite number of directions $\delta\in\Delta(\xi)$ such that 
$\partial_-H^{\M}_{i,\delta}(\xi)\neq 0$. Hence $\Delta(\xi,\S(H_i^{\M}))$ is finite.
}\fi
\end{proof}

\begin{proposition}\label{Prop. : fin and SH of H_i}
If $H_1^{\Fs},\ldots,H_{i-1}^{\Fs}$ satisfy Theorem 
\ref{Theorem : MAIN THM GEN}, 
then so does $H_i^{\Fs}$.
\end{proposition}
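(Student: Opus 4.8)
The plan is to carry out the induction on $i$ announced right before the statement: assuming that $H_1^{\Fs},\ldots,H_{i-1}^{\Fs}$ verify all the claims of Theorem \ref{Theorem : MAIN THM GEN} (in particular they are finite, so $\Gamma_{i-1}$ is a finite admissible graph, and the auxiliary set $\C_i$ of point iv) is finite by its inductive definition), I will verify that $H_i^{\Fs}$ satisfies the six conditions (C1)--(C6) of Section \ref{F_t defi} \emph{with respect to} $\Gamma:=\Gamma_{i-1}$ and $\C(H_i^{\Fs}):=\C_i$, together with the integrality, concavity, super-harmonicity and harmonicity-of-vertices statements. Finiteness of $H_i^{\Fs}$ then follows from Theorem \ref{th : finiteness theorem}, and by Remark \ref{Rk : defof RRM_i} the finiteness of $\R_i^{\Fs}$ (hence of $\R_i(-,\Fs)$, $s_i^\Fs$, $h_i^\Fs$) is equivalent to it.

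First I would dispose of the conditions already in hand: (C1), (C2), (C4) for $H_i^{\Fs}$ and $\R_i^{\Fs}$ hold by Remark \ref{Rk : (C2)+(C4)}, which derives them from Theorem \ref{thm: continuity of spectral radii along a branch} via the comparison \eqref{eq : R_i VS R_isp along a segment} between $\R_i^{\Fs}$ and $\R_i^{\Fs,\mathrm{sp}}$; the same comparison gives the integrality statement ii) and the concavity statements iii.a) and iii.c). Condition (C3) with respect to $\Gamma_{i-1}$ is exactly the Transfer principle, Proposition \ref{Prop. : (C3)}: on any open disk $D$ with $D\cap\Gamma_{i-1}=\emptyset$ one has $H_i^{\Fs}=b_{i-1}\cdot\R_i^{\Fs}$ with $b_{i-1}$ a positive constant, and Proposition \ref{Prop. : (C3)} shows $\R_i^{\Fs}$ either is constant on $D$ or coincides with the first radius $\R_1^{\Fs_{\ge i}}$ of a sub-module $\Fs_{\ge i}\subseteq\Fs_{|D}$ obtained from Theorem \ref{deco on a disc}; in both cases $\R_i^{\Fs}$ inherits the log-concavity of a first radius (Proposition \ref{Prop : (C3) for R_1}), which is (C3), and also the remaining concavity statement iii.b). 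Directional finiteness (C5) for $H_i^{\Fs}$ is the content of the immediately preceding proposition, whose proof combines Proposition \ref{Prop : SH-well done spectral NS} (when the index $i$ is free of solvability at the bifurcation point) with Lemma \ref{Lemma : KEY . .!} (when $i$ is solvable there), after reducing via \eqref{eq : GammaH_i=GammaR_i} to $\R_i^{\Fs}$ on disks tangent to $\Gamma_{i-1}$.

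The heart of the matter — and the step I expect to be the main obstacle — is the weak super-harmonicity (C6) / point iv), i.e. proving $dd^cH_i^{\Fs}(x)\le 0$ for $x\notin S_X\cup\C_i$ (and the bound \eqref{eq : H_i GOS} on $S_X$), where $\C_i$ is \emph{finite}. Here I would split according to the nature of $x$. If $x\notin\Gamma(H_i^{\Fs})$, harmonicity is trivial (all slopes vanish, point v.a). If $i$ is free of solvability at $x$, Proposition \ref{Prop : SH-well done spectral NS} gives super-harmonicity directly (via Frobenius push-forward, \eqref{eq : dd^c preserved by Frobenius}, reduced to Propositions \ref{Prop. : small radius} and \ref{Prop. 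NP of a operator}), and harmonicity when $i$ is a vertex, yielding v.b). The delicate case is when some index $j\le i$ is solvable at $x$: then the super-harmonicity of $H_i^{\Fs}$ can genuinely fail, and one must show this failure is confined to the finite set $\C_i$. The argument is: write $H_i^{\Fs}=H_{j-1}^{\Fs}\cdot\prod_{k\ge j}\R_k^{\Fs}$ around $x$ for the largest such solvable $j$; by the inductive hypothesis $H_{j-1}^{\Fs}$ is already super-harmonic off $\C_{j-1}\subseteq\C_i$, and on disks off $\Gamma_{j-1}$ the factor $\prod_{k\ge j}\R_k^{\Fs}$ behaves like a first radius by Proposition \ref{Prop. : (C3)} and is controlled by Proposition \ref{Prop : (C3) for R_1}; the only points where the solvable radius $\R_j^{\Fs}$ can contribute a positive term to $dd^c$ are, by Lemma \ref{Lemma : KEY . .!}, the end points of $\Gamma(\R_j^{\Fs})$ — precisely the points collected into $A_j$ by conditions (a),(b),(c) of point iv), which is a finite set by (C5) applied to the already-finite $\R_j^{\Fs}$. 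Summing the local contributions over $\Delta(x)$ (finite by (C5)) and using Lemma \ref{Lemma: G dominates F, so F super-H} to dominate $H_i^{\Fs}$ by the relevant $\rho_{\Gamma}$-type function at points of $S_X$ gives \eqref{eq : H_i sup-harm} and \eqref{eq : H_i GOS}. With (C1)--(C6) verified, Theorem \ref{th : finiteness theorem} yields finiteness of $H_i^{\Fs}$, completing the induction; the base case $i=1$ is Proposition \ref{Case i=1}.
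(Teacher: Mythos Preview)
Your overall structure is right and you invoke the correct ingredients, but the core of the solvable case is not handled precisely, and your decomposition is more complicated than necessary.

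The paper does not split off the largest solvable index $j$ and write $H_i^{\Fs}=H_{j-1}^{\Fs}\cdot\prod_{k\ge j}\R_k^{\Fs}$. It simply uses $H_i^{\Fs}=H_{i-1}^{\Fs}\cdot\R_i^{\Fs}$. Since $\C_{i-1}\subset\C_i$, the induction hypothesis already gives $H_{i-1}^{\Fs}$ super-harmonic at any $x\notin\C_i\cup\partial X$, so it suffices to show that $\R_i^{\Fs}$ itself is super-harmonic at $x$. Note that if some $j\le i$ is solvable at $x$ then $\R_i^{\Fs}(x)\ge r(x)$, so $i$ is automatically solvable or over-solvable at $x$; the over-solvable case makes $\R_i^{\Fs}$ locally constant, and one is reduced to the case where $i$ itself is solvable. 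Your detour through $\prod_{k\ge j}\R_k^{\Fs}$ is therefore unnecessary, and your claim that this product ``behaves like a first radius by Proposition~\ref{Prop. : (C3)}'' is not what that proposition says (it concerns a single $\R_i^{\Fs}$ on disks off $\Gamma_{i-1}$, not a product off $\Gamma_{j-1}$).

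More importantly, in the remaining case ($i$ solvable at $x$, $x\in\Gamma(H_i^{\Fs})\cap\Gamma_{i-1}\cap\Gamma(\R_i^{\Fs})$, $x$ not an end point of $\Gamma(\R_i^{\Fs})$) you assert that ``by Lemma~\ref{Lemma : KEY . .!}'' the Laplacian of $\R_i^{\Fs}$ can only be positive at end points. Lemma~\ref{Lemma : KEY . .!} does not say this: it only says $\Delta(x,\Gamma(\R_i^{\Fs}))\subseteq\Delta(x,\Gamma_{i-1})$. The actual mechanism in the paper is a domination argument: by \eqref{(2)} and Proposition~\ref{prof propp} one has $\R_i^{\Fs}\le\rho_{\Gamma(\R_i^{\Fs})}$ on all directions out of $x$, with equality $\R_i^{\Fs}(x)=\rho_{\Gamma(\R_i^{\Fs})}(x)=r(x)$ at $x$; since $x$ is not an end point of $\Gamma(\R_i^{\Fs})$, the function $\rho_{\Gamma(\R_i^{\Fs})}$ is super-harmonic at $x$ (Example~\ref{ex: exercice ttt}), and Lemma~\ref{Lemma: G dominates F, so F super-H} then forces $\R_i^{\Fs}$ to be super-harmonic at $x$. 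You invoke Lemma~\ref{Lemma: G dominates F, so F super-H} only for points of $S_X$, but it is precisely this domination by $\rho_{\Gamma(\R_i^{\Fs})}$ that makes the solvable case work everywhere outside $\C_i$.
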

\begin{proof}
It remains to prove that $H_i^{\Fs}$ verifies the super-harmonicity 
property iv) of Theorem \ref{Theorem : MAIN THM GEN}.
This will guarantee that $H_i^{\Fs}$ fulfill the assumptions (C1)--(C6) 
of Thm. \ref{th : finiteness theorem} 
with respect to $\Gamma:=\Gamma_{i-1}$ and 
$\C(H_i^{\Fs}):=\C_i$. 
Notice that $\mathscr{C}_i\subseteq\Gamma_{i-1}$ is finite by 
(b) and (c) of point iv) Theorem \ref{Theorem : MAIN THM GEN}.

If $i$ is free of solvability then we deduce the super-harmonicity from Proposition 
\ref{Prop : SH-well done spectral NS}. 

It remains to prove that if $x\in X-(\mathscr{C}_i\cup\partial X)$, 
and if some index $j\leq i$ is solvable at $x$, then $H_i^{\Fs}$ is super-harmonic at $x$. 

Since $\C_{i-1}\subset\C_i$, by induction 
$H_{i-1}^{\Fs}$ is super-harmonic at $x\notin\C_i\cup\partial X$. We 
then write
\begin{equation}\label{eq : H_i=H_i-1R_i}
H_{i}^{\Fs}\;=\;
H_{i-1}^{\Fs}\cdot\R^{\Fs}_i\;.
\end{equation}

If $x\notin\Gamma_{i-1}$, then $H_i^{\Fs}$
enjoys the properties of a first radius of convergence outside 
$\Gamma_{i-1}$ by Proposition \ref{Prop. : (C3)}, 
so it is super harmonic at $x$ by Proposition \ref{Case i=1}. 

If $x\notin\Gamma(H^{\Fs}_i)$, then $H_i^{\Fs}$  
is constant around $x$ (hence harmonic at $x$). 

If $x\in\Gamma(H_i^{\Fs})\cap\Gamma_{i-1}$, we now prove that 
$\R^{\Fs}_i$ is super-harmonic at $x$. By \eqref{eq : H_i=H_i-1R_i} this will imply that   
$H_i^{\Fs}$ is super-harmonic at $x$.

If $i$ is over-solvable at $x$, or if $x\notin\Gamma(\R^{\Fs}_i)$, then 
$\R^{\Fs}_i$ is constant around $x$, and hence it is super-harmonic at $x$. 

It remains to check the case  
where $i$ is solvable at $x$ (i.e. $\R^{\Fs}_i(x)=r(x)$) and 
\begin{equation}
x\;\in\;
\Gamma(H_i^{\Fs})\cap\Gamma_{i-1}\cap\Gamma(\R^{\Fs}_i)\;.
\end{equation}

We have to prove that $\R^{\Fs}_i$ is super-harmonic at the 
points of that graph that are not in $\C_{i-1}$ nor in the boundary of 
$\Gamma(\R^{\Fs}_i)$. As observed, these points are finite in number because this admissible graph is finite by induction.

By Lemma \ref{Lemma : KEY . .!} we have the inclusion 
$\Delta(x,\Gamma(\R^{\Fs}_i))\subseteq\Delta(x,\Gamma_{i-1})$. So $\Gamma(\R^{\Fs}_i)$ is finite around $x$. 
Now since $x$ is not a 
boundary point of $\Gamma(\R^{\Fs}_i)$, the function 
$\rho_{\Gamma(\R^{\Fs}_i)}:X\to\mathbb{R}$ is super-harmonic at 
$x$ (as in point $1.$ of Example \ref{ex: exercice ttt}). 
Moreover, by \eqref{(2)}, and by point i) of Proposition 
\ref{prof propp}, for all  $b\in\Delta(x)$ we have respectively
\begin{equation}
\partial_b\R^{\Fs}_i(x)\;\leq\;\partial_b\rho_{\Gamma(\R^{\Fs}_i)}(x)\;,
\qquad\R^{\Fs}_i(x)=\rho_{\Gamma(\R^{\Fs}_i)}(x)=r(x)\;.
\end{equation}
The function $\R^{\Fs}_i$ is then super-harmonic by Lemma 
\ref{Lemma: G dominates F, so F super-H}.
\end{proof}
\if{
\subsection{Proof of Corollary \ref{Corollary - after thm}}
\begin{proof}[Proof of Corollary \ref{Corollary - after thm}]
By translation we can assume $t=0$. Let $\O$ be equal to one of $\mathcal{H}_K(0,I)$, $\mathcal{B}_K(0,I)$, $\a_K(0,I)$.
Almost all assertions can be proved from Thm. \ref{Theorem : MAIN THM GEN} 
by restriction to a sub-annulus (resp. sub-disk) $\{|T|\in J\}$, with $J$ compact (resp. $0\in J$). 
The unique assertion that remains to prove are the global finiteness of each $H^{\M}_i$, and the fact that along the segment 
$\lambda_0(I):=\{\xi_{0,\rho}\}_{\rho\in I}$ each $H^{\M}_i$ has a finite number of breaks. 
By Cor. \ref{cor : finiteness on a disk}, these two assertions are in fact equivalent, and this proves iii). This also proves the last assertion. 
Namely assume that, for $i\leq r$, all $H^{\M}_1,\ldots,H_i^{\M}$ have a finite number of breaks along $\lambda_0(I)$. 
Then $\RM_1=H^{\M}_1$ is finite by Cor. \ref{cor : finiteness on a disk}, 
and an induction on $k\leq i$ shows that $\S(H^{\M}_k)$ is contained in $C(0,J_k)\cup\lambda_0(I)$ for some compact $J_k$. 
Indeed each new complete segment generates by super-harmonicity a break along $\lambda_0(I)$.
We now prove i) and ii). If $\O=\mathcal{H}_K(0,I)$ or if $\O=\mathcal{B}_K(0,I)$ with $K$ discretely valued, 
then $\RM_i$ and $H_i^{\M}$ always have a finite number of breaks along $\lambda_0(I)$ 
by \cite[Thm. 11.3.2, Remark 11.3.4]{Kedlaya-Book}. The proof ends here, but for the convenience of the reader 
we now show why this is true.
We assume that $I$ is open, since otherwise the result is Thm. \ref{Theorem : MAIN THM GEN}.
For all $i$ one has $\RM_i=\R_i^{\M,\mathrm{sp}}$ along $\lambda_0(I)$.
Let $L_i:=\lim_{\rho\to s^-}\R^{\M,\mathrm{sp}}_{i}(\xi_{0,\rho})$, where $s:=\sup(I)$. 
The limit exists in $[0,s]$ since  $H_i^{\M,\mathrm{sp}}$ is concave along $I$, for all $i$.
 If $L_1=s$,\footnote{This condition is called solvability in \cite{Astx}.} 
then the sequence of slopes of $\R^{\M,\mathrm{sp}}_1$ is decreasing (by concavity), contained in 
$\mathbb{Z}\cup\frac{1}{2}\mathbb{Z}\cup\cdots\cup\frac{1}{r}\mathbb{Z}$, and lower bounded by $1$ (by definition of spectral radius). 
So the sequence of slopes is constant for $\rho\to s^-$ : there is a ``last slope'' (this argument is due to Christol-Mebkhout \cite{Astx}).
So $\RM_1$ is $\log$-linear outside some compact $J_1\subseteq I$, and hence finite by Cor. \ref{cor : finiteness on a disk}. 
Since $H_2^{\M}$ is $\log$-concave, then $\RM_2$ is concave outside $J_1$, so the same argument proves that $\RM_2$ is $\log$-linear 
outside some $J_2\supseteq J_1$, and hence finite by Cor. \ref{cor : finiteness on a disk}. By induction all $\RM_i$ are finite.
Assume now that $L_1<s$. Let $i_0$ be the larger value of $i$ such that $L_{i}<s$.
We perform Frobenius push-forward to have $L_{i_0}<\omega s$. By continuity there exists $\varepsilon>0$ such that 
$\RM_i(\xi_{0,\rho})<\omega\rho$ for all $\rho\in[s-\varepsilon,s[$.
To prove that the number of breaks of $\RM_1$ is finite on $[s-\varepsilon,s[$ one has to perform 
a global push-forward over $[s-\varepsilon,s[$ in order to control simultaneously all the $\{\RM_i(\xi_{0,\rho})\}_{\rho\in[s-\varepsilon,s[}$. 
So one argues as in section \ref{Push-forward by Frob} replacing $\H(\xi)$ by $\mathcal{H}_K([s-\varepsilon,s[)$ or 
$\mathcal{B}_K([s-\varepsilon,s[)$. One has the same results as in section \ref{Beh push}.
One sees from \eqref{eq : phi_*(G)}, that if the coefficients of $G(\widetilde{T})$ are bounded (resp. 
analytic elements), then so does $\varphi_*(G)(T)$. Indeed the sequence $\{a_i\}_i$ of the Taylor coefficients of the entries 
$h_{i,j}(T)=\sum_{i\in\mathbb{Z}}a_iT^i$ of $\varphi_*(G)(T)$ are obtained as sub-sequences of those 
of $G(\widetilde{T})$. Now we perform a base change in the fraction field of $\O$, and possibly restrict the annulus, 
to find a differential operator that again has bounded coefficients (resp. analytic elements). Now bounded functions (resp. 
analytic elements) have a finite number of zeros\footnote{Bounded function have a finite number of zeros if and only if $K$ 
has a discrete valuation \cite{Christol-Book}.} so the Newton polygon of the operator has a finite number of slopes. This
proves that $\RM_1,\ldots,\RM_{i_0}$ have a finite number of breaks along $\lambda_{0}(I)$, 
and hence that they are finite by Cor. \ref{cor : finiteness on a disk}. 
For $i\geq i_0+1$ one proceeds inductively using the above argument of Christol-Mebkhout, 
to prove the finiteness of $\RM_i$.
\end{proof}
}\fi
\section{Notes.}
\label{Notes final}
A first proof of the harmonicity properties is due to P.Robba \cite{RoIII} and \cite{RoIV} 
for rank one differential equations with rational coefficients. 
He obtained the harmonicity of the radius function by expressing its slopes by means of the index 
(cf. \cite[Thm. 4.2, p.201]{RoIII}), and then deducing the harmonicity from the additivity of the indexes 
(cf. \cite[Prop.4.5, p.207]{RoIII}).\footnote{The principle is 
used in \cite[top of p.50]{RoIV} to construct 
the so called Robba's exponentials.} 

In a recent paper of Kedlaya \cite[Section 5]{Kedlaya-turritin2var} 
there is a proof of the finiteness of a certain function related to the 
radii of a differential equation over a surface. 
It is a function on a Berkovich closed unit disk over $K:=k(\!(z)\!)$, 
where $k$ is a trivially valued field.   
\if{The function considered by \cite{Kedlaya-turritin2var} is equal to 
the \emph{irregularity} of the module 
i.e. the height of the Newton polygon of the module depending on the 
Berkovich point 
(in the spirit of section \ref{NP of a module} and 
\cite{Kedlaya-Book}).}\fi 
The definitions of \cite{Kedlaya-turritin2var} are given \emph{ad hoc} 
to deal with a closed 
disk and there are discrepancies with those of this paper, especially for 
the definition of the skeleton 
of a function (which is defined in \cite{Kedlaya-turritin2var} in term of 
the slopes). 
It turns out that the two definitions eventually coincide over a closed 
disk (by Lemma \ref{Lemma : break at rho_F}), and in fact certain techniques of this paper 
are not far from those of \cite{Kedlaya-turritin2var} and 
\cite{Kedlaya-Book}.

A proof of the finiteness of the first radius function have been obtained by 
G.Christol \cite{Ch-Formula} 
for differential equations of rank one with polynomial coefficients. 
The proof uses an explicit formula for $\R_1^{\Fs}$ that we 
have contributed to realize (cf. the introduction of \cite{Ch-Formula}). 
The generalization of such a formula to rank one 
differential equation with arbitrary coefficients is the object of a 
forthcoming paper. 
This have been the starting point of the present paper. 

After the first version of the present paper is appeared 
(cf. \cite{NP-I},\cite{NP-II}), 
another proof of the finiteness of the controlling graphs 
have been obtained in \cite{Kedlaya-draft}. 
K.S.Kedlaya obtains there a shorter derivation 
of our proof, based on the same methods. 
No description of the super-harmonicity locus is given\footnote{Of 
course the finiteness of the locus of points where the 
super-harmonicity fails is a consequence of the finiteness of 
the controlling graphs and (C2).}. 
On the other hand he obtains a deep result showing that 
the end points of the controlling graphs can not be of type 
$4$. 

We also quote the remarkable result of Y.André 
 about the semi-continuity of the irregularity for 
meromorphic connections in a relative context \cite{Andre-cont}. 
With our notations the irregularity is (related to) the slope $\partial_bH_r^{\Fs}(x)$: 
it is the derivative of the height of the convergence Newton polygon.

\bibliographystyle{amsalpha}
\bibliography{bib}

\end{document}